\renewcommand{\theequation}{\thesection.\arabic{equation}}
\begin{document}
	\setlength{\baselineskip}{13pt}
	
	\parindent 0.5cm
	\evensidemargin 0cm \oddsidemargin 0cm \topmargin 0cm \textheight
	22cm \textwidth 16cm \footskip 2cm \headsep 0cm
	
	\newtheorem{thm}{Theorem}[section]
	\newtheorem{cor}{Corollary}[section]
	\newtheorem{lem}{Lemma}[section]
	\newtheorem{prop}{Proposition}[section]
	\newtheorem{defn}{Definition}[section]
	\newtheorem{rk}{Remark}[section]
	\newtheorem*{claim}{Claim}
	\newtheorem{nota}{Notation}[section]
	\newtheorem{Ex}{Example}[section]
	\def\nm{\noalign{\medskip}}
	
	\numberwithin{equation}{section}
	
	\def\p{\partial}
	\def\I{\textit}
	\def\R{\mathbb R}
	\def\C{\mathbb C}
	\def\u{\underline}
	\def\l{\lambda}
	\def\a{\alpha}
	\def\O{\Omega}
	\def\e{\epsilon}
	\def\ls{\lambda^*}
	\def\D{\displaystyle}
	\def\wyx{ \frac{w(y,t)}{w(x,t)}}
	\def\imp{\Rightarrow}
	\def\tE{\tilde E}
	\def\tX{\tilde X}
	\def\tH{\tilde H}
	\def\tu{\tilde u}
	\def\d{\mathcal D}
	\def\aa{\mathcal A}
	\def\DH{\mathcal D(\tH)}
	\def\bE{\bar E}
	\def\bH{\bar H}
	\def\M{\mathcal M}
	\renewcommand{\labelenumi}{(\arabic{enumi})}

	\def\disp{\displaystyle}
	\def\undertex#1{$\underline{\hbox{#1}}$}
	\def\card{\mathop{\hbox{card}}}
	\def\sgn{\mathop{\hbox{sgn}}}
	\def\exp{\mathop{\hbox{exp}}}
	\def\OFP{(\Omega,{\cal F},\PP)}
	\newcommand\JM{Mierczy\'nski}
	\newcommand\RR{\ensuremath{\mathbb{R}}}
	\newcommand\CC{\ensuremath{\mathbb{C}}}
	\newcommand\QQ{\ensuremath{\mathbb{Q}}}
	\newcommand\ZZ{\ensuremath{\mathbb{Z}}}
	\newcommand\NN{\ensuremath{\mathbb{N}}}
	\newcommand\PP{\ensuremath{\mathbb{P}}}
	\newcommand\abs[1]{\ensuremath{\lvert#1\rvert}}
	
	\newcommand\normf[1]{\ensuremath{\lVert#1\rVert_{f}}}
	\newcommand\normfRb[1]{\ensuremath{\lVert#1\rVert_{f,R_b}}}
	\newcommand\normfRbone[1]{\ensuremath{\lVert#1\rVert_{f, R_{b_1}}}}
	\newcommand\normfRbtwo[1]{\ensuremath{\lVert#1\rVert_{f,R_{b_2}}}}
	\newcommand\normtwo[1]{\ensuremath{\lVert#1\rVert_{2}}}
	\newcommand\norminfty[1]{\ensuremath{\lVert#1\rVert_{\infty}}}

	\title{Structural stability for the bidirectional cyclic negative feedback systems}
	
	\author {
		\\
		Xu Cheng\thanks{Partially supported by the Postgraduate Research \& Practice Innovation Program of Jiangsu Province (No. KYCX24\_0620) and the China Scholarship Council (No. 202406840118).}\\
		School of Mathematics and Statistics
		\\ Nanjing University of Science and Technology
		\\ Nanjing, Jiangsu, 210094, P. R. China
		\\
		\\
		Yi Wang\thanks{Partially supported by the NSF of China (No.12331006),  the Strategic Priority Research Program of CAS (No.XDB0900100) and National Key R\&D Program of China (Nos. 2024YFA1013603, 2024YFA1013600).} \\
		School of Mathematical Sciences\\
		University of Science and Technology of China
		\\ Hefei, Anhui, 230026, P. R. China
		\\
		\\
		Dun Zhou\thanks{Corrseponding author, email address: zhoudun@njust.edu.cn. Partially supported by the NSF of China (No. 12331006).}\\
		School of Mathematics and Statistics
		\\ Nanjing University of Science and Technology
		\\ Nanjing, Jiangsu, 210094, P. R. China
		\\
		\\
	}
	\date{}
	
	\maketitle
	
	\let\thefootnote\relax\footnotetext{\textit{Keywords}: Negative feedback systems; Transversality; 
		Morse-Smale property; Exponential dichotomy; Genericity; Sard-Smale theoerm; Invariant manifolds; Poincar\'{e}-Bendixson theorem }
	\begin{abstract}
		The present paper investigates the structural stability of bidirectional cyclic negative feedback systems. To address this, we develop a generalized Floquet theory and construct nested invariant cones for the systems. Subsequently, we demonstrate that the Poincar\'{e}-Bendixson property for the limit set persists under $C^1$-perturbations. By applying the generalized Floquet theory and the nested invariant cones, we establish that the stable and unstable manifolds of any two connecting hyperbolic critical elements intersect transversally, with the exception of two hyperbolic equilibria possessing the same odd Morse index. Next, we prove the generic hyperbolicity of critical elements using the Sard-Smale theorem. Furthermore, by formulating the transversality condition as a functional constraint, we generically preclude connecting orbits between hyperbolic equilibrium points with the same odd Morse index, thereby establishing the generic Kupka-Smale property. Finally, under dissipative assumptions, we show that the system generically exhibits the Morse-Smale property. Our findings characterize the stability of bidirectional cyclic negative feedback systems in two distinct manners: $C^1$- small perturbations of the system do not alter the asymptotic behavior of orbits; from a topological perspective, generic vector fields in bidirectional cyclic negative feedback systems are structurally stable.
		
	\end{abstract}

	\section{Introduction}
	Oscillation, a fundamental dynamic behavior spanning multiple disciplines, typically originates from cyclic negative feedback architectures. Systems exhibiting such cyclic interactions ranging from biological oscillators, like circadian clocks and neuronal networks, to biochemical processes, including cellular regulatory circuits, enzymatic reactions, and transcriptional networks (see \cite{MBE,JEF,SHJ,TYS}). 
	
	In biological systems, cyclic negative feedback mechanisms are illustrated by several key models including: the Goodwin oscillator, a classical mathematical model of circadian rhythms (see \cite{Good}); the Repressilator, a transcriptional negative feedback loop in Escherichia coli (see \cite{MBE,MuHo}); the Metabolator, a synthetic metabolic oscillator for biochemical systems (see \cite{Fung}); and the Frzilator, a computational model explaining gliding motility control in myxobacteria (see \cite{Good}). 
	
	Beyond biology, cyclic negative feedback systems are extensively utilized in cybernetics, particularly in Boolean networks and antithetic controllers (see \cite{Sontag1, BGK}). These systems operate on feedback control principles, continuously comparing their output to a reference input to generate a deviation signal. This signal then adjusts the system's input to minimize the discrepancy between the output and the reference. By doing so, the mechanism effectively mitigates internal and external disturbances, ensuring stable and reliable system performance.
	
	Consequently, cyclic negative feedback loops are considered the foundational principle that allows systems to exhibit oscillatory behavior in dynamic environments (see related models in \cite{AS,MBE,Igo, KGM}). As feedback systems typically involve either forward or backward mechanisms, the previously discussed systems can be embedded into the following  generalized framework:
	\begin{equation}\label{Three independent variables}
		\dot{x}_i=f_i\left(x_{i-1},x_i,x_{i+1}\right),\quad i=1,2,\dots,n,
	\end{equation}
	with  
	
	\begin{equation}\label{generalized-f}
		\begin{split}
			& \delta_{i}\frac{\partial f_i}{\partial x_{i+1} }   \ge 0,\, \delta _{i-1}\frac{\partial f_i}{\partial x_{i-1} }   >0,\quad 
			i=1,\dots,n; \\
			\text{or }
			& \delta _i\frac{\partial f_i}{\partial x_{i+1} }  >0,\, \delta_{i-1}\frac{\partial f_i}{\partial x_{i-1} }  \ge 0,\quad 
			i=1,\dots,n;
		\end{split}
	\end{equation}
	where $\delta_i\in \left\{-1,1\right\}$, $\delta_0=\delta_n$, $x=(x_1,\cdots, x_n)$, $x_0=x_n$, $x_{n+1}=x_1$, $f=\left(f_1,\dots,f_n\right)$ is a $C^
	1$-function defined on $\Omega\subset\mathbb{R}^n$ and $\Omega$ is a non-empty, open and convex set.  Let $\Delta=\prod_{i=1}^{n} \delta _{i} $, then a bidirectional 
	negative feedback system  is that $\Delta=-1$ (if $\Delta=1$, it is called a bidirectional 
	positive feedback system).

	Taking cybernetics as an example, structural stability is critical as it ensures a sort of system's resilience against disturbances and maintains robustness. Such stability guarantees reliable operation and performance, enabling systems to function effectively in dynamic environments despite internal and external perturbations. Structural stability underpins a system's ability to maintain functionality and integrity amid unforeseen challenges and fluctuations, serving as a cornerstone in both engineering control systems and biological regulatory networks. Given the preceding considerations, this paper investigates the structural stability of \eqref{Three independent variables}+\eqref{generalized-f}, when $\Delta=-1$ (see Theorems \ref{Automatic Transversality}, \ref{critical elements is generic}, \ref{Generic non-existence of homoindexed connecting orbits}, \ref{GenericityofMorse–Smaleproperty}).
	
	From a mathematical perspective, structural stability is a core concept in dynamical systems, especially for flows or semiflows generated by differential equations in physics and biology. Andronov and Pontryagin \cite{AL} formally introduced it, building on Poincar\'{e}'s work on the celestial mechanics three-body problem. A system is structurally stable if its trajectory behavior remains topologically conjugate under small $C^1$-perturbations.

	In 1960, Smale formalized the concept of Morse-Smale systems (see \cite{SS}), which are characterized by a non-wandering set comprising finitely many hyperbolic equilibria and hyperbolic periodic orbits, with the stable and unstable manifolds of all hyperbolic critical elements intersecting transversally. In subsequent work, Palis and Smale \cite{JP,JPS} established that Morse-Smale vector fields on compact finite-dimensional manifolds are structurally stable. For $2$-dimensional compact manifolds, the Morse-Smale property holds generically for $C^1$-vector fields. Under dissipative conditions, this genericity extends to vector fields on $\mathbb{R}^2$. Peixoto \cite{PM} further showed that on orientable closed surfaces, structural stability of a vector field is equivalent to being Morse-Smale. However, Morse-Smale vector fields fail to be dense in the space of smooth vector fields on general 3-dimensional manifolds. Indeed, the presence of transverse homoclinic orbits associated with hyperbolic critical elements can induce chaotic dynamics (see \cite{Smale-65,SSS}). Such systems may undergo bifurcations under small perturbations, sudden qualitative changes in dynamics, such as shifts in the number or stability of periodic orbits, triggered by parameter variations.

	Based on the aforementioned considerations, Smale\cite{SS-63} and Kupka\cite{KI} introduced a weaker concept known as Kupka-Smale vector fields for high-dimensional systems. Specifically, in such vector fields, the stable and unstable manifolds of all connecting hyperbolic critical elements (e.g., hyperbolic equilibria or hyperbolic periodic orbits) intersect transversely. Furthermore, Kupka-Smale vector fields are dense in any high-dimensional systems (see \cite{PM-66}).
	
	As for infinite-dimensional systems generated by partial differential equations, results related to structural stability and local stability are more recent. One can also define the Morse-Smale property for infinite-dimensional systems admitting finite-dimensional global attractors. It has been proved that Morse-Smale systems generated by dissipative parabolic equations, or more generally by dissipative equations with finite-time smoothing properties (such as certain reaction-diffusion systems), are structurally stable under appropriate compactness conditions (see \cite{Oliva2000,84Hale}).

	Although there are some high-dimensional or infinite-dimensional systems that belong to Morse-Smale systems \cite{Fusco1987,YDZ,MP,85Hen,MX,BP,BG,RGR}), in view of the complexity of dynamics in high-dimensional systems, checking the Morse-Smale property or structural stability for a given system generated by ODE/PDE remains a difficult and fascinating topic. Here, due to the recently obtained Poincar\'{e}-Bendixson theorem by Margaliot and Sontag \cite{MS} and Weiss and Margaliot \cite{EM}, we intend to consider the Morse-Smale property for  \eqref{Three independent variables}+\eqref{generalized-f}, when $\Delta=-1$.
	
	\subsection{Background for cyclic feedback system}
	If $\Delta= +1$(positive feedback), then \eqref{Three independent variables}+\eqref{generalized-f} forms a strongly monotone dynamical system (see Smith \cite{Smi95}). Therefore, if \eqref{Three independent variables}+\eqref{generalized-f} is an autonomous system (i.e., $f$ is independent of $t$), the $\omega$-limit set of ``most'' bounded trajectories converges to an equilibrium point (see Smith and Thieme \cite{ST91}). In fact, by the Poincar\'{e}-Bendixson theorem, if the $\omega$-limit set of any bounded solution of \eqref{Three independent variables}+\eqref{generalized-f} contains no equilibria, it must be a periodic orbit (see \cite{MP}); moreover, a generic Morse-Smale property was established for such systems (see \cite{FOT,MP}). If \eqref{Three independent variables}+\eqref{generalized-f} is a time-$T$ periodic system (i.e., $f(t+T,\cdot)\equiv f(t,\cdot)$), then ``most'' bounded orbits will converge to a linearly stable $kT$-periodic orbit, where $k$  is a positive integer (see \cite{Pol92}).

	However, if  $\Delta= -1$(negative feedback), \eqref{Three independent variables}+\eqref{generalized-f} is no 
	longer a monotone dynamical system. If $f$ is independent of $t$, Mallet-Paret and Smith in \cite{JMP}, obtained the Poincar\'{e}-Bendixson theorem for \eqref{Three independent variables} in a special case. 
	Later, Gedeon \cite{Gedeon} studied the Morse decomposition of global attractors, revealing complex dynamics within certain Morse sets. If system \eqref{Three independent variables}+\eqref{generalized-f} in the form of \cite{JMP} admits a unique equilibrium under suitable assumptions, it is globally asymptotically stable (see \cite{LPD}). Elkhader extended the results in \cite{JMP} to a more general class of systems by assuming  $f \in C^{n-1}(\Omega)$  (see \cite{elkhader1992_sub}). Specifically, \eqref{Three independent variables}+\eqref{generalized-f} forms a strongly $2$-cooperative system; hence, the Poincar\'{e}-Bendixson theorem applies to it by projecting the $\omega$-limit set of bounded orbits into a suitable two-dimensional space (see \cite{MS,EM}). Recently, Gao and Zhou constructed an integer-Lyapunov function  $N(\cdot)$  for \eqref{Three independent variables}+\eqref{generalized-f} (see Section 2 for its definition) and proved that  $N(\cdot)$  is constant on any $\omega$-limit set generated by the Poincar\'{e} mapping of bounded solutions (see \cite{MD}). Consequently, such $\omega$-limit sets can be continuously embedded into a compact subset of the two-dimensional plane. Regarding the structural stability of \eqref{Three independent variables}+\eqref{generalized-f}, results remain incomplete. Transversality of stable and unstable manifolds for connecting hyperbolic critical elements (equilibria or periodic orbits) has only been established for system \eqref{Three independent variables}+\eqref{generalized-f} in the form of \cite{JMP} under special conditions (see \cite{YD}).

	Note that if $\Delta= -1$, the condition \eqref{generalized-f}  can be changed into the form 
	
	\begin{equation}\label{transformation}
		\begin{split} 
			& \frac{\partial f_n}{\partial x_{1} } \leq 0,\, \frac{\partial f_1}{\partial x_{n} }< 0,\, \frac{\partial f_i}{\partial x_{i+1} }\ge 0,\, \frac{\partial f_{i+1}}{\partial x_{i} } >0, \quad
			i=1,\dots,n-1;\\
			\text{or }
			& \frac{\partial f_n}{\partial x_{1} }< 0,\, \frac{\partial f_1}{\partial x_{n} }\leq 0, \, \frac{\partial f_i}{\partial x_{i+1} } > 0,\, \frac{\partial f_{i+1}}{\partial x_{i} } \ge 0, \quad
			i=1,\dots,n-1;
		\end{split}
	\end{equation}
	via a suitable transformation (see \cite[Section 5]{MD}). For simplicity, we only investigate the generic Morse-Smale property for \eqref{Three independent variables}+\eqref{transformation} in this paper.

	\subsection{Main results}
	To present our main results, we introduce some notations and definitions for preparation. Let $\mathcal{M} $ be the set of $n\times n$ matrices $ A=\left ( a_{ij} \right )$ which satisfy the following conditions:
	\begin{equation}\label{special matrix}
		\left[\begin{array}{cccccc}
			a_1 & b_1 & 0 & \cdots & 0 & c_n \\
			c_1 & a_2 & b_2 & \ddots & \ddots & 0 \\
			0 & c_2 & \ddots & \ddots & \ddots & \vdots \\
			\vdots & \ddots & \ddots & \ddots & b_{n-2} & 0 \\
			0 & \ddots & \ddots & c_{n-2} & a_{n-1} & b_{n-1} \\
			b_n & 0 & \cdots & 0 & c_{n-1} & a_n
		\end{array}\right]
	\end{equation}
	for some $a_i, b_i, c_i$ satisfying
	\begin{equation}\label{bid-feedback}
		b_ic_i \ge 0, \quad i=1, \ldots, n ; \quad \prod_{i=1}^n b_i+\prod_{i=1}^n c_i\ne 0.
	\end{equation}
	Here, we focus on the following subset of $\mathcal{M}$:
	\begin{align*}
		\mathcal{M}^-=\{A\in\mathcal{M}\mid b_i>0,i=1,\cdots,n-1, b_n<0\}\cup\{A\in\mathcal{M}| c_i>0,i=1,\cdots,n-1, c_n<0\}.
	\end{align*}
	Define
	$$
	\begin{gathered}
		\mathcal{C}^1_{BF}=\left\{f\in C^1\left(\Omega, \mathbb{R}^n\right)\ \mid f=\left(f_1,\dots,f_n\right) \text{ satisfies }f_i\left(x_{i-1},x_i,x_{i+1}\right),\, i=1,2,\dots,n \right\},
	\end{gathered}
	$$
	and denote the set of vector fields of bidirectional negative feedback systems by 
	$$
	\mathcal{L}^-:=\left\{f\in \mathcal{C}^1_{BF}\mid \forall x\in \Omega,\,D{f} \left ( x \right ) \in \mathcal{M}^- \right\}.
	$$
	Then \eqref{Three independent variables}+\eqref{transformation} equivalents to the following
	\begin{equation}\label{negative feedback system}
		\dot{x}=f\left(x\right),\,D{f} \left ( x \right ) \in \mathcal{M}^-.
	\end{equation}

	\begin{defn}
		\rm{\textbf{(Hyperbolic critical elements)}
			Let $e$ be an equilibrium point of \eqref{negative feedback system}. $e$ is called  hyperbolic if all eigenvalues of  $Df \left ( e \right ) $ have non-zero real parts. Let $p$ be a periodic orbit of \eqref{negative feedback system} with period $\omega$, $S_{Df(p)}(s,t)$ be the solution operator of the linear variation equation of \eqref{negative feedback system} along $p(t)$, if 1 is a simple eigenvalue of $S_{Df \left(p\right)}\left(0,\omega\right)$ and 1 is the unique eigenvalue of $S_{Df\left ( 
				p  \right ) } \left ( 0,\omega  \right ) $ with a modulus of 1, then $p$ is called hyperbolic periodic orbit. Both hyperbolic equilibrium point and hyperbolic periodic orbit are referred to as hyperbolic critical elements.
		}
	\end{defn}
	Let $\gamma$ be a hyperbolic critical element of \eqref{negative feedback system}, then the invariant manifolds of hyperbolic critical elements, namely their global stable (resp. global unstable) manifolds 
	$W^s\left(\gamma\right)$ (resp. $W^u\left(\gamma\right)$) are defined as follows:
	\begin{align*}
		W^s\left(\gamma\right)= \big \{ x\in \Omega \mid \lim_{t \to +\infty}d\left ( \varphi \left ( t,x \right ) ,\gamma  \right )=0    \big \}, \\
		W^u\left(\gamma\right)=\big \{ x\in \Omega \mid \lim_{t \to -\infty}d\left ( \varphi \left ( t,x \right ) ,\gamma  \right )=0   \big \}, 
	\end{align*}
	where $d\left ( \varphi \left ( t,x \right ) ,\gamma  \right ):=\inf_{y\in\gamma }  \left \| \varphi \left ( t,x \right ) -y  \right \| $ and $\varphi 
	\left ( t,x \right )$ is a orbit of equation \eqref{negative feedback system} starting from $x$. It is known that 
	$W^s\left(\gamma\right)$ 
	and $W^u\left(\gamma\right)$ are $C^1$-manifolds (see \cite[Chapter 1]{CC}). 
	
	Let $i\left(\gamma\right)$ be the Morse index of $\gamma$ (see e.g. \cite[Chapter 6]{Hirsch} or \cite[Chapter 1]{CCCCI} for the definition of Morse index). If $\gamma$ is an equilibrium point, then  $i\left(\gamma\right)$ equals to the number of eigenvalues of $e^{Df\left(\gamma\right)}$ whose modulus is greater than 1; meanwhile, $\operatorname{dim}W^s\left(\gamma\right)=n-i\left(\gamma\right)$ and $\operatorname{dim}W^u\left(\gamma\right)=i\left(\gamma\right)$. If $\gamma$ is a periodic orbit with a period of 
	$\omega$, 
	then the Morse index is equal to the number of eigenvalues of the linear operator $S_{Df\left(\gamma\right)}\left(0,\omega\right)$ whose modulus is greater 
	than 1, and $\operatorname{dim}W^s\left(\gamma\right)=n-i\left(\gamma\right)$ and $\operatorname{dim}W^u\left(\gamma\right)=i\left(\gamma\right)+1$.  In fact, globally stable (resp. global unstable) manifolds can be generated from 
	locally stable (resp. locally unstable) manifolds $W^s_{loc}\left(\gamma\right)$ (resp. $W^u_{loc}\left(\gamma\right)$), that is 
	$W^s\left(\gamma\right)=\bigcup_{t\le 0}\varphi\left(t,W^s_{loc}\left(\gamma\right)\right)$ and $W^u\left(\gamma\right)=\bigcup_{t\ge 
		0}\varphi\left(t,W^u_{loc}\left(\gamma\right)\right)$, where $W^s_{loc}\left(\gamma\right)$ and $W^u_{loc}\left(\gamma\right)$ are defined as follows: 
	there 
	exists a small neighborhood $U_\gamma$ such that 
	\begin{align*}
		W^s_{loc}\left(\gamma\right)= \big \{ x\in \Omega \mid \varphi\left(t,x\right) \in U_\gamma,\, \forall t\ge 0   \big \}, \\
		W^u_{loc}\left(\gamma\right)=\big \{ x\in \Omega \mid \varphi\left(t,x\right) \in U_\gamma,\, \forall t\le 0   \big \}. 
	\end{align*}
	Hence, $\operatorname{dim}W^s_{loc}\left(\gamma\right)=\operatorname{dim}W^s\left(\gamma\right)$ and 
	$\operatorname{dim}W^u_{loc}\left(\gamma\right)=\operatorname{dim}W^u\left(\gamma\right)$. For more information about invariant manifolds theories, please refer to 
	\cite{JKHG,RCR},  \cite[Theorem 6.1.9]{H1}, \cite[Appendix C]{MX}, \cite[Theorem 4.1]{CC}, and \cite[Theorem 14.2 and Remark 14.3]{DR}.
	
	\begin{defn}\label{Transitivity}
		\rm{\textbf{(Tranversality)} Let $a$ be a point in $\mathbb{R}^n$, then $C^1$-manifolds $M$ and $N$ are said to be transversal at $a$ if $a\notin M\cap N$; 
			or if $a\in M\cap N$, then $T_aM+T_aN=\mathbb{R}^n$, where $T_aM$ and $T_aN$ denote the tangent spaces of $M$ and $N$, respectively, at the point $a$. 
			$M$ and 
			$N$ are said to be transversal if they are transversal (written as $ M \pitchfork N $ ) at every point $a\in\mathbb{R}^n$.}
	\end{defn}
	
	The first main result in this article is the following
	
	\begin{thm}\label{Automatic Transversality}
		\textbf{(Automatic Transversality)}
		Let $\gamma ^-$, $\gamma ^+$ be hyperbolic critical elements of \eqref{negative feedback system}. 
		Then 
		$$
		W^u\left(\gamma^{-}\right) \pitchfork W^s\left(\gamma^{+}\right) ,
		$$
		if one of the following conditions is true:
		\begin{enumerate}
			\item[{\rm(i)}] $\gamma ^-$ or $\gamma ^+$ is a periodic orbit;
			
			\item[{\rm(ii)}] $\gamma ^-$ and $\gamma ^+$ are equilibrium points, moreover $i\left(\gamma^{-}\right) \ge 2h \ge 
			i\left(\gamma^{+}\right)$, for some  integer $h\in \left \{ 1,\dots ,\frac{\tilde{n}+1 }{2}  \right \}$. Here, if $n$ is odd, $n=\tilde{n}$; if 
			$n$ is even, 
			$n=\tilde{n}+1$.
		\end{enumerate}
	\end{thm}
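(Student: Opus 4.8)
\emph{Plan.} I would run the classical ``automatic transversality via a discrete Lyapunov functional'' argument, powered by the generalized Floquet theory and the nested invariant cones established earlier in the paper. If $W^u(\gamma^-)\cap W^s(\gamma^+)=\emptyset$ there is nothing to prove, so fix $z$ in this intersection, put $u(t):=\varphi(t,z)$ (hence $u(t)\to\gamma^\pm$ as $t\to\pm\infty$) and $A(t):=Df(u(t))\in\mathcal{M}^-$. Hyperbolicity of $\gamma^\pm$ yields exponential dichotomies of the variational equation $\dot v=A(t)v$ on $(-\infty,0]$ and on $[0,+\infty)$, and the standard dichotomy description of tangent spaces of invariant manifolds (cf.\ the invariant-manifold references cited above) identifies $T_zW^u(\gamma^-)$, resp.\ $T_zW^s(\gamma^+)$, with the set of initial values of variational solutions bounded on $(-\infty,0]$, resp.\ on $[0,+\infty)$. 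Their intersection is therefore $\mathcal{B}_z:=\{v(0):\dot v=A(t)v,\ \sup_{t\in\mathbb{R}}\|v(t)\|<\infty\}$. Setting $\iota:=\dim W^u(\gamma^-)+\dim W^s(\gamma^+)-n$, the Grassmann identity gives $\dim\mathcal{B}_z\ge\iota$, with equality precisely when $W^u(\gamma^-)\pitchfork W^s(\gamma^+)$ at $z$; so the whole theorem reduces to the inequality $\dim\mathcal{B}_z\le\iota$.

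For that inequality I would use the integer Lyapunov functional $N(\cdot)$ recalled above (following \cite{MD,MS,EM}): along any orbit, $t\mapsto N(v(t))$ is non-increasing on nonzero variational solutions, takes only odd values in $\{1,\dots,\tilde n\}$, drops strictly each time $v(t)$ meets the associated degenerate set, and is constant for $t$ near $\pm\infty$ along the connecting orbit. Hence the cones $K_j:=\{0\}\cup\{v:N(v)\le 2j-1\}$ are positively invariant and nested, $K_1\subset K_2\subset\cdots$, and the generalized Floquet theory assigns to each (generalized) Floquet block of $Df(\gamma^\pm)$ --- or of the monodromy operator, when the critical element is periodic --- a consecutive odd $N$-level together with its dimension, equal to two except possibly for the extreme or ``split'' blocks, which may be one-dimensional. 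From this one reads off $\overline N^u(\gamma^-):=\max\{N(v):v\neq0\text{ in the unstable bundle over }\gamma^-\}$ and $\underline N^s(\gamma^+):=\min\{N(v):v\neq0\text{ in the stable bundle over }\gamma^+\}$, together with the way the $N$-levels and dimensions of the intervening Floquet blocks encode $i(\gamma^-)$ and $i(\gamma^+)$; the key point to be extracted from the Floquet theory is that a two-dimensional Floquet block gets ``split'' between the unstable and the stable bundle exactly for an equilibrium of odd Morse index, while for a periodic orbit the flow direction contributes its own center block at a fixed $N$-level.

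The heart of the argument is then the dimension count. For $v\in\mathcal{B}_z\setminus\{0\}$ write $m_\pm(v):=\lim_{t\to\pm\infty}N(v(t))$; these limits exist, are odd, and satisfy $m_-(v)\ge m_+(v)$. Since $v$ is bounded on $\mathbb{R}$, it asymptotes into the unstable bundle over $\gamma^-$ as $t\to-\infty$ and into the stable bundle over $\gamma^+$ as $t\to+\infty$, so comparing $N$-levels through the cones $K_j$ gives $\underline N^s(\gamma^+)\le m_+(v)\le m_-(v)\le\overline N^u(\gamma^-)$. Filtering $\mathcal{B}_z$ by the cones $K_j$ at $-\infty$ and by their complements at $+\infty$, and using that a bounded solution is determined by its asymptotic profile at either end (so that these filtrations inject $\mathcal{B}_z$ into graded pieces of the unstable/stable bundles), I would bound $\dim\mathcal{B}_z$ by the sum of the dimensions of those Floquet blocks of $\gamma^-$ and $\gamma^+$ whose $N$-levels lie in $[\,\underline N^s(\gamma^+),\overline N^u(\gamma^-)\,]$, with split or extreme blocks counted once; inserting the block data of the previous paragraph makes this sum equal $\iota$ --- unless the window $[\,\underline N^s(\gamma^+),\overline N^u(\gamma^-)\,]$ collapses to a single odd level that is a split block on both ends, which happens only for two equilibria with a common odd Morse index. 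In case (ii), the hypothesis $i(\gamma^-)\ge 2h\ge i(\gamma^+)$ places an even integer inside that window (equivalently, rules out the exceptional configuration), so $\dim\mathcal{B}_z\le\iota$. In case (i), when $\gamma^-$ or $\gamma^+$ is periodic the flow direction $f(u(t))$ lies in $\mathcal{B}_z$ and occupies the center block, whose $N$-level the Floquet theory places so that the cut is never a split block; again $\dim\mathcal{B}_z\le\iota$. In both cases $\dim\mathcal{B}_z=\iota$, which is the asserted transversality.

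\emph{Main obstacle.} The substantive difficulty is precisely this last step: converting the monotonicity of $N$ and the nested invariant cones into the sharp equality $\dim\mathcal{B}_z=\iota$. It requires the full generalized Floquet theory at hyperbolic critical elements --- the $N$-levels and dimensions of all Floquet blocks, the fact that a split two-dimensional block arises only for equilibria of odd Morse index, and the exact $N$-level of the center block of a periodic orbit --- as well as care with the asymptotics of bounded variational solutions at a hyperbolic critical element (especially the injectivity of the filtration maps). This is also where the parity phenomenon is forced: a pair of equilibria sharing an odd Morse index is the unique configuration for which no even integer can split the admissible window of $N$-levels, and it is exactly the case left open by hypothesis (ii).
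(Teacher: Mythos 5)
Your plan is a genuinely different route from the paper's. The paper proves the \emph{lower} bound $T_qW^u(\gamma^-)+T_qW^s(\gamma^+)=\mathbb{R}^n$ directly: using Theorem~\ref{Floquet Theory} it builds a subspace $\tilde\Sigma_0^-\subset T_qW^u(\gamma^-)$ lying inside $\mathcal{K}_h$ and a subspace $\tilde\Sigma_0^+\subset T_qW^s(\gamma^+)$ lying inside $\mathcal{K}^h$, with complementary dimensions (Propositions~\ref{Hyperbolic critical elements prop} and~\ref{equilibrium point prop}); since $\mathcal{K}_h\cap\mathcal{K}^h=\{0\}$ the sum $\tilde\Sigma_0^-\oplus\tilde\Sigma_0^+$ is direct and spans $\mathbb{R}^n$. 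You instead go through the Grassmann identity and try to establish the equivalent \emph{upper} bound $\dim\mathcal{B}_z\le\iota$ on the space of variational solutions bounded on all of $\mathbb{R}$, Henry-style. The paper's route is shorter and avoids any dimension count on $\mathcal{B}_z$; yours would, if completed, also deliver the ``only obstruction is a common odd Morse index'' statement as a by-product, which is conceptually appealing. Both rely on the same Floquet decomposition and cone machinery.

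That said, the dimension count you leave to the reader is exactly where the argument is not sound as stated. Two concrete problems. First, ``filtering $\mathcal{B}_z$ by the cones $K_j$'' does not produce a filtration by linear subspaces: the sets $\mathcal{K}_j$ are cones, not subspaces, and the asymptotic $N$-level $m_\pm(v)$ of a sum $v_1+v_2$ is not controlled by $\max\{m_\pm(v_1),m_\pm(v_2)\}$. To get a genuine filtration one must pass to the spectral (Floquet-block) subspaces of Theorem~\ref{Floquet Theory} via the exponential-dichotomy projections of Theorem~\ref{S exponential dichotomies} and the asymptotic ordering $\mu_1\ge\nu_1>\mu_2\ge\cdots$ of decay rates; this is doable but is precisely the machinery your sketch elides. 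Second, your announced upper bound --- ``the sum of the dimensions of those Floquet blocks of $\gamma^-$ \emph{and} $\gamma^+$ whose $N$-levels lie in $[\underline N^s(\gamma^+),\overline N^u(\gamma^-)]$'' --- is too large: already for two equilibria with even indices $p=i(\gamma^-)>q=i(\gamma^+)$, the blocks at $\gamma^-$ in that window have total dimension $p-q$ and the blocks at $\gamma^+$ in that window have total dimension $p-q$, so the sum is $2(p-q)=2\iota$, not $\iota$. One should filter by the leading Floquet block at \emph{one} end only (say $-\infty$), obtaining $\dim\mathcal{B}_z\le\dim$(unstable bundle of $\gamma^-$ with $N$-level in the window) $=p-q=\iota$, and then separately use the constraints at $+\infty$ to rule out equality failing. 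As written, the count does not close, and the ``split/extreme blocks counted once'' caveat is not enough to repair it. You correctly flag this as the crux; with the spectral filtration at a single end and the right bookkeeping the approach can be completed, but the paper's direct spanning-subspace construction is both shorter and avoids the issue.
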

	
	The second main result is  generic Kupka-Smale property of \eqref{negative feedback system}, that is, Theorems 
	\ref{critical elements is generic} and \ref{Generic non-existence of homoindexed connecting orbits}. A property of a vector field is said to be {\it generic} if the set of vector fields possessing that property contains a residual 
	subset in the proper topology.
	
	\begin{thm}\label{critical elements is generic}
		\textbf{(Generic hyperbolicity of critical elements)}
		There exists a generic subset $\mathcal{O}\subset \mathcal{L}^-$ such that for any $f\in \mathcal{O}$, all the critical elements of \eqref{negative feedback system} 
		are hyperbolic.
	\end{thm}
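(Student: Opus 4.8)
The plan is to produce the residual set as $\mathcal{O}=\mathcal{O}_{\mathrm{eq}}\cap\mathcal{O}_{\mathrm{per}}$, where $\mathcal{O}_{\mathrm{eq}}$ (resp.\ $\mathcal{O}_{\mathrm{per}}$) consists of those $f\in\mathcal{L}^-$ all of whose equilibria (resp.\ periodic orbits) are hyperbolic, and to obtain each as a countable intersection of open dense subsets via the Sard--Smale theorem in its parametric transversality form. What makes the transversality machinery run inside the constrained class $\mathcal{L}^-$ is the supply of \emph{admissible} perturbations: for $c\in\mathbb{R}^n$ the map $x\mapsto f(x)+c$ again lies in $\mathcal{L}^-$ (adding a constant changes neither the structural dependence of $f_i$ on $(x_{i-1},x_i,x_{i+1})$ nor $Df$); and for a band matrix $B$ of the shape \eqref{special matrix} and a cut-off $\chi$ in the variables $(x_{i-1},x_i,x_{i+1})$ equal to $1$ near $x_0$, the map $x\mapsto f(x)+\chi(x)B(x-x_0)$ lies in $\mathcal{L}^-$ once $\|B\|$ is small. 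Restricting moreover, from the outset, to the open dense set of $f$ whose band matrix $Df(x)$ has all off-diagonal and corner entries strictly signed for every $x$ (so that $Df(\Omega)$ lies in the interior of $\mathcal M^-$ and the admissible perturbations of $Df(x_0)$ become two-sided), the evaluation $f\mapsto j^1 f(x_0):=\big(f(x_0),Df(x_0)\big)$ is, for each $x_0$, a submersion onto $\mathbb{R}^n\times\mathcal B$, where $\mathcal B$ is the finite-dimensional space of band matrices of the form \eqref{special matrix}.

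\textit{Equilibria.} I would use the $1$-jet evaluation: an equilibrium $e$ is non-hyperbolic precisely when $j^1 f(e)\in\Sigma^1:=\{0\}\times\Sigma$, where $\Sigma=\{A\in\mathcal B:A\text{ has an eigenvalue on }i\mathbb{R}\}$. The set $\Sigma$ is closed, semialgebraic and proper in $\mathcal B$ (a generic band matrix of the form \eqref{special matrix} is hyperbolic), hence a finite union of submanifolds of codimension $\ge1$; thus $\Sigma^1$ lies in a finite union of submanifolds of $\mathbb{R}^n\times\mathcal B$ of codimension $\ge n+1$. Since $f\mapsto j^1 f(x_0)$ is a submersion, the jet-evaluation $(f,x)\mapsto j^1 f(x)$ is transverse to each stratum, so the parametric Sard--Smale theorem, applied over each member of a countable compact exhaustion $\{K_m\}$ of $\Omega$, yields open dense sets $U_m\subset\mathcal{L}^-$ on which $j^1 f|_{K_m}\pitchfork\Sigma^1$. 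As $\dim\Omega=n<n+1\le\operatorname{codim}\Sigma^1$, transversality here forces $j^1 f(K_m)\cap\Sigma^1=\emptyset$, so $\mathcal{O}_{\mathrm{eq}}:=\bigcap_m U_m$ is residual and contains no vector field with a non-hyperbolic equilibrium. (Since $\{\det A=0\}\subset\Sigma$, equilibria of such $f$ are automatically nondegenerate, hence isolated.)

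\textit{Periodic orbits.} I would stratify by period and exhaustion: for $T,m\in\mathbb{N}$ let $\mathcal{O}^{T,m}_{\mathrm{per}}$ be the set of $f\in\mathcal{L}^-$ all of whose periodic orbits of least period $\le T$ lying in $K_m$ are hyperbolic, so that $\mathcal{O}_{\mathrm{per}}:=\bigcap_{T,m}\mathcal{O}^{T,m}_{\mathrm{per}}$, and it suffices to show each $\mathcal{O}^{T,m}_{\mathrm{per}}$ contains an open dense subset. Given $f_0$ and a periodic orbit $p\subset K_m$ of period $\le T$, I pick $q\in p$ and a local section $S$ transversal to the flow at $q$; for $f$ near $f_0$ the Poincar\'e return map $\Pi_f:S\to S$ is well defined and $C^1$, its fixed points near $q$ are the periodic orbits near $p$, and $p$ is hyperbolic iff $D\Pi_{f_0}(q)$ has no eigenvalue of modulus $1$, equivalently iff the monodromy $S_{Df\left(p\right)}\left(0,\omega\right)$ has $1$ as a simple eigenvalue and no further eigenvalue of modulus $1$. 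Applying the Sard--Smale theorem to $(f,y)\mapsto\Pi_f(y)-y$ on an open set of $\mathcal{L}^-\times S$, with admissible perturbations of $f$ supported in a flow box around $q$, first makes $0$ a regular value (so the periodic orbits near $p$ are isolated) and then, at each of them, pushes the remaining Floquet multipliers off $\{|z|=1\}$; together with a compactness bound on the period-$\le T$ orbits in $K_m$ (finitely many, once equilibria are hyperbolic too) and the persistence of hyperbolic periodic orbits, this gives openness and globalizes density. Then $\mathcal{O}:=\mathcal{O}_{\mathrm{eq}}\cap\mathcal{O}_{\mathrm{per}}$ is residual.

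\textit{The main obstacle.} The difficulty is that perturbations are constrained: $f$, and hence $Df$, may be varied only within the band pattern \eqref{special matrix} and the sign constraints \eqref{bid-feedback}. For equilibria this is harmless — after the interior restriction the $1$-jet map is an honest submersion onto $\mathbb{R}^n\times\mathcal B$ and $\Sigma$ retains positive codimension in $\mathcal B$, so the codimension count alone closes the argument. For periodic orbits it is the real obstruction: one must ensure that admissible, flow-box-localized perturbations produce first-order variations of $S_{Df\left(p\right)}\left(0,\omega\right)$ that are not tangent to $\{|z|=1\}$ at any offending multiplier — equivalently, that for an eigenvector $v$ and dual eigenvector $w$ of such a multiplier one has $w^{*}Bv\ne0$ for some admissible band perturbation $B$ of $Df$ along $p$. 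This is precisely where the generalized Floquet theory and the nested invariant cones of the earlier sections are indispensable: they pin down the spectral structure of $S_{Df\left(p\right)}\left(0,\omega\right)$ — which multipliers can lie on the unit circle, and the sign/zero pattern of their (co)eigenvectors — and thereby reduce the genericity statement to perturbing a controlled finite collection of multipliers with admissible vector fields. A final, routine point, that curing one critical element's non-hyperbolicity by a localized perturbation must not reintroduce it at another, is handled by the compact exhaustion of $\Omega$ and the countable intersections in the Baire space $\mathcal{L}^-$.
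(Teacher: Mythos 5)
Your broad decomposition into generic hyperbolicity of equilibria and of periodic orbits matches the paper's split into Propositions \ref{G-H-P} and \ref{G-H-O-M}, and both arguments rest on a parametric Sard--Smale theorem adapted to the Baire space $\mathcal{L}^-$.

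For \emph{equilibria} your route is genuinely different but sound. You view non-hyperbolicity as the failure of transversality of $x\mapsto(f(x),Df(x))$ to the codimension-$(n+1)$ stratified set $\{0\}\times\Sigma\subset\mathbb{R}^n\times\mathcal{B}$, and the codimension count forces the image to miss it. The paper instead uses the concrete finite-dimensional perturbation $\Psi(x,\lambda)=f(x)+\lambda$ together with Theorem \ref{SS-2} to obtain generic \emph{simplicity} (Lemma \ref{simple points generic}), then upgrades simple to hyperbolic by the affine shift $f(x)+\alpha(x-e_i)$, which slides the eigenvalues of $Df(e_i)$ by $\alpha$. The paper's version is more elementary: it needs no stratification of $\Sigma$ and no global interior condition $Df(\Omega)\subset\operatorname{Int}\mathcal{M}^-$ (which, note, is not an open condition in $\iota$, since $\iota$ is generated by restrictions to the compacta $K_m$). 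Both succeed.

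For \emph{periodic orbits} there is a genuine gap, and it sits exactly at the place you flag as ``the real obstruction''. You invoke ``admissible perturbations of $f$ supported in a flow box around $q$''. But \emph{no} perturbation in $\mathcal{C}^1_{BF}$ can be supported in a flow box: the $j$-th component $g_j$ of any admissible perturbation is required to depend only on $(x_{j-1},x_j,x_{j+1})$, so its support is a cylinder over a low-dimensional coordinate subspace, never a small ball in $\mathbb{R}^n$. Such a cylinder may meet the periodic orbit $\gamma$ in several disjoint time windows, which ruins the surjectivity computation for $D\Psi$. What rescues the argument in the paper is Corollary \ref{periodic solution of system}: for any non-constant periodic solution $p$ of \eqref{negative feedback system}, the map $t\mapsto(p_s(t),p_{s+1}(t))$ is \emph{injective} over one period. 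This is a consequence of the integer-valued Lyapunov function, and it is precisely what guarantees that the bump $\psi_j(x_j,x_{j+1})$ constructed in the proof of Lemma \ref{dense} hits $\gamma$ in a single time window. You gesture at ``the generalized Floquet theory and the nested invariant cones'' here, but this injectivity is the fact actually doing the work, and your proposal does not supply it.

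Relatedly, your final step of ``pushing the remaining Floquet multipliers off $\{|z|=1\}$'' is unnecessary. Corollary \ref{simple} shows that for \eqref{negative feedback system}, $1$ is always the \emph{only} possible eigenvalue of $S_{Df(\gamma)}(0,\omega)$ on the unit circle, so a simple periodic orbit is automatically hyperbolic. This is exactly what Theorem \ref{Floquet Theory} buys, and the paper exploits it via the identity $\mathcal{O}_m^h=\mathcal{O}_m^{\text{simple}}$: the Sard--Smale step only has to produce simplicity (i.e.\ make $0$ a regular value of your $\Pi_f-\mathrm{id}$), and hyperbolicity comes for free.
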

	
	Note that the result of Theorem \ref{Automatic Transversality} is actually incomplete because the transversality is not clear when $\gamma^+$ and $\gamma^-$ are both equilibrium points with $i\left(\gamma^+\right)=2h-1=i\left(\gamma^-\right)$. The 
	following theorem provides a comprehensive answer to this question. Specifically, it states that  $\mathcal{L}^-$ has the generic Kupka-Smale property. This property ensures that the stable and unstable manifolds of hyperbolic critical elements intersect transversely, which is a key condition for the structural stability of the system.
	
	\begin{thm}\label{Generic non-existence of homoindexed connecting orbits}
		\textbf{(Generic Kupka-Smale property)}
		There exists a generic subset $\tilde{\mathcal{O}}\subset\mathcal{O}$ such that for any $f\in \tilde{\mathcal{O}}$, there does not exist any solution 
		$u(t)$ of \eqref{negative feedback system} connecting two equilibrium points with the same Morse index, which means that for any $f\in \tilde{\mathcal{O}}$, \eqref{negative feedback system}  is 
		Kupka-Smale. In particular, homoclinic orbits are precluded.
	\end{thm}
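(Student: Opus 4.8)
The plan is to combine Theorem~\ref{critical elements is generic}, Theorem~\ref{Automatic Transversality} and a Sard--Smale argument. By Theorem~\ref{critical elements is generic} we may work inside the residual set $\mathcal{O}$, on which all critical elements of \eqref{negative feedback system} are hyperbolic. For $f\in\mathcal{O}$ and any pair of connecting hyperbolic critical elements $\gamma^\pm$, Theorem~\ref{Automatic Transversality} already yields $W^u(\gamma^-)\pitchfork W^s(\gamma^+)$ except when $\gamma^\pm$ are both equilibria $e^\pm$ not covered by case (ii); since an even integer lies between $i(e^+)$ and $i(e^-)$ whenever $i(e^-)>i(e^+)$, and since the intersection is trivially transversal when $W^u(\gamma^-)$ or $W^s(\gamma^+)$ is open, the only unresolved situation is a connection $e^-\to e^+$ with $i(e^-)\le i(e^+)$. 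For such a connecting solution $u$ one has, when $i(e^-)=i(e^+)$, $\dim W^u(e^-)+\dim W^s(e^+)=i(e^-)+\bigl(n-i(e^+)\bigr)=n$, while $0\ne\dot u(0)=f(u(0))\in T_{u(0)}W^u(e^-)\cap T_{u(0)}W^s(e^+)$, so such a connection can never be transversal. Hence it suffices to build a residual $\tilde{\mathcal{O}}\subset\mathcal{O}$ such that \eqref{negative feedback system} has no solution connecting two equilibria with $i(e^-)\le i(e^+)$: this contains the asserted equal-index and no-homoclinic statements and, with Theorem~\ref{Automatic Transversality}, gives the Kupka--Smale property. As a connecting orbit has compact closure, writing $\Omega=\bigcup_m K_m$ with $K_m$ compact it is enough to make, for each $m$, the set of $f\in\mathcal{O}$ carrying such a connection inside $K_m$ meager; and since equilibria persist and remain hyperbolic under $C^1$-small perturbations (implicit function theorem), this is a local statement about each $f_0\in\mathcal{O}$.

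\emph{The connection operator.} Near a connecting orbit $u$ of $f_0$ joining hyperbolic equilibria $e_0^-,e_0^+$ with $i(e_0^-)\le i(e_0^+)$, fix a small $\eta>0$ below the spectral gaps of $Df_0(e_0^\pm)$ and let $X_\eta$ be the Banach space of $C^1$ curves $v\colon\mathbb{R}\to\Omega$ converging to the continued equilibria $e^\pm(f)$ at exponential rate $\eta$ and normalised by $v(0)\in\Sigma$, $\Sigma$ a hyperplane transverse to $\dot u(0)$, with target $Y_\eta$ the space of curves decaying at rate $\eta$. Set $\mathcal{F}(f,v)=\dot v-f(v)$; the zeros of $\mathcal{F}$ near $(f_0,u)$ are exactly the time-normalised connections from $e^-(f)$ to $e^+(f)$. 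By the exponential-dichotomy and generalized Floquet theory developed earlier in the paper, $L:=D_v\mathcal{F}(f,u)=\tfrac{d}{dt}-Df(u(\cdot))$ is Fredholm, with index equal to $i(e^-)-i(e^+)$ along a connection between hyperbolic equilibria, hence $\le-1$ after the codimension-one normalisation $v(0)\in\Sigma$. In particular $\operatorname{range}L$ is closed of finite codimension, and $\operatorname{coker}L$ is represented by the bounded solutions $\psi$ of the adjoint equation $\dot\psi=-Df(u(t))^{\top}\psi$, each decaying exponentially as $t\to\pm\infty$.

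\emph{Surjectivity and conclusion.} The key step is that $0$ is a regular value of $\mathcal{F}$: at each zero $(f,u)$ the derivative $D\mathcal{F}(f,u)(g,w)=Lw-g\circ u$ is onto, which (as $\operatorname{range}L$ is closed of finite codimension, and $f_0+sg\in\mathcal{L}^-$ for small $s$ since $\mathcal{L}^-$ is open) means that for every nonzero cokernel representative $\psi$ there is an admissible perturbation $g\in\mathcal{C}^1_{BF}$ with $\int_{\mathbb{R}}\langle\psi(t),g(u(t))\rangle\,dt\ne0$. Here the $i$-th component of $g\circ u$ is $h_i(u_{i-1}(t),u_i(t),u_{i+1}(t))$ for an arbitrary $h_i\in C^1(\mathbb{R}^3)$, so it is enough to produce an index $i$ and a time interval on which $\psi_i\not\equiv0$ and the curve $t\mapsto(u_{i-1}(t),u_i(t),u_{i+1}(t))$ is a $C^1$-embedding; a bump $h_i$ supported near a point of that curve, with the sign of $\psi_i$ there, then forces the integral to be nonzero. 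Granting this, $\mathcal{F}^{-1}(0)$ is a $C^1$ Banach submanifold whose projection onto the $f$-variable is a Fredholm map of index $\le-1$; by the Sard--Smale theorem (applied, as usual, on the Banach space of perturbations supported in a fixed compact set) its regular values form a residual set, and over each regular value the fibre, being a manifold of negative dimension, is empty. Running this over a finite cover of the compact family of connections of the relevant type lying in $K_m$, intersecting over $m$ and over the exhaustion of $\Omega$, gives the residual set $\tilde{\mathcal{O}}$.

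\emph{The main obstacle.} The delicate point is the local-embedding claim in the surjectivity step. The admissible perturbations are severely constrained — the $i$-th component of $g$ may depend only on $x_{i-1},x_i,x_{i+1}$ and must preserve the cyclic tridiagonal sign pattern of $\mathcal{M}^-$ — so one must show that this meagre perturbation class still detects an arbitrary cokernel element $\psi$. Equivalently, one must rule out a pathological connecting orbit $u$ whose every coordinate-triple projection $t\mapsto(u_{i-1}(t),u_i(t),u_{i+1}(t))$ fails to be locally injective precisely where $\psi\ne0$; the control on the shape of $u$ near $e^\pm$ needed to exclude this comes from the generalized Floquet theory together with the spectral structure of matrices in $\mathcal{M}^-$, and the control on its oscillations from the integer-valued Lyapunov function $N(\cdot)$. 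Recasting this obstruction as a functional constraint on $f$ and verifying that it is generically violated — which is the meaning of ``formulating the transversality condition as a functional constraint'' — is the heart of the proof.
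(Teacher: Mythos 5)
Your overall strategy --- reduce to the unresolved equal-odd-index case, set up a connection operator, show $0$ is a regular value via Sard--Smale, and reduce surjectivity of the linearization to the functional condition $\int_{\mathbb R}\langle\psi(t),g(u(t))\rangle\,dt\neq0$ for cokernel elements $\psi$ --- is the paper's strategy. You use a continuous-time connection operator $\mathcal F(f,v)=\dot v-f(v)$ on weighted curve spaces, whereas the paper discretizes the flow (following \cite{CHM} and \cite{BG}) and applies Sard--Smale to $\Psi_{m,l,p}(\omega,f)(n)=\omega((n+1)\tau)-S_f(\tau)\omega(n\tau)$; both routes lead to the same integral condition (Lemma~\ref{regular point of Psi}), so this is a cosmetic rather than substantive difference.

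The genuine gap is exactly where you stop. You reduce to: find an index $i$ and a bump $h_i$ of two (or three) of the allowed coordinates whose integral against $\psi_i$ along $u$ is nonzero, and you propose supporting $h_i$ near a single point of the curve $t\mapsto(u_{i-1}(t),u_i(t),u_{i+1}(t))$ where it is ``locally a $C^1$-embedding.'' Local injectivity is not the right obstruction and does not suffice: the orbit $u$ may re-enter the support of $h_i$ at later and earlier times where $\psi_i$ has the opposite sign, and those returns could cancel the contribution. What is actually needed is \emph{global} injectivity of the coordinate projection $t\in\mathbb R\mapsto(u_s(t),u_{s+1}(t))$ along the connecting orbit, so that the preimage of a small ball is a single interval and the bump is seen only once. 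This is precisely what the paper supplies in Proposition~\ref{one to one for connect}, and it is not automatic: it holds \emph{because} $e^-$ and $e^+$ carry the same odd Morse index $2h-1$. The proof uses Corollary~\ref{subtract}, Lemma~\ref{pro of N of L-S}(v), and the Floquet decomposition of Theorem~\ref{Floquet Theory} to show that if $(u_s(t_1),u_{s+1}(t_1))=(u_s(t_2),u_{s+1}(t_2))$ with $t_1\neq t_2$, then $z(t)=u(t+t_1-t_2)-u(t)$ has an $N$-value that drops strictly at $t=t_2$, while the equal-index hypothesis forces $N(z(t))$ to equalize as $t\to\pm\infty$ --- a contradiction. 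Without this lemma (or an equivalent), the bump-function construction, and hence the surjectivity step and the entire genericity argument, does not close. Your ``main obstacle'' paragraph correctly signals that the Floquet theory and the integer-valued Lyapunov function must be invoked, but it neither isolates the correct statement (global, not local, injectivity of a two-coordinate projection for equal-odd-index connections) nor proves it; that is where the proof is incomplete.

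Two minor points. First, as recorded in Proposition~\ref{Hyperbolic critical elements prop}(iii)--(iv), for any connecting orbit between equilibria one automatically has $i(e^-)\ge i(e^+)$, so the unresolved case is precisely $i(e^-)=i(e^+)$ odd; your phrasing ``$i(e^-)\le i(e^+)$'' sweeps in an a priori impossible case, which is harmless but imprecise. Second, the paper constructs bump functions of only two coordinates $(x_j,x_{j+1})$, since that is exactly the projection controlled by Proposition~\ref{one to one for connect}; your three-coordinate formulation is admissible but the two-coordinate version is what the nodal-count machinery actually delivers.
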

	
	It is well known that Morse-Smale systems are structurally stable. Therefore, in the final part of this article, we aim to prove that $\mathcal{L}^-$ is a set 
	of Morse-Smale vector fields or at least or at least that it possesses the generic Morse-Smale property. This property ensures that the system's behavior is robust under small perturbations, which is a key feature of Morse-Smale systems.
	To achieve this goal, we introduce the following dissipative conditions:
	$$
	\mathcal{L}^-_d=\left\{f\in \mathcal{L}^-\mid \text{ given }f,\, \exists R>0,\, \forall x\in \Omega,\text{ with }\left | x \right | \ge R, \text{ such that 
	}\left \langle f\left ( x \right ),x  \right \rangle <0 \right\}.
	$$
	A point $x$ is called a {\it nonwandering point} of \eqref{negative feedback system}, if  for any neighborhood $U$ of $x$ and $T>0$, there exists $\left | t \right 
	| 
	>T$ such that $\varphi\left(t,U\right)\cap U\ne \emptyset$.
	The following theorem states that ``almost all'' $f\in \mathcal{L}^-_d$, the nonwandering points of  \eqref{negative feedback system} consist of hyperbolic critical elements; and hence,  \eqref{negative feedback system} is a Morse-Smale system.
	
	\begin{thm}\label{GenericityofMorse–Smaleproperty}
		\textbf{(Generic Morse-Smale property)} There exists a generic subset $\mathcal{O}_M\subset\mathcal{L}^-_d$ such that for any $f\in \mathcal{O}_M$, \eqref{negative feedback system} 
		is a Morse-Smale vector field, that is, 
		\begin{itemize}
			\item the number of critical elements is finite and they are hyperbolic;
			\item $W^s\left(\sigma_1\right)$ is transversal to $W^u\left(\sigma_2\right)$, where $\sigma_1$ and $\sigma_2$ are critical elements;
			\item the non-wandering set equals to the union of the critical elements.
		\end{itemize}
	\end{thm}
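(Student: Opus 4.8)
The plan is to take $\mathcal{O}_M:=\tilde{\mathcal{O}}\cap\mathcal{L}^-_d$, with $\tilde{\mathcal{O}}$ the residual set supplied by Theorem~\ref{Generic non-existence of homoindexed connecting orbits}, and to verify the three bulleted properties for each $f\in\mathcal{O}_M$. One first checks that $\mathcal{L}^-_d$, with the induced topology, is a Baire space (strict dissipativity being stable under small perturbations), so that $\mathcal{O}_M$ is residual in $\mathcal{L}^-_d$. Fix $f\in\mathcal{O}_M$. The dissipative condition produces a compact global attractor $\mathcal{A}$ absorbing every bounded set; since every nonwandering point has a bounded full orbit (a standard consequence of dissipativity), the nonwandering set $\Omega(f)\subset\mathcal{A}$, and every critical element lies in $\mathcal{A}$. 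As $\mathcal{O}_M\subset\mathcal{O}$, all critical elements are hyperbolic by Theorem~\ref{critical elements is generic}, hence isolated; finiteness follows since a sequence of hyperbolic equilibria in the compact set $\mathcal{A}$ would subconverge to an equilibrium that is again hyperbolic and therefore isolated, a contradiction, while for periodic orbits one uses that the integer Lyapunov function $N$ attains only finitely many values on $\mathcal{A}$ and that on each of its level sets the recurrent dynamics is planar (through the invariant cones and the Poincar\'{e}-Bendixson structure described below), so that only finitely many hyperbolic periodic orbits can occur. This establishes the first bullet.

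For the transversality bullet, let $\sigma_1,\sigma_2$ be critical elements; the claim $W^s(\sigma_1)\pitchfork W^u(\sigma_2)$ is vacuous unless there is an orbit from $\sigma_2$ to $\sigma_1$, which we assume. If $\sigma_1$ or $\sigma_2$ is a periodic orbit, Theorem~\ref{Automatic Transversality}(i) gives the conclusion. If both are equilibria, the structure of $N$ together with the Jacobi-type form of the matrices in $\mathcal{M}^-$ forces $i(\sigma_2)\ge i(\sigma_1)$ along any connection between distinct equilibria, while Theorem~\ref{Generic non-existence of homoindexed connecting orbits} rules out $i(\sigma_2)=i(\sigma_1)$; hence $i(\sigma_2)>i(\sigma_1)$. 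Then either $i(\sigma_1)=0$, in which case $W^s(\sigma_1)$ is open and transversality is automatic, or $i(\sigma_1)\ge1$, and taking $h$ with $2h=i(\sigma_1)$ when $i(\sigma_1)$ is even and $2h=i(\sigma_1)+1$ when $i(\sigma_1)$ is odd yields $i(\sigma_2)\ge 2h\ge i(\sigma_1)$ with $h\in\{1,\dots,\frac{\tilde{n}+1}{2}\}$, so Theorem~\ref{Automatic Transversality}(ii) applies. This is the second bullet.

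For the third bullet one must show $\Omega(f)=\bigcup_{l}M_{l}$, where $M_1,\dots,M_k$ are the critical elements. I would argue in two steps. First, every $\omega$- and $\alpha$-limit set of a point of $\mathcal{A}$ is a single hyperbolic critical element: by the $C^1$-persistent Poincar\'{e}-Bendixson property, such a limit set embeds continuously into a compact planar set on which the induced flow is planar, hence is a periodic orbit, a single equilibrium, or a heteroclinic polycycle of equilibria; homoclinic loops to an equilibrium are excluded by Theorem~\ref{Generic non-existence of homoindexed connecting orbits}, and polycycles by the second step. Second, there is no cyclic chain of connecting orbits among $M_1,\dots,M_k$: since $N$ is nonincreasing along orbits and constant on limit sets, all critical elements in a putative cycle would carry a common value of $N$, and combining the relation between $N$ and the Morse index with a planar obstruction (two disjoint invariant Jordan curves cannot be joined by connecting orbits running in both directions) shows that no such cycle can close up; homoclinic loops to a periodic orbit are likewise excluded by the planar obstruction. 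Granting these, $\{M_1,\dots,M_k\}$ is a Morse decomposition of $\mathcal{A}$ with chain-transitive Morse sets, so by Conley theory the chain-recurrent set of the flow on $\mathcal{A}$ equals $\bigcup_{l}M_{l}$; since $\Omega(f)$ lies in the chain-recurrent set and $\bigcup_{l}M_{l}\subset\Omega(f)$ trivially, $\Omega(f)=\bigcup_{l}M_{l}$. Together with the first two bullets this shows $f$ is a Morse-Smale vector field.

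I expect the main obstacle to be the third bullet: converting the essentially two-dimensional information carried by the Poincar\'{e}-Bendixson property, which a priori only controls individual limit sets, into a global statement that the nonwandering set of this $n$-dimensional flow admits no recurrence beyond the hyperbolic critical elements. Concretely, one must exclude polycyclic limit sets and homoclinic loops to periodic orbits and establish the no-cycle condition, taking care to distinguish which exclusions are unconditional (coming from $N$, its link to the Morse index, and the earlier transversality results) from those that genuinely require the residual set $\mathcal{O}_M$. A smaller but necessary point is checking that $\mathcal{L}^-_d$ is a Baire space, so that the genericity in Theorems~\ref{critical elements is generic} and~\ref{Generic non-existence of homoindexed connecting orbits} transfers to it.
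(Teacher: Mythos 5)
Your second bullet (transversality) is handled correctly: for two equilibria $\gamma^-,\gamma^+$ connected by an orbit, Proposition \ref{Hyperbolic critical elements prop}(iii)--(iv) and $h^-\ge h^+$ give $i(\gamma^-)\ge i(\gamma^+)$, Theorem \ref{Generic non-existence of homoindexed connecting orbits} excludes equality, and strict inequality allows an even $2h$ to be sandwiched so that Theorem \ref{Automatic Transversality}(ii) applies. This matches the paper's intended use of Theorems \ref{Automatic Transversality} and \ref{Generic non-existence of homoindexed connecting orbits}.

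The genuine gap is in your first paragraph. You set $\mathcal{O}_M:=\tilde{\mathcal{O}}\cap\mathcal{L}^-_d$ and assert it is residual in $\mathcal{L}^-_d$ ``since strict dissipativity is stable under small perturbations.'' In the topology $\iota$ used in the paper this is false: a basic neighbourhood $V_{f,m,\varepsilon}$ only controls $g-f$ on the compact set $K_m$, so a $g$ close to $f$ in $d(\cdot,\cdot)$ may violate $\langle g(x),x\rangle<0$ for large $|x|$. Hence $\mathcal{L}^-_d$ is not open in $\mathcal{L}^-$, and it is not automatic that a residual subset of $\mathcal{L}^-$ intersects $\mathcal{L}^-_d$ in a residual subset of $\mathcal{L}^-_d$ (one would at least have to establish that $\mathcal{L}^-_d$ is both a Baire space and dense in $\mathcal{L}^-$, neither of which you prove). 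The paper avoids this shortcut entirely: it shows $\mathcal{L}^-_d=\bigcup_i\mathcal{L}^-_{d,i}$ is a Baire space by exhibiting each $\mathcal{L}^-_{d,i}$ as a $G_\delta$ in the complete metric space $(\mathcal{A}^1,\bar\iota|_{\mathcal{A}^1})$, and then \emph{re-runs} the Sard--Smale arguments of Sections 6--7 with $\mathcal{L}^-_d$ in place of $\mathcal{L}^-$ (this is possible because the openness arguments use restrictions to compact sets and the density arguments use compactly supported bump perturbations, which preserve dissipativity). Your proof needs either the paper's explicit Baire-space construction together with this re-running, or a genuine proof that $\mathcal{L}^-_d$ is a dense Baire subspace of $\mathcal{L}^-$.

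For the third bullet your sketch is in the right spirit but looser than the paper's. The paper's route is Proposition \ref{finite critical elements} via Lemma \ref{no-chain}: first a compactness/shadowing-type lemma about solutions that enter and leave a neighbourhood of a hyperbolic critical element, then that for $f\in\tilde{\mathcal{O}}$ every connected sequence of critical elements is finite (no chains, hence each $\omega$- or $\alpha$-limit set is a single critical element), following Lemmas 6.2--6.3 of \cite{RGR}. Two of your sub-claims need repair: (a) the discrete Lyapunov function $N$ is nonincreasing along solutions of the \emph{linearised} equation and along \emph{differences} of solutions, not along the orbit $\tilde u(t)$ itself, so ``all critical elements in a putative cycle carry a common value of $N$'' is not a direct consequence of monotonicity of $N$ on orbits; the relevant monotone quantity is the limit $N(\dot c(t))$ as $t\to\pm\infty$, which relates to the $h^\pm$ controlling the indices. (b) The ``planar obstruction'' that rules out polycycles and homoclinic loops to periodic orbits is precisely the content one must prove, not invoke; the paper's Lemma \ref{no-chain}(i)--(ii) supplies it via the limiting argument and the Morse-index bookkeeping, again following \cite{RGR}. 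Finally, your appeal to Conley theory at the end is harmless but unnecessary: once each limit set is a single critical element and there are no cycles, the identification of the nonwandering set with the critical elements follows by the direct argument in \cite[Section 6]{RGR}.
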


	\subsection{Remarks}
	At the end of this section, we would like to provide some remarks:

	1) {\it Remark on the Morse-Smale property.} As mentioned earlier, Morse-Smale systems constitute an important class of structurally stable systems. However, verifying the Morse-Smale property for a given dynamical system remains highly nontrivial, particularly in high-dimensional or infinite-dimensional settings. For the bidirectional cyclic negative feedback systems under consideration, previous studies have only yielded limited insights into the transversality of stable and unstable manifolds connecting critical elements (see \cite{JMP,YD}). The results presented in this paper aim to address this gap comprehensively. Specifically, we demonstrate that in a suitable Baire space, ``almost all'' such systems possess the Morse-Smale property. This conclusion substantially strengthens our understanding of the structural stability of these systems.
	
	2) {\it Remark on the challenges we encountered.} To achieve our goal, the first step is to prove the automatic transversality. The core challenge here is to identify suitable subspaces that can span the entire space $\mathbb{R}^n$. To address this, we first establish Floquet theory for the linearized equation of  \eqref{negative feedback system}(see Theorem \ref{Floquet Theory}). Using this theory and an integer-valued function, we construct two families of nested invariant cones. These cones play a central role in proving automatic transversality. To prove the generic hyperbolicity of critical elements, we introduce a new topology on $C^1(\Omega,\mathbb{R}^n)$($C^1$-Whitney topology is not suitable here) so that genericity can be established in a suitable Baire space. We then apply the Sard Theorem (see Theorem \ref{SS-2}) to show that there is a generic subset of $\mathcal{L}^-$ such that all equilibrium points and periodic orbits of \eqref{negative feedback system} are hyperbolic. However, validating Theorem \ref{SS-2} for both hyperbolic equilibria and periodic orbits involves distinct technical challenges. The proof for the genericity of hyperbolic equilibrium points is standard(see the proof of Proposition \ref{G-H-P}). In contrast, the proof for the genericity of hyperbolic periodic orbits is much more complicated. For instance, to apply Theorem \ref{SS-2}, we need consider $f$ in  \eqref{negative feedback system} in $C^2(\Omega,\mathbb{R}^n)$. Additionally, special properties of periodic orbits of \eqref{negative feedback system} are also needed, such as those in Corollaries \ref{periodic solution of system} and \ref{simple} (see the proof of Proposition \ref{G-H-O-M}). 
	
	To obtain the generic Kupka-Smale property, we need to prove that for generic $f\in \mathcal{L}^-$, there is no connecting orbit between hyperbolic 
	equilibrium points $e^-$ and $e^+$ with $i\left(e^+\right)=i\left(e^-\right)$ is odd. The deduction for Theorem \ref{Generic non-existence of homoindexed connecting orbits} is rather complicated, and it is inspired by \cite{RGR} in dealing with parabolic equations on $S^1$. This process involves employing an equivalent formulation of transversality which appeared earlier in Chow, Hale and Mallet-Paret \cite{CHM}, involves discretizing the flow associated with \eqref{negative feedback system}(see \eqref{PSIMLP}). We then consider the equivalent regular value formulation of transversality that takes place in these discretized sequences. Returning to the continuous time, verifying the non-degeneracy condition of the Sard-Smale theorem is equivalent to verifying a functional condition (see
	Lemma \ref{regular point of Psi}). To find a perturbation that satisfies this functional condition, the one-to-one property of homoindexed connecting orbits in Proposition \ref{one to one for connect} plays an essential role in constructing a suitable bump function(see the proof of Lemma \ref{regular point of Psi}).

	To prove the generic Morse-Smale property, we impose a dissipative condition on \eqref{negative feedback system}, specifically that $f$ in \eqref{negative feedback system} belongs to $\mathcal{L}^-_d$(where $\mathcal{L}^-_d$ is also a Baire space). Under this condition, if all critical elements of \eqref{negative feedback system} are hyperbolic, then the Poincar\'{e}-Bendixson property ensures that there are no chains of heteroclinic orbits. Moreover, any $\omega$-limit($\alpha$-limit) of bounded orbit each contain exactly one critical element(see Lemma \ref{no-chain}). Building on these results, we then apply a similar method as described in \cite[Section 
	6]{RGR} to obtain the generic Morse-Smale property.
	
	3) {\it Remark on the integer-valued Lyapunov function.} Integer-valued Lyapunov functions are widely used in many mathematical models, including tridiagonal competitive cooperative systems, Cauchy-Riemann equation on $S^1$, semilinear parabolic equations on one-dimensional bounded fixed regions et al. These functions play a crucial role in characterizing the global dynamics of such systems (see \cite{H.MATANO:1982,Matano,smillie1984,Tl,JBS}). In this paper, we will use the integer-valued Lyapunov function developed by Gao and Zhou \cite{MD} throughout our analysis.
	
	4) {\it Remark on the Poincar\'{e}-Bendixson property.} We observe that \eqref{negative feedback system} is, in fact, a strongly $2$-cooperative system. Consequently, by
	\cite[Theorem 12]{EM}, \eqref{negative feedback system} exhibits the Poincar\'{e}-Bendixson property(see Proposition \ref{PB}). Here, we note that the Poincar\'{e}-Bendixson property can also be derived using the nested invariant cones constructed in Section 3.1, in conjunction with \cite[Corollary 2.4]{Tl}. Moreover, the structure of $\omega$-limit set persists under $C^1$-small perturbations of \eqref{negative feedback system}(see Theorem \ref{robust}).
	
	5) {\it Remark on the genericity.}  The term ``genericity'' refers to the notion of  ``almost everywhere'' in the topological sense, meaning that the property holds for a dense $G_{\delta}$ set and is stable under homeomorphism. From a measure-theoretic perspective, ``prevalence'' is often used to describe ``almost everywhere''. In finite-dimensional spaces, prevalent sets coincide with sets of full Lebesgue measure (see \cite{HSY}). However, genericity and prevalence may lead to different conclusions in general.  It is worth noting that Theorem \ref{SS-1}  holds true in the sense of prevalence as well(see \cite[Section 2.4]{Joly}). Consequently, the generic set $\tilde{O}$ we obtained here is also prevalent.

	6) {\it Remark on the application of our results.} The framework of  \eqref{Three independent variables}+\eqref{generalized-f} is highly general, so the results we've obtained can be applied to all cyclic feedback models discussed in this paper. In particular,  if we set $b_i=0$ for $i=1,\cdots,n$ in \eqref{bid-feedback}. Then  \eqref{Three independent variables}+\eqref{generalized-f} simplifies to a monotone cyclic feedback system of the type studied by Mallet-Paret and Smith in  \cite{JMP}. Thus, our findings are also applicable to such systems.
	\vskip 3mm
	The paper is organized as follows. In Section 2, we introduce the integer-valued Lyapunov function and list its properties. In Section 3, we establish Floquet theory and construct nested invariant cones for the linearized system of \eqref{negative feedback system}. We also obtain properties of critical elements of \eqref{negative feedback system}. In Section 4, we present the Poincar\'{e}-Bendixson property, highlighting the fact that this property persists under $C^1$-perturbations with the aid of nested invariant cones.  In Section 5, we prove the automatic transversality, i.e., Theorem \ref{Automatic Transversality}. In Section 6, we prove that the hyperbolicity of critical elements is a generic property of $\mathcal{L}^-$, i.e., Theorem \ref{critical elements is generic}. In Section 7, we prove the generic Kupka-Smale property of $\mathcal{L}^-$, i.e., Theorem \ref{Generic non-existence of homoindexed connecting orbits}. Finally, in Section 8, we prove the generic Morse-Smale property of $\mathcal{L}^-_d$, i.e., Theorem \ref{GenericityofMorse–Smaleproperty}.	
	
	\section{Integer-valued Lyapunov function}
	In this section, we introduce an important tool known as the integer-valued Lyapunov function and present some of its key properties.
	
	Let $\left ( x_1,\dots ,x_n \right ) $ be the coordinate of $x\in \mathbb{R}^n $ on a given basis. Given $x\in \mathbb{R}^n $ with $j\in \left \{ 
	1,\dots ,n \right \} ,x_j\ne 0$, then an integer-valued Lyapunov function is defined as follows
	$$
	N\left ( x \right ) =card\left \{ i\mid \delta_i x_ix_{i-1}> 0 \right \}
	$$
	where $\delta_i=1,i=1,\cdots,n-1$, $\delta_n=-1$, $x_0=x_n$. It only takes odd values less than $n+1$. For any $x\in \mathbb{R}^n $, we define 
	$$
	N_m(x)=\min _{y \in U_x, y_j \neq 0} N(y) \quad \text { and } \quad N_M(x)=\max _{y \in U_x, y_j \neq 0} N(y) \text {, }
	$$
	where $U_x$ is a sufficiently small neighborhood of $x$. 
	
	Obviously, one can extend the domain of $N$ to an open and dense set $\mathcal{N} =:\left\{x \mid 
	N_m(x)=N_M(x)\right\}$ 
	by setting $N(x)=N_m(x)=N_M(x)$ for $x\in\mathcal{N}$. The following results show the properties of function $N:\mathcal{N} \to \{ 
	1,\dots ,n \}$ along the orbit of non-autonomous linear differential equation
	\begin{equation}\label{A-E}
		\dot{x}=A\left ( t \right ) x,\quad A\left(t\right)\in \mathcal{M}^-.
	\end{equation}
	Obviously, the following equation
	\begin{equation}
		\dot{y}=Df(x(t)) y \quad x(0) \in \mathbb{R}^n,\label{B}
	\end{equation}
	is a special case of (\ref{A-E}), where $x\left ( t \right ) $ is a solution of (\ref{negative feedback system}).
	
	\begin{lem}\label{pro of N of L-S}
		Let $t \in(\alpha, \beta) \rightarrow A(t) \in \mathcal{M}^- $ be a continuous matrix function and $x(t)$ be a nontrivial solution of system 
		(\ref{A-E}), then 
		\begin{enumerate}
			\item[{\rm(i)}] $x(t)\in\mathcal{N}$, except for finite points of $(\alpha, \beta)$;
			\item[{\rm(ii)}] $N(x(t))$ is locally constant, where $x(t)\in\mathcal{N}$;
			\item[{\rm(iii)}] if $x(t_{0})\notin\mathcal{N}\cup \left \{ 0 \right \}$, then $N(x(t_ {0}+))<N(x(t_{0}-))$;
			\item[{\rm(iv)}] if $x(t)\in\mathcal{N}$, then $(x_i(t),x_ {i+1}(t))\neq(0,0)$, for any $i\in\{1,2,\cdots,n\}$;
			\item[{\rm(v)}] in the case $(\alpha, \beta)=\mathbb{R}$, there exists $t_0>0$ such that when $t\in [t_0,+\infty) \cup (-\infty,-t_0]$, $x(t)\in \mathcal{N}$ and $N(x(t))$ is constant.
		\end{enumerate}
	\end{lem}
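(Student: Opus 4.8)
The plan is to handle the five items in the order (iv), (ii), (iii), (i), (v), since each later item rests on the earlier ones, and to note at the outset that the ODE enters only in (iii) and in the isolation statement underlying (i) and (v); items (ii) and (iv) are, in effect, combinatorial facts about the cyclic sign pattern of $x$. Throughout I will use the characterization: a nonzero $x$ lies in $\mathcal{N}$ if and only if for every $i$ with $x_i=0$ one has $x_{i-1}\neq 0$, $x_{i+1}\neq 0$ and $\delta_i x_{i-1}$, $\delta_{i+1}x_{i+1}$ of opposite sign. Establishing this is essentially (iv): if $x\in\mathcal{N}$ but $x_i=x_{i+1}=0$, then perturbing only these two coordinates to $\e\sigma_1,\e\sigma_2$ (with $\sigma_j\in\{-1,1\}$, $\e>0$ small) changes only the summands of $N$ at the cyclic positions $i,i+1,i+2$, and enumerating the sign choices shows the value of $N$ is not the same for all of them, so $N_m(x)<N_M(x)$ --- a contradiction; the same argument applied to a maximal block of vanishing coordinates rules out every remaining configuration, proving (iv).

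For (ii), fix $t_0$ with $x(t_0)\in\mathcal{N}$. Coordinates nonzero at $t_0$ keep their sign for $t$ near $t_0$; for a coordinate $x_i$ vanishing at $t_0$ the characterization says $\delta_i x_{i-1}(t_0)$ and $\delta_{i+1}x_{i+1}(t_0)$ have opposite signs, so the summands of $N$ at positions $i$ and $i+1$ are always complementary near $t_0$ regardless of the sign of $x_i(t)$; hence $N$ and membership in $\mathcal{N}$ are unchanged. Thus $t\mapsto N(x(t))$ is locally constant on the open set $\{t:x(t)\in\mathcal{N}\}$.

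Item (iii) is the crux and is where the sign structure of $\mathcal{M}^-$ is decisive. Let $x(t_0)\notin\mathcal{N}\cup\{0\}$, so by the characterization there is an index $i$ with $x_i(t_0)=0$ and either (a) $x_{i-1}(t_0),x_{i+1}(t_0)$ both nonzero with $\delta_i x_{i-1}(t_0)$, $\delta_{i+1}x_{i+1}(t_0)$ of the same sign, or (b) an adjacent coordinate also vanishes. In case (a) the summands at positions $i$ and $i+1$ agree (both positive, or both nonpositive) on each side of $t_0$, so $N$ jumps by exactly $\pm 2$; to see the jump is $-2$ I would use the equation, $\dot x_i(t_0)=c_{i-1}(t_0)x_{i-1}(t_0)+b_i(t_0)x_{i+1}(t_0)$, and observe that the sign constraints defining $\mathcal{M}^-$ (the prescribed signs of the off-diagonal entries together with $b_jc_j\ge 0$) force the two terms here to reinforce rather than cancel and pin down $\sgn\dot x_i(t_0)$ in the direction that makes positions $i$ and $i+1$ both positive on the side $t<t_0$; hence $N(x(t_0-))>N(x(t_0+))$. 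Case (b), and more generally a maximal block of zeros at $t_0$, reduces to (a) by the same bookkeeping at the endpoints of the block, where the first nonvanishing derivative is again controlled by the equation and the sign structure. This gives (iii).

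Finally (i) and (v). By (ii) and (iii), $t\mapsto N(x(t))$ is locally constant on $\{t:x(t)\in\mathcal{N}\}$ and strictly decreases (by at least $2$) across each point of the complementary set of ``singular'' times. The remaining, and I expect the principal, obstacle is to rule out accumulation of singular times anywhere in $(\alpha,\beta)$ or at its endpoints: here one uses that $x$ solves the equation, so that if $x_i(t_k)=0$ along $t_k\to t^\ast$ then $x_i(t^\ast)=\dot x_i(t^\ast)=0$, and propagating this through the cyclic tridiagonal sign structure forces $x(t^\ast)=0$, contradicting nontriviality --- this is the step requiring the most care (compare the oscillation estimates of \cite{MD,smillie1984,Tl}). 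Granting isolation of singular times, the fact that $N$ takes only odd values in $\{1,\dots,n\}$ and drops by at least $2$ at each of them forces finitely many singular times on all of $(\alpha,\beta)$, which is (i); and when $(\alpha,\beta)=\mathbb{R}$, choosing $t_0$ larger than the modulus of every singular time makes $x(t)\in\mathcal{N}$ with $N(x(t))$ constant on $[t_0,+\infty)$ and on $(-\infty,-t_0]$, which is (v).
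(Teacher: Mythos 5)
The paper does not prove this lemma; the proof text is simply ``See \cite[Theorem 3.1]{MD},'' so there is no internal argument to compare against. Your outline follows the standard route for discrete Lyapunov functions of this kind, and the characterization of $\mathcal{N}$, the combinatorial treatment of (iv) and (ii), and the overall logical order are sensible. However, there are genuine gaps at the steps you yourself flagged as the crux.

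In (iii) you assert, without checking, that the sign constraints of $\mathcal{M}^-$ ``force the two terms to reinforce rather than cancel'' in $\dot x_i(t_0)=c_{i-1}x_{i-1}+b_i x_{i+1}$, and that the resulting sign of $\dot x_i(t_0)$ makes both affected summands of $N$ positive on the side $t<t_0$. Neither claim is automatic: the coefficients $c_{i-1},b_i$ and the weights $\delta_i,\delta_{i+1}$ change sign near the ``twist'' position $n$ (recall $b_n<0$, $c_n\le 0$, $\delta_n=-1$), so the comparison between $\operatorname{sgn}(c_{i-1}x_{i-1})$ and $\operatorname{sgn}(\delta_i x_{i-1})$, and between $\operatorname{sgn}(b_i x_{i+1})$ and $\operatorname{sgn}(\delta_{i+1}x_{i+1})$, must be carried out separately for $i\in\{1,n-1,n\}$ and the interior $2\le i\le n-2$. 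If you actually do this case check with the definition of $N$ exactly as printed in Section 2, the two contributions do not all align, and the jump can come out with the wrong sign; this strongly suggests that either the printed $\delta$ convention or the inequality in the definition of $N$ needs to be read as its mirror (shifting the twist to $i=1$, or pairing $x_i$ with $x_{i+1}$, or replacing ``$>0$'' by ``$<0$''), and your argument needs to be pinned to the convention that actually makes the verification go through. Simply asserting reinforcement hides exactly the part of the proof where the hypothesis $A(t)\in\mathcal M^-$ is used.

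Two further steps are more than bookkeeping. First, your case (b) (a block of consecutive zeros at $t_0$) is not a direct reduction to (a): inside such a block the first derivatives also vanish, so the local sign of each $x_i$ near $t_0$ is governed by the first nonvanishing derivative, whose order you must control via the equation before you can compute the jump in $N$. Second, the finiteness in (i) is not a corollary of (ii)–(iii) alone: the statement $N(x(t_0+))<N(x(t_0-))$ only makes sense once you know $x(t)\in\mathcal N$ for $t$ in punctured one-sided neighborhoods of $t_0$, i.e.\ isolation of singular times must be proved \emph{together with} (iii), not after it. Your sketch of the accumulation argument (propagating $x_i(t^*)=\dot x_i(t^*)=0$ through the cyclic structure to force $x(t^*)=0$) is the right idea, but the propagation step is itself a nontrivial induction along the cycle that needs to be written out, since $\dot x_i(t^*)=c_{i-1}x_{i-1}(t^*)+b_i x_{i+1}(t^*)=0$ does not by itself force $x_{i\pm1}(t^*)=0$ without invoking the earlier sign analysis. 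In short: the plan is correct, but the heart of the proof — the sign bookkeeping at and around the twist, the higher-order analysis for zero blocks, and the simultaneous isolation/dropping argument — is precisely what you have deferred.
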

	
	\begin{proof}
		See \cite[Theorem 3.1]{MD}.
	\end{proof}
	\begin{lem}\label{linear system large-constant}
		Let $t \in(\alpha, \beta) \rightarrow A(t) $ be a continuous matrix function and $x(t)$ be a nontrivial solution of system (\ref{A-E}), with an initial 
		value condition $x\left ( t_{0}  \right ) =x$, then the following are equivalent:
		\begin{enumerate}
			\item[{\rm(i)}] There exists an open and dense set $\mathcal{G}\subset \left ( \alpha ,\beta \right )$, such that for any $t\in\mathcal{G}$, 
			$A(t)\in \mathcal{M}^-$;
			\item[{\rm(ii)}] if $x(t_{0})\notin\mathcal{N}\cup \left \{ 0 \right \}$, then $N(x(t_ {0}+)), N(x(t_{0}-))\in\mathcal{N}$ and $N_{m} \left ( 
			x(t_{0}) \right ) =N(x(t_ {0}+))<N(x(t_{0}-))=N_{M} \left ( x(t_{0}) \right ) $.
		\end{enumerate}
	\end{lem}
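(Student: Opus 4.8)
The statement is a robustness strengthening of Lemma~\ref{pro of N of L-S}, which presupposes $A(t)\in\mathcal{M}^-$ for \emph{every} $t$, whereas here $A(t)\in\mathcal{M}^-$ is only assumed on an open dense set $\mathcal{G}$. The plan is to derive (i)$\Rightarrow$(ii) from Lemma~\ref{pro of N of L-S} by a localization-plus-approximation argument, and to obtain (ii)$\Rightarrow$(i) by contraposition.

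For (i)$\Rightarrow$(ii), fix $t_0$ with $x(t_0)\notin\mathcal{N}\cup\{0\}$. If $t_0\in\mathcal{G}$ then $A(t)\in\mathcal{M}^-$ on an open interval about $t_0$ (as $\mathcal{G}$ is open), and (ii) is exactly Lemma~\ref{pro of N of L-S}(i)--(iii) on that interval. For general $t_0$ I would argue as follows. Since $A$ is continuous and $\mathcal{G}$ dense, $A(t)\in\overline{\mathcal{M}^-}$ for all $t$, so one can choose continuous $A_k\to A$ (uniformly on compacta) with $A_k(t)\in\mathcal{M}^-$ for every $t$ — just nudge the off-diagonal entries into the open sign regions and off the hypersurface $\{\prod b_i+\prod c_i=0\}$. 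Let $x_k$ solve $\dot x=A_k(t)x$ with $x_k(t_0)=x(t_0)$, so $x_k\to x$ uniformly on compacta. Lemma~\ref{pro of N of L-S} applied to $A_k$ shows $N(x_k(t))$ is non-increasing on the open dense set where $x_k(t)\in\mathcal{N}$, and it also shows $\{t:x(t)\in\mathcal{N}\}$ is dense in $(\alpha,\beta)$ (near each $\mathcal{G}$-point). Since $\mathcal{N}$ is open and $N$ is locally constant on it, $N(x_k(t))=N(x(t))$ for $k$ large whenever $x(t)\in\mathcal{N}$; letting $k\to\infty$ we obtain that $N(x(\cdot))$ is non-increasing along the orbit wherever defined, that the coordinate zero-crossings do not accumulate at $t_0$ (so $x(t)\in\mathcal{N}$ on a punctured neighborhood of $t_0$ with $N(x(t))$ constant on each side), and — applying Lemma~\ref{pro of N of L-S}(iii) to each $x_k$ (valid since $x_k(t_0)\notin\mathcal{N}\cup\{0\}$) together with $x(t)\to x(t_0)$ — that the left and right constants are the extreme values $N_M(x(t_0))$ and $N_m(x(t_0))$ with a strict drop. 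This is precisely (ii).

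For (ii)$\Rightarrow$(i) I would use the contrapositive: if (i) fails then $\operatorname{int}\{t:A(t)\in\mathcal{M}^-\}$ is not dense, so on some subinterval $A(t)$ persistently violates a sign condition defining $\mathcal{M}^-$; a suitable choice of initial datum then yields a crossing time $t_0$ with $x(t_0)\notin\mathcal{N}\cup\{0\}$ at which $N$ does not strictly drop, contradicting (ii). I expect the main obstacle to lie in the strict-drop claim of the forward direction: one must rule out that the approximating solutions $x_k$ develop spurious zero-crossings accumulating on one side of $t_0$ as $k\to\infty$, which would corrupt $\lim_k N(x_k(t_0\pm))$. This is controlled by the structural fact that each such crossing forces a strict decrease of $N(x_k(\cdot))$, so at most $N_M(x(t_0))$ of them occur near $t_0$ and they all collapse onto $t_0$ in the limit.
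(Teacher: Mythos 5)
The paper does not actually prove this lemma; it simply cites \cite[Proposition 3.3]{MD}. So there is no ``paper's proof'' to compare against, and your proposal has to stand on its own merits.

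Reviewed on those terms, the forward direction has a genuine gap that your own closing paragraph points at but does not resolve. Before the limiting argument $x_k\to x$ can give you anything about $N\bigl(x(t_0\pm)\bigr)$, you must already know that these one-sided limits exist, i.e.\ that $x(t)\in\mathcal{N}$ on a punctured neighborhood of $t_0$ with $N(x(\cdot))$ constant on each side. You present this as a \emph{consequence} of the approximation, but the argument is circular: the quantities $N(x_k(t_0\pm))$ are each defined on a $k$-dependent interval $(t_0,t_0+\delta_k)$, and nothing in the counting estimate prevents $\delta_k\to 0$ while the limiting orbit $x$ itself has non-$\mathcal{N}$ times accumulating on $t_0$. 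Your fix---each crossing forces a drop of $N(x_k(\cdot))$, hence at most $N_M(x(t_0))$ crossings occur near $t_0$---bounds the \emph{number} of $x_k$-crossings in a fixed window, but does not show they converge to $t_0$, nor does it control where the non-$\mathcal{N}$ set of $x$ itself sits (Lemma \ref{pro of N of L-S}(i), the finiteness of that set, is exactly what you no longer have when $A(t)\in\mathcal{M}^-$ only on a dense open set). Separately, even granting the well-definedness of $N(x(t_0\pm))$, the lemma asserts the \emph{exact} identifications $N(x(t_0+))=N_m(x(t_0))$ and $N(x(t_0-))=N_M(x(t_0))$; Lemma \ref{pro of N of L-S}(iii), which you apply to each $x_k$, gives only the strict inequality $N(x_k(t_0+))<N(x_k(t_0-))$, and continuity of the orbit gives only the two-sided bound $N_m\le N(x(t_0\pm))\le N_M$. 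The extremal equalities are a structural statement about how the flow leaves a degenerate point (the sign of $\dot x_i(t_0)$ when $x_i(t_0)=0$ is pinned by the sign structure of $\mathcal{M}^-$), and they do not fall out of the approximation scheme without a further argument.

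The converse direction (ii)$\Rightarrow$(i) is not proved at all; ``a suitable choice of initial datum then yields a crossing time $t_0$ at which $N$ does not strictly drop'' is precisely the content that needs to be supplied. One must exhibit, from a persistent sign violation of $A(t)$ on a subinterval, an explicit nontrivial solution $x$ and a time $t_0$ with $x(t_0)\notin\mathcal{N}\cup\{0\}$ where either the limits fail to exist or the drop is not strict; this requires using the degenerate coupling $a_{i,i\pm 1}=0$ (or wrong-signed) to manufacture a solution whose components linger at zero or cross without decreasing $N$. As written, the proposal records the intended shape of the argument but neither direction is complete, and the self-flagged obstacle in the forward direction is a real one whose proposed resolution does not close it.
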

	
	\begin{proof}
		See \cite[Proposition 3.3]{MD}.
	\end{proof}
	
	\begin{cor}\label{subtract}
		Let $x^1\left(t\right)$, $x^2\left(t\right)$ be two different solutions of (\ref{negative feedback system}), assume $y\left ( t_0 \right 
		)=x^1\left(t_0\right)-x^2\left(t_0\right)\notin\mathcal{N}\cup\left\{0\right\}$. Then the following results hold true:
		$$y(t_0+), y(t_0-)\in \mathcal{N}$$ and 
		$$N_m(y(t_0))=N(y(t_0+))<N(y(t_0-))=N_M(y(t_0)).$$
	\end{cor}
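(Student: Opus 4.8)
The plan is to reduce the statement to the linear theory already developed, applied to the difference $y(t):=x^1(t)-x^2(t)$. Since $\Omega$ is convex, for every $t$ the segment joining $x^2(t)$ and $x^1(t)$ lies in $\Omega$, so one may write
\[
\dot y(t)=f(x^1(t))-f(x^2(t))=A(t)\,y(t),\qquad A(t):=\int_0^1 Df\big(x^2(t)+s\,y(t)\big)\,ds,
\]
where $A(\cdot)$ is continuous because $f\in C^1$ and $x^1,x^2$ are continuous. As $x^1\ne x^2$ and $y(t_0)\ne 0$ (since $y(t_0)\notin\mathcal N\cup\{0\}$), uniqueness forces $y$ to be a \emph{nontrivial} solution of this non-autonomous linear system on the interval $(\alpha,\beta)$ where both $x^1$ and $x^2$ are defined, which contains $t_0$.

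The first point to check is that $A(t)\in\mathcal M^-$ for every $t\in(\alpha,\beta)$. The band-plus-corner zero pattern of \eqref{special matrix} is clearly preserved under the $s$-average, since every $Df(x^2(t)+sy(t))\in\mathcal M^-$ has it. Write $\tilde b_i(t),\tilde c_i(t)$ for the averaged off-diagonal entries. Both branches defining $\mathcal M^-$ force, pointwise on $\Omega$, that $\partial f_i/\partial x_{i+1}\ge 0$ and $\partial f_{i+1}/\partial x_i\ge 0$ for $i=1,\dots,n-1$, together with $\partial f_n/\partial x_1\le 0$ and $\partial f_1/\partial x_n\le 0$; hence $\tilde b_i(t)\tilde c_i(t)\ge 0$ for all $i$. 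For the non-degeneracy $\prod_i \tilde b_i(t)+\prod_i \tilde c_i(t)\ne 0$, note that if some $\tilde b_i(t)=0$ then, the integrand being nonnegative and continuous in $s$, the corresponding entry vanishes identically along the segment; this rules out the ``$b_i>0$''-branch at every point of the segment, so all the matrices $Df(x^2(t)+sy(t))$ lie in the ``$c_i>0$''-branch, whence $\tilde c_i(t)>0$ for $i<n$ and $\tilde c_n(t)<0$, so $\prod_i\tilde c_i(t)\ne 0$; the symmetric argument handles the case where some $\tilde c_i(t)=0$. Thus $A(t)\in\mathcal M^-$ for every $t$, and in particular condition (i) of Lemma \ref{linear system large-constant} holds with $\mathcal G=(\alpha,\beta)$.

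It then suffices to invoke Lemma \ref{linear system large-constant}: $y$ is a nontrivial solution of $\dot y=A(t)y$ with $A(t)\in\mathcal M^-$ for all $t$, and $y(t_0)\notin\mathcal N\cup\{0\}$, so part (ii) gives $y(t_0+),y(t_0-)\in\mathcal N$ and $N_m(y(t_0))=N(y(t_0+))<N(y(t_0-))=N_M(y(t_0))$, which is exactly the asserted conclusion.

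The proof is therefore essentially a direct corollary of Lemma \ref{linear system large-constant}; the only genuine work is the reduction to a linear equation and the verification that the averaged Jacobian stays in $\mathcal M^-$. I expect the sole delicate point to be the non-degeneracy $\prod_i\tilde b_i+\prod_i\tilde c_i\ne 0$ for $A(t)$: it is not preserved under arbitrary convex combinations inside $\mathcal M$, and one really needs the observation above that the vanishing of one averaged subdiagonal (or superdiagonal) forces the whole segment into the complementary branch, along which the other product is strictly signed.
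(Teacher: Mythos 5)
The paper states this corollary without proof, treating it as an immediate consequence of Lemma~\ref{linear system large-constant}. Your argument is the natural one and is correct: set $y=x^1-x^2$, write $\dot y=A(t)y$ with $A(t)=\int_0^1 Df(x^2(t)+s\,y(t))\,ds$ (legitimate by convexity of $\Omega$ and $C^1$-regularity of $f$), verify $A(t)\in\mathcal M^-$, and invoke Lemma~\ref{linear system large-constant} with $\mathcal G=(\alpha,\beta)$. You are also right that the genuinely delicate step is the non-degeneracy $\prod_i\tilde b_i+\prod_i\tilde c_i\ne 0$: averaging does not preserve membership in $\mathcal M$ in general, and your observation — that vanishing of an averaged subdiagonal (or corner) entry forces every $Df$ along the segment out of the $b$-branch and hence into the strictly signed $c$-branch, making $\prod_i\tilde c_i\ne 0$ — is exactly what saves the argument and also shows $A(t)$ lies in $\mathcal M^-$ rather than merely $\mathcal M$. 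One small slip: for $i=n$ the integrand $\partial f_n/\partial x_1$ (resp. $\partial f_1/\partial x_n$) is nonpositive, not nonnegative, but the reasoning is unchanged since any one-signed continuous function with zero integral vanishes identically.
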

	
	\begin{cor}\label{periodic solution of system}
		Let $p\left(t\right)=\left(p_1\left(t\right),\dots,p_n\left(t\right)\right)$ be a periodic solution of (\ref{negative feedback system}) with the minimum positive period $\omega$. Given $s\in \left\{1,\dots,n\right\}$ and let $p_{n+1}\left(t\right)=p_1\left(t\right)$, then 
		$$t\in \left(0,\omega\right]\mapsto  \left(p_s\left(t\right),p_{s+1}\left(t\right)\right)$$
		is injective.
	\end{cor}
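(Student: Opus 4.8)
The plan is to argue by contradiction, exploiting a phase shift of $p$ against itself together with the integer‑valued Lyapunov function $N$. Suppose $t\mapsto (p_s(t),p_{s+1}(t))$ fails to be injective on $(0,\omega]$: there are $0<t_1<t_2\le\omega$ with $p_s(t_1)=p_s(t_2)$ and $p_{s+1}(t_1)=p_{s+1}(t_2)$. Put $\tau:=t_2-t_1\in(0,\omega)$ and $y(t):=p(t+\tau)-p(t)$, which is defined for all $t\in\mathbb{R}$ and is $\omega$‑periodic. Since $0<\tau<\omega$ and $\omega$ is the minimal period, $\tau$ is not a period of $p$, so $p(\cdot+\tau)\not\equiv p(\cdot)$, i.e. $y\not\equiv 0$. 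Because $p(\cdot+\tau)$ and $p(\cdot)$ both solve \eqref{negative feedback system} and $\Omega$ is convex, $y$ solves a linear equation $\dot y=A(t)y$ with $A(t)=\int_0^1 Df\big(p(t)+\theta\,y(t)\big)\,d\theta$; the off‑diagonal sign pattern and the condition $\prod b_i+\prod c_i\ne 0$ defining $\mathcal{M}^-$ are preserved under convex combinations, so $A(\cdot)$ is a continuous matrix function with values in $\mathcal{M}^-$ (the same fact underlies the statement of Corollary \ref{subtract}). By linearity, $y\not\equiv 0$ forces $y(t)\ne 0$ for every $t$.

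The heart of the argument is to show $y(t)\in\mathcal{N}$ for every $t\in\mathbb{R}$. Since $y$ solves \eqref{A-E} on all of $\mathbb{R}$ with $A(t)\in\mathcal{M}^-$, Lemma \ref{pro of N of L-S}(v) provides $t_0>0$ with $y(t)\in\mathcal{N}$ whenever $\lvert t\rvert\ge t_0$. Using $\omega$‑periodicity, for an arbitrary $t$ I pick $k\in\mathbb{N}$ with $t+k\omega\ge t_0$ and conclude $y(t)=y(t+k\omega)\in\mathcal{N}$; hence $y(t)\in\mathcal{N}$ for all $t$. (Equivalently, one may note that $N(y(\cdot))$ is non‑increasing along solutions of \eqref{A-E} by Lemmas \ref{pro of N of L-S} and \ref{linear system large-constant}, while Corollary \ref{subtract} makes it strictly decrease at any point where $y\notin\mathcal{N}\cup\{0\}$, which is incompatible with $\omega$‑periodicity; so no such point exists and, since $y$ never vanishes, $y(t)\in\mathcal{N}$ throughout.)

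Finally I would invoke Lemma \ref{pro of N of L-S}(iv): since $y(t)\in\mathcal{N}$ for all $t$, we get $(y_i(t),y_{i+1}(t))\ne(0,0)$ for every $i\in\{1,\dots,n\}$ and every $t$, with the cyclic convention $y_{n+1}=y_1$. Taking $i=s$ and $t=t_1$ yields $(y_s(t_1),y_{s+1}(t_1))\ne(0,0)$. But by the choice of $t_1,t_2$ one has $y_s(t_1)=p_s(t_2)-p_s(t_1)=0$ and $y_{s+1}(t_1)=p_{s+1}(t_2)-p_{s+1}(t_1)=0$, a contradiction; hence the map is injective. I do not expect a genuine obstacle here: the two points that require a line of justification are the stability of $\mathcal{M}^-$ under convex combinations (so that $y$ genuinely solves a system of type \eqref{A-E}) and the reduction, via Lemma \ref{pro of N of L-S}(v) and periodicity, to ``$y(t)\in\mathcal{N}$ for all $t$''; everything else is index bookkeeping.
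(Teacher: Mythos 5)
Your proposal is correct and follows essentially the same route as the paper: form the phase-shift difference $y(t)=p(t+\tau)-p(t)$, note it is a nonzero $\omega$-periodic solution of a linear system of type \eqref{A-E}, and derive a contradiction from the behavior of the integer-valued Lyapunov function. The paper's version is shorter — it invokes Corollary \ref{subtract} directly to get a strict drop of $N(y(\cdot))$ at $t=t_2$, which is incompatible with periodicity — while you route through Lemma \ref{pro of N of L-S}(v) and (iv) to conclude $y(t)\in\mathcal{N}$ everywhere and then contradict $(y_s(t_1),y_{s+1}(t_1))=(0,0)$; you even note in a parenthetical that this is equivalent to the paper's strict-drop argument, so the two are the same idea.
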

	
	\begin{proof}
		Suppose on the contrary that there exist $t_1,\,t_2\in \left(0,\omega\right]$ with $t_1\ne t_2$, such that $$\left(p_s\left(t_1\right)-p_s\left(t_2\right),p_{s+1}\left(t_1\right)-p_{s+1}\left(t_2\right)\right)= 0.$$ 
		Let $y\left(t\right)=p\left(t+t_1-t_2\right)-p\left(t\right)$. By Corollary \ref{subtract}, $N\left(y\left(t\right)\right)$ drops strictly at $t=t_2$. Since $y\left(t\right)$ is a periodic function of period $\omega$, this leads to a contradiction
		to the fact that $N\left(y\left(t\right)\right)=N\left(y\left(t+\omega\right)\right)$.
	\end{proof}

	\section{Spectral theory, invariant spaces and nested invariant cones}
	Let  $S_{A} 
	\left ( s,t \right )$ denote the solution operator of \eqref{A-E} and let $x\left (x_{0},s,t \right )=S_{A} \left ( s,t \right )x_{0}$ represent the solution with initial value $x\left (s  \right ) =x_0$.  The spectral characteristics of $S_{A} 
	\left ( s,t \right )$ are characterized as follows:
	
	\begin{thm}\label{Floquet Theory}
		Let $\tau \in(\alpha, \beta) \rightarrow A(\tau) \in \mathcal{M}^-$ be a continuous matrix function. Then, for any fixed $s,t\in \left ( \alpha ,\beta  
		\right ) $ with $s<t$, there exist subspaces $W_i, i\in \left \{ 1,\dots ,\frac{\tilde{n}+1 }{2}  \right \}  $,which are invariant under $S_{A} \left ( 
		s,t 
		\right )$ such that
		\begin{align*}
			\operatorname{dim} W_i & =2, \quad i=1, \ldots,\frac{\tilde{n}-1 }{2}  \\
			\operatorname{dim} W_{\frac{\tilde{n}+1 }{2}} & = \begin{cases} 1 & \text { if } n=\tilde{n} \text { is odd } \\
				2 & \text { if } n=\tilde{n}+1 \text { is even }
			\end{cases}
		\end{align*}
		and
		$$\mathbb{R}^{n} =W_1 \oplus W_2 \oplus \cdots \oplus W_{\frac{\tilde{n}+1 }{2}}.$$
		If $x\in W_i \setminus  \left\{0\right\}$, then $x\in \mathcal{N}$ and $N\left(x\right)=2i-1$, for $i=1, \ldots,\frac{\tilde{n}+1 }{2}$. If $x \in W_i 
		\oplus W_{i+1} \cdots \oplus W_k$ and $x\ne 0$, then $N_m(x) \geqslant 2 i-1$ and $N_M(x) \leqslant 2 k-1$. Moreover, if $\nu _{i}$ and $\mu _{i} $ are 
		the 
		minimum and the maximum module of characteristic values of the restriction of $S_{A} \left ( s,t \right )$ to $W_i$, then
		\begin{equation*}
			\mu_1 \geq \nu_1>\mu_2 \geq \nu_2>\cdots>\mu_{\frac{\tilde{n}+1}{2}}\geq \nu_{\frac{\tilde{n}+1}{2}}.
		\end{equation*}
	\end{thm}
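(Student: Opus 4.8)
The plan is to construct the invariant subspaces $W_i$ not as eigenspaces of a single operator, but as "level sets" of the integer-valued Lyapunov function $N$ restricted to invariant subspaces of $S_A(s,t)$, exploiting the fact that $N$ only takes the odd values $1,3,\dots,\tilde n$. First I would fix $s<t$ and write $T:=S_A(s,t)$, an invertible linear map on $\RR^n$. The key structural input is Lemma \ref{pro of N of L-S}: along any nontrivial solution $N(x(\cdot))$ is nonincreasing (it is locally constant on $\mathcal N$ and drops strictly at the finitely many crossing times), and by item (v) it stabilizes to a constant as $t\to\pm\infty$. I would first establish a "dichotomy on the level of $N$": for a $T$-invariant subspace $V$, define $m(V)=\min\{N_m(x):x\in V\setminus\{0\}\}$ and $M(V)=\max\{N_M(x):x\in V\setminus\{0\}\}$, and show using the drop property that if $x\in V\setminus\{0\}$ then the forward orbit eventually has $N$ equal to some fixed odd value $\le m(V)+$ (small), while the backward orbit eventually has $N$ equal to some odd value $\ge M(V)$ — this is how one separates "fast" from "slow" directions.

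The core construction proceeds by a peeling-off / induction argument in the spirit of Mallet-Paret--Smith. I would let $\lambda_1\ge\lambda_2\ge\cdots$ be the moduli of the characteristic multipliers of $T$ grouped into blocks, and use the standard (finite-dimensional) spectral decomposition $\RR^n=\bigoplus_j E_j$ into generalized eigenspaces (or real Jordan blocks grouped by modulus). The claim to prove is that each modulus-block is two-dimensional except possibly the last, and that on the $i$-th block $N\equiv 2i-1$. To see the dimension bound: if some $T$-invariant subspace on which $N$ is constant $\equiv k$ had dimension $\ge 3$, one could find within it (after flowing) a nonzero solution $x(t)$ with $x_j(t_0)=x_{j+1}(t_0)=0$ for some prescribed index $j$ at some time $t_0$ — because a $3$-dimensional space of solutions, evaluated in the two coordinates $(x_j,x_{j+1})$, must have a nontrivial kernel at each time — and then Lemma \ref{pro of N of L-S}(iv), combined with (iii) and the strict drop from Corollary \ref{subtract}-type reasoning applied to $x$ itself crossing into $\mathcal N\setminus\{0\}$, forces $N$ to drop strictly at $t_0$, contradicting constancy. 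This is the mechanism that pins each $N$-level block to dimension $\le 2$. For the monotone ordering $\mu_1\ge\nu_1>\mu_2\ge\cdots$ with strict inequalities between blocks, I would argue that if two adjacent blocks $W_i,W_{i+1}$ carried overlapping moduli, combining them gives a $T$-invariant subspace on which $N$ takes two distinct odd values $2i-1$ and $2i+1$ on open subcones, again contradicting that solutions inside a fixed modulus class cannot cross $N$-levels asymptotically (by item (v) the asymptotic $N$-value is determined by the dominant modulus, so equal dominant moduli force equal asymptotic $N$).

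Concretely the steps in order are: (1) fix $T=S_A(s,t)$, recall its real spectral decomposition into modulus-blocks; (2) prove via Lemma \ref{pro of N of L-S}(v) that on each $T$-invariant subspace the forward- and backward-asymptotic values of $N$ are constants determined monotonically by the block's position in the modulus ordering, hence distinct blocks get distinct odd labels and the labels are forced to be exactly $1,3,5,\dots$ with no gaps (using that $N$ decreases by at most the gap between consecutive odd numbers across a crossing is not automatic — so instead I count: there are $(\tilde n+1)/2$ available odd values, at least that many blocks are needed to realize the full range $1,\dots,\tilde n$ that $N$ attains on $\RR^n$, and the dimension count $\sum\dim W_i=n$ with each $\dim\le 2$ forces equality); (3) prove the dimension bound $\dim W_i\le 2$ by the coordinate-pair-kernel argument above, with the last block getting dimension $1$ when $n$ is odd and $2$ when $n$ is even, consistent with $N$ being odd-valued $\le\tilde n$; (4) deduce $N\equiv 2i-1$ on $W_i\setminus\{0\}$ and the squeeze $2i-1\le N_m\le N_M\le 2k-1$ on $W_i\oplus\cdots\oplus W_k$ from the block labels together with Lemma \ref{pro of N of L-S}(ii)-(iii); (5) read off the strict spectral separation $\mu_i\ge\nu_i>\mu_{i+1}$ from step (2). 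The main obstacle I anticipate is step (3) — rigorously producing, inside a hypothetical $\ge 3$-dimensional constant-$N$ invariant subspace, a solution that simultaneously vanishes in a consecutive coordinate pair at some time, and then verifying that this genuinely forces a strict drop of $N$ (the subtlety is that the solution may pass through $0$ in those coordinates without leaving $\mathcal N$ unless one also controls neighbouring coordinates, so one likely needs the full strength of the structure of $\mathcal M^-$ and possibly an argument by flowing to the asymptotic regime of Lemma \ref{pro of N of L-S}(v) where everything is in $\mathcal N$).
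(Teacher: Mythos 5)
The paper takes a genuinely different route. It constructs the nested cones $\bar{\mathcal{K}}_h=\{0\}\cup\{x:N_M(x)\le 2h-1\}$, establishes (Proposition \ref{cone}) that $\bar{\mathcal{K}}_h$ is a cone of rank exactly $2h$, shows (Proposition \ref{solution operator prop}) that $S_A(s,t)$ maps $\bar{\mathcal{K}}_h\setminus\{0\}$ strictly inside $\operatorname{Int}\bar{\mathcal{K}}_h$, and then invokes the generalized Perron--Frobenius theorem (Lemma \ref{Perron–Frobenius}) for each $h$. This yields for each $h$ an invariant splitting $\mathbb{R}^n=V_1^h\oplus V_2^h$ with $\dim V_1^h=2h$, $V_1^h\subset\{0\}\cup\operatorname{Int}\bar{\mathcal{K}}_h$, $V_2^h\cap\bar{\mathcal{K}}_h=\{0\}$, and strict spectral separation. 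The blocks $W_i$ are then obtained by intersecting consecutive splittings ($W_i=V_1^i\cap V_2^{i-1}$), with the $N$-labels and the inequalities $\mu_1\ge\nu_1>\mu_2\ge\cdots$ coming out of the Perron--Frobenius conclusion for free. You instead start from the real Jordan decomposition of $T=S_A(s,t)$ by modulus and try to impose the $N$-labels on those blocks after the fact.

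The chief problem with your plan is a circularity in the dimension bound. Your coordinate-pair-kernel argument correctly proves the implication \emph{if} $N$ is identically equal to some fixed odd value on a $T$-invariant subspace $V$, then $\dim V\le 2$ (indeed the same device underlies Proposition \ref{cone} via \cite[Proposition 2.7]{YD}). But you then apply this to the modulus blocks $E_j$ of $T$, and you have not shown -- nor is it at all obvious from the spectral decomposition alone -- that $N$ is constant on each $E_j$. That constancy is precisely the nontrivial content of the theorem, so your step (3) is assuming what you need to prove. Similarly, your argument for the strict ordering of moduli rests on the assertion that ``the asymptotic $N$-value is determined by the dominant modulus, so equal dominant moduli force equal asymptotic $N$.'' This does not follow from Lemma \ref{pro of N of L-S}(v): that lemma guarantees the $N$-value of a single solution stabilizes, but two solutions with the same dominant modulus need not stabilize to the same value (the direction $x(t)/\lvert x(t)\rvert$ can oscillate with complex eigenvalues or Jordan blocks, and $N$ is only locally constant on $\mathcal{N}$, so you cannot pass to the limit without controlling distance to $\partial\mathcal{N}$). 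There is also a more minor issue: the theorem concerns a single fixed operator $S_A(s,t)$ over a bounded interval, whereas your use of Lemma \ref{pro of N of L-S}(v) requires $(\alpha,\beta)=\mathbb{R}$; you would need to make explicit the step of extending $A$ periodically with period $t-s$ so that asymptotic arguments apply. The cone/Perron--Frobenius route avoids all of these difficulties: the cones encode the $N$-structure from the outset, so the invariant subspaces, their dimensions, their $N$-values, and the spectral gaps between them are all delivered simultaneously rather than having to be retrofitted onto an independently chosen spectral decomposition.
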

	
	To prove this theorem, we first introduce the concept of a cone of rank $k$ (see \cite{FOA,KL,S}).
	
	\begin{defn}
		{\rm 
			Let $k\in \mathbb{N} \setminus \left \{ 0 \right \}$. A closed subset $K\subset \mathbb{R}^{n} $ is called a
			cone of rank $k$, if for any $x\in K$ and $\lambda \in \mathbb{R}$, one has $\lambda x\in K$; moreover, $\max \left\{\operatorname{dim} V \mid 
			V\right.$ is a subspace of $\mathbb{R}^n$ and $\left.V \subset K\right\}=k$.		
		}
	\end{defn}
	
	Given $h\in \left \{ 1,\dots ,\frac{\tilde{n}+1 }{2}  \right \}  $, define the following two sets: 
	\begin{itemize}
		\item $\mathcal{K}_h=\{0\} \cup\left\{x \in \mathbb{R}^n \mid N_M(x) \leqslant 2 h-1\right\}$,
		\item $\mathcal{K}^h=\{0\} \cup\left\{x \in \mathbb{R}^n \mid N_m(x)>2 h-1\right\}$.
	\end{itemize}
	In particular, we set $\mathcal{K}_0=\{0\}$ and $\mathcal{K}^0=\mathbb{R}^n$. It's easy to see that $\mathcal{K}_h\setminus\{0\}$ and 
	$\mathcal{K}^h\setminus\{0\}$ are open sets, $\mathcal{K}_h\cap\mathcal{K}^h=\{0\}$. From now on, we denote by $\bar{\mathcal{K}}_h$ (resp. 
	$\bar{\mathcal{K}}^h$) the closure of $\mathcal{K}_h$ (resp. $\mathcal{K}^h$), by $\operatorname{Int}\bar{\mathcal{K}}_h$ the interior of $\mathcal{K}_h$. 
	Obviously, $\bar{\mathcal{K}}_h=\overline{\left(\mathcal{K}_h \backslash\{0\}\right)}$ and $\operatorname{Int}\bar{\mathcal{K}}_{h}=\mathcal{K}_h 
	\backslash\{0\}$. 
	
	Based on Proposition \ref{pro of N of L-S}, the following property can be readily derived.
	\begin{prop}\label{solution operator prop}
		Let $S_A\left(s,t\right)$ be the solution operator of \eqref{A-E}. Then,  
		$$S_A\left ( s,t \right ) \left ( \mathcal{\bar{K}}_h\setminus \left \{ 0 \right \} \right )\subset \operatorname{Int} \mathcal{\bar{K}}_h \quad 
		\left(S_A\left ( s,t \right ) \left ( \mathcal{\bar{K}}^h\setminus \left \{ 0 \right \} \right )\subset \operatorname{Int} 
		\mathcal{\bar{K}}^h\right)$$
		for  $t>s$ $\left(t<s\right)$.
	\end{prop}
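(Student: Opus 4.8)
\emph{Proof proposal.} The plan is to reduce the whole statement to the behaviour of the integer $N(x(\tau))$ along a nontrivial solution $x(\tau)=S_A(s,\tau)x_0$ of \eqref{A-E}, exploiting Lemma \ref{pro of N of L-S} and Lemma \ref{linear system large-constant}. Recall from the text preceding the statement that $\operatorname{Int}\bar{\mathcal{K}}_h=\mathcal{K}_h\setminus\{0\}=\{x\neq 0:N_M(x)\le 2h-1\}$ and $\operatorname{Int}\bar{\mathcal{K}}^h=\mathcal{K}^h\setminus\{0\}=\{x\neq 0:N_m(x)>2h-1\}$. Two elementary preliminaries are needed. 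First, $N_m$ is lower semicontinuous and $N_M$ is upper semicontinuous, because shrinking the test neighbourhood can only raise the minimum and lower the maximum of $N$; hence $x\in\bar{\mathcal{K}}_h\setminus\{0\}$ forces $N_m(x)\le 2h-1$, and $x\in\bar{\mathcal{K}}^h\setminus\{0\}$ forces $N_M(x)>2h-1$. Second, since \eqref{A-E} is linear, a solution issued from $x_0\ne 0$ never vanishes, so it suffices to control $N_M$ (resp. $N_m$) at the image point.

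For the first inclusion, fix $x_0\in\bar{\mathcal{K}}_h\setminus\{0\}$ and $t>s$, and set $x(\tau)=S_A(s,\tau)x_0$. By Lemma \ref{pro of N of L-S}(i)--(iii), the map $\tau\mapsto N(x(\tau))$ is well defined on the open dense set where $x(\tau)\in\mathcal{N}$, is locally constant there off a locally finite set of jump points, and is non-increasing. The decisive point is the value immediately to the right of $s$: by Lemma \ref{linear system large-constant} (applicable because $A(\cdot)\in\mathcal{M}^-$), for $\tau$ slightly larger than $s$ one has $x(\tau)\in\mathcal{N}$ and $N(x(\tau))=N(x(s+))=N_m(x_0)\le 2h-1$ (this covers $x_0\in\mathcal{N}$, where $N_m(x_0)=N(x_0)$, as well as $x_0\notin\mathcal{N}$). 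Monotonicity then carries the bound forward: $N(x(\tau))\le 2h-1$ for every $\tau\in(s,t)$ with $x(\tau)\in\mathcal{N}$, so $N(x(t-))\le 2h-1$, and also $N(x(t))\le 2h-1$ if $x(t)\in\mathcal{N}$. Applying Lemma \ref{linear system large-constant} once more at $\tau=t$ gives $N_M(x(t))=N(x(t))$ when $x(t)\in\mathcal{N}$ and $N_M(x(t))=N(x(t-))$ otherwise, so in all cases $N_M(x(t))\le 2h-1$, i.e. $x(t)\in\operatorname{Int}\bar{\mathcal{K}}_h$. The second inclusion is symmetric in time: for $x_0\in\bar{\mathcal{K}}^h\setminus\{0\}$ and $t<s$ one has $N(x(s-))=N_M(x_0)>2h-1$, and since $\tau\mapsto N(x(\tau))$ is non-increasing, $N(x(\tau))\ge N(x(s-))>2h-1$ for every $\tau<s$ with $x(\tau)\in\mathcal{N}$; reading off the right-hand value at $\tau=t$ via Lemma \ref{linear system large-constant} gives $N_m(x(t))>2h-1$, i.e. $x(t)\in\operatorname{Int}\bar{\mathcal{K}}^h$.

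The step I expect to be the only genuinely subtle one is the treatment of the boundary points, i.e. those $x_0\in\bar{\mathcal{K}}_h\setminus\{0\}$ with $N_m(x_0)\le 2h-1<N_M(x_0)$, which really do occur since $x_0$ need not lie in $\mathcal{N}$. It is precisely there that the strict inequality $t>s$ is indispensable: at $\tau=s$ the value $N_M(x_0)$ may exceed $2h-1$, but the solution operator collapses it instantly to $N_m(x_0)$ through the identity $N(x(s+))=N_m(x_0)$ of Lemma \ref{linear system large-constant}(ii); for $x_0$ already in the interior (so $x_0\in\mathcal{N}$, $N_M(x_0)\le 2h-1$) the conclusion even persists at $t=s$. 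Apart from this, the remaining points to verify are routine: that the one-sided limits $N(x(\tau\pm))$ exist and that $\tau\mapsto N(x(\tau))$ is piecewise constant and non-increasing on the entire interval of definition, both immediate from Lemma \ref{pro of N of L-S}(i)--(iii), together with the semicontinuity of $N_m,N_M$ recorded in the first paragraph, used to identify the closures $\bar{\mathcal{K}}_h$ and $\bar{\mathcal{K}}^h$.
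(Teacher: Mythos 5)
Your proof is correct and supplies the argument that the paper only alludes to with the one‑line remark that Proposition \ref{solution operator prop} \emph{can be readily derived} from Lemma \ref{pro of N of L-S}. A small but worthwhile observation: the paper's remark cites only Lemma \ref{pro of N of L-S}, yet for boundary points of the cone — those $x_0$ with $N_m(x_0)\le 2h-1<N_M(x_0)$, hence $x_0\notin\mathcal{N}$ — part (iii) of that lemma gives only a strict drop $N(x(s+))<N(x(s-))$ without locating the new value, and you correctly recognize that the sharper identifications $N(x(s+))=N_m(x_0)$ and $N(x(s-))=N_M(x_0)$ from Lemma \ref{linear system large-constant}(ii) are what actually close the argument, both at the starting time and again at the terminal time when converting the on-curve limits $N(x(t-))$ into $N_M(x(t))$ (resp.\ $N(x(t+))$ into $N_m(x(t))$). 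The semicontinuity of $N_m$ and $N_M$ that you use to pass from $\mathcal{K}_h\setminus\{0\}$ to its closure, and the remark that a nontrivial solution of the linear system never vanishes, are both correct and complete the verification.
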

	
	As a matter fact, $\bar{\mathcal{K}}_h$ is a cone of rank-2$h$. To be more precise, we have
	
	\begin{prop}\label{cone}
		Given $h\in \left \{ 1,\dots ,\frac{\tilde{n}-1 }{2}  \right \}$ and let $V$ be a subspace of $\mathbb{R}^n$. Then
		$$
		d_h=\max \left\{\operatorname{dim} V \mid V \subset \bar{\mathcal{K}}_h\right\}=2h.
		$$
	\end{prop}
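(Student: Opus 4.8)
The plan is to prove the two inequalities $d_h\ge 2h$ and $d_h\le 2h$ separately, working throughout with one explicitly chosen constant coefficient system of the form \eqref{A-E}. First I would record the dynamical description of $N_m,N_M$: by Lemma \ref{linear system large-constant}(ii) (together with Lemma \ref{pro of N of L-S}(ii)), along the solution $t\mapsto e^{tB}x$ of any system \eqref{A-E} one has $N_m(x)=\lim_{t\to 0^+}N(e^{tB}x)$ and $N_M(x)=\lim_{t\to 0^-}N(e^{tB}x)$. I would also use the trivial remarks that $N_M\le 2h-1$ on $V\setminus\{0\}$ already forces $V\setminus\{0\}\subset\mathcal K_h\subset\bar{\mathcal K}_h$, and that $\operatorname{Int}\bar{\mathcal K}_h=\mathcal K_h\setminus\{0\}$. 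Now fix $B\in\mathcal M^-$, say the matrix with $b_1=\dots=b_{n-1}=1$, $b_n=-1$ and all other entries $0$; then $B^n=-I$, the complexified eigenvalues are the $n$-th roots of $-1$, namely $\lambda_k=e^{\mathrm i\pi(2k-1)/n}$, and their real parts $\cos(\pi(2k-1)/n)$ are strictly decreasing for $k=1,\dots,\frac{\tilde n+1}{2}$. Grouping complex-conjugate eigenvectors yields real $B$-invariant subspaces $W_1,\dots,W_{(\tilde n+1)/2}$, each a plane except possibly the last (a line when $n$ is odd), ordered by strictly decreasing real part. A direct inspection of the sign pattern of the generators of $W_j$, combined with Lemma \ref{pro of N of L-S}(iv), shows that $W_j\setminus\{0\}\subset\mathcal N$ and $N\equiv 2j-1$ on $W_j\setminus\{0\}$.

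For the lower bound I would take $V:=W_1\oplus\dots\oplus W_h$, of dimension $2h$ (each $W_j$ with $j\le h\le\frac{\tilde n-1}{2}$ being a plane). Given $x\in V\setminus\{0\}$, let $j^\ast\le h$ be the largest index with nonzero $W_{j^\ast}$-component. Since $\operatorname{Re}\lambda$ strictly decreases with the index, $W_{j^\ast}$ carries the smallest real part among the components present in $x$, so the direction of $e^{tB}x$ approaches $W_{j^\ast}$ as $t\to-\infty$; using Lemma \ref{pro of N of L-S}(i),(ii),(v) and the continuity of $N$ near $W_{j^\ast}\setminus\{0\}\subset\mathcal N$, the function $t\mapsto N(e^{tB}x)$ is constant and equal to $2j^\ast-1$ for $t\le -t_0$. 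As this function is nonincreasing (Lemma \ref{pro of N of L-S}(ii),(iii)), it is $\le 2j^\ast-1$ everywhere, hence $N_M(x)=\lim_{t\to 0^-}N(e^{tB}x)\le 2j^\ast-1\le 2h-1$. Therefore $V\setminus\{0\}\subset\mathcal K_h$ and $d_h\ge 2h$.

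For the upper bound I would argue by contradiction. Suppose $V$ is a subspace with $\dim V=2h+1$ and $V\setminus\{0\}\subset\bar{\mathcal K}_h$. For any $T>0$, Proposition \ref{solution operator prop} gives $e^{TB}V\setminus\{0\}\subset\operatorname{Int}\bar{\mathcal K}_h=\mathcal K_h\setminus\{0\}$, so $N_M\le 2h-1$ on the $(2h+1)$-dimensional subspace $e^{TB}V$. Since $\dim(W_1\oplus\dots\oplus W_h)=2h<2h+1$, dimension counting yields a nonzero $v\in e^{TB}V$ contained in the complementary invariant subspace $W_{h+1}\oplus W_{h+2}\oplus\dots$. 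Running the dynamical argument forwards, $W_{h+1}$ carries the largest real part among the components present in $v$, so the direction of $e^{tB}v$ approaches some $W_j$ with $j\ge h+1$ as $t\to+\infty$; hence $N(e^{tB}v)\to 2j-1\ge 2h+1$, and by monotonicity $N(e^{tB}v)\ge 2h+1$ for all $t\ge 0$, so $N_m(v)=\lim_{t\to 0^+}N(e^{tB}v)\ge 2h+1$. This forces $2h+1\le N_m(v)\le N_M(v)\le 2h-1$, a contradiction. (When $n$ is odd and $2h+1=n$, so $V=\mathbb R^n$, the contradiction is immediate because $N$ takes the value $n$ on the line $W_{(n+1)/2}$.) Hence $d_h\le 2h$, and combining the two bounds gives $d_h=2h$.

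The step I expect to be the main obstacle is the identity $N\equiv 2j-1$ on $W_j$: this is where one must actually compute the cyclic, ``twisted'' sign count of the discrete modes spanning $W_j$ and invoke the structural facts of Section 2, and it is precisely this computation that pins the rank to $2h$ rather than to some unspecified even integer. A more routine technical point is the treatment of vectors with vanishing coordinates, which the dynamical identities $N_m(x)=\lim_{t\to 0^+}N(e^{tB}x)$ and $N_M(x)=\lim_{t\to 0^-}N(e^{tB}x)$ (Lemma \ref{linear system large-constant}) handle cleanly; beyond that, the proof reduces to monotonicity of $N$ along the flow and the strict ordering of $\operatorname{Re}\lambda_k$.
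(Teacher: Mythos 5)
Your argument is correct and follows the route the paper itself invokes via \cite[Proposition 2.7]{YD}: pick a constant-coefficient member of $\mathcal M^-$, diagonalize it into the twisted Fourier modes $W_1,\dots,W_{(\tilde n+1)/2}$, verify $N\equiv 2j-1$ on each mode, and then extract the rank $2h$ from backward- and forward-time domination together with the interior-mapping property of $S_A(s,t)$ (Proposition \ref{solution operator prop}), exactly as in the two bounds you write down. The only small slip is citing Lemma \ref{pro of N of L-S}(iv) to conclude $W_j\setminus\{0\}\subset\mathcal N$ --- that item gives only a \emph{necessary} condition for membership in $\mathcal N$, and what you actually need is its easy converse (vectors with no two consecutive vanishing coordinates lie in $\mathcal N$), which the sinusoidal parametrization of $W_j$ supplies; beyond that you correctly flag the twisted sign count $N\equiv 2j-1$ on $W_j$ as the one substantive computation to be carried out.
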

	
	\begin{proof}
		The proof follows a similar approach to that in \cite[Proposition 2.7]{YD}.
	\end{proof}
	
	Finally, in order to complete the proof of Theorem \ref{Floquet Theory}, we further need the following lemma, which is the generalized Perron-Frobenius 
	Theorem(see e.g., \cite[Theorem 1]{FOA}). 
	
	\begin{lem}\label{Perron–Frobenius}
		Let $K\subset \mathbb{R}^{n} $  be a solid cone(i.e. $\operatorname{Int}K\ne \emptyset$) of rank $d$, and $L$ be a linear operator on $\mathbb{R}^{n}$ satisfies $L(K \setminus \{0\}) \subset \operatorname{Int} K$. Then there exist (unique) subspaces $V_1$, $V_2$ such that
		\begin{enumerate}
			\item[{\rm(i)}] $\mathbb{R}^n =V_1 \oplus  V_2,\quad \operatorname{dim} V_1=d$;
			\item[{\rm(ii)}] $L V_j \subset V_j, j=1,2$;
			\item[{\rm(iii)}] $V_1 \subset\{0\} \cup \operatorname{Int} K, V_2 \cap K=\{0\}$.
		\end{enumerate}
		Moreover, if $\sigma _1\left ( L \right ) $ and $\sigma _2\left ( L \right ) $ are the spectra of $L$ restricted to $V_1$ and $V_2$, then 
		$$
		\lambda \in \sigma_1(L), \mu \in \sigma_2(L) \Rightarrow|\lambda|>|\mu|.
		$$
	\end{lem}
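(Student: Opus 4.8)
The plan is to obtain $V_1$ as a fixed point of the map that $L$ induces on the Grassmannian of $d$-planes, to read off the spectral gap from the \emph{rate} at which that map attracts toward $V_1$, and then to recover $V_2$ as the complementary spectral subspace of $L$. Throughout we may assume $1\le d\le n-1$: if $d=n$ then $K$ contains an $n$-dimensional subspace, hence $K=\mathbb R^n$, and the assertion holds trivially with $V_1=\mathbb R^n$, $V_2=\{0\}$.

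\textbf{Reduction to the Grassmannian.} As $K$ is a cone of rank $d<n$, it is a proper closed subset stable under scalar multiplication, so $0\notin\operatorname{Int}K$. If $V\subseteq K$ is a $d$-dimensional subspace and $0\ne z\in V\cap\ker L$, then $z\in K\setminus\{0\}$ while $Lz=0\notin\operatorname{Int}K$, contradicting the hypothesis; hence every $d$-plane contained in $K$ meets $\ker L$ only at $0$, so $V\mapsto LV$ is a well-defined smooth map $L_*$ on a neighbourhood of the compact set $\mathcal S:=\{V\in\operatorname{Gr}(d,\mathbb R^n)\mid V\subseteq K\}$ (which is nonempty because $\operatorname{rank}K=d$). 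The hypothesis $L(K\setminus\{0\})\subseteq\operatorname{Int}K$ gives $L_*(\mathcal S)\subseteq\mathcal G_K:=\{V\in\operatorname{Gr}(d,\mathbb R^n)\mid V\setminus\{0\}\subseteq\operatorname{Int}K\}$, an open subset of $\operatorname{Gr}(d,\mathbb R^n)$ with $\mathcal G_K\subseteq\overline{\mathcal G_K}\subseteq\mathcal S$. Thus $L_*$ carries the compact set $\overline{\mathcal G_K}$ into the compact subset $L_*(\overline{\mathcal G_K})$ of the relatively open set $\mathcal G_K$.

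\textbf{The invariant $d$-plane --- the crux.} Endow $\mathcal G_K$ with a Birkhoff--Hilbert-type projective metric $\rho$ adapted to the rank-$d$ cone $K$: $\rho$ is finite on $\mathcal G_K$, diverges toward $\partial\mathcal G_K$, and is bi-Lipschitz equivalent to the Riemannian distance of $\operatorname{Gr}(d,\mathbb R^n)$ on each compact subset of $\mathcal G_K$. Since $L_*(\overline{\mathcal G_K})$ is a compact subset of $\mathcal G_K$ it has finite $\rho$-diameter, and the contraction mechanism underlying the rank-$d$ Perron--Frobenius theorem then provides $\theta\in(0,1)$ with $\rho(L_*V,L_*V')\le\theta\,\rho(V,V')$ for all $V,V'\in\overline{\mathcal G_K}$. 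The Banach fixed point theorem now yields a unique $V_1\in\mathcal G_K$ with $LV_1=V_1$: thus $\operatorname{dim}V_1=d$ and $V_1\setminus\{0\}\subseteq\operatorname{Int}K$, and $L_*^mV\to V_1$ uniformly and geometrically (rate $\theta$) on $\overline{\mathcal G_K}$. This is the only genuinely hard step; it is exactly the generalized Perron--Frobenius theorem for cones of rank $d$ (cf. \cite{FOA}), and it is where the hypothesis ``$K$ has rank $d$'' --- not merely ``$K$ is solid'' --- is used in full: one must show that sending a rank-$d$ cone strictly into its interior contracts angles between $d$-planes by a definite factor. (A purely topological Brouwer-type argument would also locate $V_1$, but the quantitative contraction is what drives the remaining steps.)

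\textbf{Spectral gap, construction of $V_2$, and uniqueness.} Since $V_1$ is $L$-invariant with $V_1\cap\ker L=\{0\}$, the restriction $L|_{V_1}$ is invertible and $L$ descends to a map $\bar L$ on $\mathbb R^n/V_1$, with $\sigma(\bar L)=\sigma(L)\setminus\sigma(L|_{V_1})$ as multisets (block triangularity of $L$ relative to $V_1$). Under the identification $T_{V_1}\operatorname{Gr}(d,\mathbb R^n)\cong\operatorname{Hom}(V_1,\mathbb R^n/V_1)$ a direct computation gives $DL_*(V_1)\phi=\bar L\circ\phi\circ(L|_{V_1})^{-1}$, whose eigenvalues are the ratios $\bar\mu/\lambda$ with $\bar\mu\in\sigma(\bar L)$, $\lambda\in\sigma(L|_{V_1})$; hence its spectral radius equals $\mu_2/\nu_1$, where $\nu_1:=\min\{|\lambda|\mid\lambda\in\sigma(L|_{V_1})\}$ and $\mu_2:=\max\{|\bar\mu|\mid\bar\mu\in\sigma(\bar L)\}$. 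Because $V_1$ attracts at a uniform geometric rate and $\rho$ is locally comparable to the smooth metric near $V_1$, we get $\|DL_*(V_1)^m\|=O(\theta^m)$, so the spectral radius of $DL_*(V_1)$ is $<1$; that is, $\nu_1>\mu_2$. In particular $\sigma(L|_{V_1})$ and $\sigma(\bar L)$ are separated in modulus, so we may take $V_2$ to be the $L$-invariant spectral subspace of $L$ associated with the eigenvalues in $\sigma(\bar L)$: then $\mathbb R^n=V_1\oplus V_2$, $\operatorname{dim}V_2=n-d$, $LV_2\subseteq V_2$, $\sigma(L|_{V_2})=\sigma(\bar L)$, and $|\lambda|>|\mu|$ whenever $\lambda\in\sigma_1(L):=\sigma(L|_{V_1})$ and $\mu\in\sigma_2(L):=\sigma(L|_{V_2})$. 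It remains to check $V_2\cap K=\{0\}$. Suppose $0\ne x\in V_2\cap K$; then $x\notin V_1$, so $Q:=V_1+\mathbb R x$ is an $L$-invariant $(d+1)$-dimensional subspace. For $0\ne q=v+tx\in Q$ with $v\in V_1$: if $v=0$ then $q=tx\in K\setminus\{0\}$, hence $L^mq\in\operatorname{Int}K$ for every $m\ge1$; if $v\ne0$ then $\lim_m\|L^mv\|^{1/m}\ge\nu_1>\mu_2\ge\limsup_m\|L^mx\|^{1/m}$ (as $x\in V_2$), so $L^mq/\|L^mv\|$ remains within any prescribed distance of the unit sphere of $V_1$ for $m$ large, and that sphere lies at a positive distance from the complement of $\operatorname{Int}K$. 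A routine compactness argument over the unit sphere of $Q$ then produces $m_0$ with $L^{m_0}(Q)\setminus\{0\}\subseteq\operatorname{Int}K$. But $L^{m_0}x\ne0$ (since $x\in K\setminus\{0\}$ forces $L^mx\in\operatorname{Int}K$ while $0\notin\operatorname{Int}K$) and $L^{m_0}x\in V_2$, so $L^{m_0}x\notin V_1$ and $L^{m_0}(Q)=V_1+\mathbb R L^{m_0}x$ is genuinely $(d+1)$-dimensional --- contradicting $\operatorname{rank}K=d$. Hence $V_2\cap K=\{0\}$. Finally, for uniqueness: any $d$-dimensional $L$-invariant $V_1'$ with $V_1'\setminus\{0\}\subseteq\operatorname{Int}K$ satisfies $V_1'\cap V_2=\{0\}$, so $\mathbb R^n=V_1'\oplus V_2$ is an $L$-invariant splitting, which forces $\sigma(L|_{V_1'})=\sigma(L)\setminus\sigma(L|_{V_2})=\sigma_1(L)$ and therefore $V_1'=V_1$; the symmetric argument shows $V_2$ is unique.
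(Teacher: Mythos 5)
The paper does not actually prove this lemma: it is quoted from Fusco and Oliva \cite{FOA} (their Theorem~1), and the in-text ``proof'' of Theorem \ref{Floquet Theory} simply applies it. So there is no in-paper argument to compare yours against; judged as a self-contained proof, your proposal has a genuine gap exactly where you flag ``the crux,'' and the gap is not a removable formality. The step ``endow $\mathcal G_K$ with a Birkhoff--Hilbert-type projective metric $\rho$ \dots\ the contraction mechanism underlying the rank-$d$ Perron--Frobenius theorem then provides $\theta\in(0,1)$'' is not an off-the-shelf fact for the cones of this lemma. A cone of rank $d$ here is merely a closed set, stable under all scalar multiplications, whose maximal contained subspace has dimension $d$; it is not convex, and for such sets there is no standard projective metric on $\{V\in\operatorname{Gr}(d,\mathbb R^n)\mid V\subseteq K\}$ that every operator with $L(K\setminus\{0\})\subseteq\operatorname{Int}K$ contracts by a uniform factor. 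Producing such a metric --- or otherwise showing that $L_*$ has a unique, uniformly attracting fixed $d$-plane --- \emph{is} the content of the generalized Perron--Frobenius theorem, so invoking ``(cf.\ \cite{FOA})'' at that point makes the argument circular. The Brouwer-type fallback you mention is not automatic either, since the compact set $\overline{\mathcal G_K}$ need not have the fixed-point property.

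The surrounding material is correct and carefully done: the observations that $0\notin\operatorname{Int}K$ and that $L$ is injective on every $d$-plane in $K$; the identification $DL_*(V_1)\phi=\bar L\circ\phi\circ(L|_{V_1})^{-1}$ and the deduction $\nu_1>\mu_2$ from the uniform attraction rate; the choice of $V_2$ as the complementary spectral subspace; the proof that $V_2\cap K=\{0\}$ by pushing the $(d+1)$-plane $V_1+\mathbb R x$ into $\{0\}\cup\operatorname{Int}K$ (the compactness over the unit sphere of $Q$ does go through, because the sets $\{q\mid L^mq\in\operatorname{Int}K\}$ are open and increasing in $m$); and the uniqueness argument. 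In short, you have correctly reduced the lemma to its hard core but not proved that core. Since the paper itself only cites \cite{FOA} for the whole statement, the appropriate fix is either to do the same or to supply the missing attraction/contraction argument in full.
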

	
	\begin{proof}[Proof of Theorem \ref{Floquet Theory}]
		Fix $s,t\in \left ( \alpha ,\beta  \right ) $, in view of Proposition \ref{solution operator prop} and Proposition \ref{cone}, $S_{A} \left 
		( s,t \right )$ and $\bar{\mathcal{K}}_h$ satisfy Lemma \ref{Perron–Frobenius}. Then the rest of the proof is similar to \cite[Theorem 2]{FOT}.
	\end{proof}

	\begin{defn}\label{simple critical elements}
		{\rm 
			Let $\gamma$ be a periodic orbit of the system  $\dot{x} = f\left ( x \right 
			) $ with period $\omega$.  If 1 is a simple eigenvalue of $S_{Df \left(\gamma\right)}\left(0,\omega\right)$, then $\gamma$  
			is called a simple periodic orbit. 
		}
	\end{defn}

	An additional observation on the result of Theorem \ref{Floquet Theory} leads to the following:
	
	\begin{cor}\label{simple}
		Let $\gamma$ be a periodic orbit of (\ref{negative feedback system}) with  period $\omega$. Then, 1 is the unique eigenvalue of $S_{D{f}\left ( 
			\gamma  \right ) } \left ( 0,\omega  \right ) $ with a modulus of 1,indicating that a simple periodic orbit coincides with a hyperbolic periodic orbit.
	\end{cor}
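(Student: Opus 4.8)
The plan is to read everything off the Floquet decomposition of the monodromy operator $M:=S_{Df(\gamma)}(0,\omega)$ and then to isolate the single block carrying the multiplier $1$. Write $\gamma$ as the trace of an $\omega$-periodic solution $p(\cdot)$ of \eqref{negative feedback system}; the linear variational equation along $\gamma$ is $\dot y=A(t)y$ with $A(t)=Df(p(t))$, which is continuous, $\omega$-periodic, and satisfies $A(t)\in\mathcal{M}^-$ for all $t$ because $p(t)\in\Omega$. Theorem \ref{Floquet Theory} (with $s=0$, $t=\omega$) gives the $M$-invariant splitting $\mathbb{R}^n=W_1\oplus\cdots\oplus W_m$, $m=\frac{\tilde n+1}{2}$, with $\dim W_i=2$ for $i<m$, $\dim W_m\in\{1,2\}$, $N\equiv 2i-1$ on $W_i\setminus\{0\}$, and spectral moduli $\mu_1\ge\nu_1>\mu_2\ge\nu_2>\cdots>\mu_m\ge\nu_m$. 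Since $\dot p(\cdot)$ is a nowhere-vanishing $\omega$-periodic solution of the variational equation (nowhere zero since $\gamma$ is not an equilibrium), $M\dot p(0)=\dot p(0)$, so $1\in\mathrm{spec}(M)$; because the closed annuli $\{\nu_i\le|z|\le\mu_i\}$ are pairwise disjoint, there is a unique index $i_0$ with $1\in\mathrm{spec}(M|_{W_{i_0}})$, whence $\dot p(0)\in W_{i_0}\setminus\{0\}$ and $N(\dot p(0))=2i_0-1$. Disjointness also forces every eigenvalue of $M$ of modulus $1$ to lie in $\mathrm{spec}(M|_{W_{i_0}})$: for $i<i_0$ its eigenvalues have modulus $\ge\nu_i>\mu_{i_0}\ge 1$, and for $i>i_0$ modulus $\le\mu_i<\nu_{i_0}\le 1$.

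Thus it suffices to prove that $1$ is the only eigenvalue of $M|_{W_{i_0}}$ of modulus $1$. If $\dim W_{i_0}=1$ this is immediate. If $\dim W_{i_0}=2$, the two eigenvalues cannot be a non-real conjugate pair (that would leave no slot for the real eigenvalue $1$), so the second eigenvalue $\lambda$ is real; and if $\mu_{i_0}>\nu_{i_0}$ then exactly one of $\mu_{i_0},\nu_{i_0}$ equals $1$ while $|\lambda|$ equals the other, so $|\lambda|\ne 1$ and we are done. The only surviving possibility is $\mu_{i_0}=\nu_{i_0}=1$ with $\lambda=-1$, and excluding a Floquet multiplier $-1$ is the crux.

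To exclude it I would invoke the integer-valued Lyapunov function. A multiplier $-1$ produces a nonzero solution $y$ with $My_0=-y_0$, so $y(t+\omega)=-y(t)$, $y$ is $2\omega$-periodic, and $y(0)=y_0\in W_{i_0}$, hence $N(y(0))=2i_0-1$. Since $N$ is non-increasing along solutions of \eqref{A-E} and $N(y(\omega))=N(-y_0)=N(y_0)$, Lemma \ref{pro of N of L-S} forces $N(y(t))\equiv 2i_0-1$ for all $t$; the same argument applied to $z_c(t):=\dot p(t)+c\,y(t)$ — which is nowhere zero, since $\dot p(t),y(t)$ stay independent under the invertible $S_A(0,t)$ and $z_c(0),z_c(\omega)\in W_{i_0}$ — gives $N(z_c(t))\equiv 2i_0-1$ for every $c\in\mathbb{R}$, and together with $N(y(t))\equiv 2i_0-1$ this means every nonzero vector of the moving $2$-plane $P(t):=\mathrm{span}\{\dot p(t),y(t)\}$ has $N=2i_0-1$. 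So $P(t)\subset\{x:N(x)=2i_0-1\}\cup\{0\}$ with $P(0)=P(\omega)=W_{i_0}$, and by Lemma \ref{pro of N of L-S}(iv) each consecutive coordinate pair is nonvanishing on $P(t)\setminus\{0\}$. This endows the closed loop $t\mapsto P(t)$ with the canonical monotone planar phase typical of these cyclic systems: the $\omega$-periodic solution $\dot p$ returns to itself after an integer number of turns of that phase, so $M|_{W_{i_0}}$ preserves the induced orientation of $W_{i_0}$, contradicting $\det(M|_{W_{i_0}})=-1<0$ — the value that would correspond to the half-turn realized by a $(-1)$-eigenvector. Hence $\lambda\ne -1$, and $1$ is the unique eigenvalue of $M$ of modulus $1$.

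With this established, the definition of a hyperbolic periodic orbit ($1$ a simple eigenvalue of $M$ \emph{and} $1$ the unique modulus-$1$ eigenvalue of $M$) reduces to "$1$ is a simple eigenvalue of $M$", i.e. to the definition of a simple periodic orbit, so the two notions coincide. I expect the real obstacle to be the third paragraph: converting the constancy of $N$ on the moving plane $P(t)$ into a rigorous winding/orientation argument. The delicate point is that $P(t)$ itself rotates over $[0,\omega]$, so the naive estimate "$\det(M|_{W_{i_0}})=\exp\int_0^\omega\mathrm{tr}(\cdots)>0$" is circular; one must instead use the orientation canonically attached to the $N$-level set via the nonvanishing consecutive pairs (Lemma \ref{pro of N of L-S}(iv)) together with the monotonicity of $N$ to make the integer-turn/half-turn dichotomy precise.
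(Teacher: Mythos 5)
Your reduction is correct: Theorem~\ref{Floquet Theory} localizes all modulus-$1$ multipliers of $M:=S_{Df(\gamma)}(0,\omega)$ to the unique block $W_{i_0}$ containing $\dot p(0)$, a complex-conjugate pair on $W_{i_0}$ is impossible because $1$ is already an eigenvalue there, and the only dangerous leftover is $\mathrm{spec}(M|_{W_{i_0}})=\{1,-1\}$, i.e.\ $\det(M|_{W_{i_0}})=-1$. The paper presents the corollary only as ``an additional observation'' on Theorem~\ref{Floquet Theory} and gives no written proof, so there is nothing to compare against line by line; your filled-in argument is a valid one.

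The step you flag as delicate is in fact rigorous and needs no ``monotone planar phase'' or winding-number discussion. You have already shown that every nonzero vector of $P(t)=\operatorname{span}\{\dot p(t),y(t)\}$ lies in $\mathcal N$ with $N$-value $2i_0-1$ for all $t\in[0,\omega]$; Lemma~\ref{pro of N of L-S}(iv) then says the linear map $\Pi_s:x\mapsto(x_s,x_{s+1})$ has trivial kernel on $P(t)$, hence is an isomorphism $P(t)\to\mathbb R^2$ for every $t$. Fix a basis of $W_{i_0}$ and set $L(t):=\Pi_s\circ S_A(0,t)|_{W_{i_0}}$, a continuous family of $2\times 2$ matrices with $\det L(t)\neq 0$ on $[0,\omega]$, so $\det L(0)$ and $\det L(\omega)$ have the same sign. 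Since $P(\omega)=W_{i_0}$ and $S_A(0,\omega)|_{W_{i_0}}=M|_{W_{i_0}}$, one has $L(\omega)=L(0)\cdot\bigl[M|_{W_{i_0}}\bigr]$, whence $\det\bigl(M|_{W_{i_0}}\bigr)=\det L(\omega)/\det L(0)>0$, ruling out $\lambda=-1$. This is not circular: it never invokes a ``restricted trace'' of $A(t)$, only the sign-constancy of $\det L(t)$, which is exactly what your Lyapunov-function computation for $y$, $\dot p$ and $z_c$ secures. With the modulus-$1$ spectrum reduced to $\{1\}$, the equivalence of ``simple'' and ``hyperbolic'' periodic orbit is immediate from the definitions.
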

	
	The following property is an application of Theorem \ref{Floquet Theory} and will play a crucial role in Section 6.
	
	\begin{prop}\label{one to one for connect}
		Let $e^\pm$ be two hyperbolic equilibria of (\ref{negative feedback system}) with same index $i\left(e^+\right)=i\left(e^-\right)=2h-1$, where $h\in \left \{ 1,\dots ,\frac{\tilde{n}+1 }{2}  \right \}$. Assume that $y\left(t\right)=\left(y_1\left(t\right),\dots,y_n\left(t\right)\right)$ is a solution of (\ref{negative feedback system}) connecting $e^-$ and $e^+$. If $e^-=e^+=e$, $y\left(t\right)\ne e$. Given $s\in \left\{1,\dots,n\right\}$ and letting $y_{n+1}\left(t\right)=y_1\left(t\right)$, then the map
		$$t\in \left(-\infty,\infty\right)\mapsto  \left(y_s\left(t\right),y_{s+1}\left(t\right)\right)$$
		is injective.
	\end{prop}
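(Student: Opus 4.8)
The plan is to mimic the argument used for periodic orbits in Corollary~\ref{periodic solution of system}, but with the integer-valued Lyapunov function evaluated along differences of translates of the connecting orbit $y(t)$, using the fact that $N$ must be \emph{constant} along $y(\cdot)$ (rather than merely periodic, as in the periodic case). First I would argue that $N(y(t))$ is well-defined and constant for all $t\in\mathbb{R}$. Since $y$ connects two hyperbolic equilibria $e^{\pm}$, the function $z(t):=y(t)-e^{+}$ solves a linear non-autonomous equation $\dot z = A^{+}(t)z$ with $A^{+}(t)=\int_0^1 Df(e^{+}+\theta z(t))\,d\theta \in\mathcal{M}^-$ (by convexity of $\Omega$ and the structure of $\mathcal{M}^-$, which is preserved under such averaging), and $z(t)\to 0$ as $t\to+\infty$; similarly $y(t)-e^{-}$ solves an analogous linear equation with $y(t)-e^{-}\to 0$ as $t\to -\infty$. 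By Lemma~\ref{pro of N of L-S}(v), applied to these linear equations, $N(y(t)-e^{+})$ is constant for $t$ large positive and $N(y(t)-e^{-})$ is constant for $t$ large negative; more importantly, since $z(t)\to 0$ in the hyperbolic unstable/stable manifold, Theorem~\ref{Floquet Theory} pins these constants down to the Morse index data: $N(y(t)-e^{+})=2h-1$ for $t\gg 0$ (the tail lies in the stable manifold, which is dominated by the top $n-(2h-1)$ Floquet spaces, so $N_m\ge$ something — here I need to be careful, but the hyperbolicity $i(e^{\pm})=2h-1$ together with the nested cone structure forces $N(y(t)-e^{+})=2h-1$), and likewise $N(y(t)-e^{-})=2h-1$ for $t\ll 0$.

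Next I would establish that $N$ does not drop anywhere along the orbit, hence is globally constant equal to $2h-1$. The key is that there is no ``room'' for a drop: whenever $N(y(t))$ exists and is locally constant it can only \emph{decrease} as $t$ increases (by Lemma~\ref{pro of N of L-S}(iii) and the fact that differences-of-solutions and, more generally, solutions of equations with $A(t)\in\mathcal{M}^-$ satisfy the drop property — but here $y(t)-e$ itself is such a solution). Wait — I should phrase this via the difference $w(t):=y(t+\tau)-y(t)$ for a fixed shift $\tau$; this $w$ solves $\dot w = B(t)w$ with $B(t)=\int_0^1 Df(y(t)+\theta(y(t+\tau)-y(t)))\,d\theta\in\mathcal{M}^-$, so by Corollary~\ref{subtract} / Lemma~\ref{pro of N of L-S} the function $N(w(t))$ is non-increasing in $t$, drops strictly at points where $w(t)\notin\mathcal{N}\cup\{0\}$, and is constant on the complementary open intervals. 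The asymptotic behaviour of $w$ as $t\to\pm\infty$: as $t\to+\infty$, $y(t+\tau),y(t)\to e^{+}$, and $w(t)=(y(t+\tau)-e^{+})-(y(t)-e^{+})$ is a difference of two tails in $W^u(e^{+})$ or $W^s(e^{+})$; using the generalized Floquet theory (Theorem~\ref{Floquet Theory}) applied to the linear equation governing $y(t)-e^{+}$ together with the standard asymptotic-direction argument, $w(t)$ approaches the dominant Floquet subspace of that tail, on which $N\equiv 2h-1$ — so $\lim_{t\to+\infty}N(w(t))=2h-1$; symmetrically $\lim_{t\to-\infty}N(w(t))=2h-1$. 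Since $N(w(t))$ is a non-increasing integer-valued function with equal limits $2h-1$ at $\pm\infty$, it must be \emph{identically} equal to $2h-1$, and in particular it never drops, which by Lemma~\ref{pro of N of L-S}(iii) (the strict-drop statement) forces $w(t)\in\mathcal{N}$ for every $t\in\mathbb{R}$, for every shift $\tau\ne 0$.

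Now the injectivity follows quickly. Suppose for contradiction that there exist $t_1\ne t_2$ with $(y_s(t_1),y_{s+1}(t_1))=(y_s(t_2),y_{s+1}(t_2))$. Put $\tau=t_1-t_2$ and $w(t)=y(t+\tau)-y(t)$, so $w(t_2)$ has its $s$-th and $(s+1)$-th coordinates both zero: $(w_s(t_2),w_{s+1}(t_2))=(0,0)$. But we just showed $w(t_2)\in\mathcal{N}$, and by Lemma~\ref{pro of N of L-S}(iv) every point of $\mathcal{N}$ has $(x_i,x_{i+1})\ne(0,0)$ for all $i$ — in particular $w(t_2)\notin\mathcal{N}$ unless $w(t_2)=0$. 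Hence $w(t_2)=0$, i.e.\ $y(t+\tau)\equiv y(t)$ by uniqueness of solutions, so $y$ is periodic with period $|\tau|$. A bounded periodic solution cannot converge to $e^{-}$ as $t\to-\infty$ unless $y\equiv e^{-}$, contradicting either $y(t)\ne e$ (in the homoclinic case $e^{-}=e^{+}=e$) or the fact that a heteroclinic orbit is non-constant. This completes the proof.

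\textbf{Main obstacle.} The delicate point is identifying the limiting values of $N(w(t))$ as $t\to\pm\infty$ as exactly $2h-1$: this requires knowing that the ``leading asymptotic direction'' of a solution tail inside $W^s(e^+)$ (resp.\ $W^u(e^-)$) lands in a single Floquet block $W_i$ of Theorem~\ref{Floquet Theory} on which $N\equiv 2i-1$, and that the index constraint $i(e^{\pm})=2h-1$ forces $i=h$. Making this rigorous needs the exponential-dichotomy/spectral-gap structure from Theorem~\ref{Floquet Theory} (strict inequalities $\mu_i\ge\nu_i>\mu_{i+1}$) plus a standard argument that a nonzero solution decaying (resp.\ growing) at a definite exponential rate must align with the corresponding invariant subspace; the potential subtlety is that $w(t)$ is a \emph{difference} of two tails, which could a priori cancel the leading terms — one must check the leading behaviour survives, or else bypass this by working directly with the monotone non-increasing integer $N(w(t))$ bounded below by $2h-1$ (from the cone inclusion $y(t)-e^{\pm}$ staying in $\bar{\mathcal{K}}^{h-1}$) and above by $2h-1$ (from $\bar{\mathcal{K}}_h$), which is cleaner and avoids the asymptotic-direction analysis altogether.
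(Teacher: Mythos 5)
Your overall strategy coincides with the paper's: form the difference $z(t)=y(t+\tau)-y(t)$ with $\tau=t_1-t_2$, show that $N(z(t))$ has the \emph{same} integer limit $2h-1$ as $t\to+\infty$ and as $t\to-\infty$, and then use the strict-drop property (Corollary~\ref{subtract} together with Lemma~\ref{pro of N of L-S}(iv)) at $t=t_2$ to obtain a contradiction, ruling out the residual case $z(t_2)=0$ by uniqueness and the fact that a periodic connecting orbit must be constant. This is exactly the shape of the paper's argument.

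However, the crux — pinning down $\lim_{t\to\pm\infty}N(z(t))=2h-1$ — is where your write-up has a genuine gap. Your primary route (the ``asymptotic-direction argument'' applied directly to the secant $z(t)$) runs into the cancellation issue you yourself flag, and the ``cleaner bypass'' you propose does not work: the inclusions $y(t)-e^{\pm}\in\bar{\mathcal{K}}^{h-1}$ (or $\bar{\mathcal{K}}_h$) do not transfer to $z(t)=\bigl(y(t+\tau)-e^{+}\bigr)-\bigl(y(t)-e^{+}\bigr)$, because the nested cones $\bar{\mathcal{K}}_h$, $\bar{\mathcal{K}}^h$ are cones of rank $\ge 2$ and are not closed under addition or subtraction. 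So neither branch of your argument actually establishes the two limits.

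The paper closes this gap by working with the tangent vector $\dot{y}(t)$ rather than the secant. Since $\dot{y}(t)\in T_{y(t)}W^u(e^-)$ for all $t$, the hypothesis $i(e^-)=2h-1$ gives $T_{e^-}W^u(e^-)=E^u\subset W_1\oplus\cdots\oplus W_h\subset\mathcal{K}_h$ via Theorem~\ref{Floquet Theory}; since $\mathcal{K}_h\setminus\{0\}$ is open and $W^u(e^-)$ is $C^1$, one gets $T_{y(t)}W^u(e^-)\subset\mathcal{K}_h$ for $t\ll -1$, hence $N(\dot{y}(t))\le 2h-1$ there, i.e.\ $h^-\le h$. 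Symmetrically $W^s(e^+)\subset\mathcal{K}^{h-1}$ forces $h^+\ge h$, and the monotonicity of $N$ along the variational equation gives $h^-\ge h^+$, so $h^\pm=h$. Only then does one pass from $\dot{y}$ to $z$ via the mean-value/fundamental-theorem relation $z(t)=\int_0^\tau\dot{y}(t+s)\,ds$, under which the sign pattern of $\dot{y}$ (fixed for $|t|$ large) is inherited by $z$. Your write-up never uses $\dot{y}(t)\in T_{y(t)}W^u(e^-)$, which is precisely the device that sidesteps the cancellation problem; supplying it would repair the proof. As a secondary remark, the opening paragraph about $N(y(t))$ being ``well-defined and constant'' is a red herring: $N$ in this paper is evaluated on differences of solutions (or on tangent vectors), not on the orbit $y(t)$ itself, and that sentence plays no role in what follows.
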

	
	\begin{proof}
		Suppose, for contradiction, that there exist $t_1,t_2\in \left(-\infty,\infty\right)$ with  $t_1\ne t_2$ such that $$\left(y_s\left(t_1\right)-y_s\left(t_2\right),y_{s+1}\left(t_1\right)-y_{s+1}\left(t_2\right)\right)= 0.$$ 
		Let $z\left(t\right)=y\left(t+t_1-t_2\right)-y\left(t\right)$. By Corollary \ref{subtract}, $N\left(z\left(t\right)\right)$ strictly decreases at $t=t_2$. We claim that there exists $T\in \left(0,\infty\right)$ such that 
		$$
		N\left(z\left(-t\right)\right)=N\left(z\left(t\right)\right),\quad \forall t\ge T.
		$$
		In fact, by Lemma \ref{pro of N of L-S}(v), there exist $h^-$, $h^+\in \left \{ 1,\dots ,\frac{\tilde{n}+1 }{2}  \right \}$ with $h^+\le h^-$, and some $T>0$, such that 
		\[
		N\left(\dot{y}\left(t\right)\right)=2h^++1 \text{(resp. $N\left(\dot{y}\left(t\right)\right)=2h^-+1$)}
		\]  
		for all $t\ge T$ (resp. $t\le -T$). It follows from Theorem \ref{Floquet Theory} and  $S_{Df\left(e^-\right)}\left(0,1\right)=e^{Df\left(e^-\right)}$ that 
		$$
		T_{e^-}W^u\left(e^-\right)\subset W_1\oplus \cdots \oplus W_h\subset \mathcal{K}_h.
		$$
		Clearly, $\dot{y}\left(t\right)\in T_{y\left(t\right)}W^u\left(e^-\right)$. Since $W^u\left(e^-\right)$ is a $C^1$ manifold and $\mathcal{K}_h\setminus\left\{0\right\}$ is an open set, we obtain $\dot{y}\left(t\right)\in T_{y\left(t\right)}W^u\left(e^-\right)\subset \mathcal{K}_h$ for all $t\ll -1$. It's means that $h^-\le h$. A similar argument for $e^+$ yields
		that $h^+\ge h$. Thus, $h^+=h^-=h$. Note that $z\left(t\right)=\left(t_1-t_2\right)\dot{y}\left(\eta \right)$, $\eta\in \left[t,t+t_1-t_2\right]$, the claim is proved. Therefore, the proof is completed.
	\end{proof}

	\section{Poincar\'{e}-Bendixson property and robustness}
	In this section, we summarize the long-time behavior of bounded orbits of \eqref{negative feedback system}.  Recently, E. Weiss and M. Margaliot \cite{EM} used an integer-valued Lyapunov function associated with a competitive-cooperative tridiagonal system to construct suitable cones.   Building on relevant results by Sanchez \cite{S} from 2009, they established the Poincar\'{e}-Bendixson property for strongly 2-cooperative systems. Since 
	\eqref{negative feedback system} is a strongly 2-cooperative system(see  \cite[Corollary 4]{EM}), it inherits this property(see \cite[Theorem 12]{EM}). Specifically,
	
	\begin{prop}\label{PB}
		(Poincar\'{e}-Bendixson property) Suppose $f\in \mathcal{L}^-$ and $S_f\left(t\right)u_0$ is a bounded solution of $\dot{x}=f\left(x\right)$. Then one of the following holds: 
		\begin{enumerate}
			\item [\rm (i)] $\omega\left(u_0\right)$ is a closed orbit;
			\item [\rm (ii)] $\omega\left(u_0\right)$ consists of fixed points and there connecting orbits.
		\end{enumerate}
		There are similar results for $\alpha$-limit set as well.
	\end{prop}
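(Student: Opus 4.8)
The plan is to obtain the statement as a special case of the Poincar\'{e}--Bendixson theorem for strongly $2$-cooperative systems, once we have checked that \eqref{negative feedback system} belongs to that class. The first step is purely about the sign structure of the Jacobian. For $f\in\mathcal{L}^-$ we have $Df(x)\in\mathcal{M}^-$ for every $x\in\Omega$, so along any trajectory the variational matrix is a continuous curve of tridiagonal-plus-corner matrices satisfying \eqref{bid-feedback} with the normalization \eqref{transformation}; the requirement $\prod_{i=1}^n b_i+\prod_{i=1}^n c_i\neq 0$ is exactly what guarantees the irreducibility needed for \emph{strong} (not merely weak) $2$-cooperativity. This verification has already been carried out in \cite[Corollary 4]{EM}, which I would simply quote: every $f\in\mathcal{L}^-$ generates a strongly $2$-cooperative flow on $\Omega$.

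The second step is then a direct appeal to \cite[Theorem 12]{EM}. Let $u_0$ be such that $S_f(t)u_0$ is bounded; reading ``bounded'' in the usual way---the forward orbit has compact closure contained in $\Omega$---the set $\omega(u_0)$ is nonempty, compact, connected and invariant, and \cite[Theorem 12]{EM} (built in turn on Sanchez \cite{S}) gives the dichotomy: either $\omega(u_0)$ is a single closed orbit, which is alternative (i), or $\omega(u_0)$ contains an equilibrium, in which case that theorem forces it to be a union of equilibria together with the orbits connecting them, which is alternative (ii). The third step disposes of the $\alpha$-limit assertion: reversing time turns \eqref{negative feedback system} into $\dot x=-f(x)$ with Jacobian $-Df(x)$, and after the cyclic relabeling $x_i\mapsto x_{n+1-i}$ (which swaps the super- and sub-diagonals) one checks $-Df(x)\in\mathcal{M}^-$ again, so the reversed flow is once more strongly $2$-cooperative; applying \cite[Theorem 12]{EM} to it and noting that its $\omega$-limit sets are precisely the $\alpha$-limit sets of the original flow yields the last sentence.

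An alternative route, which I would record as a remark and which avoids both the Weiss--Margaliot formalism and the time-reversal bookkeeping, uses the cones of Section 3 directly. Taking $h=1$, Proposition \ref{cone} shows $\bar{\mathcal{K}}_1$ is a solid cone of rank $2$, and Proposition \ref{solution operator prop} gives $S_{Df(x(\cdot))}(s,t)(\bar{\mathcal{K}}_1\setminus\{0\})\subset\operatorname{Int}\bar{\mathcal{K}}_1$ for $t>s$ along any trajectory, together with the symmetric inclusion for $\bar{\mathcal{K}}^1$ when $t<s$. This is exactly the invariant cone-field hypothesis of \cite[Corollary 2.4]{Tl}, whose conclusion is the Poincar\'{e}--Bendixson dichotomy for $\omega$- and $\alpha$-limit sets simultaneously.

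The step I expect to require the most care is not conceptual but bookkeeping: one must confirm that the precise hypotheses of whichever abstract theorem is invoked are met verbatim---the exact definition of strong $2$-cooperativity used in \cite{EM}, or the smoothness, boundedness and cone-field conditions in \cite{Tl}---and in particular that ``bounded solution'' is understood as having compact closure inside the \emph{open} set $\Omega$, so that limit sets are nonempty, invariant and internally chain recurrent. Once the conventions are aligned, no further argument is needed.
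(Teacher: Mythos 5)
Your first two steps and the alternative route via \cite[Corollary 2.4]{Tl} follow the paper exactly: the paper likewise does not give an independent proof of Proposition \ref{PB}, but simply invokes \cite[Corollary 4]{EM} to identify \eqref{negative feedback system} as strongly $2$-cooperative and then \cite[Theorem 12]{EM} for the dichotomy, noting the Tere\v{s}\v{c}\'{a}k cone route as a remark. Up to that point your proposal is a faithful reconstruction.

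The third step, however, contains a genuine error. The claim that $-Df(x)$ lies in $\mathcal{M}^-$ after the relabeling $x_i\mapsto x_{n+1-i}$ is false. That relabeling swaps the super- and sub-diagonals and the two corners, so for $A\in\mathcal{M}^-$ with $b_i>0$ $(i<n)$, $b_n<0$, $c_i\ge 0$ $(i<n)$, $c_n\le 0$, the relabeled $-A$ has super-diagonal entries $-c_{n-i}\le 0$, sub-diagonal entries $-b_{n-i}<0$ and corner entries $-c_n\ge 0$, $-b_n>0$: every sign is exactly wrong for membership in $\mathcal{M}^-$. Moreover the obstruction is not merely cosmetic. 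The product $\prod_i b_i$ is an invariant under coordinate sign changes and cyclic relabelings, but under $A\mapsto -A$ it picks up the factor $(-1)^n$; hence for odd $n$ the time-reversed system is a \emph{positive} feedback system ($\Delta=+1$), and no change of coordinates within this matrix class will put $-Df$ back into $\mathcal{M}^-$. For even $n$ one could repair it by additionally negating alternating coordinates, a step your argument omits, but for odd $n$ the repair is impossible.

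The $\alpha$-limit assertion is nonetheless correct, and the proper justification is already internal to the framework you cite: the integer-valued Lyapunov function $N$ is eventually constant along $\dot c(t)$ as $t\to-\infty$ just as it is as $t\to+\infty$ (Lemma \ref{pro of N of L-S}(v) is two-sided), and the complementary cones $\bar{\mathcal{K}}^h$ are invariant under the backward solution operator by Proposition \ref{solution operator prop}. Consequently \cite[Corollary 2.4]{Tl} (and the Sanchez/Weiss--Margaliot machinery behind \cite[Theorem 12]{EM}) delivers the Poincar\'{e}--Bendixson conclusion for $\alpha$- and $\omega$-limit sets simultaneously, and the time-reversal detour is both unnecessary and, as written, unsound.
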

	
	\vskip 2mm
	
	From another perspective, 
	a careful examination of 
	$N_m$ defined in Section 2 reveals that $N_m$ is, in fact, a discrete Lyapunov functional as defined in \cite{Tl}. Additionally, $\bar{\mathcal{K}}_h$ are nested invariant cones.  Consequently, we can verify the assumptions of 
	\cite[Corollary 2.4]{Tl} and obtain the Poincar\'{e}-Bendixson property for \eqref{negative feedback system}. Moreover, if $f\in \mathcal{L}^-_d$ in \eqref{negative feedback system}(i.e., \eqref{negative feedback system} admits a global attractor), then the structure of $\omega$-limit set persists under $C^1$-small perturbations of \eqref{negative feedback system}. For instance, let $f\in \mathcal{L}^-$ satisfy the following strong dissipative condition {\textbf (SD)}:
	\[
	\exists M,\delta>0,\, \text{such that} <f(x),x>\leq -\delta|x|,\quad \forall |x|>M.
	\]
	
	Consider the equation
	\begin{equation}\label{c1-perturbed}
		\dot{x}=f\left(x\right)+\epsilon g(x)
	\end{equation}
	where $f\in \mathcal{L}^-$ satisfies {\textbf (SD)}, and $g(x)=(g_1(x),\cdots,g_n(x))$ is a bounded and $C^1$ smooth function. The following theorem holds:
	\begin{thm}\label{robust}
		There exists  $\epsilon_0>0$ such that for any $0<\epsilon<\epsilon_0$, if the solution $S_{f,\epsilon}(t)u_0$ of \eqref{c1-perturbed} remains in $\Omega$ for all $t\geq 0$, then the positive orbit of $S_{f,\epsilon}(t)u_0$ is precompact and the $\omega$-limit set $\omega_{\epsilon}(u_0)$ has Poincar\'{e}-Bendixson property.
	\end{thm}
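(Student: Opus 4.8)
The plan is to first use the strong dissipative condition \textbf{(SD)} to produce a compact absorbing set, which immediately yields precompactness, and then to show that the nested invariant cones $\bar{\mathcal K}_h$ of Section~3 remain \emph{strongly} invariant under the linearized flow of the perturbed equation \eqref{c1-perturbed} once $\epsilon$ is small — which is exactly what is needed to rerun, for \eqref{c1-perturbed}, the derivation of Proposition \ref{PB} via \cite[Corollary 2.4]{Tl}.

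\emph{Precompactness.} Let $\|g\|_\infty:=\sup_{\Omega}|g|<\infty$ (if $g\equiv 0$ there is nothing to prove) and set $\epsilon_0:=\delta/(2\|g\|_\infty)$. For a solution $x(t)=S_{f,\epsilon}(t)u_0$ that remains in $\Omega$, whenever $|x(t)|\ge M$ one has, using \textbf{(SD)} (and continuity at $|x|=M$) together with Cauchy--Schwarz,
\[
\frac{d}{dt}\,\tfrac12|x(t)|^2=\langle f(x(t)),x(t)\rangle+\epsilon\langle g(x(t)),x(t)\rangle\le(-\delta+\epsilon\|g\|_\infty)|x(t)|\le-\tfrac{\delta}{2}|x(t)|<0
\]
for every $0<\epsilon<\epsilon_0$. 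Hence $t\mapsto|x(t)|^2$ is strictly decreasing while $|x(t)|\ge M$, so $\{|x|\le M\}$ is positively invariant and the orbit of $u_0$ enters it in finite time; in particular the positive orbit is bounded, hence precompact, and $\omega_\epsilon(u_0)$ is a nonempty, compact, connected, $S_{f,\epsilon}$-invariant set contained in a compact, forward-invariant absorbing set $\mathcal B\subset\Omega$.

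\emph{Transfer of the cone structure.} Recall that for any continuous $A(\cdot)\in\mathcal M^-$, Proposition \ref{solution operator prop} gives $S_A(s,t)(\bar{\mathcal K}_h\setminus\{0\})\subset\operatorname{Int}\bar{\mathcal K}_h$ for all $t>s$ and all $h\in\{1,\dots,\frac{\tilde n+1}{2}\}$, with $\bar{\mathcal K}_1$ a cone of rank $2$ (Proposition \ref{cone}); this, together with the discrete Lyapunov functional $N_m$, is what feeds \cite[Corollary 2.4]{Tl} in the proof of Proposition \ref{PB}. Fix $\tau>0$. By uniform continuous dependence on initial data, $x_0\mapsto\big(s\in[0,\tau]\mapsto Df(\varphi(s,x_0))\big)$ is continuous from the compact set $\mathcal B$ into $C([0,\tau],\mathcal M^-)$, so $\mathcal S_\tau:=\{S_{Df(\varphi(\cdot,x_0))}(0,\tau):x_0\in\mathcal B\}$ is a compact subset of $GL(n)$; since every member of $\mathcal S_\tau$ maps $\bar{\mathcal K}_h\setminus\{0\}$ into the \emph{open} set $\operatorname{Int}\bar{\mathcal K}_h$, compactness of $\mathcal S_\tau$, of the slices $\{|y|=1\}\cap\bar{\mathcal K}_h$, and finiteness of the number of cones yield a \emph{uniform} margin $\rho>0$ with $\operatorname{dist}(Ty,\partial\bar{\mathcal K}_h)\ge\rho$ for all $T\in\mathcal S_\tau$, all unit $y\in\bar{\mathcal K}_h$, all $h$; since $\mathcal B$ is forward invariant, the same holds over every window $[t_0,t_0+\tau]$, $t_0\ge0$. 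Now compare with \eqref{c1-perturbed}: by uniform continuous dependence on the initial datum and on the parameter $\epsilon$, perturbed orbit segments converge in $C([0,\tau],\mathcal B)$ to unperturbed ones as $\epsilon\to0$, uniformly over $\mathcal B$; since $Df,Dg$ are bounded on $\mathcal B$, a Gronwall estimate then gives $\operatorname{dist}\big(S_{D(f+\epsilon g)(x(\cdot))}(t_0,t_0+\tau),\mathcal S_\tau\big)\to0$ as $\epsilon\to0$, uniformly in $t_0\ge0$ and over orbits $x(\cdot)$ of \eqref{c1-perturbed} in $\mathcal B$. Shrinking $\epsilon_0$ so that this distance stays $<\rho/2$ for $0<\epsilon<\epsilon_0$, we conclude $S_{D(f+\epsilon g)(x(\cdot))}(t_0,t_0+\tau)(\bar{\mathcal K}_h\setminus\{0\})\subset\operatorname{Int}\bar{\mathcal K}_h$ for every $h$, every $t_0\ge0$ and every such orbit; thus the nested cones $\bar{\mathcal K}_h$ remain strongly invariant under the perturbed linearized flow on $\mathcal B$ and $N_m$ stays a discrete Lyapunov functional for \eqref{c1-perturbed}. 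The hypotheses of \cite[Corollary 2.4]{Tl} — equivalently of the cone Poincar\'e--Bendixson theorem of Sanchez \cite{S} and Weiss--Margaliot \cite{EM} — are therefore met on $\mathcal B$, which yields the Poincar\'e--Bendixson property for $\omega_\epsilon(u_0)$ exactly as in Proposition \ref{PB}. (When $n=2$ this is just the classical Poincar\'e--Bendixson theorem.)

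\emph{Main obstacle.} The difficulty is precisely the step that makes the cone machinery applicable: the perturbation $\epsilon g$ carries no sign structure, so $D(f+\epsilon g)=Df+\epsilon Dg$ is a full matrix and \eqref{c1-perturbed} is no longer strongly $2$-cooperative — none of the results quoted for Proposition \ref{PB} apply to it directly. One must instead preserve the \emph{geometric} content, namely strong invariance of the rank-$2$ cone field, and the delicate point is the \emph{uniformity} of the strictness margin $\rho$ of the inclusions $S_{Df(x(\cdot))}(0,\tau)(\bar{\mathcal K}_h\setminus\{0\})\subset\operatorname{Int}\bar{\mathcal K}_h$ over the compact family of linearizations coming from orbits in the absorbing set: it is exactly this uniform margin that prevents a sufficiently small $C^1$ perturbation from pushing the cone out of its interior.
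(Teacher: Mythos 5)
The paper's own proof is a two‑sentence pointer: it just says to verify the hypotheses of \cite[Corollary 2.4]{Tl} for \eqref{c1-perturbed} when $\epsilon$ is small and refers to \cite[Section 4.2]{DZ} for "similar deductions." Your proposal fleshes out exactly this strategy, so it is the same route, carried out in detail rather than delegated. The decomposition into (a) dissipativity $\Rightarrow$ compact absorbing set $\Rightarrow$ precompactness, and (b) uniform‑margin persistence of the rank‑$2h$ cone field $\bar{\mathcal K}_h$ under the perturbed linearized flow $\Rightarrow$ the discrete Lyapunov functional $N_m$ and nested‑cone hypotheses of Tere\v{s}\v{c}\'ak's Corollary 2.4 remain valid, is the right one, and you have correctly isolated the crux: $D(f+\epsilon g)$ is no longer in $\mathcal M^-$, so one cannot cite strong $2$-cooperativity, and the geometric cone inclusion must be propagated by a uniform strictness margin $\rho$ on the compact family of linearizations over the absorbing set. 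The Gronwall comparison giving $\operatorname{dist}\bigl(S_{D(f+\epsilon g)(x(\cdot))}(t_0,t_0+\tau),\mathcal S_\tau\bigr)\to 0$ uniformly is the mechanism the paper implicitly invokes.

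One point you should make explicit rather than leave implicit: the uniform margin is obtained only for a \emph{fixed} time step $\tau$, whereas for the unperturbed system Proposition \ref{solution operator prop} gives strict invariance for all $t>s$. For $t-s\ll 1$ the strictness margin of the unperturbed operator degenerates, so for fixed $\epsilon>0$ one cannot expect strict invariance of the cones under $S_{D(f+\epsilon g)}(s,t)$ for arbitrarily small $t-s$. This is not fatal — Tere\v{s}\v{c}\'ak's framework is built on a \emph{discrete} Lyapunov functional and the $\omega$-limit set of the continuous flow coincides with that of any time-$\tau$ return map, and the reference \cite[Section 4.2]{DZ} handles exactly the discrete reformulation — but as written your sentence "thus the nested cones $\bar{\mathcal K}_h$ remain strongly invariant under the perturbed linearized flow" overstates what the margin argument proves: it gives strong invariance of the time-$\tau$ step (and hence of any $k\tau$ step, $k\ge 1$, by composition), which is what feeds the discrete-time theorem, not strong invariance for all positive times. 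You should also say a word on why $\mathcal B$ can be taken compact inside $\Omega$ (it is not automatic for an arbitrary open $\Omega$ that $\{|x|\le M\}\cap\Omega$ is compact); this is a hypothesis implicit in the theorem's phrasing "remains in $\Omega$" and is inherited from the setup, not introduced by your argument.
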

	\begin{proof}
		To prove this theorem, it suffices to check that \eqref{c1-perturbed} satisfies \cite[Corollary 2.4]{Tl} when $0<\epsilon_0\ll 1$. For this purpose, one may refer to \cite[Section 4.2]{DZ} for similar deductions. 
	\end{proof}
	
	\section{Automatic Transversality}
	
	In this section, we prove Theorem  \ref{Automatic Transversality}. Assume $\gamma^{+}$, $\gamma^{-}$ are two hyperbolic critical elements of \eqref{negative feedback system}. If $q\in W^u\left(\gamma^{-}\right) \cap 
	W^s\left(\gamma^{+}\right)$ and $c\left(t\right)$ is a solution of (\ref{negative feedback system}) with the initial condition $c\left(0\right)=q$, then 
	the 
	$\alpha$-limit and $\omega$-limit sets of $q$ are coincide with $\gamma^-$ and $\gamma^+$, respectively. The orbit $C=\left \{ x\mid x=c\left ( t \right ) ,t\in \left 
	( 
	-\infty ,+\infty  \right )  \right \} $ is called to be an connecting orbit between two hyperbolic critical elements $\gamma^-$ and $\gamma^+$. Meanwhile, the 
	equation 
	$\dot{y}=f^{\prime}(c(t)) y$ has a solution $\dot{c}\left(t\right)$.  According to Theorem \ref{linear system large-constant} and Proposition \ref{pro of N 
		of 
		L-S}, there exist $\bar{t}>0$ and $h^-,h^+\in \left \{ 1,\dots ,\frac{\tilde{n}+1 }{2}  \right \}$ with $h^- \ge h^+$ , such that for any 
	$t>\bar{t}\left(t<-\bar{t}\right)$, $N(\dot{c}(t))=2 h^{+}-1\left(N(\dot{c}(t))=2 h^{-}-1\right)$. 
	
	The following propositions will be repeatedly used in the proof of Theorem \ref{Automatic Transversality}.
	\begin{prop}\label{Hyperbolic critical elements prop}
		Let $\gamma ^-$, $\gamma ^+$ be two hyperbolic critical elements of (\ref{negative feedback system}). Take $h^-,h^+\in 
		\left \{ 1,\dots ,\frac{\tilde{n}+1 }{2}  \right \}$ such that $\lim _{t \rightarrow-\infty} N(\dot{c}(t))=2 h^{-}-1$ and $\lim _{t \rightarrow \infty} 
		N(\dot{c}(t))=2 h^{+}-1$.
		\begin{enumerate}
			\item[{\rm(i)}] If $\gamma^-$ is a hyperbolic periodic orbit, then $N(\dot{\gamma}^-(t))=2 h^{-}-1$, $\forall t \in \left ( -\infty ,+\infty  
			\right 
			)$. Moreover,
			\begin{itemize}
				\item if $h^->h^+$, then $T_{q} W^u\left(\gamma^{-}\right)$ contains a subspace $\tilde{\Sigma}_0^{-}$, such that
				$$
				\operatorname{dim}\tilde{\Sigma}_0^{-}=2 h^{-}-2 \quad \text { and } \quad \tilde{\Sigma}_0^{-} \subset \mathcal{K}_{h^{-}-1};
				$$
				\item if $h^{-}=h^{+}=h<\frac{\tilde{n}+1}{2}$, then
				$$
				\operatorname{dim} T_{q} W^u\left(\gamma^{-}\right)=2 h \quad \text { and } \quad T_{q} W^u\left(\gamma^{-}\right) \subset 
				\mathcal{K}_h \text {; }
				$$
				\item if $h^{-}=h^{+}=\frac{\tilde{n}+1}{2}$, then $T_{q} W^u\left(\gamma^{-}\right)=\mathbb{R}^n$.
			\end{itemize}
			
			\item[{\rm(ii)}] If $\gamma^+$ is a hyperbolic periodic orbit, then $N(\dot{\gamma}^+(t))=2 h^{+}-1$, $\forall t \in \left ( -\infty ,+\infty  
			\right 
			)$. Moreover,
			\begin{itemize}
				\item if $h^->h^+$, then $T_{q} W^s\left(\gamma^{+}\right)$ contains a subspace $\tilde{\Sigma}_0^{+}$, such that
				$$
				\operatorname{dim} \tilde{\Sigma}_0^{+}=n- 2 h^{-}+2 \quad \text { and } \quad \tilde{\Sigma}_0^{+} \subset \mathcal{K}^{h^{-}-1};
				$$
				\item if $h^{-}=h^{+}=h>1$, then
				$$
				\operatorname{dim} T_{q} W^s\left(\gamma^{+}\right)=n- 2 h+2 \quad \text { and } \quad T_{q} W^s\left(\gamma^{+}\right) \subset 
				\mathcal{K}^{h} \text {; }
				$$
				\item if $h^{-}=h^{+}=1$, then $T_{\bar{q}} W^s\left(\gamma^{+}\right)=\mathbb{R}^n$.
			\end{itemize}
			\item[{\rm(iii)}] If $\gamma^-$ is a hyperbolic equilibrium point, then $i\left(\gamma^-\right)\ge sup\left\{2h^--1,1\right\} $ .
			\item[{\rm(iv)}] If $\gamma^+$ is a hyperbolic equilibrium point, then $i\left(\gamma^+\right)\le 2h^+-1 $ .
		\end{enumerate}
	\end{prop}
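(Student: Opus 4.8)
The plan is to split the four assertions into the two equilibrium cases (iii)--(iv), handled by a direct semicontinuity argument for $N$ along the connecting orbit, and the two periodic cases (i)--(ii), handled by pushing the tangent spaces of the invariant manifolds along the linearized flow and trapping them inside the appropriate nested cone via Proposition \ref{solution operator prop} and Proposition \ref{cone}. Write $\Phi(t,s)$ for the solution operator of $\dot y=f'(c(t))y$; then $\Phi(t,s)$ carries $T_{c(s)}W^u(\gamma^-)$ onto $T_{c(t)}W^u(\gamma^-)$ and $T_{c(s)}W^s(\gamma^+)$ onto $T_{c(t)}W^s(\gamma^+)$, and $\dot c(t)=\Phi(t,0)\dot c(0)$ lies in both for every $t$.

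For (iii): since $\dot c(t)\in T_{c(t)}W^u(\gamma^-)$ and $c(t)\to\gamma^-$ as $t\to-\infty$, continuity of the tangent bundle of the $C^1$-manifold $W^u(\gamma^-)$ forces every accumulation point $v^*$ of the normalized vectors $\dot c(t)/|\dot c(t)|$ into $T_{\gamma^-}W^u(\gamma^-)$, which for an equilibrium is the unstable eigenspace $E^u$ of $Df(\gamma^-)$. By Lemma \ref{pro of N of L-S}(v), $N(\dot c(t))=2h^--1$ for $t$ sufficiently negative, so upper semicontinuity of $N_M$ together with scale invariance gives $N_M(v^*)\ge 2h^--1$. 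On the other hand, by the modulus ordering in Theorem \ref{Floquet Theory} applied to $S_{Df(\gamma^-)}(0,1)=e^{Df(\gamma^-)}$, $E^u$ equals the sum of the leading Floquet subspaces $W_1\oplus\cdots\oplus W_{k-1}$ together with a (possibly one-dimensional) part of $W_k$, where $k$ is the largest index with $W_k\cap E^u\ne\{0\}$; since $N_M\le 2k-1$ on $W_1\oplus\cdots\oplus W_k$ we get $2h^--1\le N_M(v^*)\le 2k-1$, hence $k\ge h^-$ and $i(\gamma^-)=\dim E^u\ge 2(k-1)+1\ge 2h^--1$. Combined with $i(\gamma^-)\ge1$ (a genuine connecting orbit must leave $\gamma^-$) this proves (iii). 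Part (iv) is the mirror image: $\dot c(t)\to v^{**}\in E^s(\gamma^+)$ as $t\to+\infty$, lower semicontinuity of $N_m$ gives $N_m(v^{**})\le 2h^+-1$, and since $E^s(\gamma^+)$ is a sum of the trailing Floquet subspaces one obtains $i(\gamma^+)\le 2h^+-1$.

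For (i) (and symmetrically (ii)): first, $\dot\gamma^-$ is a nontrivial $\omega$-periodic solution of a linear system with coefficients in $\mathcal{M}^-$, so by Lemma \ref{pro of N of L-S}(i),(iii) the function $N(\dot\gamma^-(t))$ is defined off a finite set per period and can only jump downward there; periodicity forbids a net drop, so it is constant, and its value is identified with $2h^--1$ by choosing $t_k\to-\infty$ with $c(t_k)\to\gamma^-(\tau_0)$: then $\dot c(t_k)=f(c(t_k))\to\dot\gamma^-(\tau_0)\in\mathcal{N}$, and $N$ is locally constant on $\mathcal{N}$ while $N(\dot c(t_k))=2h^--1$. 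Next, at every $p\in\gamma^-$ one has $T_pW^u(\gamma^-)=E^u(p)\oplus\mathbb{R}\dot\gamma^-(p)\subseteq W_1^{(p)}\oplus\cdots\oplus W_{h^-}^{(p)}$: indeed $\dot\gamma^-(p)$ is a monodromy eigenvector with eigenvalue $1$ and $N(\dot\gamma^-(p))=2h^--1$, hence $\dot\gamma^-(p)\in W_{h^-}^{(p)}$, which by the modulus ordering makes $W_1^{(p)},\dots,W_{h^--1}^{(p)}$ strongly unstable and $W_{h^-+1}^{(p)},\dots$ stable, so $E^u(p)\subseteq W_1^{(p)}\oplus\cdots\oplus W_{h^-}^{(p)}$. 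Since $N_M\le 2h^--1$ on this sum, the bundle $TW^u(\gamma^-)$ lies near $\gamma^-$ inside the open cone $\operatorname{Int}\bar{\mathcal{K}}_{h^-}$, so for $T$ large $T_{c(-T)}W^u(\gamma^-)\subset\operatorname{Int}\bar{\mathcal{K}}_{h^-}$, and applying $\Phi(0,-T)=S_A(-T,0)$, which by Proposition \ref{solution operator prop} carries $\bar{\mathcal{K}}_{h^-}\setminus\{0\}$ into $\operatorname{Int}\bar{\mathcal{K}}_{h^-}$, we get $T_qW^u(\gamma^-)\subset\mathcal{K}_{h^-}$, whence $\dim T_qW^u(\gamma^-)\le 2h^-$ by Proposition \ref{cone}. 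For the bullet $h^->h^+$ one applies the same push-forward to the strongly unstable subbundle $\Sigma^{ss}$ of $TW^u(\gamma^-)$ (well defined by the spectral gap $\nu_{h^--1}>\mu_{h^-}$), which restricts over $\gamma^-$ to $W_1\oplus\cdots\oplus W_{h^--1}\subset\operatorname{Int}\bar{\mathcal{K}}_{h^--1}$: then $\tilde\Sigma_0^-:=\Phi(0,-T)\bigl(\Sigma^{ss}(c(-T))\bigr)$ is a $(2h^--2)$-dimensional subspace of $T_qW^u(\gamma^-)$ contained in $\mathcal{K}_{h^--1}$, and the dual construction with $\bar{\mathcal{K}}^{h^--1}$ gives (ii).

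The main obstacle is the exact dimension count in the remaining bullets, i.e.\ $\dim T_qW^u(\gamma^-)=2h$ when $h^-=h^+=h<\tfrac{\tilde{n}+1}{2}$ (and $=\mathbb{R}^n$ when $h^-=h^+=\tfrac{\tilde{n}+1}{2}$). The cone bound above gives $i(\gamma^-)\le 2h-1$ and the strongly unstable blocks give $i(\gamma^-)\ge 2h-2$, so everything reduces to excluding $i(\gamma^-)=2h-2$, equivalently to showing that the two-dimensional Floquet block $W_h$ carrying $\dot\gamma^-$ has its second multiplier outside the unit disk. Here one must use that $h^-=h^+$ forces $N(\dot c(t))\equiv 2h-1$ on all of $\mathbb{R}$, so $\dot c(t)$ stays permanently in the level-$h$ shell $\mathcal{K}_h\cap\mathcal{K}^{h-1}$; combining the backward trapping in $\bar{\mathcal{K}}_h$ with the forward trapping in $\bar{\mathcal{K}}^{h-1}$ (Proposition \ref{solution operator prop} for both cones) and the explicit descriptions of $T_pW^u(\gamma^-)$ and $T_pW^s(\gamma^-)$ in terms of the $W_i$, one should derive that a stably split $W_h$ is incompatible with the existence of such a connecting orbit. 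I expect this asymptotic analysis at the two ends --- parallel to the unidirectional treatment in \cite{YD} --- to be the most delicate step; the case $h^-=h^+=\tfrac{\tilde{n}+1}{2}$ with $n$ odd is immediate because then $\dim W_h=1$, forcing $i(\gamma^-)=n-1$ automatically, and for $n$ even it reduces to the same split-block question with $h=n/2$.
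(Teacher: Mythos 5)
Your treatment of parts (iii) and (iv) is correct and is essentially the same argument as the paper's: push the tangent bundle of the relevant invariant manifold up to (or from) the equilibrium, use the Floquet splitting $\mathbb{R}^n = W_1\oplus\cdots\oplus W_{(\tilde n+1)/2}$ from Theorem \ref{Floquet Theory} applied to $e^{Df(e^\pm)}$ to read off where $T_{e^\pm}W^{u/s}$ sits among the nested cones, and compare against $N(\dot c(t))=2h^\pm-1$ near the respective end. The paper works with $T_{c(t^\pm)}W^{u/s}$ directly and the openness of $\mathcal{K}^k$ rather than with limits of $\dot c(t)/|\dot c(t)|$, but the two phrasings encode the same argument and both use the ordering $\mu_1\ge\nu_1>\mu_2\ge\cdots$ plus the identification of $k=\min\{i:\nu_i<1\}$.

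For parts (i) and (ii), however, there is a real gap that you yourself flag. The paper does not reprove these assertions at all; it cites \cite[p.\,874]{YD} for them. Your reconstruction correctly establishes the outer bound $T_qW^u(\gamma^-)\subset\mathcal{K}_{h^-}$ (hence, by Proposition \ref{cone}, $\dim T_qW^u(\gamma^-)\le 2h^-$) via forward cone-invariance from Proposition \ref{solution operator prop}, and the lower bound $i(\gamma^-)\ge 2h^--2$ from the strongly unstable Floquet blocks $W_1\oplus\cdots\oplus W_{h^--1}$. But the stated conclusion in the case $h^-=h^+=h<\tfrac{\tilde n+1}{2}$ is the exact equality $\dim T_qW^u(\gamma^-)=2h$, which requires excluding $i(\gamma^-)=2h-2$, i.e.\ proving that the second multiplier of the block $W_h$ containing $\dot\gamma^-$ lies strictly outside the unit circle. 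Your sketch (``one should derive that a stably split $W_h$ is incompatible with the existence of such a connecting orbit'') is a plan, not a proof — it leaves precisely this pinch-point open. Likewise, the first bullet of (i) (the $h^->h^+$ case) invokes a $C^1$ strong-unstable subbundle of $TW^u(\gamma^-)$ converging to $W_1\oplus\cdots\oplus W_{h^--1}$ along the backward orbit; this is reasonable but is an invariant-manifold fact you state rather than establish. The cleanest resolution, and what the paper actually does, is to cite \cite{YD} for (i)--(ii) and reserve the Floquet/cone argument for (iii)--(iv).
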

	
	\begin{proof}
		The proofs for (i) and (ii) are included in \cite[p.874]{YD}. We only show the fourth assertion, as the proof of the third is similar. 
		Denote $\gamma^+=e^+$. Then, the operator $S_{{f}'\left ( e^{+}  \right )  } \left ( 0,1 \right ) =e^{{f}'\left ( e^{+}  \right )} $ satisfies the 
		condition of 
		Theorem \ref{Floquet Theory}. Let $k=\min \left\{i \mid \nu_i<1\right\}$, $k \ge  1$, then $T_{e^{+}} W^s(e^{+}) \subset W_k \oplus \cdots \oplus 
		W_\frac{\tilde{n}+1 }{2} \subset \mathcal{K}^k$. Since $\lim_{t \to \infty} c\left ( t \right ) =e^+$,  choose $t^+\gg 1$  such 
		that 
		$T_{c\left(t^{+}\right)} W^s(e^+) \subset \mathcal{K}^k$. Meanwhile, $\dot{c}\left(t^{+}\right) \in T_{c\left(t^{+}\right)} W^s(e^+) \subset 
		\mathcal{K}^k $, and 
		$N\left(\dot{c}\left(t^{+}\right)\right)=2 h^{+}-1$. This implies, $h^+ \ge k$ and $\nu_{h^+}<1$. Therefore, $i\left(\gamma^+\right)\le 2h^+-1. $ 
	\end{proof}

	\begin{prop}\label{equilibrium point prop}
		Assume $e$ is a hyperbolic equilibrium point of (\ref{negative feedback system}). Let $a \in \Omega$, $h\in \left \{ 1,\dots ,\frac{\tilde{n}-1 
		}{2}  \right \}$. Then
		\begin{enumerate}
			\item[{\rm(i)}] if $a\in W^u\left(e\right)$, $i\left(e\right)\ge 2h $, then exists a subspace $\tilde{\Sigma}_0^{-} \subset T_{a} W^u(e)$ 
			such that $\operatorname{dim} \tilde{\Sigma}_0^{-}=2h $ and  $\tilde{\Sigma}_0^{-} \subset \mathcal{K}_{h}$.
			
			\item[{\rm(ii)}] if $a\in W^s\left(e\right)$, $i\left(e\right)\le 2h $, then exists a subspace $\tilde{\Sigma}_0^{+} \subset T_{a} W^s(e)$ 
			such that $\operatorname{dim} \tilde{\Sigma}_0^{+}=n-2h$ and  $\tilde{\Sigma}_0^{+} \subset \mathcal{K}^{h}$.
		\end{enumerate}
	\end{prop}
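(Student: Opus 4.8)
The plan is to read Proposition \ref{equilibrium point prop} off the Floquet decomposition at the equilibrium (Theorem \ref{Floquet Theory} applied to the constant matrix function $A(\tau)\equiv Df(e)$) together with the one-sided invariance of the cones under the variational flow (Proposition \ref{solution operator prop}). It suffices to prove (i); part (ii) follows by the symmetric argument with $W^u$, $\bar{\mathcal{K}}_h$ and forward time replaced by $W^s$, $\bar{\mathcal{K}}^h$ and backward time. If $a=e$ one simply takes $\tilde{\Sigma}_0^{-}=W_1\oplus\cdots\oplus W_h$, so assume $a\neq e$ and put $c(t)=\varphi(t,a)$; then $c(0)=a$, $c(t)\neq e$ for every $t$, and $c(t)\to e$ as $t\to-\infty$ since $a\in W^u(e)$.

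First I would identify the right $2h$-dimensional subspace at $e$. Applying Theorem \ref{Floquet Theory} to $A(\tau)\equiv Df(e)$ (so that $S_A(0,1)=e^{Df(e)}$) gives invariant subspaces $W_1,\dots,W_{(\tilde{n}+1)/2}$ with $\mathbb{R}^n=\bigoplus_i W_i$ and spectral gaps $\mu_1\ge\nu_1>\mu_2\ge\nu_2>\cdots$, where $\mu_i,\nu_i$ are the extreme moduli of the spectrum of $e^{Df(e)}$ on $W_i$. Set $E_h:=W_1\oplus\cdots\oplus W_h$; since $h\le\frac{\tilde{n}-1}{2}$ each $W_i$ with $i\le h$ is two-dimensional, so $\operatorname{dim}E_h=2h$, and by the gap inequalities the $2h$ eigenvalues (with multiplicity) of $e^{Df(e)}|_{E_h}$ are exactly the $2h$ of largest modulus. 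If $i(e)\ge 2h$, then at least $2h$ eigenvalues of $e^{Df(e)}$ have modulus $>1$, hence its $2h$-th largest modulus is $>1$, hence every eigenvalue of $e^{Df(e)}|_{E_h}$ has modulus $>1$; because $e$ is hyperbolic, this forces the invariant subspace $E_h$ into the unstable generalized eigenspace, i.e. $E_h\subset T_eW^u(e)$. Moreover Theorem \ref{Floquet Theory} gives $N_M(x)\le 2h-1$ on $E_h\setminus\{0\}$, so $E_h\setminus\{0\}\subset\mathcal{K}_h\setminus\{0\}=\operatorname{Int}\bar{\mathcal{K}}_h$, which is open.

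Then I would transport $E_h$ along the orbit to $a$. Since $W^u_{loc}(e)$ is a $C^1$ manifold through $e$ and $c(t)\to e$, the tangent spaces $T_{c(t)}W^u(e)$ converge to $T_eW^u(e)$ in the Grassmannian of $i(e)$-planes as $t\to-\infty$; as $E_h\subset T_eW^u(e)$, orthogonally projecting a fixed basis of $E_h$ onto $T_{c(t)}W^u(e)$ produces, for $t$ sufficiently negative, a continuous family of $2h$-planes $V_t\subset T_{c(t)}W^u(e)$ with $V_t\to E_h$. The unit sphere of $E_h$ is compact and contained in the open set $\operatorname{Int}\bar{\mathcal{K}}_h$, so $V_{-\bar{t}}\subset\bar{\mathcal{K}}_h$ for some $\bar{t}>0$. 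Writing $A(t):=Df(c(t))\in\mathcal{M}^-$ and letting $S_A(s,t)$ be its solution operator, the unstable tangent bundle is invariant, so $S_A(-\bar{t},0)V_{-\bar{t}}\subset T_aW^u(e)$ is again $2h$-dimensional, while Proposition \ref{solution operator prop} (with $0>-\bar{t}$) gives $S_A(-\bar{t},0)(\bar{\mathcal{K}}_h\setminus\{0\})\subset\operatorname{Int}\bar{\mathcal{K}}_h\subset\mathcal{K}_h$. Hence $\tilde{\Sigma}_0^{-}:=S_A(-\bar{t},0)V_{-\bar{t}}$ is the required subspace. For (ii) one replaces $E_h$ by $E^h:=W_{h+1}\oplus\cdots\oplus W_{(\tilde{n}+1)/2}$, of dimension $n-2h$, which satisfies $N_m(x)>2h-1$ on $E^h\setminus\{0\}$ (so $E^h\setminus\{0\}\subset\operatorname{Int}\bar{\mathcal{K}}^h=\mathcal{K}^h\setminus\{0\}$) and lies in $T_eW^s(e)$ whenever $i(e)\le 2h$ (hyperbolicity then forces the $n-2h$ smallest moduli of $e^{Df(e)}$ to be $<1$); one extracts $V_t\subset T_{c(t)}W^s(e)$ as $t\to+\infty$ and pushes back by $S_A(\bar{t},0)$, using the second inclusion of Proposition \ref{solution operator prop}, valid for $0<\bar{t}$.

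I expect the main obstacle to be the spectral inclusion $E_h\subset T_eW^u(e)$ of the first step — correctly matching the ordering of the Floquet blocks $W_i$ by modulus with the Morse-index count of the hyperbolic equilibrium; once this is in place the geometric transport is routine. The only mildly delicate analytic point in the second step is the compactness argument that a $C^0$-small rotation of $T_eW^u(e)$ still contains a $2h$-plane inside the open cone $\operatorname{Int}\bar{\mathcal{K}}_h$, which is exactly why the strict membership $E_h\setminus\{0\}\subset\operatorname{Int}\bar{\mathcal{K}}_h$ (rather than merely $\subset\bar{\mathcal{K}}_h$) matters.
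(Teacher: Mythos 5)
Your argument is correct and follows essentially the same route as the paper: both apply Theorem \ref{Floquet Theory} to $Df(e)$ to identify the relevant $2h$-dimensional (resp.\ $(n-2h)$-dimensional) Floquet block inside $T_eW^u(e)$ (resp.\ $T_eW^s(e)$), observe that it lies in $\operatorname{Int}\bar{\mathcal{K}}_h$ (resp.\ $\operatorname{Int}\bar{\mathcal{K}}^h$), and transport it to $T_aW^u(e)$ (resp.\ $T_aW^s(e)$) along the orbit via the cone invariance of Proposition \ref{solution operator prop}. The paper simply asserts the existence of the nearby subspace $\tilde\Sigma^{\pm}$ at $z(t^\pm)$ while you supply the Grassmannian convergence and compactness details, but this is the same proof.
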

	
	\begin{proof}
		We only show  the second assertion, the proof of the first being similar. Let $\Sigma^{+}=W_{h+1} \oplus \cdots \oplus W_\frac{\tilde{n}+1 }{2}$. 
		By virtue of Theorem \ref{Floquet Theory} and $i\left(e\right)\le 2h $,  one has $\operatorname{dim}\Sigma^{+}=n-2h$ and $\Sigma^{+} \subset T_e 
		W^s(e)$. 
		Let $z\left(t\right)$ be a solution of  (\ref{negative feedback system}) with $z\left(0\right)=a$. Then,
		there 
		exists $t^+\gg 1$, such that $T_{z\left(t^{+}\right)} W^s(e)$ contains a subspace $\tilde{\Sigma}^+$ with
		$\operatorname{dim}\tilde{\Sigma}^+=\operatorname{dim}\Sigma^+$ and  $\tilde{\Sigma}^+\subset\mathcal{K}^h$ . If $\tilde{\Sigma}_0^{-}$ is image 
		of  
		$\tilde{\Sigma}^+$ under operator $S_{D{f} \left ( z\left ( t \right )  \right ) }\left(t^+,0\right)$, then $\operatorname{dim} 
		\tilde{\Sigma}_0^{-}=\operatorname{dim} \tilde{\Sigma}^{-}=n-2h$ and $\tilde{\Sigma}_0^{-}\subset T_a W^s(e)$. It then follows from Proposition \ref{solution 
			operator 
			prop} that $\tilde{\Sigma}_0^{+} \subset \mathcal{K}^h$.
	\end{proof}
	
	\vskip 2mm
	
	We now turn to prove Theorem \ref{Automatic Transversality}.
	
	\begin{proof}[Proof of Theorem \ref{Automatic Transversality}]
		Obviously, if $W^{s}\left ( \gamma ^+  \right )  \cap W^{u}\left ( \gamma ^-  \right ) =\emptyset $, the conclusion is self-evident. Suppose 
		$W^{s}\left ( \gamma ^+  \right )  \cap W^{u}\left ( \gamma ^-  \right ) \ne \emptyset $,  and let $ q \in 
		W^{s}\left 
		( \gamma ^+  \right )  \cap W^{u}\left ( \gamma ^-  \right )$. Then, Theorem \ref{Automatic Transversality} can be split into the following four cases.
		\begin{enumerate}
			\item[{\rm(i)}] $\gamma^+$ and $\gamma^-$ are hyperbolic periodic orbits. The proof of this case is similar to that in \cite[Theorem 4]{FOT}; see also \cite[Theorem 3.1]{YD}.
			
			\item[{\rm(ii)}] $\gamma^+$ is a hyperbolic periodic orbit and $\gamma^-$ is a hyperbolic equilibrium point. If $h^-=1$, then by Proposition \ref{Hyperbolic 
				critical elements prop}, the conclusion is obvious. Assume that $h^->1$, we prove the conclusion in two situations:
			\begin{itemize}
				\item $h^->h^+$.  By virtue of Proposition \ref{Hyperbolic critical elements prop}(ii), there is a subspace 
				$\tilde{\Sigma}_{0}^+\subset T_{q}W^s\left(\gamma^+\right)$, such that 
				$
				\operatorname{dim} \tilde{\Sigma}_0^{+}=n- 2 h^{-}+2 $ and $ \tilde{\Sigma}_0^{+} \subset \mathcal{K}^{h^{-}}$; moreover, 
				$i\left(\gamma^-\right)\ge 2\left(h^--1\right)$. It then follows from Proposition \ref{equilibrium point prop} that there exists a subspace 
				$\tilde{\Sigma}_0^{-} \subset T_{q} 
				W^u(e)$, such that $\operatorname{dim} \tilde{\Sigma}_0^{-}=2\left(h^--1\right) $ and  $\tilde{\Sigma}_0^{-} \subset 
				\mathcal{K}_{\bar{h}}$. Thus, $ 
				\mathbb{R}^n=\tilde{\Sigma}_0^{-} \oplus \tilde{\Sigma}_0^{+}\subset T_{q} W^u\left(\gamma^{-}\right)+T_{q} W^s\left(\gamma^{+}\right) $, that is $W^u\left(\gamma^{-}\right) \pitchfork W^s\left(\gamma^{+}\right)$.
				
				\item $h^-=h^+$. The proof of this situation is similar to the case of $h^->h^+$.
			\end{itemize}
			
			\item[{\rm(iii)}] $\gamma^-$ is a hyperbolic periodic orbit and $\gamma^+$ is a hyperbolic equilibrium point. Note that when $h^+=\frac{\tilde n +1}{2} 
			$, the conclusion holds naturally. The proofs for other situations are similar to part (ii).
			
			\item[{\rm(iv)}] $\gamma^+$ and $\gamma^-$ are hyperbolic equilibrium points with $i\left(\gamma^{-}\right) \ge 2h \ge i\left(\gamma^{+}\right)$, for 
			some  integer $h\in \left \{ 1,\dots ,\frac{\tilde{n}+1 }{2}  \right \}$. In this case, by Proposition  \ref{equilibrium point prop}, 
			there exist two 
			spaces $\tilde{\Sigma}_{0}^-$  and $\tilde{\Sigma}_{0}^+$ such that $ \mathbb{R}^n=\tilde{\Sigma}_0^{-} \oplus \tilde{\Sigma}_0^{+}\subset T_{q} 
			W^u\left(\gamma^{-}\right)+T_{q} W^s\left(\gamma^{+}\right) $, which implies the transversality $W^u\left(\gamma^{-}\right) \pitchfork 
			W^s\left(\gamma^{+}\right)$.
		\end{enumerate}
		We have completed the proof of Theorem \ref{Automatic Transversality}.
	\end{proof}

	\section{Generic hyperbolicity of critical elements}
	In this section, we prove Theorem \ref{critical elements is generic}. As a preparation, we introduce a suitable topology on the space $\mathcal{L}^-$. To do this, we define a family of compact sets  $\left\{K_\alpha \mid \alpha >0\right\}$ and
	a sequence of semi-norms $\left\{p_m \mid m\in \mathbb{N}\setminus \left \{ 0 \right \} \right\}$:
	$$
	\begin{gathered}
		K_\alpha=\{x \in \Omega,\|x\| \le \alpha\} \cap\left\{x \in \Omega, \inf _{y \in \Omega^c}\|x-y\| \ge \frac{1}{\alpha}\right\}, \\
		p_m(f)=\sup _{x \in K_m}\ \left\{\left\|f(x)\right\|+\left\|Df(x)\right\|\right\} .
	\end{gathered}
	$$
	Given $f\in C^1\left(\Omega,\mathbb{R}^n\right)$, its neighborhood in $C^1\left(\Omega,\mathbb{R}^n\right)$ is defined as 
	$$
	V_{f,m, \varepsilon}=\left\{g\in  C^1\left(\Omega,\mathbb{R}^n\right)\mid  p_m(f-g)<\varepsilon \right\}.
	$$ 
	As we all know, $\bigcup_{f,m,\varepsilon} V_{f,m, \varepsilon}$ is a topological basis on $C^1\left(\Omega,\mathbb{R}^n\right)$ and 
	$\bigcup_{m,\varepsilon} V_{f,m, \varepsilon}$ is a neighborhood basis of $f$. Moreover, this topology is equivalent to the topology induced by distance the
	$$
	d(f, 
	g)=\sum_{k=1}^{\infty} \frac{1}{2^k} \frac{p_k(f-g)}{1+p_k(f-g)}.
	$$	
	We use $\bar{\iota }$ to represent the topology on 
	$C^1\left(\Omega,\mathbb{R}^{n}\right)$, 
	then the topology on the subspace $\mathcal{L}^-$ is defined as $\iota =\bar{\iota } \mid _{\mathcal{L}^-}$.

	\begin{rk}
		\rm{
			Note that if $p_m$ is defined on $\mathcal{L}^- \mid _{K_m} $, then $p_m$ is equivalent to the norm defined in \cite[p.20]{PD}. 
			Meanwhile, it is not difficult to observe that the metric space $\left ( C^{1}\left ( \Omega ,\mathbb{R}^n \right ),d \right ) $ is complete and the
			topological space 
			$\left ( \mathcal{L}^-,\iota   \right ) $ is a Baire space.
		}
	\end{rk}
	
	The proof of Theorem \ref{critical elements is generic} can be split into the following two propositions. Meanwhile, for convenience, we introduce the following map
	$$
	\tilde{\mathcal{R}}_\alpha: C^1\left(\Omega,\mathbb{R}^n\right) \mapsto 
	C^1\left(\Omega,\mathbb{R}^n\right)\mid_ {K_\alpha}
	$$
	defined by $\tilde{\mathcal{R}}_\alpha f=f\mid _{K_{\alpha}}$. Then $\tilde{\mathcal{R}}_\alpha$ is a
	continuous, open and surjective map.
	
	\begin{prop}\label{G-H-P}
		For any $m\in \mathbb{N}\setminus \left \{ 0 \right \}$, the following set 
		$$
		\mathcal{E}_m^{h}=\left\{f \in \mathcal{L}^- \mid \text {each equilibrium point } e\in K_m \text { of } \dot{x} = f\left ( x \right ) \text { is 
			hyperbolic}\right\}
		$$ 
		is an open and dense subset of $\mathcal{L}^-$. Obviously, the set $\mathcal{E}^{h} =\bigcap_{m=1}^{\infty }\mathcal{E}_m^{h}$ is a generic subset of 
		$\mathcal{L}^-$.
	\end{prop}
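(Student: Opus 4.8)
First I would show that $\mathcal{E}_m^h$ is open. If $f\in\mathcal{E}_m^h$, then each of its equilibria in $K_m$ is hyperbolic, hence has an invertible Jacobian and is therefore an isolated zero of $f$; by compactness of $K_m$ there are only finitely many of them. Suppose, for contradiction, that $\mathcal{E}_m^h$ is not open: then some sequence $g_j\to f$ in $\iota$ has, for each $j$, a non-hyperbolic equilibrium $p_j\in K_m$. Passing to a subsequence, $p_j\to p\in K_m$; since $\iota$-convergence entails $g_j\to f$ and $Dg_j\to Df$ uniformly on $K_m$, we get $f(p)=0$ and $Dg_j(p_j)\to Df(p)$. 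Thus $p$ is an equilibrium of $f$ in $K_m$, hence hyperbolic, so $Df(p)$ has no eigenvalue on the imaginary axis; by continuity of the spectrum, neither does $Dg_j(p_j)$ for $j$ large, contradicting the choice of $p_j$. Hence $\mathcal{E}_m^h$ is open.

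\textbf{A structure-preserving perturbation family.} For density the main idea is to perturb within a finite-dimensional family that automatically respects the constraint $Df(x)\in\mathcal{M}^-$. Writing $\mathcal{D}$ for the space of diagonal $n\times n$ matrices, for $(a,B)\in\mathbb{R}^n\times\mathcal{D}$ put $g_{a,B}(x):=f(x)+a+Bx$. Each $(g_{a,B})_i$ still depends only on $(x_{i-1},x_i,x_{i+1})$, and $Dg_{a,B}(x)=Df(x)+B$ alters only the diagonal entries of the Jacobian, which are unrestricted in the definition of $\mathcal{M}^-$; hence $g_{a,B}\in\mathcal{L}^-$ for all $(a,B)$. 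Since $p_{m'}(f-g_{a,B})=\sup_{x\in K_{m'}}(\|a+Bx\|+\|B\|)\to 0$ as $(a,B)\to 0$ for every $m'$, it suffices to find $(a,B)$ arbitrarily close to $0$ for which every equilibrium of $g_{a,B}$ lying in $K_m$ is hyperbolic.

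\textbf{A Sard-type argument.} Next I would show that this holds for Lebesgue-almost every $(a,B)$. The map $\Phi(a,B,x)=f(x)+a+Bx$ is a submersion since $\partial_a\Phi=I$, so $\mathcal{Z}:=\Phi^{-1}(0)$ is a $C^1$ manifold which, over $x\in K_m$, is parametrized by $(B,x)\mapsto(-f(x)-Bx,B,x)$. Let $\mathcal{Z}_{\mathrm{bad}}$ be the set of points of $\mathcal{Z}$ with $x\in K_m$ at which, in addition, $Df(x)+B$ has an eigenvalue on the imaginary axis. For fixed $x$ this forces the resultant $\operatorname{Res}_\lambda\big(\chi_{Df(x)+B}(\lambda),\chi_{Df(x)+B}(-\lambda)\big)$, a polynomial in $B\in\mathcal{D}$, to vanish; it is not identically zero because $Df(x)+tI$ has no imaginary eigenvalue for $|t|$ large, so the admissible $B$ form a Lebesgue-null subset of $\mathcal{D}\cong\mathbb{R}^n$. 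By Fubini, $\mathcal{Z}_{\mathrm{bad}}$ is null in the $(B,x)$-chart $\mathcal{D}\times K_m$, and its image under the locally Lipschitz map $(B,x)\mapsto(-f(x)-Bx,B)$ is null in $\mathbb{R}^n\times\mathcal{D}$ (exhausting $\mathcal{D}$ by bounded sets). Since this image is exactly the set of parameters for which $g_{a,B}\notin\mathcal{E}_m^h$, a full-measure set of $(a,B)$ — in particular some arbitrarily small ones — yield $g_{a,B}\in\mathcal{E}_m^h$. Thus $\mathcal{E}_m^h$ is dense, and since $(\mathcal{L}^-,\iota)$ is a Baire space, $\mathcal{E}^h=\bigcap_{m}\mathcal{E}_m^h$ is a dense $G_\delta$, hence generic.

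\textbf{Expected main obstacle.} I expect the crux to be precisely the structural constraint: the ``localized bump'' perturbations used in classical genericity arguments are not available inside $\mathcal{L}^-$, so one must verify that the reduced $2n$-parameter family $\{f+a+Bx\}$ — the only obviously admissible one — is already rich enough to force hyperbolicity generically. This reduces to the linear-algebra fact that adding a generic diagonal matrix to a fixed matrix removes all imaginary eigenvalues, which is what the resultant computation encodes; checking the nontriviality of that resultant uniformly in $x\in K_m$ is the one point that needs care. The remaining ingredients — continuity of the spectrum, the Baire property of $(\mathcal{L}^-,\iota)$, and the bookkeeping with the exhaustion $\{K_m\}$ and the seminorms $p_m$ in place of a norm — are routine.
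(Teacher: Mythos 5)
Your proof is correct, and it takes a route that is genuinely different from the paper's. For openness the paper simply cites Palis--de~Melo for the openness of the set of vector fields on $K_m$ with all equilibria hyperbolic, then pulls back through the restriction map; your compactness/contradiction argument is a self-contained replacement and is fine. The real divergence is in the density step. The paper proceeds in two stages: first it proves a separate Lemma (Lemma~\ref{simple points generic}) that \emph{simple} equilibria (i.e.\ $0\notin\sigma(Df(e))$) are generic, using the Sard--Smale theorem with the $n$-parameter translation family $\Psi(x,\lambda)=f(x)+\lambda$; then, starting from $f\in\mathcal{E}_m^{\text{simple}}$, it perturbs iteratively by $f+\alpha(x-e_i)$, shifting the whole spectrum by $\alpha$ at each equilibrium in turn and arguing that a small good $\alpha$ can be chosen at each stage without spoiling the others. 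You collapse both stages into one: the $2n$-parameter family $g_{a,B}=f+a+Bx$ with $B$ diagonal (which is admissible because the diagonal entries $a_i$ of matrices in $\mathcal{M}^-$ are unconstrained), and a direct Fubini/null-set argument in place of Sard--Smale. Your argument buys a single clean statement, avoids introducing simplicity as an intermediate notion, and sidesteps the slightly delicate bookkeeping in the paper's iterative step (where one has to ensure that the $\alpha$-shift repairing $e_i$ does not simultaneously de-hyperbolize another $e_j$, and that no new equilibria slip in at $\partial K_m$); the paper's approach stays closer to the Sard--Smale machinery it also uses for periodic orbits, so the two stages are more uniform in style across the section. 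One small remark: the ``main obstacle'' you flag --- checking nontriviality of the resultant \emph{uniformly} in $x\in K_m$ --- is not actually needed. Fubini only requires that for each fixed $x$ the $B$-slice of the bad set be null, which your one-dimensional slice $B=tI$ already gives pointwise; no uniform lower bound on the resultant is required, and the measurability of the bad set (it is closed) is all that Fubini asks for.
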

	
	\begin{prop}\label{G-H-O-M}
		For any $m\in \mathbb{N}\setminus \left \{ 0 \right \}$, the following set 
		\begin{align*}
			\mathcal{O}_m^h=\big \{ f\in\mathcal{E}_m^{h}\mid &\text{ all periodic orbits }\gamma \text{ of } \dot{x} = f\left ( x \right ) \text{ are included 
				in 
			} K_m  \\ & \text{ with period } \omega \in (0, m] \text{ are hyperbolic } \big \} 
		\end{align*}
		is an open and dense subset of $\mathcal{L}^-$. Obviously, the set $\mathcal{O}^{h} =\bigcap_{m=1}^{\infty }\mathcal{O}_m^h$ is a generic subset of 
		$\mathcal{L}^-$.
	\end{prop}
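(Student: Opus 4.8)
The plan is to combine Corollary \ref{simple} with a Sard--Smale (parametric transversality) argument, in the same spirit as Proposition \ref{G-H-P} but with considerably more bookkeeping. By Corollary \ref{simple}, a periodic orbit of \eqref{negative feedback system} is hyperbolic precisely when it is \emph{simple}, i.e. $1$ is a simple eigenvalue of the monodromy operator $S_{Df(\gamma)}(0,\omega)$. So it suffices, for each fixed $m$, to make every periodic orbit lying in $K_m$ with period $\omega\in(0,m]$ simple, for $f$ in an open dense set. The decisive reformulation is this: letting $\varphi_f$ denote the flow of $f$ and $\Psi(f,x,\omega)=\varphi_f(\omega,x)-x$, at a zero of $\Psi$ one has $D_x\Psi=S_{Df(\gamma)}(0,\omega)-I$ and $D_\omega\Psi=f(x)$, and since $f(x)=\dot\gamma(0)$ lies in $\ker(S_{Df(\gamma)}(0,\omega)-I)$, a short linear-algebra computation shows $D_{(x,\omega)}\Psi$ is surjective if and only if $1$ is a simple eigenvalue of $S_{Df(\gamma)}(0,\omega)$. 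Thus simplicity of all relevant orbits is exactly ``$0$ is a regular value of $\Psi(f,\cdot,\cdot)$'' on a suitable domain. To make that domain compact I would use that $f\in\mathcal{E}_m^h$ forces the finitely many equilibria in $K_m$ to be hyperbolic, so they carry neighbourhoods free of periodic orbits, while on the complement $\|Df\|$ is bounded on $K_m$; a standard lower bound on periods (no periodic orbit has period below $2\pi/\sup_{K_m}\|Df\|$) then gives $\omega_*>0$ with every periodic orbit in $K_m$ of period $\ge\omega_*$. Hence it is enough to treat $\Psi$ on $(f,x,\omega)$ with $x$ in a compact set $Q_m\subset K_m$ bounded away from the equilibria and $\omega\in[\omega_*,m]$ (a set on which $f(x)\ne0$).

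\emph{Openness.} Suppose $f_k\to f$ in $(\mathcal{L}^-,\iota)$ with each $f_k$ carrying a non-simple periodic orbit $\gamma_k\subset K_m$ of period $\omega_k\in[\omega_*,m]$. Passing to a subsequence, $\omega_k\to\omega\in[\omega_*,m]$, and by Arzel\`a--Ascoli together with $f_k\to f$ in $C^1$ on $K_m$ the orbits $\gamma_k$ converge in $C^1$ to a solution of $\dot x=f(x)$; since $\omega_k\ge\omega_*>0$ this limit is a genuine periodic orbit $\gamma$ of $f$ in $K_m$ of period $\le m$. As $f\in\mathcal{O}_m^h$, $\gamma$ is hyperbolic, hence simple; but simplicity is an open condition on the monodromy operator, which depends continuously on $(f_k,\gamma_k)$, so $\gamma_k$ is simple for large $k$ --- a contradiction. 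Therefore $\mathcal{O}_m^h$ is open in $\mathcal{L}^-$.

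\emph{Density.} Applying the Sard--Smale theorem (Theorem \ref{SS-2}) to $\Psi$ requires $\Psi$ to be $C^q$ with $q$ exceeding the Fredholm index of $D_{(x,\omega)}\Psi$ (which is $(n+1)-n=1$), and because the flow loses one derivative in its dependence on the vector field, I would first run the argument in the space of $C^2$ bidirectional negative feedback fields (with the analogous seminorm topology), and then descend to $C^1$ using density of $C^2$ in $C^1$ together with the openness just established. In the $C^2$ category $\Psi$ is a $C^1$ Fredholm map and its $f$-derivative at a zero is the sensitivity formula
\[
D_f\Psi(f,x,\omega)[g]=\int_0^{\omega}S_{Df(\gamma)}(s,\omega)\,g(\gamma(s))\,ds .
\]
To show $0$ is a regular value of this ``universal'' $\Psi$ it suffices to show that a covector $\ell$ annihilating $\operatorname{Im}(S_{Df(\gamma)}(0,\omega)-I)+\mathbb{R}f(x)$ with $\ell\cdot D_f\Psi(f,x,\omega)[g]=0$ for all admissible perturbations $g$ (those of the form $g_i=g_i(x_{i-1},x_i,x_{i+1})$) must vanish. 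The annihilation condition says $\psi(s):=\ell\,S_{Df(\gamma)}(s,\omega)$ is an $\omega$-periodic solution of the adjoint variational equation, and the orthogonality reads $\int_0^{\omega}\psi(s)\,g(\gamma(s))\,ds=0$. If $\psi\not\equiv0$, choose $s_0$ and an index $j$ with $\psi_j(s_0)\ne0$; by Corollary \ref{periodic solution of system} the planar curve $t\mapsto(\gamma_j(t),\gamma_{j+1}(t))$ is injective over one period, so a narrow bump with only its $j$-th component nonzero, of the form $g_j(x_{j-1},x_j,x_{j+1})=\chi(x_j-\gamma_j(s_0))\,\chi(x_{j+1}-\gamma_{j+1}(s_0))$, is admissible, is supported near $\gamma(s_0)$ along the orbit, and makes $\int_0^{\omega}\psi(s)\,g(\gamma(s))\,ds\approx\psi_j(s_0)\int\chi\chi\,ds\ne0$ --- a contradiction. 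Hence $0$ is a regular value of the universal $\Psi$ on the $\sigma$-compact set $Q_m\times[\omega_*,m]$, so by Sard--Smale the set of $f$ for which $0$ is a regular value of $\Psi(f,\cdot,\cdot)$ is residual, hence dense, in the $C^2$ category; by the reformulation these are exactly the $f$ all of whose periodic orbits in $K_m$ of period $\le m$ are simple, i.e. hyperbolic. Transferring this through the restriction maps $\tilde{\mathcal{R}}_\alpha$ and the metric $d$ (as in the proof of Proposition \ref{G-H-P}), and approximating $C^1$ fields by $C^2$ ones, yields density of $\mathcal{O}_m^h$ in $\mathcal{E}_m^h$, hence in $\mathcal{L}^-$; intersecting over $m$ gives the generic set $\mathcal{O}^h$.

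\emph{Main obstacle.} The hard part will be twofold: handling the loss of one derivative (working in $C^2$ to get a map regular enough for Sard--Smale, then descending to $C^1$), and producing the admissible perturbation --- the bump $g$ must respect the tridiagonal dependence $g_i=g_i(x_{i-1},x_i,x_{i+1})$ while being localized near a single point of the orbit, which is possible exactly because Corollary \ref{periodic solution of system} makes the projection of the orbit to any pair of consecutive coordinates an embedded Jordan curve. Excluding equilibria from the domain of $\Psi$ (so that $f(x)\ne0$ and $D_\omega\Psi\ne0$) and matching the various function-space topologies are the remaining technical points.
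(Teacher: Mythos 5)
Your proposal follows essentially the same route as the paper: reduce hyperbolicity to simplicity via Corollary \ref{simple}, work in the $C^2$ class, parameterize periodic orbits as zeros of a fixed-point map $\Psi$, note that surjectivity of $D_{(x,\omega)}\Psi$ at a zero is equivalent to $1$ being a simple Floquet multiplier, and apply Sard--Smale with a tridiagonal bump perturbation made admissible by the injectivity of $t\mapsto(\gamma_j(t),\gamma_{j+1}(t))$ from Corollary \ref{periodic solution of system}. Those are exactly the paper's ingredients in Proposition \ref{G-H-O} and Lemma \ref{dense}.

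Where the two diverge is in the density step's bookkeeping, and I would caution that your one-shot version hides a real difficulty. You propose to run Sard--Smale once on a fixed compact domain $Q_m\times[\omega_*,m]$, but that domain depends on the reference field (through $Q_m$ and $\omega_*$), and more importantly, for the Sard--Smale map to be well-defined and $C^2$ as the parameter $g$ ranges over an open set, one must cut out $U$ using the reference flow $S_{f_0}$ and guarantee $S_g(t)u_0$ stays in the compact set for all $t$ in the period range. Once $g$ moves away from $f_0$, a periodic orbit of $g$ lying in $K_m$ with period $\le m$ need not have its base point in a domain defined via $S_{f_0}$ on the full time interval $[0,m]$. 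The paper's Lemma \ref{dense} is precisely the machinery that avoids this: it fixes a base period $\varepsilon$, enlarges the period window by a geometric factor $a\in(1,2)$ step by step, simultaneously shrinking both the spatial margin and the parameter neighbourhood $\mathcal{U}^k$, and proves density of $\mathfrak{O}(k)\cap\mathcal{U}^k$ in $\mathfrak{O}(k-1)\cap\mathcal{U}^k$. If you want to circumvent the iteration, you would have to reproduce the uniform-in-$g$ containment estimates that Lemma \ref{neighborhood } packages, which is morally the same work. Two smaller remarks: for openness you argue by Ascoli--Arzel\`a limits, while the paper cites Palis--de Melo and pulls back through $\tilde{\mathcal{R}}_m^{-1}$ --- both are fine, but your sketch should also exclude the possibility that the limiting orbit collapses to an equilibrium (the lower period bound $\omega_*>0$ does not by itself prevent the orbits from shrinking to a point; the paper handles this via hyperbolicity of the equilibria and Lemma \ref{neighborhood }(ii)--(iii)). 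Finally, the admissible perturbation $g$ must be checked to keep $Dg$ within the tridiagonal class and small enough that $f+g$ remains in $\mathcal{L}^-$; your $g_j(x_j,x_{j+1})$ bump is admissible in form, as is the paper's.
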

	
	\subsection{Proof of Proposition \ref{G-H-P}}
	
	Before proving Proposition \ref{G-H-P}, we need to introduce a necessary concept and a lemma that will facilitate the proof.
	
	\begin{defn}\label{simple critical elements 2}
		{\rm 
			Let $e$ be an equilibrium point of \eqref{negative feedback system}. Then $e$ is called simple if 0 is not an eigenvalue of ${Df} \left ( e \right ) $.
		}
	\end{defn}
	
	\begin{lem}\label{simple points generic}
		For any $m\in \mathbb{N}\setminus \left \{ 0 \right \}$, the following set 
		$$
		\mathcal{E}_m^{\text {simple }}=\left\{f \in \mathcal{L}^- \mid \text {for each equilibrium point } e\in K_m \text { of } \dot{x} = f\left ( x \right ) 
		\text { is simple}\right\}
		$$ 
		is an open and dense subset of $\mathcal{L}^-$. Obviously, the set $\mathcal{E}^{simple} =\bigcap_{m=1}^{\infty }\mathcal{E}_m^{simple}$ is a generic subset 
		of $\mathcal{L}^-$.
	\end{lem}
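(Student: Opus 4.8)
The plan is to prove Lemma \ref{simple points generic} by a standard transversality/Sard-type argument applied to the parametrized map $(x,f)\mapsto f(x)$, reducing genericity to a local statement in a suitable finite-dimensional slice of $\mathcal{L}^-$. \textbf{Openness.} Suppose $f\in\mathcal{E}_m^{\text{simple}}$. Since $K_m$ is compact and every equilibrium $e\in K_m$ is simple, $Df(e)$ is invertible there; by compactness there is a finite cover and a uniform lower bound $\eta>0$ on $\norm{Df(e)^{-1}}^{-1}$ together with a uniform lower bound on $\norm{f(x)}$ away from a finite union of small balls around the equilibria. A $g$ with $p_{m'}(f-g)$ small (for suitable $m'\ge m$ controlling a neighbourhood of $K_m$ in $\Omega$) is then $C^1$-close to $f$ on $K_m$, so it has no zeros outside those balls, and inside each ball it has exactly one zero near $e$ at which $Dg$ stays invertible. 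Hence $g\in\mathcal{E}_m^{\text{simple}}$, giving openness; note one must phrase this using the semi-norms $p_k$ of the topology $\iota$, not the Whitney topology, exactly as flagged in the ``Remark on the challenges'' paragraph.

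\textbf{Density.} Fix $f\in\mathcal{L}^-$ and $\varepsilon>0$; I want $g\in\mathcal{E}_m^{\text{simple}}$ with $p_m(f-g)<\varepsilon$ (and then approximate in $d$). The key point is that additive constant perturbations preserve membership in $\mathcal{L}^-$: if $g=f+v$ with $v\in\mathbb{R}^n$ constant, then $Dg=Df\in\mathcal{M}^-$ pointwise, so $g\in\mathcal{L}^-$. Now consider $F:\Omega\times\mathbb{R}^n\to\mathbb{R}^n$, $F(x,v)=f(x)+v$. For every $(x,v)$ with $F(x,v)=0$ the partial derivative $D_vF=\mathrm{Id}$ is surjective, so $0$ is a regular value of $F$. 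By the parametrized Sard theorem (Theorem \ref{SS-2} / the Sard--Smale theorem cited in the excerpt, here in finite dimensions just the classical Sard theorem applied to the solution manifold $F^{-1}(0)\to\mathbb{R}^n$), the set of $v$ for which $0$ is a regular value of $x\mapsto f(x)+v$ is residual, hence dense, in $\mathbb{R}^n$. Pick such a $v$ with $\norm{v}$ as small as desired; then $x\mapsto f(x)+v$ has only simple zeros, in particular all equilibria of $\dot{x}=(f+v)(x)$ lying in $K_m$ are simple. Choosing $\norm{v}$ small enough that $\sup_{x\in K_m}\norm{v}+\norm{D v}=\norm{v}<\varepsilon$ gives $p_m(f-(f+v))<\varepsilon$. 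Finally, since the $V_{f,m,\varepsilon}$ form a neighbourhood basis and $\mathcal{E}^{\text{simple}}_m$ is open, this density in each basic neighbourhood yields density in $(\mathcal{L}^-,\iota)$.

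\textbf{The generic set.} Since $(\mathcal{L}^-,\iota)$ is a Baire space (as noted in the Remark), $\mathcal{E}^{\text{simple}}=\bigcap_{m\ge 1}\mathcal{E}_m^{\text{simple}}$ is a countable intersection of open dense sets, hence residual, i.e.\ generic; and a point of $\Omega$ lies in some $K_m$, so an $f\in\mathcal{E}^{\text{simple}}$ has all its equilibria simple.

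\textbf{Main obstacle.} The only genuinely delicate point is the openness argument handled in the correct topology: one must ensure the semi-norm $p_m$ (which sees $f$ and $Df$ only on the compact $K_m$, not on a neighbourhood of it) still controls the count and non-degeneracy of zeros in $K_m$. The standard fix is to work with $p_{m'}$ for some $m'>m$ with $K_m\subset\mathrm{int}\,K_{m'}$, so that $C^1$-closeness on $K_{m'}$ gives genuine $C^1$-control on a neighbourhood of $K_m$; since $\bigcup_{m',\varepsilon}V_{f,m',\varepsilon}$ is a neighbourhood basis of $f$, this does not weaken the conclusion. Everything else is routine and is exactly the ``standard'' argument the authors refer to when they say the proof of generic hyperbolicity of equilibria is standard.
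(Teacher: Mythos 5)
Your proof is correct and follows essentially the same route as the paper: openness comes from the $C^1$-stability of simple zeros on the compact $K_m$ (the paper simply cites Palis--de~Melo for openness of the corresponding subset of $C^1(K_m,\mathbb{R}^n)$ and pulls it back under the restriction map $\tilde{\mathcal{R}}_m$, rather than rerunning the compactness argument by hand as you do), and density comes from the parametrized Sard/Sard--Smale theorem applied to $(x,\lambda)\mapsto f(x)+\lambda$, exploiting exactly the observation you make that constant shifts leave $Df\in\mathcal{M}^-$ unchanged and have $p_m$-distance $\lVert\lambda\rVert$ from $f$.
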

	
	We now prove Proposition \ref{G-H-P}, while the proof of Lemma \ref{simple points generic} will be given at the end of this subsection.

	\begin{proof}[Proof of Proposition \ref{G-H-P}]
		Since $\mathcal{L}^-$ is a Baire space, it follows that $\mathcal{E}^{h}$ is a generic subset of $\mathcal{L}^-$. Therefore, we only need to prove that 
		$\mathcal{E}_m^{h}$ is an open and dense subset of $\mathcal{L}^-$.
		
		Firstly, we prove that $\mathcal{E}_m^{h}$ is an open set. Let $\mathcal{G}_1$ be a subset of $C^1\left(K_m, \mathbb{R}^n\right)$ consisting of functions whose equilibrium points are all hyperbolic.  According to\cite[p.58]{PD}, $\mathcal{G}_1$ is an open set in $C^1\left(K_m, \mathbb{R}^n\right)$. Consequently, $\tilde{\mathcal{R}}_m^{-1}\left(\mathcal{G}_1\right)$ is an open set in $C^1\left(\Omega,\mathbb{R}^n\right)$. By the definition of $\iota$, $\mathcal{E}_m^{h}$ is an open 
		set in the subspace $\mathcal{L}^-$. 
		
		Now, we prove the density of $\mathcal{E}_m^{h}$. For any $f\in \mathcal{E}_{m}^{simple}$, let $e_1,\dots ,e_k$ be equilibrium points of 
		$\dot{x}=f\left ( x \right )  $ in $K_m$. Assuming $e_i$ is a simple equilibrium point, consider the function  $\Psi\left(\alpha,x\right)=f\left(x\right)+\alpha\left(x-e_i\right)$ defined on 
		$\mathbb{R}\times\Omega$. Note that if $\lambda$ is an eigenvalue of ${Df}\left ( x_0 \right )  $, then $\lambda+\alpha$ is an eigenvalue of $D_x 
		\Psi\left(\alpha,x_0\right)={Df}\left ( x_0 \right )+\alpha I \in \mathcal{M}^-$. Take a sufficiently small $\left | \alpha  \right | $, then for each $e_j$, $j\in \left\{1,\dots, k\right\}$,
		there exists a small neighborhood $\mathcal{U}_j$ such that there is a unique equilibrium point $\bar{e}_j$
		of
		$\dot{x}=\Psi\left(\alpha,x\right)$ that coincides with the state $e_j$ within 
		$\mathcal{U}_j$(see \cite[p.55, p.100]{PD}). Note that $\bar{e}_i$ can be hyperbolic. Indeed, if $\bar{e}_s$ is simple but not hyperbolic for some $s\in \left\{1,\dots,k\right\}$, then by applying a similar perturbation to $\Psi$, we can make $\bar{e}_s$ hyperbolic without altering the states of the other equilibrium points. Proceeding in the same manner, all $e_j$ can eventually be transformed into hyperbolic equilibrium points. Therefore, $\mathcal{E}_m^{h}$ is dense in $\mathcal{E}_m^{simple}$, which 
		combined 
		with Lemma \ref{simple points generic} imply that $\mathcal{E}_m^{h}$ is dense in $\mathcal{L}^-$.
	\end{proof}

	\begin{proof}[Proof of Lemma \ref{simple points generic}]
		Since $\mathcal{L}^-$ is a Baire space, it follows that $\mathcal{E}^{simple}$ is a generic subset of $\mathcal{L}^-$. Therefore, we only need to prove that 
		$\mathcal{E}_m^{\text {simple }}$ is an open and dense subset of $\mathcal{L}^-$.
		
		We first prove that $\mathcal{E}_m^{\text {simple }}$ is an open set. Let $\mathcal{G}_0$ be a set of $C^1\left(K_m.\mathbb{R}^n\right)$ consisting of functions whose equilibrium points are all simple.  According to \cite[p.56]{PD}, $\mathcal{G}_0$ is an open set in $C^1\left(K_m.\mathbb{R}^n\right)$. Consequently, 
		$\tilde{\mathcal{R}}_m^{-1}\left(\mathcal{G}_0\right)$ is an open set in $C^1\left(\Omega,\mathbb{R}^n\right)$. By the definition of $\iota$, $\mathcal{E}_m^{\text {simple 
		}}$ is an open set in the subspace $\mathcal{L}^-$. 
		
		We now prove the density of $\mathcal{E}_m^{\text {simple}}$. Choose $f\in \mathcal{L}^-$. We apply the Sard-Smale Theorem (Theorem \ref{SS-2} 
		in 
		Appendix) to the functional $\Psi :\Omega \times \left ( -1,1 \right ) ^{n} \to \mathbb{R}^{n}$ defined by $\Psi \left ( x, \lambda  \right ) =f\left ( 
		x \right )+\lambda  $ and to the point $z = 0$. Obviously, hypotheses (i), (ii), (iv) in Theorem \ref{SS-2} are satisfied, and we only need to verify hypothesis (iii). Note that, for any $\left(u,v\right)\in \mathbb{R}^n\times\mathbb{R}^n$, one has $D\Psi\left(x,\lambda\right)\left(u,v\right)={Df}\left ( x \right )u+v $, and then $D\Psi\left(x,\lambda\right):\mathbb{R}^n\times\mathbb{R}^n\to \mathbb{R}^n$ is surjective. Thus, 
		\begin{align*}
			\Theta&=\{\lambda\in \left(-1,1\right)^n \mid 0  \text{ is a regular value of } \Psi(., \lambda)\}\\
			&=\left\{\lambda\in \left(-1,1\right)^n \mid \text {for each equilibrium point} \text { of } \dot{x} = \Psi\left ( x,\lambda \right ) \text { is 
				simple}\right\}.
		\end{align*}
		is dense in $\left(-1,1\right)^n$. Take a sequence $\left\{\lambda_k\right\}\subset \Theta$ which converges to 0. Note that $D_x \Psi \left ( 
		\cdot ,\lambda _k \right ) \in \mathcal{M}^-$ and $\Psi \left ( \cdot ,\lambda _k \right ) \in \mathcal{E}_m^{\text {simple }}$, for any $k$. Meanwhile, for any $m\in 
		\mathbb{N}\setminus \left\{0\right\}$, $p_m\left(\Psi \left ( \cdot ,\lambda _k \right )-f\right)=\left \|\lambda _k \right \|\to 0 $ as $k\to \infty$. 
		Therefore, $\mathcal{E}_m^{\text {simple }}$ is a dense subset of $\mathcal{L}^-$.
	\end{proof}

	\subsection{Proof of Proposition \ref{G-H-O-M}}
	
	From Corollary \ref{simple}, we already know that $\mathcal{O}_m^h=\mathcal{O}_m^{\text {simple}}$, where
	\begin{align*}
		\mathcal{O}_m^{\text {simple}}=\big \{ f\in\mathcal{E}_m^{h}\mid &\text{ all periodic orbits }\gamma \text{ of } \dot{x} = f\left ( x \right ) \text{ 
			are included in } K_m  \\ & \text{ with period } \omega \in (0, m] \text{ are simple } \big \} .
	\end{align*}
	Therefore, we only need to prove the following proposition.
	\begin{prop}\label{G-H-O}
		For each $m\in \mathbb{N}\setminus \left \{ 0 \right \}$, the set $\mathcal{O}_m^{\text {simple}}$ is a dense open subset of $\mathcal{L}^-$. Obviously, 
		the set $\mathcal{O}^{h} =\bigcap_{m=1}^{\infty }\mathcal{O}_m^{\text {simple}}$ is a generic subset of $\mathcal{L}^-$.
	\end{prop}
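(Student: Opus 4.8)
The plan is to mimic the classical Kupka--Smale argument for genericity of hyperbolic periodic orbits, adapting it to the Baire space $(\mathcal{L}^-,\iota)$ and exploiting the special structure of periodic orbits of \eqref{negative feedback system} recorded in Corollaries \ref{periodic solution of system} and \ref{simple}. \emph{Openness} should be the easier half. Using the restriction map $\tilde{\mathcal{R}}_m$ and the fact (from \cite{PD}) that the subset of $C^1(K_m,\mathbb{R}^n)$ whose periodic orbits of period at most $m$ contained in $K_m$ are all simple (equivalently, by Corollary \ref{simple}, hyperbolic) is open, one pulls this set back to an open set in $C^1(\Omega,\mathbb{R}^n)$ and intersects with the already-open set $\mathcal{E}_m^h$; here one must be slightly careful that ``periodic orbit of period $\le m$ contained in $K_m$'' is a compact-open condition so that no orbits escape to the boundary under small perturbation, which is handled exactly as on \cite[p.~100--110]{PD}.

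\emph{Density} is the substantive part. Fix $f\in\mathcal{E}_m^h$; since equilibria are already hyperbolic and isolated, the only closed orbits with period in $(0,m]$ lying in $K_m$ form, by a compactness argument, a finite collection $\gamma_1,\dots,\gamma_N$ (a non-simple periodic orbit is isolated among orbits of comparable period by the implicit function theorem applied to the Poincar\'e map, and $K_m$ is compact). Treat them one at a time. Around a non-simple $\gamma_j$ pick a small flow box / cross-section $\Sigma$ disjoint from all $e_i$ and from the other $\gamma_k$, and a bump-function-supported perturbation $g$ supported in a tubular neighborhood of $\gamma_j$, so that $f+\epsilon g$ still lies in $\mathcal{L}^-$ (this requires choosing $g$ with $\|Dg\|$ small, which is exactly what the seminorm $p_m$ controls, so that $Df+\epsilon Dg$ stays in the open set $\mathcal{M}^-$; here we also need $f\in C^2$ to control the Poincar\'e map derivative, as flagged in Remark~2 of the introduction). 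The perturbation is engineered so that the derivative of the Poincar\'e return map at the fixed point changes, and one applies Sard--Smale (Theorem~\ref{SS-2}) to the map sending $(\text{parameter},\text{point on }\Sigma)$ to $(\text{Poincar\'e image}-\text{point})$ to conclude that for almost every parameter the resulting periodic orbit is simple; Corollary~\ref{simple} then upgrades ``simple'' to ``hyperbolic''. Iterating over $j=1,\dots,N$ and then over $m$, and using that $\mathcal{L}^-$ is a Baire space, yields the generic set $\mathcal{O}^h=\bigcap_m \mathcal{O}_m^{\mathrm{simple}}$.

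The main obstacle I expect is \emph{keeping the perturbation inside $\mathcal{L}^-$ while still making it effective on the Poincar\'e map}: an arbitrary bump perturbation will generically violate the sign/tridiagonal-cyclic constraints \eqref{special matrix}--\eqref{bid-feedback} defining $\mathcal{M}^-$, so one must design $g$ adapted to the structure (perturbing, say, only a diagonal entry $a_i$ of $Df$ via $g(x)=\rho(x)(x-\text{something})$ with $\rho$ a bump, exactly as in the equilibrium case in the proof of Proposition~\ref{G-H-P}) and then verify that such constrained perturbations still suffice to achieve transversality of the Poincar\'e map fixed point. This is where Corollaries~\ref{periodic solution of system} and \ref{simple} and the injectivity of $t\mapsto(p_s(t),p_{s+1}(t))$ enter: they guarantee that along $\gamma_j$ one can find a coordinate pair and a time interval on which a localized perturbation can be placed without interfering with the rest of the orbit, making the relevant derivative of $\Psi$ surjective so that Sard--Smale applies. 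The remaining bookkeeping (finiteness of $N$, disjointness of neighborhoods, the diagonal Baire argument) is routine.
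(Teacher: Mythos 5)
Your openness argument matches the paper's: pull back the open set from \cite{PD} via $\tilde{\mathcal{R}}_m$ and intersect with $\mathcal{E}_m^h$. The density half, however, contains a genuine gap. You claim that the closed orbits of period in $(0,m]$ inside $K_m$ form a finite collection $\gamma_1,\dots,\gamma_N$, justified by ``a non-simple periodic orbit is isolated among orbits of comparable period by the implicit function theorem applied to the Poincar\'e map.'' That implication runs the wrong way: the implicit function theorem isolates a periodic orbit precisely when $1$ is \emph{not} an eigenvalue of the Poincar\'e map derivative, i.e.\ when the orbit is already simple. The non-simple orbits --- exactly the ones you need to perturb away --- can a priori accumulate, form one-parameter families, or have a continuum of nearby closed orbits of nearby period. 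Without finiteness, the one-at-a-time surgery with disjoint tubular neighborhoods and cross-sections cannot even be set up, so the core of the density proof collapses at the first step.

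The paper avoids this trap by never attempting to enumerate periodic orbits. Instead it first records (Lemma~\ref{neighborhood }(ii)) that for $f\in\mathcal{E}^h_{m+\delta}\cap\mathcal{C}^2_{BF}$ there is a uniform lower bound $\varepsilon>0$ on minimal periods of nonconstant orbits in $K_{m+\delta}$, and then runs a recursive period-extension argument over the nested sets $\mathfrak{O}(k)=\mathcal{O}(a^k\varepsilon,\,m+\delta-\tfrac{\delta}{2}\sum_{j\le k}2^{-j})\cap\mathcal{C}^2_{BF}$: starting from $\mathfrak{O}(0)\cap\mathcal{U}^0=\mathcal{U}^0$, one shows at each stage (Lemma~\ref{dense}) that $\mathfrak{O}(k)\cap\mathcal{U}^k$ is dense in $\mathfrak{O}(k-1)\cap\mathcal{U}^k$, geometrically enlarging the period window and shrinking the spatial region, until $a^{k_0}\varepsilon\ge m$. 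Within each stage, the Sard--Smale theorem (Theorem~\ref{SS-2}) is applied not to a Poincar\'e map on a section but directly to the map $\Psi(t,u_0,g)=S_g(t)u_0-u_0$ on a finite-dimensional $(t,u_0)$-domain, with $Z=\mathbb{R}^n$; this sidesteps the need to choose sections near a finite list of orbits. Your diagnosis of the constraint problem (keeping $Df+\epsilon Dg\in\mathcal{M}^-$) and your proposed remedy (a bump in a single coordinate pair, legitimized by the injectivity from Corollary~\ref{periodic solution of system}) do coincide with the paper's surjectivity verification, but without the recursive period-window argument the surrounding scaffolding is not sound.
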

	
	To carry out the proof, we introduce the following set. Given $\alpha > 0$, $ T > 0 $, define
	\begin{align*}
		\mathcal{O}\left(T,\alpha\right)=\big \{ f\in \mathcal{L}^-\mid &\text{ all periodic orbits }\gamma \text{ of } \dot{x} = f\left 
		( x \right ) \text{ included in } K_\alpha  \\ & \text{ with period } \omega \in (0, T] \text{ are hyperbolic } \big \}. 
	\end{align*}
	
	It's clear that the set $\mathcal{O}_m^{\text {simple}}$ or $\mathcal{O}_m^h$ is a subset of $\mathcal{O}\left(T,\alpha\right)$. Define
	$$
	\mathcal{C}^2_{BF}=\left\{f\in \mathcal{C}^1_{BF}\mid f\in C^2\left(\Omega,\mathbb{R}^n\right)\right\}.
	$$
	Then, we have the following 
	lemmas. 
	
	\begin{lem}\label{neighborhood }
		The following statements are valid.
		\begin{enumerate}
			\item[{\rm(i)}] Assume $f\in \mathcal{E}^{simple}_m\cap \mathcal{C}^2_{BF} \left(resp.f\in 
			\mathcal{E}^{h}_m\cap \mathcal{C}^2_{BF} \right) $. Then there exists $\delta >0$ such that $$f\in 
			\mathcal{E}^{simple}_{m+\delta}\cap \mathcal{C}^2_{BF} \left(resp.f\in\mathcal{E}^{h}_{m+\delta}\cap 
			\mathcal{C}^2_{BF} \right).$$
			
			\item[{\rm(ii)}] Let $f\in \mathcal{E}^{h}_{m}\cap 
			\mathcal{C}^2_{BF}$ and $\delta$ be as in (i). Then there 
			exist $\varepsilon >0$ and an open neighborhood  $\mathcal{U}_1\subset\mathcal{E}^{h}_{m+\delta}\cap 
			\mathcal{C}^2_{BF}$ of $f$ 
			such that for any $g\in \mathcal{U}_1$, and any nonconstant periodic solution $p\left(t\right)$ of $S_g \left(t\right)$ with $p\left(t\right)\subset 
			K_{m+\delta}$ has a smallest period strictly larger than $\varepsilon$.
			
			\item[{\rm(iii)}] Let $f\in \mathcal{E}^{h}_{m}\cap 
			\mathcal{C}^2_{BF}$ and $\delta $ be as in (i). Let $\mathcal{E}\left(f,m+\delta\right)$ be the set of the equilibrium points $e_f$ of $S_f 
			\left(t\right)$ with $e_f \in K_{m+\delta}$, and define
			$$\mathcal{E}\left(f,m+\delta,r\right)=\bigcup_{e_f\in 
				\mathcal{E}\left(f,m+\delta\right)}B\left ( e_f,r 
			\right ).$$ 
			For any $T>0$, there exist a positive constant $r$ and an open neighborhood $\mathcal{U}_2\subset \mathcal{E}^{h}_{m+\delta}\cap 
			\mathcal{C}^2_{BF}$  of $f$ such that the following property holds. 
			For any $g\in \mathcal{U}_2$, one has $\mathcal{E}\left(g,m+\delta\right)\subset \mathcal{E}\left(f,m+\delta,r\right)$ and the set of all 
			nonconstant 
			periodic orbits $p(t)$ of $S_g \left(t\right)$ of period less than $T$ with $p\left(t\right)\subset K_{m+\delta}$ does not intersect 
			$\mathcal{E}\left(f,m+\delta,r\right)$.
		\end{enumerate}
	\end{lem}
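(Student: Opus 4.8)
The plan is to treat the three assertions in order, exploiting the fact that in the $\bar\iota$-topology (equivalently, the metric $d$) convergence is exactly uniform $C^1$-convergence on each compact $K_\alpha$, together with the standard continuous-dependence results for ODEs.

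First I would prove (i). The sets $K_m$ are nested and exhaust $\Omega$, so $K_{m+\delta}\searrow K_m$ as $\delta\to 0^+$, in the sense that $\bigcap_{\delta>0}K_{m+\delta}=\overline{K_m}$ (in fact $K_m$ is already compact). If $f\in\mathcal{E}^h_m\cap\mathcal{C}^2_{BF}$ failed to lie in $\mathcal{E}^h_{m+\delta}$ for every $\delta>0$, then for a sequence $\delta_k\to 0$ there would be equilibria $e_k\in K_{m+\delta_k}\setminus K_m$ at which $Df(e_k)$ has an eigenvalue on the imaginary axis; after passing to a subsequence $e_k\to e_\ast\in\overline{K_m}$, and by continuity of $Df$ the limit $e_\ast$ is an equilibrium in $K_m$ that is not hyperbolic, contradicting $f\in\mathcal{E}^h_m$. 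The argument for $\mathcal{E}^{simple}_m$ is identical, replacing ``eigenvalue on the imaginary axis'' by ``$0$ is an eigenvalue.'' Fix such a $\delta$ once and for all.

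Next, (ii). Here the point is a lower bound on the minimal period of nonconstant periodic orbits lying in a fixed compact set, which is uniform on a $C^1$-neighborhood of $f$. I would argue by contradiction: suppose there are $g_k\to f$ in $\mathcal{E}^h_{m+\delta}\cap\mathcal{C}^2_{BF}$ and nonconstant periodic solutions $p_k$ of $\dot x=g_k(x)$ with $p_k([0,\infty))\subset K_{m+\delta}$ and minimal period $\omega_k\to 0$. Since $\|g_k\|$ is bounded uniformly on $K_{m+\delta}$ (because $g_k\to f$ uniformly there) and $\omega_k\to 0$, the orbit $p_k$ shrinks to a point: $\operatorname{diam}p_k([0,\omega_k])\le \omega_k\sup_{K_{m+\delta}}\|g_k\|\to 0$. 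Passing to a subsequence, $p_k(0)\to p_\ast\in K_{m+\delta}$, and since $g_k(p_k(0))\to 0$ would force $g_k$ to vanish near $p_\ast$ in the limit (a periodic orbit of vanishing period and vanishing diameter must accumulate on an equilibrium), we get that $f(p_\ast)=0$, i.e.\ $p_\ast$ is an equilibrium of $f$ in $K_{m+\delta}$. But a hyperbolic equilibrium has a neighborhood containing no periodic orbit at all (the linearization has no center subspace, so locally the flow is topologically conjugate to its linear part, which has no closed orbits); this contradicts the existence of the $p_k$ near $p_\ast$ for large $k$. Taking $\varepsilon$ to be this uniform lower bound and $\mathcal{U}_1$ the corresponding neighborhood finishes (ii).

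Finally, (iii). The inclusion $\mathcal{E}(g,m+\delta)\subset\mathcal{E}(f,m+\delta,r)$ for $g$ near $f$ is just the upper semicontinuity of the equilibrium set together with hyperbolicity of each $e_f$: around each of the finitely many $e_f$ the implicit function theorem gives a unique equilibrium of $g$ that is close to $e_f$, and no new equilibria can appear in $K_{m+\delta}\setminus\bigcup_{e_f}B(e_f,r)$ by a compactness argument as in (i). The second, harder, part is that short periodic orbits of $g$ stay away from the equilibria of $f$: I would show that for $r$ small and $g$ close to $f$, no periodic orbit of $S_g(t)$ of period $<T$ meets $\mathcal{E}(f,m+\delta,r)$. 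Indeed, each hyperbolic $e_f$ has a fixed neighborhood $B(e_f,\rho)$ in which $Dg(x)$ stays uniformly close to $Df(e_f)$ (so bounded away from having purely imaginary eigenvalues) and in which $g$ has no equilibrium other than its own $e_g$; in such a region the flow is a perturbation of a hyperbolic linear flow, and any trajectory that enters a sufficiently small ball $B(e_f,r)\subset B(e_f,\rho)$ must leave $B(e_f,\rho)$ within a time that is bounded \emph{below} independently of $g$ (it is pushed out along the unstable directions) — hence it cannot close up inside the ball, and a periodic orbit passing through $B(e_f,r)$ would have to make an excursion of length bounded below and period bounded below as well, but more to the point it would then pass through $B(e_f,\rho)\setminus B(e_f,r)$, and one shows by the same hyperbolic-estimate that such an orbit, once it leaves, never returns to $B(e_f,r)$, contradicting periodicity together with the fact (from the linear hyperbolic picture) that the only recurrent point in $B(e_f,\rho)$ is $e_g$ itself, which is not on a nonconstant periodic orbit. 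Choosing $r$ smaller than all the $\rho$'s and $\mathcal{U}_2$ small enough that the above estimates hold uniformly yields the claim.

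The main obstacle is the last step of (iii): making precise, with constants uniform over a $C^1$-neighborhood of $f$, the assertion that a nonconstant periodic orbit cannot dip into a small ball around a hyperbolic equilibrium. The clean way to do this is a Lyapunov/cone estimate near $e_f$: pick an inner product adapted to $Df(e_f)$ so that the stable and unstable subspaces are orthogonal and $\langle Df(e_f)x,x\rangle$ is negative definite on the stable part and positive definite on the unstable part; by continuity this adapted quadratic form is a strict Lyapunov-type function (monotone in opposite senses on two complementary cones) for every $g$ in a $C^1$-neighborhood and in a fixed ball $B(e_f,\rho)$, which immediately forbids a closed orbit from being contained in, or even from visiting, $B(e_f,r)$ for $r$ small. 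I would present the argument in that language.
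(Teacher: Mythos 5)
Your arguments for (i) and (ii) are sound. For (i) you argue sequentially (non\nobreakdash-hyperbolic equilibria in $K_{m+\delta_k}$ accumulate, by compactness of $K_{m+\delta_1}$ and $\bigcap_\delta K_{m+\delta}=K_m$, at a non\nobreakdash-hyperbolic equilibrium inside $K_m$), where the paper uses a finite open cover of $\partial K_m$; both reach the same conclusion. For (ii) you pass directly to a sequence $g_k\to f$ of perturbed fields, whereas the paper first proves the period lower bound for $f$ itself (via the Floquet-multiplier argument $1\in\sigma(e^{Df(c)T_0})\Rightarrow 0\in\sigma(Df(c))$) and then invokes \cite[p.\,100--101]{PD} for openness; your route, provided the final step is made uniform in $g$ (which your adapted-inner-product Lyapunov function does accomplish, since the short periodic orbit is \emph{entirely} contained in a small ball around the equilibrium), is a legitimate alternative.

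The genuine gap is in (iii). Your Lyapunov/cone estimate near a hyperbolic equilibrium $e_f$ precludes a nonconstant closed orbit from being \emph{contained} in $B(e_f,\rho)$, but it does \emph{not} preclude a periodic orbit from \emph{visiting} $B(e_f,r)$ and leaving. Your claims that ``once it leaves, never returns to $B(e_f,r)$'' and that ``the only recurrent point in $B(e_f,\rho)$ is $e_g$ itself'' are false: perturbing a field with a homoclinic loop to a saddle produces, for $g$ arbitrarily close to $f$, periodic orbits of $g$ that pass arbitrarily close to the saddle. What rescues the statement is the period bound $T$, which your proposal never uses, while the lemma explicitly lets $r=r(T)$ depend on $T$. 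The correct mechanism is quantitative: in an adapted norm the stable component contracts and the unstable expands at rates uniform over a $C^1$\nobreakdash-neighbourhood of $f$, so any trajectory that reaches $B(e_{g},r)$ must spend a time of order $\log(\rho/r)$ inside $B(e_{g},\rho)$ both before and after its closest approach; choosing $r$ small enough that this time exceeds $T$ (and $\mathcal{U}_2$ small enough that $e_g\in B(e_f,r)$) rules out intersection with $\mathcal{E}(f,m+\delta,r)$ by orbits of period $<T$. Equivalently, one can argue by Ascoli--Arzel\`a: if $g_k\to f$, $r_k\to 0$, and $p_k$ are $g_k$\nobreakdash-periodic of period $\le T$ with $p_k(0)\in B(e_{f,j},r_k)$, then a subsequence of $p_k$ converges uniformly on $[0,T]$ to the constant $e_{f,j}$, whence for large $k$ the entire orbit $p_k$ lies in $B(e_{f,j},\rho)$ and your Lyapunov estimate applies. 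Either way, the dependence on $T$ is essential and must appear in the argument.
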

	
	\begin{lem}\label{dense in L-}
		The set $\mathcal{L}^-\cap \mathcal{C}^2_{BF}$ is dense in $\mathcal{L}^-$. 
	\end{lem}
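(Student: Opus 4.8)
The plan is to approximate a given $f\in\mathcal{L}^-$ by smooth maps obtained through mollification, taking care to preserve the sign and sparsity structure encoded in $\mathcal{M}^-$. Recall that $\mathcal{L}^-$ consists of maps $f=(f_1,\dots,f_n)$ with each $f_i$ depending only on $(x_{i-1},x_i,x_{i+1})$, and with $Df(x)\in\mathcal{M}^-$ for all $x$, which amounts to strict sign conditions on $\partial f_i/\partial x_{i\pm 1}$ along the cyclic structure \eqref{special matrix}--\eqref{bid-feedback} (with one of the two families of strict inequalities in \eqref{transformation} holding). The topology on $\mathcal{L}^-$ is the restriction of $\bar\iota$, generated by the seminorms $p_m$, so ``density'' means: given $f\in\mathcal{L}^-$, $m\in\mathbb{N}\setminus\{0\}$ and $\varepsilon>0$, produce $g\in\mathcal{L}^-\cap\mathcal{C}^2_{BF}$ with $p_m(f-g)<\varepsilon$.

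First I would fix $m$ and work on a slightly larger compact set $K_{m'}\supset K_m$ with $m'>m$, and choose a smooth cutoff so that the construction is local. Each component $f_i$ is a $C^1$ function of the three real variables $(x_{i-1},x_i,x_{i+1})$; I convolve $f_i$ with a standard mollifier $\rho_\eta$ \emph{in those three variables only}, producing $f_i^\eta\in C^\infty$ which still depends only on $(x_{i-1},x_i,x_{i+1})$, so $f^\eta\in\mathcal{C}^2_{BF}$ automatically. As $\eta\to 0$, $f_i^\eta\to f_i$ and $Df_i^\eta\to Df_i$ uniformly on $K_{m'}$ (since $f\in C^1$), hence $p_m(f-f^\eta)\to 0$. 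It remains to check $Df^\eta(x)\in\mathcal{M}^-$. The partial derivatives $\partial f_i^\eta/\partial x_j = (\partial f_i/\partial x_j)*\rho_\eta$ inherit the \emph{non-strict} sign conditions ($b_ic_i\ge 0$, and the $\ge 0$ entries in \eqref{transformation}) immediately, because convolution with a nonnegative kernel preserves non-strict inequalities. The strict inequalities require a short argument: on the compact set $K_{m'}$ the continuous function $\partial f_1/\partial x_n$ (say) is bounded away from $0$, so its mollification is still negative for $\eta$ small; similarly $\partial f_{i+1}/\partial x_i>0$ on $K_{m'}$ survives mollification. One subtlety is that mollification requires values of $f$ slightly outside $\Omega$; since $\Omega$ is open and convex and we only need $f^\eta$ correct on $K_m\subset\subset\Omega$, I extend $f$ to a $C^1$ map on a neighborhood of $K_m$ preserving the sign conditions (e.g.\ by a $C^1$ Whitney-type extension of each $f_i$, or simply by noting $K_{m'}\subset\subset\Omega$ for $m'$ close to $m$ and mollifying with $\eta<\mathrm{dist}(K_{m'},\Omega^c)$).

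The condition $\prod_{i=1}^n b_i+\prod_{i=1}^n c_i\ne 0$ in \eqref{bid-feedback} is the one point that needs genuine care, since a sum of products could conceivably vanish even when each factor is close to a nonzero value. However, under the normalization of $\mathcal{M}^-$, membership in $\mathcal{M}^-$ is exactly the conjunction of the pointwise strict sign conditions on the super/subdiagonal entries together with $b_ic_i\ge 0$; in the first family one has all $b_i$ with a definite sign pattern ($b_i>0$ for $i<n$, $b_n<0$) so $\prod b_i\ne 0$ holds \emph{automatically} once the strict inequalities are preserved, and likewise in the second family with the $c_i$. Thus no extra work is needed for that inequality. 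I expect the main obstacle to be purely bookkeeping: verifying that \emph{all} the strict sign conditions of whichever family of \eqref{transformation} $f$ satisfies are simultaneously preserved after mollification, uniformly on $K_{m'}$, and handling the boundary-of-$\Omega$ issue cleanly. Once $p_m(f-f^\eta)<\varepsilon$ and $Df^\eta(x)\in\mathcal{M}^-$ on all of $\Omega$ (the latter following from mollifying a sign-preserving $C^1$ extension globally, or by a partition-of-unity patching of the local mollifications), we conclude $f^\eta\in\mathcal{L}^-\cap\mathcal{C}^2_{BF}$, proving density.
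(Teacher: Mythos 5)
Your approach — direct mollification componentwise in the three relevant variables — is genuinely different from the paper's argument, which instead invokes the abstract density of $\mathcal{C}^2_{BF}\mid_{K_\alpha}$ in $\mathcal{C}^1_{BF}\mid_{K_\alpha}$ on compacts and then manipulates the restriction map $\mathcal{R}$ together with the openness of the constraint $\operatorname{Int}\mathcal{L}^-(K_\alpha)$. Your route is more concrete and makes the preservation of the $\mathcal{M}^-$ structure transparent: mollification in the three variables $(x_{i-1},x_i,x_{i+1})$ automatically keeps $f^\eta\in\mathcal{C}_{BF}$, and a weighted average of values of a single strict sign stays strictly signed, so the $\mathcal{M}^-$ conditions (and hence $\prod b_i \neq 0$, as you correctly note) are inherited wherever the convolution is defined. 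One small remark: the ``bounded away from zero on $K_{m'}$'' step is not actually needed for sign preservation — strict sign of the integrand alone suffices; uniform positivity on compacts is only what gives you the $C^1$ convergence estimate $p_m(f - f^\eta)<\varepsilon$.

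The genuine gap is the passage from ``works on $K_m$'' to ``$Dg(x)\in\mathcal{M}^-$ for every $x\in\Omega$'', which is required for $g\in\mathcal{L}^-$. Your two suggested fixes do not close it. A Whitney-type $C^1$ extension of $f_i$ past $\partial\Omega$ does not come with any control on the signs of the extended partial derivatives, so you cannot then mollify globally and expect the $\mathcal{M}^-$ conditions to survive off $\Omega$. And partition-of-unity patching of local mollifications fails for a structural reason: if $g=\sum_k\chi_k f^{\eta_k}$, then $Dg=\sum_k\chi_k Df^{\eta_k}+\sum_k f^{\eta_k}\otimes D\chi_k$, and the second sum has no reason to respect the zero pattern or the signs of $\mathcal{M}^-$. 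A clean repair, which uses exactly the standing hypothesis that $\Omega$ is convex, is to precompose with a contraction towards an interior point: fix $x_0\in\Omega$ and set $f_\lambda(x)=f\bigl(\lambda x+(1-\lambda)x_0\bigr)$ for $\lambda\in(0,1)$. Then $f_\lambda$ is $C^1$ on the dilated set $x_0+\lambda^{-1}(\Omega-x_0)\supset\overline{\Omega}$ (locally, with a uniform positive margin over compacta), preserves the $\mathcal{C}_{BF}$ structure, and $Df_\lambda=\lambda\,Df(\lambda\cdot+(1-\lambda)x_0)$ has exactly the same sign pattern, so $Df_\lambda\in\mathcal{M}^-$ wherever defined. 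Now mollify $f_\lambda$ with $\eta$ small enough that the averaging balls around points of $\Omega$ stay inside the domain of $f_\lambda$; the result $(f_\lambda)^\eta$ lies in $\mathcal{L}^-\cap\mathcal{C}^2_{BF}$ on all of $\Omega$, and letting $\lambda\uparrow 1$, $\eta\downarrow 0$ gives $p_m(f-(f_\lambda)^\eta)\to 0$. With this modification your argument is complete and arguably more self-contained than the proof in the paper.
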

	
	Given $a\in\left(1,2\right)$, we define $\mathfrak{O}\left(k\right)= \mathcal{O}\left ( a^{k}\varepsilon ,m+\delta-\frac{\delta}{2}\sum_{j=1}^{k}\frac{1}{2^{j} }   \right )\cap 
	\mathcal{C}^2_{BF}$. Obviously, one can choose $\varepsilon$ sufficiently small such that there exists $k_0 \in 
	\mathbb{N}\setminus  \left\{0\right\}$ with $m\le 
	a^{k_0}\varepsilon<m+2$. Under these conditions, we have the following lemma. 
	
	\begin{lem}\label{dense}
		Given $k\in \mathbb{N}$ with $0<k\le k_0$, then there exists a neighbourhood $\mathcal{U}^k \subset \mathcal{U}^{k-1}$ of $f_0\in \mathcal{E}_{m}^h\cap\mathcal{C}^2_{BF}$ in $ \mathcal{E}^h_{m+\delta}\cap \mathcal{C}^2_{BF}$ such that the set 
		$\mathfrak{O}\left(k\right) \cap \mathcal{U}^k $ is dense in $\mathfrak{O}\left(k-1\right) \cap \mathcal{U}^k $.  
	\end{lem}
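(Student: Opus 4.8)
The plan is to run the classical ``one-more-turn'' step of the Kupka--Smale construction for closed orbits, taking care that every perturbation remains inside $\mathcal{L}^-\cap\mathcal{C}^2_{BF}$. Write $T_{k-1}=a^{k-1}\varepsilon$, $T_k=a^{k}\varepsilon$ and $\alpha_j=m+\delta-\tfrac{\delta}{2}\sum_{i=1}^{j}2^{-i}$, so that $K_{\alpha_k}\subset\operatorname{Int}K_{\alpha_{k-1}}$ and, since $a\in(1,2)$, $T_{k-1}<T_k<2T_{k-1}$. First I would pin down $\mathcal{U}^k$: with $f_0\in\mathcal{E}_m^h\cap\mathcal{C}^2_{BF}$ and $\delta$ as in Lemma \ref{neighborhood }(i), apply parts (ii) and (iii) of that lemma with $T=a^{k_0}\varepsilon\ (\ge T_k)$ to obtain $r>0$, a fixed neighbourhood $\mathcal{E}(f_0,m+\delta,r)$ of the equilibria, and an open $\mathcal{U}^k\subset\mathcal{U}^{k-1}$ of $f_0$ in $\mathcal{E}^h_{m+\delta}\cap\mathcal{C}^2_{BF}$ on which: every nonconstant periodic orbit contained in $K_{\alpha_{k-1}}$ has minimal period $>\varepsilon$; every periodic orbit of period $\le a^{k_0}\varepsilon$ contained in $K_{\alpha_{k-1}}$ is disjoint from $\mathcal{E}(f_0,m+\delta,r)$; and $\mathcal{E}(g,m+\delta)\subset\mathcal{E}(f_0,m+\delta,r)$ for all $g\in\mathcal{U}^k$. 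Thus on $\mathcal{U}^k$ every ``short'' periodic orbit lies in a fixed compact region away from the equilibria and has period bounded below by $\varepsilon$.

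Now take $f\in\mathfrak{O}(k-1)\cap\mathcal{U}^k$ and $\eta>0$; I must find $g\in\mathfrak{O}(k)\cap\mathcal{U}^k$ with $d(f,g)<\eta$. Since $f\in\mathfrak{O}(k-1)$, every periodic orbit of $f$ in $K_{\alpha_{k-1}}$ of period $\le T_{k-1}$ is hyperbolic; combined with the lower bound on periods and the separation from equilibria, a standard accumulation argument shows there are only finitely many of them, $\gamma_1,\dots,\gamma_r$. Each $\gamma_j$ persists under $C^2$-small perturbation as a hyperbolic closed orbit $\gamma_j(g)$ of nearby period, and, choosing pairwise disjoint closed tubular neighbourhoods $N_1,\dots,N_r$ small enough (and disjoint from $\mathcal{E}(f_0,m+\delta,r)$) together with a small enough $C^2$-neighbourhood $\mathcal{W}\subset\mathcal{U}^k$ of $f$, every periodic orbit of any $g\in\mathcal{W}$ in $K_{\alpha_{k-1}}$ of period $\le T_{k-1}$ is one of the $\gamma_j(g)$, and the only periodic orbit of $g$ of period $\le T_k$ that meets some $N_j$ is $\gamma_j(g)$ --- the last point because a hyperbolic fixed point of the Poincar\'e return map admits no other nearby periodic point. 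It remains to handle periodic orbits of period in $(T_{k-1},T_k]$ contained in the compact set $\Lambda:=K_{\alpha_k}\setminus\big(\bigcup_j\operatorname{int}N_j\cup\mathcal{E}(f_0,m+\delta,r)\big)$, on which the flow has no equilibria and no orbit of period $\le T_{k-1}$. Since $T_k$ is finite (and $<2T_{k-1}$, so we add only a controlled amount of period), cover $\Lambda$ by finitely many flow boxes with transverse $(n-1)$-disks; as the flight times are bounded below, any periodic orbit of period $\le T_k$ in $\Lambda$ crosses the union of these sections a uniformly bounded number of times and therefore determines a fixed point of one of finitely many compositions $\Pi_\sigma^{(g)}$ of section-to-section maps, indexed by admissible itineraries $\sigma$. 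By Corollary \ref{simple}, such an orbit is hyperbolic exactly when $D_x\Pi_\sigma^{(g)}-\mathrm{Id}$ is invertible at the fixed point, i.e.\ exactly when $0$ is a regular value of $x\mapsto\Pi_\sigma^{(g)}(x)-x$. So it suffices to perturb $f$, inside $\mathcal{L}^-\cap\mathcal{C}^2_{BF}$, with support in $\Lambda$ and $d$-norm $<\eta$, so that $0$ becomes a regular value of each of these finitely many maps; the Sard--Smale theorem (Theorem \ref{SS-2}), applied to the universal maps $(x,h)\mapsto\Pi_\sigma^{(f+h)}(x)-x$, then yields a residual --- hence dense --- set of admissible perturbations $h$ that work, and picking one with small $d$-norm gives $g=f+h\in\mathfrak{O}(k)\cap\mathcal{U}^k$ with $d(f,g)<\eta$.

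The step I expect to be the real obstacle is to exhibit an admissible family of perturbations rich enough to make all those universal maps submersive at their zeros. The constraint is that $Df(x)+Dh(x)$ must stay in $\mathcal{M}^-$ for \emph{all} $x\in\Omega$, which is \emph{not} an open condition, since some of the sign requirements in \eqref{transformation} are non-strict; hence one cannot simply use arbitrary $C^2$ bump perturbations. To get around this I would exploit the structure of \eqref{negative feedback system}: by Corollary \ref{periodic solution of system} the map $t\mapsto(p_s(t),p_{s+1}(t))$ is injective over one period for any periodic solution $p$, so the projection of any candidate orbit to a plane of two consecutive coordinates is a simple closed curve. One may then choose perturbations $h=(h_1,\dots,h_n)$ with each $h_i$ of the feedback form $h_i(x_{i-1},x_i,x_{i+1})$, localized through a $C^2$ cutoff in the variables $(x_s,x_{s+1})$ around a prescribed point of the orbit so that the orbit feels the perturbation essentially once per period, and shaped --- exploiting the complete freedom of the diagonal entries $\partial f_i/\partial x_i$ in $\mathcal{M}^-$, and monotonicity in the constrained off-diagonal directions --- so that $Df+Dh$ remains in $\mathcal{M}^-$ everywhere. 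Checking that varying such perturbations moves $\Pi_\sigma^{(f+h)}$ in all directions of the relevant cokernels, and then carrying out the finitely many perturbations successively on disjoint arcs, is the technical heart; I would follow the scheme of \cite[Section 6]{RGR} (cf.\ the discretized transversality formulation of \cite{CHM}), adapted to the ODE/feedback setting. This produces the required $g$, so $\mathfrak{O}(k)\cap\mathcal{U}^k$ is dense in $\mathfrak{O}(k-1)\cap\mathcal{U}^k$; combined with Lemma \ref{dense in L-} and iterated over $1\le k\le k_0$, it yields Proposition \ref{G-H-O}.
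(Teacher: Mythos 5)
Your proposal is correct in spirit but takes a genuinely different reduction to the Sard--Smale framework than the paper. You break the compact set into tubular neighbourhoods of the (finitely many, persistent) hyperbolic orbits of period $\le T_{k-1}$, cover the remainder by flow boxes, and apply Theorem \ref{SS-2} to the finitely many section-to-section return maps $\Pi_\sigma^{(f+h)}(x)-x$ indexed by admissible itineraries. The paper instead applies Theorem \ref{SS-2} directly to the single ``period-map defect''
$\Psi(t,u_0,g)=S_g(t)u_0-u_0$, with the period $t\in(\varepsilon,(a^k+\eta)\varepsilon)$ and initial condition $u_0$ ranging over an open set bounded away from the equilibria and satisfying the growth constraint built into $\mathcal{U}^k$; the observation that $a<2$ together with $f\in\mathfrak{O}(k-1)$ forces any non-simple orbit encountered to have $\omega$ as its \emph{minimal} period replaces your separate treatment of persistent short orbits, and surjectivity of $D\Psi$ is then checked via the Fredholm alternative for $S_{Dg(\gamma)}(0,\omega)-\mathrm{Id}$ and the variation-of-constants formula. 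This avoids the flow-box bookkeeping entirely (no itineraries, no flight-time estimates, no isolating tubular neighbourhoods), at the cost of having to rule out non-minimal periods directly. What you and the paper share is the crucial bottleneck and its resolution: the perturbation must lie in the feedback class and preserve $Df\in\mathcal{M}^-$, and both proofs use Corollary \ref{periodic solution of system} to justify a $C^2$ bump $\psi_j(x_j,x_{j+1})$ supported near one point of the orbit in the $(x_j,x_{j+1})$-projection. In the paper this bump perturbs only the $j$-th component and hence only the diagonal entry and the \emph{strictly}-signed off-diagonal entry of $Df$, which is what makes the perturbation admissible --- exactly the resolution you sketch with ``exploiting the freedom of the diagonal entries.'' You flag this obstruction explicitly, which the paper handles somewhat tacitly when it asserts that $V$ is open; your concern is legitimate and worth spelling out, and the fix is as you propose. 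Your route would work, and has the virtue of making visible where hyperbolicity of a Poincar\'e fixed point is used (together with Corollary \ref{simple} to identify ``simple'' with ``hyperbolic''), but it is considerably longer and the admissibility of perturbations for each $\Pi_\sigma$ still reduces to the same bump-function construction; the paper's single-functional formulation is the cleaner choice here.
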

	
	Now we use Lemma \ref{neighborhood } and Lemma \ref{dense} to prove Proposition \ref{G-H-O}.
	
	\begin{proof}[Proof of Proposition \ref{G-H-O}]
		We first prove that $\mathcal{O}_m^{\text {simple}}=\mathcal{O}_m^{h}$ is an open set. Let $\mathfrak{X}\left ( m \right )$ be a subset of $C^1\left(K_m,\mathbb{R}^n\right)$ whose 
		periodic 
		orbits $\gamma$ of $\dot{x} = f\left ( x \right )$ with period $\omega \in (0, m]$ are all hyperbolic. From \cite[p.104]{PD}, the set 
		$\mathfrak{X}\left ( m 
		\right )$ is an open set in vector field $C^1\left(K_m,\mathbb{R}^n\right)$. Thus, $\tilde{\mathcal{R}}_m^{-1}\left(\mathfrak{X}\left ( m 
		\right )\right)$ is an open set in 
		$C^1\left(\Omega,\mathbb{R}^n\right)$. It then follows from the definition of $\iota$ that $\mathcal{O}_m^{simple}$ is an open set in $\mathcal{L}^-$. 
		
		To prove that 
		$\mathcal{O}_m^{simple}$ is dense in $\mathcal{L}^-$, it suffices to show that  $\mathcal{O}_m^{simple} \cap \mathcal{C}^2_{BF}$ is dense 
		in  
		$\mathcal{E}_{m}^h \cap \mathcal{C}^2_{BF}$. We claim that $\mathcal{E}_m^h \cap \mathcal{C}^2_{BF}$ is dense in $\mathcal{L}^-$. Suppose this is not the case. Then there exists an open set $V_1\subset\mathcal{C}^1_{BF}$ such that $\mathcal{E}_m^h \cap \mathcal{C}^2_{BF} \cap V_1 \cap \mathcal{L}^- = \emptyset$. Recall that $\mathcal{E}_m^h$ is an open subset in $\mathcal{L}^-$ (see Proposition \ref{G-H-P}), there exists an open set $V_2\subset\mathcal{C}^1_{BF}$ such that $V_2 \cap \mathcal{L}^-=\mathcal{E}_m^h$. This implies that $V_2 \cap V_1 \cap \mathcal{L}^-\cap \mathcal{C}^2_{BF} \cap \mathcal{L}^- = \emptyset$. However, since $\mathcal{E}_m^h$ is a dense subset in $\mathcal{L}^-$(see Proposition \ref{G-H-P}), $V_2 \cap V_1 \neq \emptyset$. By virtue of Lemma \ref{dense in L-}, $V_2 \cap V_1 \cap \mathcal{L}^-\cap \mathcal{C}^2_{BF} \cap \mathcal{L}^- \ne  \emptyset$, which is a contradiction.
		
		Now, we prove that $\mathcal{O}_m^{simple} \cap \mathcal{C}^2_{BF}$ is dense in  $\mathcal{E}_{m}^h \cap \mathcal{C}^2_{BF}$. Given $f_0\in \mathcal{E}_{m}^h\cap \mathcal{C}^2_{BF}$, by Lemma \ref{neighborhood }, there 
		exists a constant $\delta \in \left(0,1\right)$ such that $f_0\in \mathcal{E}_{m+\delta}^h\cap \mathcal{C}^2_{BF}$. Let 
		$\delta$, 
		$\varepsilon$ and $\mathcal{U}_1$ be as in Lemma \ref{neighborhood } (i) and (ii). And for $T=m+2$, take $r$, $\mathcal{E}\left(f_0,m+\delta,r\right)$ 
		and 
		$\mathcal{U}_2$ be as in Lemma \ref{neighborhood } (iii).  Choose a
		neighborhood $\mathcal{U}^0=\mathcal{U}_1 \cap \mathcal{U}_2\subset \mathcal{E}^h_{m+\delta}\cap 
		\mathcal{C}^2_{BF}$ of $f_0$. 
		
		Since $\mathfrak{O}\left(0\right)= \mathcal{O}\left ( \varepsilon ,m+\delta    \right ) \cap \mathcal{C}^2_{BF}$, Lemma \ref{neighborhood } (ii) implies that $\mathfrak{O}\left(0\right) \cap \mathcal{U}^0   = \mathcal{U}^0 $. Based on Lemma \ref{dense}, we 
		obtain $\mathfrak{O}\left(k\right) \cap \mathcal{U}^k $ is dense in $\mathcal{U}^k$ for any $k$ through recursive proof. Therefore, 
		the subset $\mathcal{O}\left(m,m\right) \cap\mathcal{U}^{k_0}$ of $ \mathcal{O}_m^{simple}\cap\mathcal{C}^2_{BF}$ is dense in 
		$\mathcal{U}^{k_0}$. Note that $f_0\in \mathcal{E}_{m}^h\cap \mathcal{C}^2_{BF}$ is arbitrary, $\mathcal{O}_m^{simple} \cap 
		\mathcal{C}^2_{BF}$ is dense in  $\mathcal{E}_{m}^h \cap \mathcal{C}^2_{BF}
		$. Hence,  
		$\mathcal{O}_m^{simple}$ is dense in $\mathcal{L}^-$.
	\end{proof}
	
	We now turn to prove Lemma \ref{neighborhood }-Lemma \ref{dense}.
	
	\begin{proof}[Proof of Lemma \ref{neighborhood }]
		The proof of (i) is straightforward. Indeed, since $\partial K_m $ is a compact set and simple equilibrium points are isolated, there exists a finite number of open neighborhoods $\left\{N_i\right\}_i$ that cover $\partial K_m$ and contain no equilibrium points of vector field $f$ within $\cup_i N_i\setminus K_m$. Therefore, we can find  $\delta>0$ such that the number of equilibrium points in $K_m$ and $K_{m+\delta}$ are the same.
		
		To prove (ii), we first show that the smallest period of all non-constant periodic orbits of $f\in \mathcal{E}^{h}_{m+\delta}\cap \mathcal{C}^2_{BF} $ with $p\left(t\right)\subset K_{m+\delta}$ strictly greater than a certain constant $\varepsilon$.  Suppose, for contradiction,  that no such $\varepsilon$ exists. Then, by the Ascoli-Arzela theorem, we can find a subsequence $\left\{p_{n_j}\left(t\right)\right\}$ that converges uniformly to $p_0\left(t\right)$, and 
		$p_0\left(t\right)$ is a 
		periodic orbit of $f$ with period $T^*=\lim_{n_j \to \infty } T_{n_j}$, where $T_{n_j}$ is the period of $p_{n_j}\left(t\right)$. Clearly, 
		$p_0\left(t\right)$ is a constant $c$, and $T^*=0$. For any $T_0 \in \left(0, T \right]$, we can find a subsequence of the positive integer 
		sequence 
		$\left\{N_n\right\}$ such that $N_{n_j}T_{n_j}$ (the subscript still denoted as $n_j$) converges to $T_0$. Therefore, $S_{{Df}\left ( p_{n_j} \left ( t 
			\right )  
			\right )  } \left (0, N_{n_j}T_{n_j} \right ) $ converges to $S_{{Df}\left ( c  \right )  } \left ( 0, T_0 \right ) =e^{{Df}\left ( c  \right )T_0} 
		$ and $1\in 
		\sigma \left ( S_{{Df}\left ( c  \right )  } \left ( 0, T_0 \right )  \right ) $. It follows that $0\in \sigma \left ( {Df}\left ( c  \right ) \right 
		)$, so $c$ is not a simple equilibrium point of $f$. This contradicts $f\in \mathcal{E}^{h}_{m+\delta}\cap \mathcal{C}^2_{BF}  $. From \cite[p.100-101]{PD}, we can find a neighborhood $\mathcal{U}_1\subset\mathcal{E}^{h}_{m+\delta}\cap \mathcal{C}^2_{BF} $ of $f$ 
		that 
		satisfies (ii).
		
		Finally, we prove (iii). Using \cite[p.100]{PD} again, it is easy to see that (iii) holds true.
	\end{proof}
	
	\begin{proof}[Proof of Lemma \ref{dense in L-}]
		Let $\mathcal{R}: \mathcal{C}^1_{BF} \mapsto 
		\mathcal{C}^1_{BF}\mid_ {K_\alpha}$ be a
		restriction operator defined by $\mathcal{R}f=f\mid _{K_{\alpha}}$. Then the map $\mathcal{R}$ is a
		continuous, open and surjective map. Since $\mathcal{C}^2_{BF}\mid_{K_\alpha}$ is dense in $\mathcal{C}^1_{BF}\mid_{K_\alpha}$, it follows that 
		$\mathcal{C}^2_{BF}$ is dense in $\mathcal{C}^1_{BF}$. Therefore,  $\mathcal{R}\left( \operatorname{Int}\mathcal{L}^- \cap 
		\mathcal{C}^2_{BF}\right)=\mathcal{R}\left( \operatorname{Int}\mathcal{L}^-\left(K_\alpha\right) \cap \mathcal{C}^2_{BF}\right)$ is dense in 
		$\mathcal{R}\left( 
		\operatorname{Int}\mathcal{L}^-\left(K_\alpha\right)\cap\mathcal{C}^1_{BF}\right)=\mathcal{R}\left( \operatorname{Int}\mathcal{L}^- \cap\mathcal{C}^1_{BF}\right)$, where $$\mathcal{L}^-\left(K_\alpha\right):=\left\{f\in \mathcal{C}^1_{BF}\mid \forall x \in K_\alpha, {Df} \left ( x 
		\right ) 
		\in \mathcal{M}^-  \right\}.$$ Noticing that $\operatorname{Int}\mathcal{L}^-$ is dense in $\mathcal{L}^-$, we conclude that  $\mathcal{L}^- \cap \mathcal{C}^2_{BF}$ is dense in 
		$\mathcal{L}^-$.
	\end{proof}

	\begin{proof}[Proof of Lemma \ref{dense}]
		Let $\eta\in\left(0,2-a\right]$. To utilize Theorem \ref{SS-2} for $\zeta=0$, we first define the map $\Psi : U\times V\mapsto Z$ as follows
		$$
		\Psi : \left ( t,u_0,g \right ) \in U\times V\mapsto \Psi\left ( t,u_0,g \right ) =S_{g}\left ( t \right )u_0-u_0,
		$$
		where $Z=\mathbb{R}^n$ and 
		\begin{align*}
			U=\bigg\{\left ( t,u_0 \right )\in \left ( \varepsilon , \left(a^k+\eta\right)\varepsilon  \right )\times \Omega \,\Big | \, &u_0\notin \overline{\mathcal{E} \left ( 
				f_0,m+\delta 
				,r \right )}\\ &\text{ and }\sup_{0\le t \le \left(a^k+\eta\right)\varepsilon }  \left \| S_{f_0}\left ( t \right ) u_0 \right \|<m+\delta-\frac{\delta}{2}\sum_{j=1}^{k-1}\frac{1}{2^{j} }- \frac{\delta}{2^{k+2}}  \bigg\}.
		\end{align*}
		Choose a neighbourhood $\mathcal{U}^k \subset \mathcal{U}^{k-1}$ of $f_0$  such that if $$\sup_{0\le t \le 
			\left(a^k+\eta\right)\varepsilon }  \left \| S_{f_0}\left ( t \right ) u_0 \right \|<m+\delta-\frac{\delta}{2}\sum_{j=1}^{k-1}\frac{1}{2^{j} }- \frac{\delta}{2^{k+2} }, $$ then for any $g\in \mathcal{U}^k$,  $\sup_{0\le t 
			\le 
			\left(a^k+\eta\right)\varepsilon  }  \left \| S_{g}\left ( t \right ) u_0 \right \|<m+\delta-\frac{\delta}{2}\sum_{j=1}^{k-1}\frac{1}{2^{j} - \frac{\delta}{2^{k+3}} }$.
		
		Let $\mathcal{R}:f\in \mathcal{C}^2_{BF} \mapsto \mathcal{C}^2_{BF}\mid_{K_{m+2}}$ be the
		restriction operator defined by $\mathcal{R}f=f\mid _{K_{m+2}}$.  Then,  $\mathcal{R}\left(\mathfrak{O}\left(k\right) \cap \mathcal{U}^k \right)$ dense in $V=\mathcal{R}\left(\mathfrak{O}\left(k-1\right) \cap \mathcal{U}^k \right)$ if and only if  
		$\mathfrak{O}\left(k\right) \cap \mathcal{U}^k $ dense in $\mathfrak{O}\left(k-1\right) \cap \mathcal{U}^k $. 
		
		Clearly, $U$ and $V$ are open subsets of $\mathbb{R}^{n+1}$ and $\mathcal{C}^2_{BF}\mid _{K_{m+2}}$, respectively. Meanwhile, $\Psi$ is a 
		$C^2$-map. We now verify  that the hypotheses of Theorem \ref{SS-2} are satisfied. 
		
		The hypotheses (i), (ii) and (iv) of Theorem \ref{SS-2} are clearly valid. We only need to verify hypothesis (iii). Given $\left (\omega,u_0,g \right )\in 
		\Psi ^{-1} \left ( 0  \right ) $, we prove that $D\Psi \left (\omega,u_0,g \right ) $ is surjective. Note that if the periodic orbit with initial value $u_0$ is not a simple periodic orbit and $\omega\in\left(a^{k-1}\varepsilon,\left(a^k+\eta\right)\varepsilon\right] $, then $\omega$ must be the minimal period of the periodic orbit with initial value $u_0$. Assume that $\gamma \left ( t \right ) $ is the 
		periodic orbit of $\dot{x} =g\left ( x \right ) $ with $\omega>0$ as the minimum period and $u_0$ as initial value belongs to $U$. Note that $u_0\in U$ cannot be an equilibrium point of $\dot{x} =g\left ( x \right ) $. By calculation
		$$
		D \Psi\left(\omega, u_0, g\right)(\tau, u, \psi)=\tau \frac{\mathrm{d} \gamma }{\mathrm{d} t} \bigg | _{t=0}+\left ( S_{{Dg}\left \{ \gamma  \right \} 
		} \left ( 0, \omega  \right )-Id \right ) u+\Sigma  _{g,\gamma } \left ( \omega  \right ) \psi   ,
		$$
		where $\Sigma  _{g,\gamma } \left ( t  \right )\psi=w\left(t\right)$  is the solution of the following equation:
		$$
		\dot{w}(t) =Dg(\gamma(t)) w(t)+\psi(\gamma(t)), \quad
		w(0) =0.
		$$
		Then
		\begin{equation}\label{forml solution}
			w(t)=\int_0^t S_{Dg(\gamma)}(s, t) \psi(\gamma(s)) ds .
		\end{equation}
		The map $D \Psi\left(\omega, u_0, g\right)$ is surjective if and only if for any $\tilde{c}\in \mathbb{R}^n$, there exists $(\tau, u, \psi)\in 
		\mathbb{R}\times\mathbb{R}^n\times \mathcal{C}^2_{BF}\mid _{K_{m+2}}$ such that 
		$$
		\tau \frac{\mathrm{d} \gamma }{\mathrm{d} t} \bigg | _{t=0}+\left ( S_{{Dg}\left \{ \gamma  \right \} } \left ( 0, \omega  \right )-Id \right ) u+\Sigma  
		_{g,\gamma } \left ( \omega  \right ) \psi =\tilde{c}.
		$$
		Let $c=\tilde{c}-\tau \frac{\mathrm{d} \gamma }{\mathrm{d} t} \bigg | _{t=0}$. By the Fredholm alternative, $c-\Sigma  _{g,\gamma } \left ( \omega  \right ) \psi$ belongs to the image of $ S_{D{g}\left \{ \gamma  \right 
			\} } \left ( 0, \omega  \right )-Id$ if and only if $\left \langle \varphi^*,c-\Sigma  _{g,\gamma } \left ( \omega  \right ) \psi \right \rangle =0$ 
		for any 
		solution $\varphi^*$ of the adjoint equation $ \left(S_{{Dg}\left \{ \gamma  \right \} } \left ( 0, \omega  \right )\right)^* \varphi^*=\varphi^*$. Let 
		$\left\{\varphi_i^*\right\}_{i\in\mathbb{Z}^+}$ be a basis of $\ker\left(S_{{Dg}\left \{ \gamma  \right \} } \left ( 0, \omega \right )-Id\right)^* $. We need to find $\psi$ such that 
		$\left 
		\langle \varphi_i^*, c\right \rangle =\left \langle \varphi_i^*, \Sigma  _{g,\gamma } \left ( \omega  \right ) \psi \right \rangle $ for any $i$. The 
		surjectivity of the map $ \psi \mapsto \left\{\left \langle \varphi_i^*, \Sigma  _{g,\gamma } \left ( \omega  \right ) \psi \right \rangle\right\}_{i\in\mathbb{Z}^+}$  
		is equivalent to the absence of non-zero vectors $\left\{a_i\right\}_{i\in\mathbb{Z}^+}$ such that $\sum_{i}a_i\left \langle \varphi_i^*, \Sigma  _{g,\gamma } \left ( \omega  
		\right 
		) \psi \right \rangle =0$. In fact, if the map is not surjective, then there exists an orthogonal complementary space for the image space of the map. Therefore, to verify hypothesis (iii), we only need to prove that for any solution $\varphi^* \ne 0$ of the adjoint equation 
		$ 
		\left(S_{D{g}\left \{ \gamma  \right \} } \left ( 0, \omega  \right )\right)^* \varphi^*=\varphi^*$, there exists $\psi \in \mathcal{C}^2_{BF}\mid _{K_{m+2}}$ such that $\left \langle \varphi^*, \Sigma  _{g,\gamma } \left ( \omega  \right ) \psi \right \rangle \ne 0$, which is equivalent to the 
		fact that, 
		for any $\varphi^* \ne 0$, there exists $\psi \in \mathcal{C}^2_{BF}\mid _{K_{m+2}}$ such that
		$$
		\left \langle \varphi^*, \int_0^\omega S_{Dg(\gamma)}(s, \omega) \psi(\gamma(s)) ds \right \rangle =\int_0^\omega \left \langle \left( 
		S_{Dg(\gamma)}(s, \omega)\right)^* \varphi^*, \psi(\gamma(s)) \right \rangle ds \ne 0 .
		$$
		Note that $\varphi^* \ne 0$. By the continuity of $\left( S_{g^{\prime}(\gamma)}(s, \omega)\right)^*$, there exist $0<s_1 < s_2<\omega $ and $j$ such 
		that the $j$-th component $\left\{\left( S_{Dg(\gamma)}(s_n, \omega)\right)^* \varphi^*\right\}_j$ of $\left( S_{Dg(\gamma)}(s_n, \omega)\right)^* \varphi^*,\,n=1,\,2$ is not equal to 0. For convenience, assume that the $j$-th component $\left\{\left( S_{Dg(\gamma)}(s_n, \omega)\right)^* 
		\varphi^*\right\}_j>0,\,n=1,\,2$. We can find a regular bump function $\psi_j\left(x_j,x_{j+1}\right)$ such that $\psi_j>0$ in a certain small neighborhood 
		of 
		$\left(\gamma_j\left(0\right),\gamma_{j+1}\left(0\right)\right)$, and $\psi_j=0$ outside the neighborhood. For example, one can choose a suitable smooth function $h\left(x\right)$, when $ x>0 
		$, 
		$h\left ( x \right ) =e^{-\frac{1}{x} } $; when $ x   \le 0$, $h\left(x\right)=0$. Then, we have
		$$
		\psi _j\left ( x_j,x_{j+1} \right ) =\frac{h\left ( r-d\left(x_j,x_{j+1}\right)  \right ) }{h\left ( r-d\left(x_j,x_{j+1}\right) \right )+h\left ( d\left(x_j,x_{j+1}\right) -\frac{r}{2}  \right )},
		$$
		where 
		$$
		d\left(x_j,x_{j+1}\right)=\left | x_j-\gamma_j \left ( 0 \right )  
		\right |^2+\left | x_{j+1}-\gamma_{j+1} \left ( 0 \right )  
		\right |^2, 
		$$
		$$r=\min \left \{ \min_{ s\in \left [ s_1, s_2 \right ]  }  d\left(\gamma_j\left(s\right),\gamma_{j+1}\left(s\right)\right), \max_{ s\in \left 
			[ 0, s_1 \right ]  }  d\left(\gamma_j\left(s\right),\gamma_{j+1}\left(s\right)\right), \max_{ s\in \left [ s_2, \omega  \right ]  }  d\left(\gamma_j\left(s\right),\gamma_{j+1}\left(s\right)\right)  \right \}. $$
		It follows from Corollary \ref{periodic solution of system} that $r\ne 0$. Finally, for any $i \ne j$, let $\psi_i\left(x\right)=0$, which satisfies the requirement.
		
		Since all the hypotheses of Theorem \ref{SS-2} are satisfied, the set $\Theta=\{\psi \in V \mid 0$ is a regular value of $\Psi(.,., \psi)\}$ is dense 
		in $V$. In other words, for any $\psi\in \Theta$ and $\left (\omega,u_0,g \right )\in 
		\Psi ^{-1} \left ( 0  \right ) $, the mapping $D_{1,2} \Psi\left(\omega, u_0\right)$ is surjective. Moreover, due to $\operatorname{Ker}\left(D_{1,2} \Psi\left(\omega, u_0\right)\right)=0$, $\frac{\mathrm{d} \gamma }{\mathrm{d} t} \big | _{t=0}\notin \operatorname{Im} \left ( S_{{Dg}\left \{ \gamma  \right \} } \left ( 0, \omega  \right )-Id  \right ) $. Hence, $\Theta \subset \mathcal{R}\left(\mathfrak{O}\left(k\right) \cap \mathcal{U}^k \right)$. This completes the proof of the lemma.
	\end{proof}
	
	\section{Generic Kupka-Smale properties}
	In this section, we focus on proving of Theorem \ref{Generic non-existence of homoindexed connecting orbits}. Recall that the genericity of hyperbolic critical elements was established in Theorem \ref{critical elements is generic}, and automatic transversality was proven in Theorem 
	\ref{Automatic Transversality}. To prove Theorem \ref{Generic non-existence of homoindexed connecting orbits}, we need to study the case of hyperbolic equilibrium points $e^-$ and $e^+$ such that  $i\left(e^+\right)=i\left(e^-\right)=2h-1$. We can assert that in this case, $
	W^u\left(e^{-}\right) \pitchfork W^s\left(e^{+}\right)$ if and only if $
	W^u\left(e^{-}\right) \cap W^s\left(e^{+}\right)=\emptyset$. Otherwise, there exists a connecting orbit $c\left(t\right)\in W^u\left(e^{-}\right) \cap 
	W^s\left(e^{+}\right) $, and we have
	$0=\operatorname{dim}W^u\left(e^{-}\right)+\operatorname{dim}W^s\left(e^{+}\right)-\operatorname{dim}\left(\mathbb{R}^n\right) =\operatorname{dim}\left(W^u\left(e^{-}\right) \cap 
	W^s\left(e^{+}\right)\right)>1$, which is a contradiction. 
	
	In view of the above, we aim to show that generically with respect to the non-linearity vector field $f$, there does not exist any orbit connecting 
	two equilibrium points with same index $i\left(e^+\right)=i\left(e^-\right)=2h-1$, where $h\in \left \{ 1,\dots ,\frac{\tilde{n}+1 }{2}  \right \}$.
	
	\subsection{The main idea of proving Theorem \ref{Generic non-existence of homoindexed connecting orbits}}
	Let $\left(\mathcal{L}^-,\iota \right)$ be as defined in Section 5. Assume that $f_0\in \mathcal{L}^-$ is chosen so that all the critical elements of the corresponding equation $\dot{x}=f_0\left(x\right)$ are hyperbolic. Given
	$f\in 
	\mathcal{L}^-$, let $\mathcal{C}_{f}\left(e^-\left(f\right),e^+\left(f\right)\right)$ be the set of all orbits
	$v\left(t\right)=S_{f}\left(t\right)v_0$ of $\dot{x}=f\left(x\right)$ that connect two  equilibria $e^\pm\left(f\right)$. Note that 
	$\mathcal{L}^-$ is not a Banach space but is a Baire space. As in the previous section, we introduce a restriction map
	$$
	\mathcal{R}\left(\alpha\right): f\in \mathcal{L}^-\mapsto \mathcal{R}\left(\alpha\right)f\in \mathcal{L}^-\mid _{K_{\alpha+2}}
	$$ 
	defined by 
	$\mathcal{R}\left(\alpha\right)f=f\mid _{K_{\alpha+2}}$, where $K_{\alpha+2}$ is as defined in Section 
	5. Then, $\mathcal{R}\left(\alpha\right)$ is a continuous, open and surjective map from $\mathcal{L}^-$ into 
	$\mathcal{R}\left(\alpha\right)\left(\mathcal{L}^-\right)$.
	
	Recall that $\Omega=\bigcup_{m\in \mathbb{Z}^+ }K_m $ and $\mathcal{E}^h_{m+1}$ is open in $\mathcal{L}^-$. Then, 
	$\mathcal{R}\left(m\right)\left(\mathcal{E}^h_{m+1}\right)$ is an open subset of $\mathcal{L}^-\mid _{K_{m+2}}$, 
	and hence it is a separable Baire space. The following Proposition \ref{Reduction} indicates that under suitable assumptions, Theorem \ref{Generic non-existence of homoindexed 
		connecting orbits} will automatically hold. In the remainder of this section, we verify that all the assumptions in Proposition \ref{Reduction} are satisfied.
	
	\begin{prop}\label{Reduction}
		Assume that for any $m\in\mathbb{Z}^+$ and any $f_0\in \mathcal{R}\left(m\right)\left(\mathcal{E}^h_{m+1}\right)$, there 
		exists a small open neighbourhood $\mathcal{U}_{f_0,m}$ of $f_0$ in $\mathcal{R}\left(m\right)\left(\mathcal{E}^h_{m+1}\right)$ and a 
		generic set $\mathcal{G}\left(f_0,m\right)$ in $\mathcal{U}_{f_0,m}$ such that for any $f\in \mathcal{G}\left(f_0,m\right)$, any solution 
		$u\left(t\right)$ of 
		$\dot{x}=f\left(x\right)$ connecting two hyperbolic equilibrium points with $sup_{t\in\mathbb{R}}\left \| u\left ( t \right )  \right \| \le m$ is 
		transverse. Then, Theorem \ref{Generic non-existence of homoindexed connecting orbits} holds.
	\end{prop}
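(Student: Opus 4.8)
The plan is to read Proposition \ref{Reduction} as a purely topological \emph{localization--globalization} device: the hypothesis supplies, at each ``scale'' $m$ and near each reference field $f_0$, a residual family of vector fields all of whose bounded equilibrium connections are transverse, and I want to glue these into a single residual set $\tilde{\mathcal O}\subset\mathcal O$ of $(\mathcal L^-,\iota)$ on which the full Kupka--Smale transversality holds. The four ingredients are: (i) a Lindel\"of/countable-subcover argument on each $\mathcal R(m)(\mathcal E^h_{m+1})$; (ii) the fact that residuality passes from the members of a countable open cover to the whole space; (iii) stability of residual sets under preimages by the continuous open surjections $\mathcal R(m)$; and (iv) a dimension count turning ``every homoindexed connection is transverse'' into ``no homoindexed connection exists'', after which Theorem \ref{Automatic Transversality} disposes of every remaining pair of critical elements.

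Concretely, I would first fix $m\in\mathbb Z^+$ and let $\mathcal T_m\subset\mathcal R(m)(\mathcal E^h_{m+1})$ be the set of $g$ for which every solution of $\dot x=g(x)$ connecting two hyperbolic equilibria and obeying $\sup_{t\in\mathbb R}\|u(t)\|\le m$ is transverse. Since $\mathcal R(m)(\mathcal E^h_{m+1})$ is a separable metrizable (hence Lindel\"of) Baire space, the cover $\{\mathcal U_{f_0,m}\}_{f_0}$ from the hypothesis has a countable subcover $\{\mathcal U_j\}_{j\in\mathbb Z^+}$ with associated generic sets $\mathcal G_j:=\mathcal G(f_0^{(j)},m)\subset\mathcal U_j$; since every $g\in\mathcal G_j$ has the stated transversality property we get $\mathcal G_j\subset\mathcal T_m\cap\mathcal U_j$, so $\mathcal T_m\cap\mathcal U_j$ is residual in $\mathcal U_j$, and the standard ``local residual $\Rightarrow$ global residual'' principle for countable open covers gives that $\mathcal T_m$ is residual in $\mathcal R(m)(\mathcal E^h_{m+1})$. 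Next, $\mathcal R(m)\colon\mathcal E^h_{m+1}\to\mathcal R(m)(\mathcal E^h_{m+1})$ is continuous, open and surjective, so preimages of dense open sets are dense open and $\widehat{\mathcal T}_m:=\mathcal R(m)^{-1}(\mathcal T_m)$ is residual in $\mathcal E^h_{m+1}$; since $\mathcal E^h_{m+1}$ is open and dense in $\mathcal L^-$ by Proposition \ref{G-H-P}, $\widehat{\mathcal T}_m$ is residual in $\mathcal L^-$. I would then set $\tilde{\mathcal O}:=\mathcal O\cap\bigcap_{m\in\mathbb Z^+}\widehat{\mathcal T}_m$, with $\mathcal O$ the generic set of Theorem \ref{critical elements is generic}; being a countable intersection of residual subsets of the Baire space $(\mathcal L^-,\iota)$, $\tilde{\mathcal O}$ is residual in $\mathcal L^-$ (hence also in $\mathcal O$) and satisfies $\tilde{\mathcal O}\subset\mathcal O$.

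It then remains to check that each $f\in\tilde{\mathcal O}$ is Kupka--Smale. All critical elements are hyperbolic because $f\in\mathcal O$. Let $\gamma^\pm$ be critical elements joined by a connecting orbit $u$. If $\gamma^-$ or $\gamma^+$ is a periodic orbit, Theorem \ref{Automatic Transversality}(i) gives $W^u(\gamma^-)\pitchfork W^s(\gamma^+)$; if $\gamma^\pm=e^\pm$ are equilibria with $i(e^-)\ne i(e^+)$ or with $i(e^-)=i(e^+)$ even, Theorem \ref{Automatic Transversality}(ii) applies and again yields transversality (and in the even homoindexed case the dimension count forces $W^u(e^-)\cap W^s(e^+)=\emptyset$). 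The only remaining possibility is $i(e^-)=i(e^+)=2h-1$ odd: being a connection between equilibria, $u$ has precompact image in $\Omega$, so $\sup_{t\in\mathbb R}\|u(t)\|\le m$ and $u(\mathbb R)\subset K_m$ for some $m\in\mathbb Z^+$, whence $f\in\widehat{\mathcal T}_m$ forces $u$ to be transverse, i.e.\ $W^u(e^-)\pitchfork W^s(e^+)$; but $\dim W^u(e^-)+\dim W^s(e^+)-\dim\mathbb R^n=(2h-1)+(n-(2h-1))-n=0$, so a transverse nonempty intersection would be $0$-dimensional, contradicting that $u(\mathbb R)\subset W^u(e^-)\cap W^s(e^+)$ has dimension at least $1$. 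Hence no orbit connects two equilibria of equal Morse index — in particular no homoclinic orbit exists — and together with the transversality of all other pairs this is exactly the Kupka--Smale conclusion of Theorem \ref{Generic non-existence of homoindexed connecting orbits}.

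The hard part is not any individual step but making the localization bookkeeping airtight: one must genuinely use that $\mathcal R(m)(\mathcal E^h_{m+1})$ is separable (for the Lindel\"of reduction), that $\mathcal R(m)$ is an \emph{open} map (so that preimages of residual sets stay residual), that $\mathcal E^h_{m+1}$ is open \emph{and} dense in $\mathcal L^-$ (to descend from $\mathcal E^h_{m+1}$ back to $\mathcal L^-$), and that the cut-off radius $m+2$ built into $\mathcal R(m)$ is generous enough that a connecting orbit with $u(\mathbb R)\subset K_m$, together with every field in the small neighbourhood $\mathcal U_{f_0,m}$, keeps all the relevant equilibria and invariant-manifold data inside $K_{m+2}$. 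All of these are already encoded in the definitions recalled above, so the argument is essentially a careful assembly; the substantive analytic input — the \emph{local} genericity of transversality of bounded equilibrium connections, i.e.\ the hypothesis of Proposition \ref{Reduction} — is precisely what the remainder of the section will establish.
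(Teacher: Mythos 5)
Your proof is correct and follows essentially the same route as the paper's: separability of $\mathcal{R}(m)(\mathcal{E}^h_{m+1})$ yields a countable subcover $\{\mathcal{U}_j\}$, local residuality of the transversal set in each $\mathcal{U}_j$ is promoted to global residuality (the paper implements this via the explicit sets $\tilde{\mathcal{G}}(f_i,m)=\mathcal{G}(f_i,m)\cup[\mathcal{R}(m)(\mathcal{E}^h_{m+1})\setminus\overline{\mathcal{U}_{f_i,m}}]$, which is precisely the mechanism behind your ``local residual $\Rightarrow$ global residual'' principle), the preimage under the continuous open surjection $\mathcal{R}(m)$ combined with openness and density of $\mathcal{E}^h_{m+1}$ in $\mathcal{L}^-$ carries residuality back to $\mathcal{L}^-$, and intersecting over $m$ and with $\mathcal{O}$ gives the generic set. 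Your final paragraph (boundedness of any equilibrium-to-equilibrium connection, the odd-index dimension count $\dim W^u(e^-)+\dim W^s(e^+)-n=0$ precluding a transverse nonempty intersection, and invocation of Theorem \ref{Automatic Transversality} for the remaining pairs) simply spells out the remark the paper makes immediately before stating the proposition, so it is the same argument made explicit rather than a different one.
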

	
	\begin{proof}
		For a fixed $m\in\mathbb{Z}^+$, since $\mathcal{R}\left(m\right)\left(\mathcal{E}^h_{m+1}\right)$ is separable, there exists a countable 
		dense subset $\left\{f_i\right\}_{i\in\mathbb{Z}^+}$ such that $\bigcup_{i\in\mathbb{Z}^+ } \mathcal{U}_{f_i,m}  \supset 
		\mathcal{R}\left(m\right)\left(\mathcal{E}^h_{m+1}\right)$. Observe that
		$$\tilde{\mathcal{G}}\left(f_i,m\right)=\mathcal{G}\left(f_i,m\right)\cup\left[\mathcal{R}\left(m\right)\left(\mathcal{E}^h_{m+1}\right)\setminus \overline{\mathcal{U}_{f_i,m}}\right]$$
		is a generic subset of $\mathcal{R}\left(m\right)\left(\mathcal{E}^h_{m+1}\right)$. Therefore, $$\mathcal{G}_m=\bigcap_{i\in 
			\mathbb{Z}^+}\tilde{\mathcal{G}}\left(f_i,m\right)$$
		is generic in $\mathcal{R}\left(m\right)\left(\mathcal{E}^h_{m+1}\right)$.
		Since $\mathcal{E}_{m+1}^h$ is dense in $\mathcal{L}^-$and the map $\mathcal{R}\left(m\right)$ 
		is continuous, open and surjective, $\left(\mathcal{R}\left(m\right)\right)^{-1}\left(\mathcal{G}_m\right)$ and 
		$\left(\mathcal{R}\left(m\right)\right)^{-1}\left(\mathcal{G}_m\right)\cap\mathcal{O}$ are generic subsets of 
		$\mathcal{L}^-$, where $\mathcal{O}$ is a generic set as defined in Theorem \ref{critical elements is generic}.  Finally, we notice that the set 
		$$\tilde{O}=\bigcap_{m\in\mathbb{Z}^+}\left(\mathcal{R}\left(m\right)\right)^{-1}\left(\mathcal{G}_m\right)\cap\mathcal{O}$$ is a generic set satisfies Theorem 
		\ref{Generic non-existence of 
			homoindexed connecting orbits}.
	\end{proof}
	
	To find $\mathcal{U}_{f_0,m}$ and $\mathcal{G}\left(f_0,m\right)$ that satisfy the assumptions in Proposition \ref{manifolds}, we need the 
	following lemma.
	
	\begin{lem}\label{manifolds}
		Let $\alpha >0 $ and $f_0\in \mathcal{L}^-$ be given such that all its critical elements are hyperbolic. 
		Then, $\dot{x}=f_0\left(x\right)$ has a finite number of equilibria $e_j$, $1\le j\le k$ satisfy $\left \| e_j \right \| \le \alpha $. Moreover, there exist $r_0>0$, 
		$R_0>0$, 
		$R_1>0$ with $r_0<R_0<R_1$ and a small neighbourhood $\mathcal{U}\left(f_0,\alpha\right)$ of $f_0$ in $\mathcal{R}\left(\alpha\right)\left(\mathcal{L}^-\right)$, depending only on $f_0$ and $\alpha$, such that the following properties hold:
		\begin{enumerate}
			\item [\rm (i)] For any $f\in \mathcal{U}\left(f_0,\alpha\right)$, $1\le j\le k$, there exists an equilibrium point 
			$e_j\left(f\right)\in B\left(e_j\left(f_0\right),r_0\right)$. Moreover, $e_j$ is the unique equilibrium point in $\overline 
			{B\left((e_j\left(f_0\right),R_1\right)}$ and has the same Morse index as $e_j\left(f_0\right)$.
			
			\item [\rm (ii)] $R_1$ can be chosen small enough so that $B\left((e_i\left(f_0\right),R_1\right)\cap B\left(e_j\left(f_0\right),R_1\right)=\emptyset$, $i\ne 
			j$.
			
			\item [\rm (iii)] There exist small neighbourhoods $\mathcal{V}_j\left(f\right)$ of $e_j\left(f\right)$ such that  
			$B\left(e_j\left(f_0\right),R_0\right)\subset \mathcal{V}_j\left(f\right) \subset B\left(e_j\left(f_0\right),R_1\right)$, which converge to 
			$\mathcal{V}_j\left(f_0\right)$ as $f$ converges to $f_0$ in $\mathcal{C}_{BF}^2$ and satisfy the following property:
			
			the local stable set $W^s_{loc,f}\left(e_j\left(f\right),\mathcal{V}_j\left(f\right)\right)$ and the local unstable set 
			$W^u_{loc,f}\left(e_j\left(f\right),\mathcal{V}_j\left(f\right)\right)$ are $C^2$-invariant manifolds of $n-i\left(e_j\left(f_0\right)\right)$ 
			dimension and 
			$i(e_j(f_0))$ dimension respectively. Meanwhile, $W^s_{loc,f}\left(e_j\left(f\right),\mathcal{V}_j\left(f\right)\right)\cap 
			W^u_{loc,f}\left(e_j\left(f\right),\mathcal{V}_j\left(f\right)\right)=\left\{e_j\left(f\right)\right\}$.
			
			\item [\rm (iv)]  If $v\left(t\right)=S_f\left(t\right)v_0$ is a solution of $\dot{x}=f\left(x\right)$ such that $v\left(t\right)\in 
			B\left(e_j\left(f_0\right),R_0\right)$ for all $t\ge t_0$ (resp. $t\le t_1$), then $v\left(t\right)\in 
			W^s_{loc,f}\left(e_j\left(f\right),\mathcal{V}_j\left(f\right)\right)$ (resp. 
			$W^u_{loc,f}\left(e_j\left(f\right),\mathcal{V}_j\left(f\right)\right)$) for 
			all $t\ge t_0$ (resp. $t \le t_1$).
		\end{enumerate}
	\end{lem}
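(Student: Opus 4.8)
\emph{Proof proposal.} The plan is to assemble three classical facts of hyperbolic theory — persistence of nondegenerate zeros, persistence of a hyperbolic splitting together with its Morse index, and persistence of local stable and unstable manifolds — and to be careful that every constant is chosen \emph{simultaneously} and uniformly over a single neighbourhood of $f_0$. Here ``$f$ close to $f_0$'' means close in the $C^1$-norm on the compact set $K_{\alpha+2}$, which by the Remark in Section~5 is the topology on $\mathcal{R}(\alpha)(\mathcal{L}^-)$, and all the relevant dynamics takes place inside $K_{\alpha+2}$.

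Finiteness of the $e_j$ is immediate: each equilibrium of $f_0$ is hyperbolic, so $Df_0(e_j)$ is invertible and $e_j$ is isolated, and an infinite family of isolated equilibria inside the compact set $\{\|x\|\le\alpha\}$ would have to accumulate at a non-isolated equilibrium. For (i) and (ii) I would first fix $R_1>0$ so small that the closed balls $\overline{B(e_j(f_0),R_1)}$, $1\le j\le k$, are pairwise disjoint, lie in $K_{\alpha+2}$, each contains no equilibrium of $f_0$ other than $e_j(f_0)$, and on each of them $Df_0$ stays close enough to the invertible matrix $Df_0(e_j(f_0))$ that $f_0$ is injective there. Then for $f$ in a small $C^1$-neighbourhood $\mathcal{U}(f_0,\alpha)$ of $f_0$ the map $f$ is still injective on each $\overline{B(e_j(f_0),R_1)}$, hence has at most one zero there; meanwhile the implicit function theorem at $e_j(f_0)$ produces one, $e_j(f)$, depending continuously on $f$, which therefore lies in a preassigned ball $B(e_j(f_0),r_0)$, $r_0<R_1$, once $\mathcal{U}(f_0,\alpha)$ is shrunk enough (compare \cite{PD}). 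Since the spectrum of $Df(e_j(f))$ varies continuously with $f$ while that of $Df_0(e_j(f_0))$ avoids the imaginary axis, no eigenvalue can cross it, so $e_j(f)$ is hyperbolic with the same number of eigenvalues of positive real part, i.e.\ the same Morse index as $e_j(f_0)$.

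For (iii) I would appeal to the local (un)stable manifold theorem with continuous dependence on the vector field (as in \cite[Chapter~1]{CC}, \cite[Theorem~6.1.9]{H1}, \cite[Appendix~C]{MX}): writing $\mathbb{R}^n=E^s_f\oplus E^u_f$ for the hyperbolic splitting of $Df(e_j(f))$, of dimensions $n-i(e_j(f_0))$ and $i(e_j(f_0))$ and varying continuously with $f$, there is on a box $\mathcal{V}_j(f)$ of some fixed radius $\rho$ — taken in linear coordinates centred at $e_j(f)$ adapted to the splitting — a representation of $W^s_{loc,f}$ and $W^u_{loc,f}$ as graphs of $C^1$ maps (of class $C^2$ when $f\in\mathcal{C}^2_{BF}$) of the stated dimensions, meeting only at $e_j(f)$, and depending continuously on $f$ in the $C^1$ (resp.\ $C^2$) topology. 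Choosing the radii in the order $R_1$, then $\rho$, then $R_0<\rho$, then $r_0<R_0$, and shrinking $\mathcal{U}(f_0,\alpha)$ so that $e_j(f)$ stays close to $e_j(f_0)$, yields the inclusions $B(e_j(f_0),R_0)\subset\mathcal{V}_j(f)\subset B(e_j(f_0),R_1)$ and the convergence $\mathcal{V}_j(f)\to\mathcal{V}_j(f_0)$ as $f\to f_0$.

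For (iv) I would use the standard fact that, after possibly shrinking $\rho$, the box $\mathcal{V}_j(f)$ is small enough that $\{x:S_f(t)x\in\mathcal{V}_j(f)\ \forall t\ge0\}=W^s_{loc,f}(e_j(f),\mathcal{V}_j(f))$ and $\{x:S_f(t)x\in\mathcal{V}_j(f)\ \forall t\le0\}=W^u_{loc,f}(e_j(f),\mathcal{V}_j(f))$ — a consequence of the graph-transform estimates, or of the Hartman--Grobman conjugacy, with the smallness threshold uniform over $\mathcal{U}(f_0,\alpha)$ by continuity. Since $B(e_j(f_0),R_0)\subset\mathcal{V}_j(f)$, an orbit $v(t)=S_f(t)v_0$ with $v(t)\in B(e_j(f_0),R_0)$ for all $t\ge t_0$ satisfies $S_f(s)v(t_0)\in\mathcal{V}_j(f)$ for all $s\ge0$, so $v(t_0)\in W^s_{loc,f}$; and because $W^s_{loc,f}$, being cut out by a forward-trapping condition, is forward invariant, $v(t)\in W^s_{loc,f}$ for all $t\ge t_0$, the backward/unstable case being symmetric. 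The only genuine difficulty is organizational: one must fix the order of quantifiers so that $r_0<R_0<\rho<R_1$, the injectivity estimate, the graph-transform domain and the trapping threshold of (iv) are all chosen at once and depend only on $f_0$ and $\alpha$ — each individual ingredient being classical.
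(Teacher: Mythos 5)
Your proposal is correct and takes essentially the same route as the paper's proof, which simply cites the continuity of eigenvalues under perturbation (\cite[p.100]{PD}) for (i)--(ii), and the local stable/unstable manifold theorem with continuous dependence (\cite[Theorem~4.1]{CC}, \cite[Theorem~3.2.1]{SW}, \cite[Appendix~C]{MX}) for (iii)--(iv). You have filled in exactly the standard details behind those citations — implicit function theorem, eigenvalue tracking, graph-transform representation, and the trapping characterization of the local manifolds — and your care about the order in which the radii and the neighbourhood $\mathcal{U}(f_0,\alpha)$ are chosen is appropriate.
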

	
	\begin{proof}
		Based on the continuity of eigenvalues with respect to continuous perturbations, \cite[p.100]{PD}, one can derive (i). Since the hyperbolic equilibrium point is isolated, (ii) follows immediately. By virtue of Theorem 4.1 in \cite{CC}, Theorem 3.2.1 in \cite{SW} and Appendix C of \cite{MX}, we can obtain (iii) and (iv).
	\end{proof}
	
	\begin{rk}\label{ball}
		\rm{Note that there exists a sufficiently small $r$ such that the open ball
			$B\left(f_0,r\right)\subset\mathcal{U}\left(f_0,\alpha\right)$. For simplicity, we assume that 
			$\mathcal{U}\left(f_0,\alpha\right)=B\left(f_0,r\right)$.}
	\end{rk}
	
	Now, fix $m\in\mathbb{Z}^+$ and $f_0\in \mathcal{R}\left(m\right)\left(\mathcal{E}^h_{m+1}\right)$. By applying Lemma 
	\ref{manifolds} with $\alpha=m$, there exists a small open  neighbourhood $\mathcal{U}\left(f_0,m\right)$ of $f_0$ in 
	$\mathcal{R}\left(m\right)\left(\mathcal{E}^h_{m+1}\right)$ such that:
	\begin{itemize}
		\item  There are hyperbolic equilibrium points $\left\{e_j\left(f_0\right)\right\}_{j\in \left\{1,\dots,k\right\}}$ of $S_{f_0}\left(t\right)$ with 
		$\left \| e_j\left ( f_0 \right )   \right \| \le m$, and all the properties described in Lemma \ref{manifolds} are satisfied.
		
		\item There are $\delta_1$ and $\delta_2$ with $0<\delta_2<\delta_1<1$ such that for any $f\in \mathcal{U}\left(f_0,m\right)$, $S_f\left(t\right)$ 
		has only $k$ equilibrium points in $\overline{B\left(0,m+\delta_1\right)}$ and has no equilibrium points in 
		$\overline{B\left(0,m+\delta_1\right)}\setminus 
		B\left(0,m+\delta_2\right)$.
		
		\item For any $j\in \left\{1,\dots,k\right\}$, $B\left(e_j\left(f_0\right),R_0\right)\subset B\left(0,m+\delta_1\right)$.
	\end{itemize}
	
	We now turn to construct a generic set $\mathcal{G}\left(f_0,m\right)$ in $\mathcal{U}\left(f_0,m\right)$. Given $p\in\{0,\cdots,n\}$, define 
	$$
	\mathfrak{D} \left(\mathcal{E}^{p,f_0}_{m}\right)=\bigcup_{e_j\left(f_0\right)\in \mathcal{E}^{p,f_0}_{m}}B\left(e_j\left(f_0\right),\rho_0\right),
	$$
	where $\mathcal{E}^{p,f_0}_{m}=\left\{e_j\left(f_0\right) \in \overline{B\left(0,m\right)}\mid i\left(e_j\left(f_0\right)\right)=p\right\}$ is the set of equilibrium points of $S_{f_0}(t)$ with index $p$ and 
	$r_0<\rho_0<R_0$. Let $l\in 
	\mathbb{N} $, define 
	\begin{align*}
		\mathcal{G}^{l,p}\left(f_0,m\right)=\Big\{f\in\mathcal{U}\left(f_0,m\right) \Big | &\text{ for every solution }u\left(t\right) \text{ satisfying } 
		\left \| 
		u\left ( t \right )  \right \| \le m,\,\forall t\in \mathbb{R}\\
		&\text{ and } u\left(t\right)\in \mathfrak{D} \left(\mathcal{E}^{p,f_0}_{m}\right),\,\forall t\in \left ( -\infty ,-l  \right ]\cup  \left [ l,\infty  
		\right ),\,\text{ is transverse} \Big\}
	\end{align*}
	and
	$$
	\mathcal{G}\left(f_0,m\right)=\bigcap_{l,p}\mathcal{G}^{l,p}\left(f_0,m\right).
	$$
	By Lemma \ref{manifolds} (iv), $\mathcal{G}\left(f_0,m\right)$ satisfies the assumptions in Proposition \ref{Reduction}. We now proceed to prove that
	$\mathcal{G}^{l,p}\left(f_0,m\right)$ is an open and dense subset in $\mathcal{U}\left(f_0,m\right)$.

	\subsection{\boldmath$\mathcal{G}^{l,p}\left(f_0,m\right)$ is an open set in $\mathcal{U}\left(f_0,m\right)$ }
	
	Assume that $\left\{f_n\right\}_{n\in \mathbb{Z}^+}\subset \mathcal{U}\left(f_0,m\right)\setminus \mathcal{G}^{l,p}\left(f_0,m\right)$ and $f_n\to 
	f_\infty\in \mathcal{U}\left(f_0,m\right)$. For each $f_n$, equation $\dot{x}=f_n\left(x\right)$ has a non trivial solution 
	$v_n\left(t\right)$ that connects two equilibrium points $e^-\left(f_n\right), e^+\left(f_n\right)\in \mathfrak{D} \left(\mathcal{E}^{p,f_0}_{m}\right)$. Since the number of sets $B\left(e_j\left(f_0\right),\rho_0\right)$ in $\mathfrak{D} 
	\left(\mathcal{E}^{p,f_0}_{m}\right)$ is finite, there exist $j_1$, $j_2$ with $1\le j_1,\,j_2 \le k$ and a subsequence $\left\{f_n\right\}_{n\in \mathbb{Z}^+}$ (denoted again by $\left\{f_n\right\}_{n\in 
		\mathbb{Z}^+}$), 
	such that $v_n\left(t\right)$ satisfies:
	\begin{itemize}
		\item $v_n\left(t\right)\in B\left(0,m+\delta_1\right)$, $\forall t\in \mathbb{R}$,
		
		\item $v_n\left(t\right)\in B\left(e_{j_1}\left(f_0\right),\rho_0\right)$, $\forall t\in \left ( -\infty , -l \right ]$,
		
		\item $v_n\left(t\right)\in B\left(e_{j_2}\left(f_0\right),\rho_0\right)$, $\forall t\in \left [ l,\infty  \right )$,
		
		\item if $e_{j_1}\left(f_0\right)=e_{j_2}\left(f_0\right)=e\left(f_0\right)$, it then follows from Lemma \ref{manifolds} (iii) that $\exists \left\{t_n\right\}_{n\in\mathbb{Z}^+}\subset \left(-l,l\right)$ such that $v_n\left(t_n\right) 
		\notin B\left(e\left(f_0\right),R_0\right)$.
	\end{itemize}
	By Lemma \ref{limit fn}, there exists a subsequence $\left\{v_{n_j}\right\}_{j\in\mathbb{Z}^+}\subset 
	\left\{v_n\right\}_{n\in\mathbb{Z}^+}$ such that $v_{n_j}\to u_{\infty}$ locally uniformly on $\mathbb{R}$, where $u_{\infty}$ is a non-trivial  orbit of $\dot{x}=f_\infty\left(x\right)$ connecting the equilibria $e^\pm\left(f_\infty\right)$. Since 
	$i\left(e^+\left(f_\infty\right)\right)=i\left(e^-\left(f_\infty\right)\right)$, $u_{\infty}$ it not a transverse orbit. This implies that 
	$\mathcal{G}^{l,p}\left(f_0,m\right)$ is open.

	\begin{lem}\label{limit fn}
		Let $f_0$, $\alpha$, $r_0$, $R_0$ be as in Lemma \ref{manifolds} and $e^-\left(f_0\right)$, $e^+\left(f_0\right)$ be two hyperbolic equilibria of $f_0$ 
		satisfying the conditions of Lemma \ref{manifolds}. 
		Let $\left\{f_n\right\}_{n\in \mathbb{Z}^+}\subset\mathcal{U}\left(f_0,\alpha\right)$ be such that $f_n\to f_\infty \in 
		\mathcal{U}\left(f_0,\alpha\right)$. Given $r_0<\rho_0<R_0$, $t_0>0$, and assume that $v_n$ is a solution of $\dot{x}=f_n\left(x\right)$ satisfies the following:
		
		\begin{itemize}
			\item $v_n\left(t\right)\in B\left(0,\alpha+2\right)$, $\forall t\in \mathbb{R}$,
			
			\item $v_n\left(t\right)\in B\left(e^-\left(f_0\right),\rho_0\right)$, $\forall t\in \left ( -\infty , -t_0 \right ]$,
			
			\item $v_n\left(t\right)\in B\left(e^+\left(f_0\right),\rho_0\right)$, $\forall t\in \left [ t_0,\infty  \right )$,
			
			\item $\exists \left\{t_n\right\}_{n\in\mathbb{Z}^+}\subset \left(-t_0,t_0\right)$ such that $v_n\left(t_n\right) \notin 
			B\left(e\left(f_0\right),R_0\right)$, in the case of $e^-\left(f_0\right)=e^+\left(f_0\right)=e\left(f_0\right).$
		\end{itemize}
		Then, there exists a subsequence $\left\{v_{n_j}\right\}_{j\in\mathbb{Z}^+}\subset \left\{v_n\right\}_{n\in\mathbb{Z}^+}$ such that $v_{n_j}\to 
		u_{\infty}$ locally uniformly on $\mathbb{R}$, where $u_{\infty}$ is a non-trivial orbit of $\dot{x}=f_\infty\left(x\right)$  
		connecting 
		the equilibria $e^\pm\left(f_\infty\right)$.
	\end{lem}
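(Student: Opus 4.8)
The plan is to combine an Ascoli--Arzel\`a compactness argument on all of $\mathbb{R}$ with the robustness of the local invariant manifolds recorded in Lemma \ref{manifolds}. First I would produce the limiting orbit. Since every $v_n$ remains in the fixed compact set $\overline{B(0,\alpha+2)}$ and $f_n\to f_\infty$ in the $C^1$ topology, the velocities $\dot v_n=f_n(v_n)$ are bounded by a single constant $M$ independent of $n$, so $\{v_n\}$ is uniformly bounded and equicontinuous on each interval $[-k,k]$. A diagonal extraction yields a subsequence $v_{n_j}$ converging locally uniformly on $\mathbb{R}$ to a continuous map $u_\infty\colon\mathbb{R}\to\overline{B(0,\alpha+2)}$. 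Passing to the limit in the integral identity $v_{n_j}(t)=v_{n_j}(0)+\int_0^t f_{n_j}(v_{n_j}(s))\,ds$, using $f_{n_j}\to f_\infty$ uniformly on compact subsets of $\Omega$ and $v_{n_j}\to u_\infty$ locally uniformly, shows that $u_\infty$ is a $C^1$ solution of $\dot x=f_\infty(x)$.

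Next I would identify the two ends of $u_\infty$. For $t\le -t_0$ one has $v_n(t)\in B(e^-(f_0),\rho_0)$, so in the limit $u_\infty(t)\in\overline{B(e^-(f_0),\rho_0)}\subset B(e^-(f_0),R_0)$ for all $t\le -t_0$; by Lemma \ref{manifolds}(iv) this forces $u_\infty(t)$ to lie in the local unstable manifold $W^u_{loc,f_\infty}(e^-(f_\infty))$ for every $t\le -t_0$, and in particular $u_\infty(t)\to e^-(f_\infty)$ as $t\to-\infty$. The symmetric argument on $[t_0,\infty)$, using Lemma \ref{manifolds}(iv) for the local stable manifold, gives $u_\infty(t)\to e^+(f_\infty)$ as $t\to+\infty$. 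Thus $u_\infty$ is an orbit of $\dot x=f_\infty(x)$ connecting $e^-(f_\infty)$ and $e^+(f_\infty)$.

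It remains to check that $u_\infty$ is non-trivial, and this is the step I expect to be the main obstacle, since a priori the sequence could collapse onto an equilibrium. If $e^-(f_0)\ne e^+(f_0)$, then by Lemma \ref{manifolds}(i)--(ii) the balls $B(e^-(f_0),r_0)$ and $B(e^+(f_0),r_0)$ are disjoint, hence $e^-(f_\infty)\ne e^+(f_\infty)$ and $u_\infty$ cannot be constant. If $e^-(f_0)=e^+(f_0)=e(f_0)$, then the fourth hypothesis supplies $t_n\in(-t_0,t_0)$ with $v_n(t_n)\notin B(e(f_0),R_0)$; after passing to a further subsequence $t_{n_j}\to t_*\in[-t_0,t_0]$, uniform convergence on $[-t_0,t_0]$ together with continuity of $u_\infty$ yields $v_{n_j}(t_{n_j})\to u_\infty(t_*)$. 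If $u_\infty$ were the constant orbit $e(f_\infty)$ we would get $u_\infty(t_*)=e(f_\infty)$, which contradicts $\|v_{n_j}(t_{n_j})-e(f_0)\|\ge R_0$ passed to the limit, since $e(f_\infty)\in B(e(f_0),r_0)$ and $r_0<R_0$. Hence $u_\infty$ is a non-trivial connecting orbit, and the lemma follows.

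The compactness step and the passage to the limit in the integral equation are routine, and the identification of the $\alpha$- and $\omega$-limit endpoints uses nothing beyond the persistence of the local (un)stable manifolds in Lemma \ref{manifolds}. The genuinely delicate point is the non-degeneracy in the homoclinic case: one must carry along a marked point $v_n(t_n)$ of the orbit that stays uniformly bounded away from $e(f_0)$ and transport it safely to the limit, which is precisely what the escape hypothesis $v_n(t_n)\notin B(e(f_0),R_0)$ guarantees.
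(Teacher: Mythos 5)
Your argument follows essentially the same route as the paper's: Arzel\`a--Ascoli on nested compact intervals (the paper uses $[-(i+1)t_0,(i+1)t_0]$, you use $[-k,k]$, which is the same thing), a diagonal extraction, passage to the limit to see that $u_\infty$ solves $\dot x=f_\infty(x)$, and then Lemma \ref{manifolds}(iv) to identify the ends with $e^\pm(f_\infty)$. The one place where you do more than the paper is the non-triviality step: the paper's proof stops at ``By virtue of Lemma \ref{manifolds}(iv), we have actually completed the proof'' and never explicitly verifies that $u_\infty$ is not a constant orbit in the homoclinic case, whereas you carry the marked points $t_n\to t_*$, show $v_{n_j}(t_{n_j})\to u_\infty(t_*)$ by combining uniform convergence on $[-t_0,t_0]$ with continuity of $u_\infty$, and conclude $\|u_\infty(t_*)-e(f_0)\|\ge R_0>r_0>\|e(f_\infty)-e(f_0)\|$, so $u_\infty\not\equiv e(f_\infty)$. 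That is exactly the right way to use the fourth hypothesis and closes a gap the paper leaves implicit; your heteroclinic case ($e^-(f_0)\ne e^+(f_0)$, disjoint balls) is also correct. In short: same approach as the paper, with the non-degeneracy step spelled out properly rather than left to the reader.
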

	
	\begin{proof}
		Given $i\in \mathbb{Z}^+$ and consider the sequence $\left\{v_n\mid_{\left[-\left(i+1\right)t_0,\left(i+1\right)t_0\right]}\right\}_{n\in \mathbb{Z}^+}$. 
		Note that for any $s\in \left[-\left(i+1\right)t_0,\left(i+1\right)t_0\right]$, $\left\{v_n\left(s\right)\right\}_{n\in \mathbb{Z}^+}$ is bounded. 
		Recall that
		$\left\{f_n\right\}_{n\in\mathbb{Z}^+}\subset\mathcal{U}\left(f_0,\alpha\right)$, that is $$\left \| f_n-f_0 \right \| <r,\,\left \| f_n \right 
		\|\le\left \| 
		f_0 \right \| +r\le \sup_{x\in K_{\alpha+2} }\left \| f_0 \right \| +r=:C,\, \forall n\in \mathbb{Z}^+.$$ Thus, for any $t',\,t''\in 
		\left[-\left(i+1\right)t_0,\left(i+1\right)t_0\right]$ we have
		$$
		\left | v_n\left ( t' \right )- v_n\left ( t''  \right ) \right | =\left | \dot{v}_n\left ( \eta  \right )\right | \left | t'-t'' \right |=\left | f_n\left ( v_n \left ( \eta  
		\right ) \right )\right | \left | t'-t'' \right |\le C\left | t'-t'' \right |, \,\forall n\in \mathbb{Z}^+.
		$$ 
		Therefore, $\left\{v_n\mid_{\left[-\left(i+1\right)t_0,\left(i+1\right)t_0\right]}\right\}_{n\in \mathbb{Z}^+}$ is equicontinuous. By
		the 
		Ascoli-Arzela theorem, there is a subsequence of $\left\{v_n\mid_{\left[-\left(i+1\right)t_0,\left(i+1\right)t_0\right]}\right\}_{n\in \mathbb{Z}^+}$ 
		(denoted again by $\left\{v_n\mid_{\left[-\left(i+1\right)t_0,\left(i+1\right)t_0\right]}\right\}_{n\in \mathbb{Z}^+}$) that $\left\{v_n\mid_{\left[-\left(i+1\right)t_0,\left(i+1\right)t_0\right]}\right\}_{n\in \mathbb{Z}^+}$
		converges uniformly to $u^{'}_\infty$ on $\left[-(i+1)t_0,(i+1)t_0\right]$, where $u^{'}_\infty$ is the solution of 
		$\dot{x}=f_\infty\left(x\right)$ on $\left[-(i+1)t_0,(i+1)t_0\right]$. 
		
		Let $i$ increase to $i+1$, and based on the subsequence $\left\{v_n\right\}_{n\in \mathbb{Z}^+}$ mentioned above, apply the Arzela-Ascoli theorem again 
		to 
		$\left\{v_n\mid_{\left[-\left(i+2\right)t_0,\left(i+2\right)t_0\right]}\right\}_{n\in \mathbb{Z}^+}$, repeat this process, and finally obtain a
		subsequence 
		$\left\{v_{n_j}\right\}_{j\in\mathbb{Z}^+}$ through the diagonal method, which converges to $u_\infty$ locally uniformly for $t$ on $\mathbb{R}$. Due to the extension and existence uniqueness 
		of the 
		solution, $u_\infty$ is the solution of $\dot{x}=f_\infty\left(x\right)$ on $\mathbb{R}$. By virtue of Lemma \ref{manifolds} (iv), we have actually 
		completed 
		the proof.
	\end{proof}
	
	\begin{rk}
		\rm{
			In the case of $e^-\left(f_0\right)=e^+\left(f_0\right)=e\left(f_0\right)$, if we don't require that $\exists \left\{t_n\right\}_{n\in\mathbb{Z}^+}\subset \left(-t_0,t_0\right)$ such that 
			$v_n\left(t_n\right) \notin B\left(e\left(f_0\right),R_0\right)$. Then, the subsequence $\left\{v_{n_j}\right\}_{j\in\mathbb{Z}^+}$ would converge locally uniformly to an equilibrium point 
			$e\left(f_\infty\right)$ of $S_{f_\infty}\left(t\right)$ on $\mathbb{R}$.
		}
	\end{rk}
	
	\subsection{\boldmath $\mathcal{G}^{l,p}\left(f_0,m\right)$ is dense in $\mathcal{U}\left(f_0,m\right)$}
	
	Similar to the approach in Section 5, we employ the Sard-Smale theorem to achieve our goal. To do this, we define 
	\begin{align*}
		\mathcal{V}_{m,l,p}=\Big\{\tilde{v}\left(t\right)\in C^1_b\left(\mathbb{R},\mathbb
		R^n\right)\mid &\forall \left | t \right | \ge l,\,\tilde{v}\left(t\right)\in \mathfrak{D} \left(\mathcal{E}^{p,f_0}_{m}\right)\\ &\text{and }  \forall 
		t\in\mathbb{R},\,\tilde{v}\left(t\right)\in B\left(0,m+\delta_2\right)\Big\}.
	\end{align*}
	and 
	\[
	\begin{split}
		& \tilde{\Psi}_{m,l,p}:  \mathcal{V}_{m,l,p}  \times \mathcal{U}\left ( f_0,m \right )  \to  C\left(\mathbb{R},\mathbb{R}^n\right)\\
		& \tilde{\Psi}_{m,l,p}\left(\tilde{v},f\right)=\dot{\tilde{v}}\left(t\right)-f\left(\tilde{v}\left(t\right)\right).
	\end{split}
	\] 
	We verify that the map $\tilde{\Psi}_{m,l,p}$ is surjective.
	As in \cite[Lemma 4.b.5 and Corollary 4.b.6]{BP}, we prove that if 0 is a regular value of the map $\tilde{v}\in \mathcal{V}_{m,l,p}\mapsto 
	\tilde{\Psi}_{m,l,p}\left(\tilde{v},f\right)$, then all the connecting orbits $\tilde{v}\left(t\right)$ with $\left(\tilde{v},f\right)\in \mathcal{V}_{m,l,p}  \times 
	\mathcal{U}\left ( f_0,m \right )$ are transverse. Similarly to \cite{BG}, we employ a discretized version of 
	function $\tilde{\Psi}_{m,l,p}$.
	
	For $l\in \mathbb{N}$, $\tau>0$,  $p\in\{0,\cdots,n\}$, we define 
	\begin{align*}
		D_{m,l,p}=\Big\{\omega\left(\cdot\tau\right)\in \ell^\infty\left(\mathbb{Z},\mathbb
		R^n\right)\mid &\,\forall \left | n\tau \right | \ge l,\,\omega\left(n\tau\right)\in \mathfrak{D} \left(\mathcal{E}^{p,f_0}_{m}\right)\\ 
		&\text{and }  \forall n\in\mathbb{Z},\,\omega\left(n\tau\right)\in B\left(0,m+\delta_2\right)\Big\}.
	\end{align*} 
	
	The discretization mapping $\Psi_{m,l,p}$ of $\tilde{\Psi}_{m,l,p}$ with $\tau$ as the discretization 
	step size (see remark below) is as follows:
	\begin{align}\label{PSIMLP}
		& \Psi_{m,l,p}:D_{m,l,p}  \times \mathcal{U}\left ( f_0,m \right )  \to \ell^\infty\left(\mathbb{Z},\mathbb
		R^n\right)\\
		& \Psi_{m,l,p}\left(\omega ,f\right)\left(n\right)=\omega \left(\left(n+1\right)\tau\right)-S_f\left(\tau\right)\omega \left(n\tau\right) \nonumber
	\end{align}
	Clearly, $D_{m,l,p}$ is an open subset in $\ell^\infty\left(\mathbb{Z},\mathbb
	R^n\right)$. 
	\begin{rk}
		
		{\rm We provide key remarks on the discretization step size $\tau$. Specifically, the neighborhood $\mathcal{U}\left(f_0,m\right)$ (an open ball; see Remark  \ref{ball}) can be replaced by a smaller one if necessary. 
			\begin{itemize}
				\item The primary requirement in selecting  $\tau>0$ is to ensure that $S_f\left(t\right)v_0$ does not explode in  
				$\left[-\tau,\tau\right]$. Specifically, for any $m>0$, we choose $\tau_m>0$  such that  $S_f\left(t\right)v_0$ remains well-defined on 
				$\left[-\tau_m,\tau_m\right]$ for all $v_0\in B\left(0,m\right)$ and $f\in\mathcal{U}\left(f_0,m\right)$.
				
				\item For $\tau_m$, we also need the following additional assumptions.
				\begin{enumerate}
					\item [\rm (i)] If $\left | v_0 \right | \le m$, then $S_{f_0}\left(t\right)v_0\in B\left(0,m+\delta_2\right)$, $\forall t\in 
					\left[0,\tau_m\right]$.
					
					\item [\rm (ii)] If $\left | v_0 \right | \le m+\delta_2$, then $S_{f_0}\left(t\right)v_0\in B\left(0,m+\delta_1\right)$, $\forall t\in 
					\left[0,\tau_m\right]$.
					
					\item [\rm (iii)] If $v_0 \in B\left(e_j\left(f_0\right),\rho_0\right)$, $j\in \left\{1,\dots,k\right\}$ and $f\in 
					\mathcal{U}\left(f_0,m\right)$, then $S_{f}\left(t\right)v_0\in B\left(e_j\left(f_0\right),\rho_1\right)$, $\forall t\in 
					\left[0,\tau_m\right]$, where 
					$r_0<\rho_0<\rho_1<R_0$.
				\end{enumerate}
				Note that if $\tilde{v}\left(n\tau_m\right)\in B\left(e_j\left(f_0\right),\rho_0\right)$ sufficiently large $n$, then according to (iii) and Lemma 
				\ref{manifolds}, 
				for sufficiently large $t$, $\tilde{v}\left(t\right)\in W^s_{loc,f}\left(e_j\left(f\right),\mathcal{V}_j\left(f\right)\right)$.
				
				\item The solution operator $S_{\tilde{u}}\left(s,t\right)$ of the linearized equation  (\ref{app linearized equation}) 
				along the bounded solution  $\tilde{u}\left(t\right)$ of the original equations $\dot{x}=f\left(x\right)$  is discretized as follows: 
				$S_{\tilde{u}}\left(n\tau_m, \left(n+1\right)\tau_m\right)$, for any $n\in\mathbb{Z}$.
			\end{itemize}
		}
	\end{rk}
	
	\vskip 2mm	
	We use Lemmas \ref{Psi Fredholm operator} to \ref{regular point of Psi} to prove the density of $\mathcal{G}^{l,p}\left(f_0,m\right)$ in $\mathcal{U}\left(f_0,m\right)$.

	\begin{proof}
		By virtue of Lemma \ref{Psi Fredholm operator}, the map (\ref{PSIMLP})
		is of class $C^1$ and $D\Psi_{m,l,p}\left(v,f\right)$ is a Fredholm operator of index 0 for any $\left(v,f\right)\in \Psi^{-1}_{m,l,p}\left(0\right)$. Thus, hypothesis 
		(i) of Theorem \ref{SS-1} holds.
		
		We claim that the map
		$$
		D\Psi_{m,l,p}\left(v,f\right):\ell^\infty\left(\mathbb{Z},\mathbb
		R^n\right)\times \mathcal{C}^1_{BF}\mid _{K_{m+2}} \to \ell^\infty\left(\mathbb{Z},\mathbb
		R^n\right) 
		$$
		is surjective for $\left(v,f\right)\in \Psi^{-1}_{m,l,p}\left(0\right)$. Choose $\left(v,f\right)\in \Psi^{-1}_{m,l,p}\left(0\right)$. According to  Lemma \ref{Psi Fredholm operator}(i) $v$  falls into one of two cases:
		
		Case 1. $v$ is a hyperbolic equilibrium 
		point. By Remark \ref{e with urjective}, $D\Psi_{m,l,p}\left(v,f\right)$ is surjective.
		
		Case 2. $v$ is the discretization of a connecting orbit. Let $\phi\left(s\right)=(\phi_1\left(s\right),\dots,\phi_n)^T\in 
		C_b\left(\mathbb{R},\mathbb{R}^n\right)$(bounded continuous function) be a non-trivial solution of the adjoint equation (\ref{adjoint equation}). Then there exist $j\in\left\{1,\dots,n\right\}$, $s_0\in\mathbb{R}$ and an interval $\left[s_1,s_2\right]\ni s_0$ such that 
		$\phi_j\left(s\right)\ne 0,\,\forall s\in \left[s_1,s_2\right]$.
		Define
		\begin{gather*}
			d\left(x_j,x_{j+1}\right)=\left | x_j-v_j \left ( s_0 \right )  
			\right |^2+\left | x_{j+1}-v_{j+1} \left ( s_0 \right )  
			\right |^2,\\
			r=\min \left \{ \max_{ s\in \left [ s_1, s_2 \right ]  }  d\left(v_j\left(s\right),v_{j+1}\left(s\right)\right), \inf_{ 
				s\in \left (-\infty, s_1 \right ]  }  d\left(v_j\left(s\right),v_{j+1}\left(s\right)\right), \inf_{ s\in \left [ s_2, 
				\infty  \right 
				)  }  d\left(v_j\left(s\right),v_{j+1}\left(s\right)\right) \right \},\\
			\xi\left(y\right)=\left\{	\begin{aligned}
				e^{-\frac{1}{y} },\quad y>0\\
				0,\quad y\le 0
			\end{aligned}\right..
		\end{gather*}
		Note that $r\ne 0$ by Proposition \ref{one to one for connect}. We define the function  $h\left(x\right)$ as follows, which satisfies Lemma \ref{regular point of 
			Psi}:
		$$
		h _j\left ( x_j,x_{j+1} \right ) =\frac{\xi\left ( r-d\left(x_j,x_{j+1}\right)  \right ) }{\xi\left ( r-d\left(x_j,x_{j+1}\right)  \right )+\xi\left ( d\left(x_j,x_{j+1}\right) -\frac{r}{2}  \right )};\,
		h_i\left(x\right)\equiv 0, 1\le i\le n, i\ne j.
		$$
		This construction of $h\left(x\right)$ ensures that $D\Psi_{m,l,p}\left(v,f\right)$ is surjective. Thus, hypothesis (ii) of Theorem \ref{SS-1} 
		holds.
		
		Finally, let's verify that $\Psi_{m,l,p}$ satisfies hypothesis (iii)(b) of Theorem \ref{SS-1}. For this purpose, we define
		\begin{align*}
			M_i=\bigg\{\left(v,f\right)\in \Psi^{-1}_{m,l,p}\left(0\right)\mid\,\, &\forall n\in\mathbb{Z}^+,\,v\left(n\tau\right)\in 
			B\left(0,m+\delta_2-\frac{1}{i}\right);\\
			&\forall \left | n\tau \right | \ge l,\, v\left(n\tau\right)\in\bigcup_{e_j\left(f_0\right)\in 
				\mathcal{E}^{p,f_0}_{m}}B\left(e_j\left(f_0\right),\rho_0-\frac{1}{i}\right)\bigg\}.
		\end{align*} 
		It is not difficult to find that:
		$$
		\Psi^{-1}_{m,l,p}\left(0\right)=\bigcup_{i=1}^{\infty} M_i.
		$$
		By Lemma \ref{limit fn}, the map 
		$\left(v,f\right)\in 
		M_i \to f\in \mathcal{U}\left(f_0,m\right)$ is proper. This implies that hypothesis (iii)(b) of Theorem \ref{SS-1} holds.
		
		Therefore, the set:
		$$
		\Theta=\left\{f \in \mathcal{U}\left(f_0,m\right) \mid 0\text{ is a regular value of }\Psi_{m,l,p}(.,f)\right\}
		$$ 
		is dense in 
		$\mathcal{U}\left(f_0,m\right)$. It then follows from Lemma \ref{Psi Fredholm operator} (iii) that $\Theta\subset\mathcal{G}^{l,p}\left(f_0,m\right)$. Thus, 
		$\mathcal{G}^{l,p}\left(f_0,m\right)$ is dense in $\mathcal{U}\left(f_0,m\right)$, and we have completed the proof of  \ref{Generic 
			non-existence 
			of homoindexed connecting orbits}.
	\end{proof}
	\begin{lem}\label{Psi Fredholm operator}
		The following statements about $\Psi_{m,l,p}$ hold true:
		\begin{enumerate}
			\item [\rm (i)] The map (\ref{PSIMLP}) is of class $C^1$. A pair $\left(v,f\right)\in \Psi^{-1}_{m,l,p}\left(0\right)$ if
			and only if $v\in B\left(0,m+\delta_1\right)$ is the discretization of a connecting orbit (or an equilibrium point) $\tilde{v}\left(t\right)\in 
			B\left(0,m+\delta_1\right)$. 
			
			\item [\rm (ii)] Given $\left(v,f\right)\in \Psi^{-1}_{m,l,p}\left(0\right)$, then the derivative $D_v\Psi_{m,l,p}\left(v,f\right)$ is a Fredholm operator of index 
			0.
			
			\item [\rm (iii)] If 0 is a regular value of the map $v\in D_{m,l,p} \mapsto \Psi_{m,l,p}\left(v,f\right)$, then $f\in 
			\mathcal{G}^{l,p}\left(f_0,m\right)$.
		\end{enumerate}
	\end{lem}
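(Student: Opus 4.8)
The plan is to prove the three assertions in order, regarding $\Psi_{m,l,p}$ as a discretization (with step $\tau$) of the continuous-time operator $\tilde v\mapsto\dot{\tilde v}-f(\tilde v)$ and transferring the Fredholm theory of asymptotically hyperbolic operators to the discrete setting, in the spirit of Chow--Hale--Mallet-Paret \cite{CHM} and of \cite{BP,BG,RGR}. For claim (i): since for $v_0$ in the relevant balls $S_f(t)v_0$ exists on $[-\tau,\tau]$ and depends $C^1$ on $(t,v_0,f)$ (this is the role of the assumptions on $\tau$ recorded in the remark above), the composition with the bounded linear shift shows $\Psi_{m,l,p}$ is $C^1$ on $D_{m,l,p}\times\mathcal U(f_0,m)$, with
\[
(D_v\Psi_{m,l,p}(v,f)\xi)(n)=\xi((n+1)\tau)-S_{Df}(n\tau,(n+1)\tau)\xi(n\tau),
\]
where $S_{Df}(n\tau,(n+1)\tau)$ is the solution operator of the variational equation along the orbit segment $t\mapsto S_f(t-n\tau)v(n\tau)$. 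If $(v,f)\in\Psi^{-1}_{m,l,p}(0)$ then $v((n+1)\tau)=S_f(\tau)v(n\tau)$ for every $n$, so $\{v(n\tau)\}$ extends to a solution $\tilde v$ of $\dot x=f(x)$ on $\mathbb R$ by flowing inside each interval $[n\tau,(n+1)\tau]$; the $\tau$-assumptions give $\tilde v(t)\in B(0,m+\delta_1)$ for all $t$, and for $|t|\ge l$ they (with the disjointness of Lemma \ref{manifolds}(ii)) confine $\tilde v$ to a single ball $B(e_{j_1}(f_0),\rho_0)$ as $t\to-\infty$ and to a single ball $B(e_{j_2}(f_0),\rho_0)$ as $t\to+\infty$, so by Lemma \ref{manifolds}(iv) $\tilde v(t)$ lies on $W^u_{loc,f}(e_{j_1}(f))$, resp.\ $W^s_{loc,f}(e_{j_2}(f))$, there and hence converges to $e_{j_1}(f)$, resp.\ $e_{j_2}(f)$: $\tilde v$ is a connecting orbit (possibly an equilibrium or homoclinic). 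Conversely, the discretization of any such connecting orbit lies in $\Psi^{-1}_{m,l,p}(0)$.

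For claim (ii): writing $L_n:=S_{Df}(n\tau,(n+1)\tau)$ along $\tilde v$, the operator $\Lambda:=D_v\Psi_{m,l,p}(v,f)$ is $(\Lambda\xi)(n)=\xi(n+1)-L_n\xi(n)$ on $\ell^\infty(\mathbb Z,\mathbb R^n)$. By Lemma \ref{manifolds}(i) the endpoints $e_{j_1}(f),e_{j_2}(f)$ are hyperbolic of the same Morse index $p$, so $L_n\to e^{\tau Df(e_{j_1}(f))}$ as $n\to-\infty$ and $L_n\to e^{\tau Df(e_{j_2}(f))}$ as $n\to+\infty$, both limit matrices having no eigenvalue of modulus $1$ and unstable subspaces of dimension $p$. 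Hence $\Lambda$ admits discrete exponential dichotomies on $\mathbb Z_{\le -N}$ and $\mathbb Z_{\ge N}$, and by the discrete counterpart of the Fredholm theory used in \cite{BP} (cf.\ \cite{CHM}) $\Lambda$ is Fredholm with index $\dim E^u(e^{\tau Df(e_{j_1}(f))})-\dim E^u(e^{\tau Df(e_{j_2}(f))})=p-p=0$. When $\tilde v$ is an equilibrium $L_n$ is constant and hyperbolic, and $\Lambda$ is in fact bijective.

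For claim (iii): suppose $0$ is a regular value of $v\mapsto\Psi_{m,l,p}(v,f)$, and let $u$ be any solution with $\|u(t)\|\le m$ on $\mathbb R$ and $u(t)\in\mathfrak D(\mathcal E^{p,f_0}_m)$ for $|t|\ge l$. By (i), $u$ connects two index-$p$ equilibria (or is one), and its discretization $v$ lies in $D_{m,l,p}$ with $(v,f)\in\Psi^{-1}_{m,l,p}(0)$. Regularity makes $\Lambda=D_v\Psi_{m,l,p}(v,f)$ surjective, hence, being Fredholm of index $0$ by (ii), bijective, so $\ker\Lambda=\{0\}$. Extending a sequence of $\ker\Lambda$ by the variational solution operator inside each interval identifies $\ker\Lambda$ with the space of bounded solutions of $\dot\xi=Df(u(t))\xi$ on $\mathbb R$, which decay at $\pm\infty$ by hyperbolicity of the endpoints and hence equals $T_qW^u(e^-)\cap T_qW^s(e^+)$ with $q=u(0)$. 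If $u$ were not an equilibrium, $\dot u=f(u)$ would be a non-zero element of this space, contradicting $\ker\Lambda=\{0\}$; thus $u$ is an equilibrium, for which $W^u$ and $W^s$ meet transversally at the point itself. Therefore $f\in\mathcal G^{l,p}(f_0,m)$.

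The main obstacle is claim (ii): one must carefully justify that the discretized operator $\Lambda$ inherits the Fredholm property and the index formula of its continuous-time model --- that is, derive the discrete exponential dichotomies on the two half-lines from the hyperbolicity of the asymptotic matrices $e^{\tau Df(e_{j_1}(f))}$, $e^{\tau Df(e_{j_2}(f))}$, control their roughness under the $\ell^\infty$-perturbations $L_n\to\text{const}$, and match $\ker\Lambda$ and $\operatorname{coker}\Lambda$ (the latter via bounded solutions of the adjoint difference equation) with the continuous ones. This is precisely where the constructions of \cite{BP,BG} have to be adapted to the present bidirectional cyclic feedback setting.
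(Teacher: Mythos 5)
Your argument is correct and rests on the same core machinery as the paper: you compute the partial derivative $D_v\Psi_{m,l,p}(v,f)$, identify it with the difference operator $\mathfrak{L}_{v,f}$ of \eqref{L}, and establish the Fredholm property and index $0$ via discrete exponential dichotomies inherited from the hyperbolicity of $e^{\tau Df(e^\pm)}$ at the two ends. The paper in fact disposes of (ii) and (iii) in a single sentence by citing Corollary \ref{Functional characterization of transversality}, whose content (Fredholm of index $i(e^-)-i(e^+)$; surjectivity of $\mathfrak{L}_{\tilde u}$ equivalent to transversality of the connecting orbit) is exactly the dichotomy/Fredholm package you rebuild by hand. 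For (iii), however, your route differs in detail from the paper's: rather than using the equivalence ``surjective $\Leftrightarrow$ transverse'' directly, you combine surjectivity with index $0$ to get bijectivity, identify $\ker\Lambda$ with the bounded solutions of the variational equation $\dot\xi=Df(u(t))\xi$ (which equal $T_qW^u(e^-)\cap T_qW^s(e^+)$ by Proposition \ref{the range of P}), and then observe that $\dot u$ would be a nonzero kernel element whenever $u$ is nonconstant. This buys a slightly stronger conclusion --- under the regular-value hypothesis there are \emph{no} nonconstant same-index connecting orbits, so the only solutions in the admissible class are equilibria --- whereas the paper simply records that such orbits, if any, are transverse, which already suffices since $\mathcal{G}^{l,p}(f_0,m)$ is defined by transversality. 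Both arguments are valid and ultimately rely on the same dichotomy theory from Appendix~B.
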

	
	\begin{proof}
		The statement (i) is straightforward, and we proceed to prove (ii) and (iii). Through direct calculation, we have:
		\begin{align}\label{DPsi} 
			D\Psi_{m,l,p}\left(v,f\right)\left(w,g\right)\left(n\right)&=w\left(\left(n+1\right)\tau\right)-S_v\left(\left(n+1\right)\tau,n\tau\right)w\left(n\tau\right)-D_f\left(S_f\left(\tau\right)v\left(n\tau\right)\right)\cdot 
			g \nonumber\\ 
			&=\left(\mathfrak{L}_{v,f}w\right)\left(n\tau\right)-D_f\left(S_f\left(\tau\right)v\left(n\tau\right)\right)\cdot g,
		\end{align}
		where $\left(w,g\right)\in \ell^\infty\left(\mathbb{Z},\mathbb{R}^n\right ) \times \mathcal{C}^1_{BF}\mid _{K_{m+2}}$ and $\mathfrak{L}_{v,f}$ 
		is 
		defined by (\ref{L}). Therefore, we have:
		$$
		D_v\Psi_{m,l,p}\left(v,f\right)=\mathfrak{L}_{v,f}.
		$$
		Moreover, $0$ is a regular value of the map 
		$v\in 
		D_{m,l,p} \mapsto \Psi_{m,l,p}\left(v,f\right)$ if and only if the map $\mathfrak{L}_{v,f}$ is surjective. By Corollary \ref{Functional 
			characterization of transversality}, assertions (ii) and (iii) are established.
	\end{proof}
	
	\begin{rk}\label{e with urjective}
		\rm{
			In the above lemma, by Theorem \ref{Fredholm operator}, if $\tilde{v}\left(t\right)$ is an equilibrium point $e$, then 
			$\mathfrak{L}_{v,f}$ is surjective if and only if $e$ is a hyperbolic equilibrium point.
		}
	\end{rk}
	
	\begin{lem}\label{DPhi surjective}
		Suppose $\left(v,f\right)\in \Psi^{-1}_{m,l,p}\left(0\right)$. Then, the following statements are equivalent:
		\begin{enumerate}
			\item [\rm (i)] A sequence $Y\in \ell^\infty\left(\mathbb{Z},\mathbb{R}^n\right )$ is in the range of $D\Psi_{m,l,p}$.
			
			\item [\rm (ii)]  There exists $h\in \mathcal{C}^1_{BF}\mid _{K_{m+2}}$ such that  
			$Y+D_f\left(S_f\left(\tau\right)v\left(n\tau\right)\right)\cdot h$ is in the range of $\mathfrak{L}_{v,f}$.
			
			\item [\rm (iii)] There exists $h\in \mathcal{C}^1_{BF}\mid _{K_{m+2}}$ such that $$
			\sum_{n=-\infty}^{+\infty}\langle \phi \left((n+1)\tau\right),D_f\left(S_f\left(\tau\right)v\left(n\tau\right)\right)\cdot 
			h\rangle_{\mathbb{R}^n}=-\sum_{n=-\infty}^{+\infty}\langle \phi \left((n+1)\tau\right),Y\left(n\tau\right)\rangle_{\mathbb{R}^n}
			$$
			for every sequence $\phi(n\tau)=S^*(n\tau, 0) \phi_0$ with $\phi_0 \in \mathbb{R}^n$, where $\phi$ is the discretization of the bounded solution to the adjoint equation of $\dot{y} \left ( t \right ) =D f\left ( \tilde{v}\left(t\right)  \right ) y\left ( t \right )$ .
			
		\end{enumerate}
	\end{lem}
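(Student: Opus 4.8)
The plan is to prove the two equivalences (i)$\Leftrightarrow$(ii) and (ii)$\Leftrightarrow$(iii) separately, the first being purely algebraic and the second resting on the Fredholm alternative for the discretized linearization $\mathfrak L_{v,f}$.

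For (i)$\Leftrightarrow$(ii) I would simply read off the splitting \eqref{DPsi}: for $(w,g)\in\ell^\infty(\mathbb Z,\mathbb R^n)\times\mathcal C^1_{BF}\mid_{K_{m+2}}$ one has $D\Psi_{m,l,p}(v,f)(w,g)=\mathfrak L_{v,f}w-D_f\big(S_f(\tau)v(\cdot\tau)\big)\cdot g$. Hence $Y$ lies in the range of $D\Psi_{m,l,p}(v,f)$ exactly when there exist $w$ and $g$ with $Y+D_f(S_f(\tau)v(\cdot\tau))\cdot g=\mathfrak L_{v,f}w$; putting $h=g$, this says precisely that $Y+D_f(S_f(\tau)v(\cdot\tau))\cdot h$ belongs to the range of $\mathfrak L_{v,f}$, and conversely. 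So all the content is pushed into a description of $\operatorname{Range}\mathfrak L_{v,f}$.

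For (ii)$\Leftrightarrow$(iii) I would use that, by Lemma \ref{Psi Fredholm operator}(ii), the operator $\mathfrak L_{v,f}=D_v\Psi_{m,l,p}(v,f)$ is Fredholm of index $0$ and hence has closed range, so that a sequence $Z\in\ell^\infty(\mathbb Z,\mathbb R^n)$ lies in $\operatorname{Range}\mathfrak L_{v,f}$ if and only if $\sum_{n\in\mathbb Z}\langle\phi((n+1)\tau),Z(n\tau)\rangle_{\mathbb R^n}=0$ for every bounded solution $\phi(n\tau)=S^*(n\tau,0)\phi_0$ of the discretized adjoint equation, the index shift to $(n+1)\tau$ being the one dictated by the form of $\mathfrak L_{v,f}$ in \eqref{L}; this is the discrete counterpart of the Fredholm alternative of Theorem \ref{Fredholm operator} together with the adjoint equation \eqref{adjoint equation}. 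Granting it, I apply the criterion to $Z:=Y+D_f(S_f(\tau)v(\cdot\tau))\cdot h$, split the inner product over the two summands, and move the $Y$-term to the right-hand side: the outcome is exactly the identity of (iii), with the quantifier ``$\exists h$'' inherited unchanged. I would also remark that all the series converge absolutely, since $v(n\tau)$ tends exponentially to the hyperbolic equilibria $e^{\pm}$ as $n\to\pm\infty$, so the bounded adjoint sequence $\phi$ decays exponentially and is thus summable against $Y\in\ell^\infty$ and against the bounded sequence $D_f(S_f(\tau)v(\cdot\tau))\cdot h$ (with $h$ bounded on $K_{m+2}$).

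The step I expect to be the main obstacle is this range characterization of the \emph{discrete} operator $\mathfrak L_{v,f}$, i.e. the discrete Fredholm alternative along a connecting orbit. The argument there uses that the continuous linearization $\dot y=Df(\tilde v(t))y$ admits exponential dichotomies on $(-\infty,-T]$ and on $[T,\infty)$ because $e^{\pm}$ are hyperbolic; passing to the time-$\tau$ map transfers these dichotomies to the discrete cocycle $S_{\tilde v}(n\tau,(n+1)\tau)$, which makes $\mathfrak L_{v,f}$ Fredholm with cokernel spanned exactly by the bounded solutions of the adjoint difference equation and with index $\dim W^u(e^-)-\dim W^u(e^+)=i(e^-)-i(e^+)=0$ (the hypothesis $i(e^-)=i(e^+)$ entering here). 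All of this is the content of Theorem \ref{Fredholm operator} and the definition \eqref{L} invoked in Lemma \ref{Psi Fredholm operator}; once those are cited, the standard annihilator description of the image of a closed-range operator closes the loop, and the remaining work is the bookkeeping of the previous two paragraphs.
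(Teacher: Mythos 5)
Your proposal is correct and takes essentially the same route as the paper: the paper's one-line proof cites formula \eqref{DPsi} for (i)$\Leftrightarrow$(ii) and Corollary \ref{Functional characterization of transversality} (the discrete Fredholm alternative for $\mathfrak L_{v,f}$) for (ii)$\Leftrightarrow$(iii), which is precisely the decomposition you spelled out, with your remarks on the dichotomies, closed range, and summability simply filling in what the paper leaves implicit.
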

	
	\begin{proof}
		By (\ref{DPsi}) and Corollary \ref{Functional characterization of transversality}, we have (i) $\Leftrightarrow$ (ii) $\Leftrightarrow$ (iii). 
	\end{proof}
	
	\begin{lem}\label{DPhi surjective 1}
		Let $\left(v,f\right)\in \Psi^{-1}_{m,l,p}\left(0\right)$. Suppose that there is a non-trivial sequence $\phi(n\tau)=S^*(n\tau, 0) 
		\phi_0$, which is a discretization of the bounded solution of the adjoint equation of  $\dot{y} \left ( t \right ) =D f\left ( \tilde{v}\left(t\right)  \right ) y\left ( t \right )$. Then, the map $D\Psi_{m,l,p}$ is surjective if and only if the map defined by
		\begin{equation}\label{A map}
			h\in \mathcal{C}^1_{BF}\mid _{K_{m+2}} \mapsto \left(\sum_{n=-\infty}^{+\infty}\langle \phi_i 
			\left((n+1)\tau\right),D_f\left(S_f\left(\tau\right)v\left(n\tau\right)\right)\cdot h\rangle_{\mathbb{R}^n}\right)_{i\in\left\{1,\dots,d\right\}}\in \mathbb{R}^d
		\end{equation}
		is surjective, where $\left\{\phi_i\right\}_{i\in\left\{1,\dots,d\right\}}$  is a basis of the space of bounded sequences $\phi$ satisfying $\phi(n\tau)=S^*(n\tau, 0) 
		\phi_0$.
	\end{lem}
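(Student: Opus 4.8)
The plan is to reduce the surjectivity of the infinite–dimensional operator $D\Psi_{m,l,p}$ to a finite–dimensional statement, using the Fredholm–alternative description of $\operatorname{Im}D\Psi_{m,l,p}$ already obtained in Lemma \ref{DPhi surjective} together with the fact that the space of bounded adjoint solutions is finite dimensional. First I would recall from Lemma \ref{Psi Fredholm operator}(ii) that $\mathfrak{L}_{v,f}=D_v\Psi_{m,l,p}(v,f)$ is a Fredholm operator of index $0$; hence, by Corollary \ref{Functional characterization of transversality} (and the exponential dichotomy of the linearization at the hyperbolic ends $e^\pm(f)$ of the connecting orbit), its cokernel is spanned exactly by the discretizations $\phi(n\tau)=S^*(n\tau,0)\phi_0$ of the \emph{bounded} solutions of the adjoint equation of $\dot y=Df(\tilde v(t))y$. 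Fix a basis $\{\phi_i\}_{i\in\{1,\dots,d\}}$ of that (finite–dimensional) space. Because bounded adjoint solutions decay exponentially as $n\to\pm\infty$, all the infinite series below converge absolutely for $Y\in\ell^\infty(\mathbb{Z},\mathbb{R}^n)$ and $h\in\mathcal{C}^1_{BF}\mid_{K_{m+2}}$.

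Next I would invoke the equivalence (i)$\Leftrightarrow$(iii) in Lemma \ref{DPhi surjective}: a sequence $Y\in\ell^\infty(\mathbb{Z},\mathbb{R}^n)$ lies in $\operatorname{Im}D\Psi_{m,l,p}$ iff there is $h\in\mathcal{C}^1_{BF}\mid_{K_{m+2}}$ with
\[
\sum_{n=-\infty}^{+\infty}\langle \phi((n+1)\tau),\,D_f\!\left(S_f(\tau)v(n\tau)\right)\cdot h\rangle_{\mathbb{R}^n}
=-\sum_{n=-\infty}^{+\infty}\langle \phi((n+1)\tau),\,Y(n\tau)\rangle_{\mathbb{R}^n}
\]
for every bounded adjoint solution $\phi$; since both sides are linear in $\phi$, it suffices to impose this for $\phi=\phi_1,\dots,\phi_d$. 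Thus $D\Psi_{m,l,p}$ is surjective iff for every $Y$ there exists $h$ such that the vector
\[
b(Y):=\Big(-\sum_{n=-\infty}^{+\infty}\langle \phi_i((n+1)\tau),\,Y(n\tau)\rangle_{\mathbb{R}^n}\Big)_{i\in\{1,\dots,d\}}\in\mathbb{R}^d
\]
belongs to the image of the map (\ref{A map}). The remaining elementary point is that $Y\mapsto b(Y)$ is onto $\mathbb{R}^d$: if some $a=(a_i)\neq 0$ annihilated its range, then testing against finitely supported sequences $Y=e_k\,\delta_{\{n_0\}}$ would give $\sum_i a_i\phi_i((n_0+1)\tau)=0$ for all $n_0\in\mathbb{Z}$, i.e.\ $\sum_i a_i\phi_i\equiv 0$, contradicting the linear independence of $\{\phi_i\}$ as sequences. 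Hence $\{b(Y):Y\in\ell^\infty(\mathbb{Z},\mathbb{R}^n)\}=\mathbb{R}^d$, and therefore $D\Psi_{m,l,p}$ is surjective if and only if the map (\ref{A map}) is surjective onto $\mathbb{R}^d$, which is the assertion.

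The only step requiring genuine care is the identification of $\operatorname{coker}(\mathfrak{L}_{v,f})$ with the span of the discretized bounded adjoint solutions and, simultaneously, the absolute convergence of the bilinear sums; both hinge on the Fredholm property from Lemma \ref{Psi Fredholm operator} and on the exponential dichotomy of the linearized flow near the hyperbolic equilibria $e^\pm(f)$ (equivalently, Theorem \ref{Fredholm operator} and Corollary \ref{Functional characterization of transversality}). Once this is in place, everything else is linear algebra in the fixed finite–dimensional space $\mathbb{R}^d$.
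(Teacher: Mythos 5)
Your proposal is correct and follows essentially the same route as the paper: both reduce the surjectivity of $D\Psi_{m,l,p}$ to the finite-dimensional functional condition via Lemma \ref{DPhi surjective}, then establish the needed non-degeneracy. The only cosmetic difference is in how that non-degeneracy is shown: you observe that $Y\mapsto b(Y)$ is onto $\mathbb{R}^d$ by testing against finitely supported sequences $Y=e_k\,\delta_{\{n_0\}}$, whereas the paper argues by contradiction, takes $\bar\phi=\sum_i a_i\phi_i\neq 0$ orthogonal to the range of \eqref{A map}, and feeds the specific sequence $Y(n\tau)=\bar\phi((n+1)\tau)$ into the equality of Lemma \ref{DPhi surjective}(iii) to reach the contradiction. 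Both choices of test sequence accomplish the same thing and rely on the same ingredients (the Fredholm structure of $\mathfrak{L}_{v,f}$ and the functional characterization from Appendix B); yours is arguably a slightly cleaner packaging, the paper's is more concrete. No gap.
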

	
	\begin{proof}
		Suppose that $D\Psi_{m,l,p}$ is surjective, if (\ref{A map}) is not surjective, then there exists a non-trivial vector $\left(a_1,\dots,a_d\right)^T$ orthogonal to the 
		range, in other words, there exists a bounded sequence $\bar{\phi}=\Sigma_{i=1}^{d}a_i\phi_i\ne 0$ such that 
		\begin{equation}\label{DPhi surjective-1}
			\sum_{n=-\infty}^{+\infty}\langle \bar{\phi} \left((n+1)\tau\right),D_f\left(S_f\left(\tau\right)v\left(n\tau\right)\right)\cdot 
			h\rangle_{\mathbb{R}^n}=0,\,\forall h\in \mathcal{C}^1_{BF}\mid _{K_{m+2}}.
		\end{equation}
		Note that $D\Psi_{m,l,p}$ is sujective, choose $Y(n\tau)=\bar{\phi} \left((n+1)\tau\right)$, then the equality in Lemma \ref{DPhi surjective} (iii) can lead a contradiciton to \eqref{DPhi surjective-1}. That is the map defined  by \eqref{A map} is surjective.
		
		Assume that the map defined by \eqref{A map} is surjective. Then for any  bounded sequence $\phi(n\tau)=S^*(n\tau, 0) 
		\phi_0$ and $Y\in \ell^\infty\left(\mathbb{Z},\mathbb{R}^n\right )$,  there exists an  $h\in \mathcal{C}^1_{BF}\mid _{K_{m+2}}$ such that the equality in  Lemma \ref{DPhi surjective}(iii) holds. Consequently, $Y$ belongs to the range of $D\Psi_{m,l,p}$. Therefore, $D\Psi_{m,l,p}$ is surjective.
	\end{proof}
	
	\begin{rk}
		\rm{If the space of bounded sequences $\phi(n\tau)=S^*(n\tau, 0) 
			\phi_0$ is trivial, then $\mathfrak{L}_{v,f}$ is surjective by Corollary \ref{Functional characterization of transversality}. It also means that $D\Psi_{m,l,p}\left(v,f\right)$ is surjective.}
	\end{rk}
	
	\begin{lem}\label{regular point of Psi}
		$\left(v,f\right)$ is a regular point of (\ref{PSIMLP}) with respect to the regular value 0 if and only if for any non-trivial solution $\phi\left(s\right)\in 
		C_b\left(\mathbb{R},\mathbb{R}^n\right)$ of the adjoint equation (\ref{adjoint equation}), there exists $h\in \mathcal{C}^1_{BF}\mid _{K_{m+2}}$ such 
		that
		$$
		\int_{-\infty}^{+\infty}\langle\phi(\sigma), h(\tilde{v}(\sigma))\rangle d \sigma \neq 0,
		$$
		where $\tilde{v}\left(t\right)$ is a continuous orbit corresponding to the discrete orbit $v\left(n\tau\right)$.
	\end{lem}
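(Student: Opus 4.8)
The plan is to chain together the equivalences already in hand and then replace a bi-infinite sum by an integral over $\mathbb{R}$. By Lemma~\ref{DPhi surjective 1}, $(v,f)$ is a regular point of \eqref{PSIMLP} with respect to the value $0$ if and only if the linear map \eqref{A map} onto $\mathbb{R}^d$ is surjective, where $\{\phi_i\}_{i=1}^d$ is a basis of the space of bounded sequences $\phi(n\tau)=S^*(n\tau,0)\phi_0$. Since the target is finite dimensional, non-surjectivity is equivalent to the existence of a non-trivial vector $(a_1,\dots,a_d)$ with $\bar\phi:=\sum_i a_i\phi_i\neq 0$ and
\[
\sum_{n\in\mathbb{Z}}\big\langle\bar\phi((n+1)\tau),\,D_f\big(S_f(\tau)v(n\tau)\big)\cdot h\big\rangle=0 \qquad\text{for all } h\in\mathcal{C}^1_{BF}\mid_{K_{m+2}}.
\]
So it suffices to evaluate this sum in closed form, show it equals $\int_{-\infty}^{+\infty}\langle\bar\phi(\sigma),h(\tilde v(\sigma))\rangle\,d\sigma$, and then pass to the contrapositive.

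Next I would carry out the computation termwise. Writing $\tilde v(n\tau+s)=S_f(s)v(n\tau)$ for $s\in[0,\tau]$ (legitimate because $(v,f)\in\Psi^{-1}_{m,l,p}(0)$ forces $v$ to sample the orbit $\tilde v$), differentiation of $f\mapsto S_f(\tau)v(n\tau)$ by the variation-of-constants formula gives
\[
D_f\big(S_f(\tau)v(n\tau)\big)\cdot h=\int_0^{\tau}S_{Df(\tilde v)}\big(n\tau+s,(n+1)\tau\big)\,h\big(\tilde v(n\tau+s)\big)\,ds,
\]
i.e. the value at $(n+1)\tau$ of the solution of $\dot w=Df(\tilde v(t))w+h(\tilde v(t))$ with $w(n\tau)=0$. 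Pairing against $\bar\phi((n+1)\tau)$, moving the solution operator onto the adjoint factor, and invoking the identity $S_{Df(\tilde v)}(s,t)^{*}\bar\phi(t)=\bar\phi(s)$ — valid because $\bar\phi$ solves the adjoint equation \eqref{adjoint equation} — collapses each term to $\int_{n\tau}^{(n+1)\tau}\langle\bar\phi(\sigma),h(\tilde v(\sigma))\rangle\,d\sigma$. Adding over $n\in\mathbb{Z}$ reconstitutes $\int_{-\infty}^{+\infty}\langle\bar\phi(\sigma),h(\tilde v(\sigma))\rangle\,d\sigma$, which is exactly the identity needed, and the stated criterion follows by negating.

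Two points then require attention. The first is the dictionary between the sequences $\phi(n\tau)=S^*(n\tau,0)\phi_0$ appearing in Lemma~\ref{DPhi surjective 1} and the bounded continuous solutions $\phi\in C_b(\mathbb{R},\mathbb{R}^n)$ of \eqref{adjoint equation}: along the bounded orbit $\tilde v$ the coefficient $Df(\tilde v(t))$ is uniformly bounded, so a solution of the adjoint equation is bounded on $\mathbb{R}$ precisely when its sampled sequence is bounded, and sampling is a linear isomorphism between these two finite-dimensional spaces; thus $\{\phi_i\}$ may equally be read as a basis of the bounded adjoint solutions. The second is the interchange of $\sum_{n\in\mathbb{Z}}$ with the integral: since $e^{\pm}$ are hyperbolic and $\tilde v$ is a connecting orbit, any solution of \eqref{adjoint equation} bounded on all of $\mathbb{R}$ lies in the stable (resp. unstable) subspace of the limiting constant-coefficient adjoint equation at $e^{+}$ (resp. $e^{-}$), hence decays exponentially as $t\to\pm\infty$; together with the boundedness of $h(\tilde v(\cdot))$ this makes $\sigma\mapsto\langle\bar\phi(\sigma),h(\tilde v(\sigma))\rangle$ exponentially small at infinity, so every sum and integral above converges absolutely and the rearrangement is justified. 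This exponential-decay statement is standard for hyperbolic connecting orbits and is also what underlies the Fredholm property recorded in Lemma~\ref{Psi Fredholm operator}.

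I expect the main obstacle to be bookkeeping rather than conceptual: keeping the adjoint solution-operator conventions in the discretization mutually consistent, and supplying the exponential-decay estimate that legitimizes passing between the bi-infinite discrete sum and the integral over $\mathbb{R}$. Once those are in place, the equivalence is a direct consequence of the variation-of-constants formula together with Lemma~\ref{DPhi surjective 1} (and, in the degenerate case where the space of bounded adjoint solutions is trivial, of the remark following it, by which $\mathfrak{L}_{v,f}$ is then automatically surjective and the criterion holds vacuously).
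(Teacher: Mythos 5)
Your proof follows the paper's route exactly: reduce via Lemma~\ref{DPhi surjective 1} to the surjectivity of the finite-dimensional map \eqref{A map}, express $D_f\big(S_f(\tau)v(n\tau)\big)\cdot h$ through the variation-of-constants formula \eqref{forml solution}, move the solution operator onto the adjoint factor, and collapse the bi-infinite sum into $\int_{-\infty}^{+\infty}\langle\phi(\sigma),h(\tilde v(\sigma))\rangle\,d\sigma$. The paper additionally splits off the constant-sequence case up front via Remark~\ref{e with urjective}; your treatment of it through the ``trivial bounded-adjoint-solution space'' remark is equivalent. The two clarifications you add — that sampling is an isomorphism between the bounded continuous adjoint solutions and the bounded discrete ones, and that the exponential decay forced by hyperbolicity of $e^{\pm}$ justifies rearranging the sum into an integral — are points the paper leaves implicit, and are worthwhile to record, but they do not constitute a different approach.
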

	
	\begin{proof}
		Given $\left(v,f\right)\in \Psi^{-1}_{m,l,p}\left(0\right)$, by Remark \ref{e with urjective}, if $v=\left\{\dots,e,e,\dots\right\}$, then $\left(v,f\right)$ is a regular point of $\Psi_{m,l,p}$ with respect to the regular value 0.
		
		Now, assume $v$ is not a constant sequence. By Lemma \ref{DPhi surjective 1} and (\ref{DPhi surjective-1}), $D\Psi_{m,l,p}$ is 
		surjective if and only if for any bounded sequence $\phi(n\tau)=S^*(n\tau, 0) \phi_0$, there exists $h\in \mathcal{C}^1_{BF}\mid _{K_{m+2}}$ such 
		that 
		\begin{equation}\label{Dphi surjective condition}
			\sum_{n=-\infty}^{+\infty}\langle \phi \left((n+1)\tau\right),D_f\left(S_f\left(\tau\right)v\left(n\tau\right)\right)\cdot 
			h\rangle_{\mathbb{R}^n}\ne 0.
		\end{equation}
		It follows from (\ref{forml solution}) that $h$ satisfies 
		\begin{equation}\label{formal solution 1}
			D_f\left(S_f\left(\tau\right)v\left(n\tau\right)\right)\cdot h=\int_{n\tau}^{\left ( n+1 \right )\tau } S_{\tilde{v}}\left ( \left ( n+1 \right 
			)\tau,\sigma \right ) h\left ( \tilde{v}\left ( \sigma   \right )  \right ) d\sigma. 
		\end{equation}
		Combining (\ref{Dphi surjective condition}) and (\ref{formal solution 1}), we have
		\begin{align*}
			&\sum_{n=-\infty}^{+\infty}\langle \phi \left((n+1)\tau\right),D_f\left(S_f\left(\tau\right)v\left(n\tau\right)\right)\cdot 
			h\rangle_{\mathbb{R}^n}\\
			&=\sum_{n=-\infty}^{+\infty} \int_{n\tau}^{\left ( n+1 \right )\tau }\langle \phi \left((n+1)\tau\right),S_{\tilde{v}}\left ( \left ( n+1 \right 
			)\tau 
			,\sigma  \right ) h\left (\tilde{ v}\left ( \sigma   \right )  \right )d\sigma\rangle_{\mathbb{R}^n}\\
			&=\sum_{n=-\infty}^{+\infty} \int_{n\tau}^{\left ( n+1 \right )\tau }\langle S^*_{\tilde{v}}\left (\sigma, \left ( n+1 \right )\tau  \right )\phi 
			\left((n+1)\tau\right), h\left ( \tilde{v}\left ( \sigma   \right )  \right )d\sigma\rangle_{\mathbb{R}^n}\\
			&=\int_{-\infty}^{+\infty}\langle\phi(\sigma), h(\tilde{v}(\sigma))\rangle d \sigma .
		\end{align*}
		This completes the proof.
	\end{proof}

	\section{Generic Morse-Smale properties}
	In this section, we use the following Proposition \ref{finite critical elements} to prove Theorem \ref{GenericityofMorse–Smaleproperty}.
	\begin{prop}\label{finite critical elements}
		Let  $f\in \tilde{O}$ be such that \eqref{negative feedback system} has a compact global attractor. Then the number of critical elements is finite. Furthermore, the non-wandering set 
		of $S_f\left(t\right)$ consists of a finite number of critical elements.
	\end{prop}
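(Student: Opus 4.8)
The plan is to use the exclusion of connecting orbits between homo-indexed equilibria to place $\dot x=f(x)$ in a ``gradient-like'' regime on its attractor, and then run a Conley-type argument following the scheme of \cite[Section~6]{RGR}. First I would record the inputs. Since $f\in\tilde{O}$, every critical element of \eqref{negative feedback system} is hyperbolic; by Theorems~\ref{Automatic Transversality} and \ref{Generic non-existence of homoindexed connecting orbits}, whenever there is an orbit connecting two distinct critical elements $\gamma^-$, $\gamma^+$, the intersection $W^u(\gamma^-)\pitchfork W^s(\gamma^+)$ is transverse and the dimension count $\dim W^u(\gamma^-)+\dim W^s(\gamma^+)>n$ forces the Morse index to drop strictly along the connection. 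Together with the Poincar\'e--Bendixson property (Proposition~\ref{PB}), this yields, as in Lemma~\ref{no-chain}, that the flow has no homoclinic orbit and no heteroclinic cycle, and that $\alpha(x)$, $\omega(x)$ each consist of a single critical element for every bounded orbit. Let $\mathcal{A}$ be the compact global attractor; it contains every critical element and every non-wandering point (a non-wandering point outside $\mathcal{A}$ would be attracted to $\mathcal{A}$, hence wandering). Finiteness of the equilibria is then immediate: if there were infinitely many, a subsequence would converge in $\mathcal{A}$ to an equilibrium $e_*$, but $e_*$ is hyperbolic so $Df(e_*)$ is invertible, making $e_*$ an isolated zero of $f$ --- a contradiction.

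Next I would prove the non-wandering characterization. The inclusion of the union of critical elements into the non-wandering set is trivial. Conversely, let $x$ be non-wandering and not lying on any critical element. Then $x\in\mathcal{A}$, its orbit is bounded, and by Lemma~\ref{no-chain} we have $\alpha(x)=\gamma^-$, $\omega(x)=\gamma^+$ with $\gamma^-\neq\gamma^+$ (equality would produce a homoclinic orbit). Since $x$ is non-wandering, choose $t_n\to+\infty$ (the case $t_n\to-\infty$ is symmetric) and $y_n\to x$ with $S_f(t_n)y_n\to x$. For a fixed large $s>0$ the points $S_f(s)y_n$ follow the forward orbit of $x$, hence lie near $\gamma^+$; for a fixed large $\sigma>0$ the points $S_f(t_n-\sigma)y_n=S_f(-\sigma)\bigl(S_f(t_n)y_n\bigr)$ follow the backward orbit of $x$, hence lie near $\gamma^-$. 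Passing to a limit along the orbit segments $S_f(\cdot)y_n|_{[s,\,t_n-\sigma]}$ as in Lemma~\ref{limit fn} produces an orbit --- or a chain of connecting orbits --- from $\gamma^+$ to $\gamma^-$; since the Morse index strictly decreases along such a chain, the chain is finite, and concatenated with the orbit of $x$ from $\gamma^-$ to $\gamma^+$ it forms a heteroclinic cycle, contradicting Lemma~\ref{no-chain}. Hence $x$ lies on a critical element, so the non-wandering set equals the union of the critical elements.

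Finally I would deduce finiteness of the periodic orbits, and therefore of all critical elements. Suppose there are infinitely many; take pairwise distinct periodic orbits $\gamma_n\subset\mathcal{A}$ and pass to a subsequence converging in the Hausdorff metric to a nonempty compact connected invariant set $\Gamma$. Every point of $\Gamma$ is a limit of (periodic, hence non-wandering) points, so $\Gamma$ lies in the non-wandering set, which by the previous step equals the union of the critical elements; being connected, $\Gamma$ is contained in a single critical element $\gamma$. If $\gamma$ is an equilibrium, then $\Gamma=\{\gamma\}$ and, for large $n$, $\gamma_n$ is an entire bounded orbit lying in an arbitrarily small neighbourhood of the hyperbolic equilibrium $\gamma$, which is impossible since the only complete orbit staying in a small neighbourhood of a hyperbolic equilibrium is the equilibrium itself. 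If $\gamma$ is a periodic orbit, then $\Gamma=\gamma$ by minimality, but a hyperbolic periodic orbit admits a tubular neighbourhood containing no other periodic orbit, contradicting $\gamma_n\to\gamma$ with $\gamma_n\neq\gamma$. This completes the proof of Proposition~\ref{finite critical elements}.

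The step I expect to be the real obstacle is the non-wandering characterization, and within it the limiting argument producing the return connection from $\gamma^+$ to $\gamma^-$: in a general $n$-dimensional flow a non-wandering point on a heteroclinic orbit need not force a cycle, and it is precisely the Poincar\'e--Bendixson structure together with the strict drop of the Morse index along connections (embodied in Lemma~\ref{no-chain}) that makes the argument of \cite[Section~6]{RGR} applicable here. A secondary technical point, handled by the same index estimate, is to check that the Hausdorff limit of the orbit segments decomposes into a chain of connecting orbits joining critical elements, rather than into a more complicated invariant set.
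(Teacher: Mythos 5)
Your proposal follows essentially the same route the paper takes: the paper's proof of Proposition~\ref{finite critical elements} simply records that it rests on Lemma~\ref{no-chain} and refers to \cite[Section~6]{RGR}, and your argument reconstructs that section --- finiteness of the (hyperbolic, hence isolated) equilibria, a Conley-type limiting chain argument to identify the non-wandering set with the union of critical elements, and then exclusion of infinitely many periodic orbits. The structure and the role played by Lemma~\ref{no-chain} are exactly as in the paper.

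Two points deserve correction, though neither overturns the argument. First, the claim that transversality plus the dimension count ``forces the Morse index to drop strictly along the connection'' is only true for equilibrium-to-equilibrium connections; when the source or both critical elements are periodic orbits, $\dim W^u(\gamma^-)+\dim W^s(\gamma^+)-n\ge 1$ gives $i(\gamma^-)\ge i(\gamma^+)$ rather than a strict inequality, because $\dim W^u$ for a periodic orbit is $i+1$. The needed conclusion --- that connected sequences of critical elements have finite length and there is no heteroclinic cycle --- should therefore be taken directly from Lemma~\ref{no-chain}(ii), as you do elsewhere, rather than re-derived from an index drop. Second, the step ``being connected, $\Gamma$ is contained in a single critical element'' needs a word of justification: the union of critical elements could a priori have connected components that are not single critical elements. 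The missing ingredient is that a hyperbolic critical element $\sigma$ is an \emph{isolated invariant set}: it has a neighborhood $U$ whose maximal invariant subset is $\sigma$, so the non-wandering set meets $U$ only in $\sigma$, making $\sigma$ clopen in the non-wandering set. Once that is noted, two things happen: your Hausdorff-limit $\Gamma$, being connected, indeed lies in a single $\sigma$; and more directly, the non-wandering set (compact, now known to be a disjoint union of clopen critical elements) is covered by finitely many of them, which yields the finiteness of all critical elements at once, without the Hausdorff-limit detour.
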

	
	\begin{proof}
		The proof requires the following Lemma \ref{no-chain}, and for a detailed process, one can refer to \cite[Section 
		6]{RGR}.
	\end{proof}
	
	\begin{rk}\label{simga and manifold}
		\rm{
			Let $\sigma$ be a hyperbolic critical element of $\dot{x}=f\left(x\right)$. By \cite[Section 6.3]{DR} and \cite[Appendix C]{MX}, there exists an open neighbourhood
			$U$ of $\sigma$ such that any solution $u\left(t\right)$ of $\dot{x}=f\left(x\right)$ with $u\left(t\right)\in \overline{U}$ for all $t\le 0$ belongs to the local unstable manifold
			$W^u_{loc}\left(\sigma\right)$.
		}
	\end{rk}
	
	\begin{defn}
		\rm{A sequence of critical elements $\left\{\sigma_n\right\}_{n\in\left\{1,\dots,p+1\right\}}$ is called connected if, for any 
			$n\in\left\{1,\dots,p+1\right\}$, $p\in 
			\left\{\mathbb{N}\setminus\left\{0\right\}\right\}\cup\left\{\infty\right\}$, there exists a heteroclinic orbit such that the $\alpha$-limit
			set is $\sigma_n$ and the $\omega$-limit set is $\sigma_{n+1}$. In particularly, if a connected sequence 
			$\left\{\sigma_n\right\}_{n\in\left\{1,\dots,p+1\right\}}$ satisfies  $\sigma_{p+1}=\sigma_1$, the corresponding heteroclinic orbits are called a chain of 
			heteroclinic orbits.}
	\end{defn}
	
	\begin{lem}\label{no-chain}
		Let $f\in C^1\left(\Omega,\mathbb{R}^n\right)$ be such that $\dot{x}=f\left(x\right)$ has a compact global attractor. Assume that $\sigma$ is a hyperbolic critical element, $\left\{v_n\left(t\right)\right\}_{n\in \mathbb{Z}^+}$ is a sequence of solution, and $U$ is a 
		neighbourhood of 
		$\sigma$ as described in Remark \ref{simga and manifold}. Then, the following statements hold:
		\begin{enumerate}
			\item [\rm (i)]Assume that for each $n\in \mathbb{Z}^+$, there exist $a_n<t_n<\tau_n$ such that $v\left(a_n\right)\in \partial U$, 
			$v\left(\tau_n\right)\in \partial U$ and $v\left(t\right)\in U$ with all $t\in \left(a_n,\tau_n\right)$; moreover, $\inf _{c \in 
				\sigma}\left\|v_n\left(t_n\right)-c\right\| \to 0$, as $n \rightarrow+\infty$. Then, there exist a subsequent $\left\{v_{n_j}\right\}_{j\in \mathbb{Z}^+}$ and a bounded solution $v_{\infty}\left(t\right)$ of 
			$\dot{x}=f\left(x\right)$ satisfying $v_{\infty}\left(t\right)\in W^u_{loc}\left(\sigma\right)$, $t\le 0$ and 
			$$
			\forall T>0, \quad \sup _{t \in[-T, T]}\left\|v_{n_j}\left(\tau_{n_j}+t\right)-v_{\infty}(t)\right\|\to  0, \quad \text{ as } n\to +\infty.
			$$
			
			\item [\rm (ii)] Further, assume $f\in \tilde{O}$. Then, the connected sequence of critical elements has finite length.  Consequently, there is no 
			chain of heteroclinic orbits, and every $\omega$-limit set or non-empty $\alpha$-limit set of trajectory of 
			\eqref{negative feedback system} consist 
			of exactly one critical element.
		\end{enumerate}
	\end{lem}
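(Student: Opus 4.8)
The plan is to prove (i) by a compactness argument --- Arzela-Ascoli on expanding time windows followed by a diagonal extraction --- combined with the local unstable manifold characterization of Remark \ref{simga and manifold}, and to prove (ii) by first using the integer-valued Lyapunov function $N$ to show that connected sequences cannot be infinite, and then feeding this into the Poincar\'e--Bendixson property of Proposition \ref{PB}.

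For (i): since $\dot x=f(x)$ has a compact global attractor the solutions $v_n$ are uniformly bounded, and as $f\in C^1$ they are equi-Lipschitz on every compact interval. The first step is to show $\tau_n-a_n\to\infty$. Passing to a subsequence with $v_n(t_n)\to q\in\sigma$, Arzela-Ascoli applied to the shifts $v_n(t_n+\cdot)$ together with uniqueness of solutions forces a further subsequence to converge locally uniformly to the solution through $q$, which lies in $\sigma$ by invariance; hence for every $T>0$ one has $[t_n-T,t_n+T]\subset(a_n,\tau_n)$ for all large $n$, so $\tau_n-a_n>2T$. Next, extracting over the windows $[-k,k]$, $k\in\mathbb N$, by the diagonal method, a subsequence of $u_n(t):=v_n(\tau_n+t)$ converges locally uniformly on $\mathbb R$ to a solution $v_\infty$ of $\dot x=f(x)$. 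For $t\le 0$ we have $u_{n_j}(t)\in\overline U$ as soon as $\tau_{n_j}-a_{n_j}>|t|$, so $v_\infty(t)\in\overline U$ for all $t\le 0$, whence $v_\infty(t)\in W^u_{loc}(\sigma)$ for $t\le 0$ by Remark \ref{simga and manifold}; and $v_\infty(0)=\lim v_{n_j}(\tau_{n_j})\in\partial U$ shows that $v_\infty$ is not contained in $\sigma$. The displayed uniform convergence on $[-T,T]$ is exactly what has just been established.

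For (ii): let $\{\sigma_n\}$ be a connected sequence with heteroclinic orbits $c_n$ from $\sigma_n$ to $\sigma_{n+1}$. Since $\dot c_n$ solves the variational equation along $c_n$, Lemma \ref{pro of N of L-S}(v) yields $h_n^\pm\in\{1,\dots,\frac{\tilde n+1}{2}\}$ with $h_n^-\ge h_n^+$ and $N(\dot c_n(t))\to 2h_n^\mp-1$ as $t\to\mp\infty$. Comparing the two connections at the common endpoint $\sigma_{n+1}$: if $\sigma_{n+1}$ is a hyperbolic equilibrium, Proposition \ref{Hyperbolic critical elements prop}(iii)--(iv) give $2h_{n+1}^--1\le i(\sigma_{n+1})\le 2h_n^+-1$; if $\sigma_{n+1}$ is a hyperbolic periodic orbit, Proposition \ref{Hyperbolic critical elements prop}(i)--(ii) give $2h_n^+-1=N(\dot\sigma_{n+1})=2h_{n+1}^--1$; in either case $h_{n+1}^-\le h_n^+$. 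Hence $h_1^-\ge h_1^+\ge h_2^-\ge h_2^+\ge\cdots\ge 1$, so the bookend sequence $\{h_n^-\}$ is non-increasing and can decrease only finitely often; thus for $n\ge n_0$ one has $h_n^-=h_n^+=h$ for a fixed $h$, i.e.\ $N(\dot c_n(\cdot))\equiv 2h-1$. On this tail, if $\sigma_n$ and $\sigma_{n+1}$ were both equilibria then the squeeze $i(\sigma_n)\ge 2h-1\ge i(\sigma_{n+1})$ together with $f\in\tilde{O}$ (no connection between equilibria of equal Morse index) forces $i(\sigma_n)>i(\sigma_{n+1})$, and then $W^u(\sigma_n)\pitchfork W^s(\sigma_{n+1})$ by Theorem \ref{Automatic Transversality}; if instead an endpoint is periodic the transversality is given by Theorem \ref{Automatic Transversality}(i). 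Writing $m(\sigma)=\dim W^u(\sigma)$ (so $m(e)=i(e)$ and $m(\gamma)=i(\gamma)+1$), the inclusion $c_n\subset W^u(\sigma_n)\cap W^s(\sigma_{n+1})$ and transversality give $m(\sigma_n)-m(\sigma_{n+1})\ge 1-\varepsilon_{n+1}$, with $\varepsilon_{n+1}=1$ when $\sigma_{n+1}$ is periodic and $0$ otherwise; since $m\ge 0$, equilibria occur only finitely often in the tail, so the far tail consists of distinct hyperbolic periodic orbits, all with $N$-level $2h-1$ and constant unstable dimension.

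The hard part is to rule out such an infinite tail of periodic orbits: here the plan is to exploit the nested invariant cones $\overline{\mathcal K}_h,\overline{\mathcal K}^h$ together with the injectivity argument of Proposition \ref{one to one for connect}, adapted to periodic endpoints, and the Poincar\'e--Bendixson property, to show that two periodic orbits with equal $N$-level and equal unstable dimension cannot be joined by a heteroclinic orbit and that no periodic orbit is homoclinic to itself (following the reduction of \cite[Section 6]{RGR}); this contradicts the existence of the tail. Consequently no connected sequence is infinite. A chain $\sigma_1\to\cdots\to\sigma_p\to\sigma_1$, repeated, would produce an infinite connected sequence, so no chain exists. Finally, for a trajectory $u$ with non-empty $\omega$-limit set, Proposition \ref{PB} gives that $\omega(u)$ is either a single closed orbit (done) or a union of hyperbolic --- hence isolated --- equilibria together with connecting orbits among them; if $\omega(u)$ contained two distinct equilibria, following connections inside the compact set $\omega(u)$ and applying the pigeonhole principle would produce a chain of heteroclinic orbits, a contradiction, so $\omega(u)$ contains exactly one equilibrium $e$, and since $f\in\tilde{O}$ precludes homoclinic orbits, $\omega(u)=\{e\}$. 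The argument for a non-empty $\alpha$-limit set is identical.
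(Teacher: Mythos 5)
Your argument for (i) is correct and matches the strategy the paper defers to in \cite[Lemma 6.2]{RGR}. For (ii), the reduction you set up is sound as far as it goes: Proposition \ref{Hyperbolic critical elements prop} gives the monotone string $h_1^- \ge h_1^+ \ge h_2^- \ge h_2^+ \ge \cdots \ge 1$, so the tail eventually has constant $h$, and the dimension count using transversality (Theorems \ref{Automatic Transversality}, \ref{Generic non-existence of homoindexed connecting orbits}) shows $\dim W^u(\sigma_n)$ is non-increasing and strictly decreasing whenever $\sigma_{n+1}$ is an equilibrium, so the far tail consists only of periodic orbits with constant $N$-level and constant unstable dimension. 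The final deductions (no chain from no infinite connected sequence; $\omega$-limit and $\alpha$-limit a single critical element, via Poincar\'e--Bendixson and the absence of homoclinics for $f \in \tilde{\mathcal O}$) are also essentially correct.

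But the step you yourself call ``the hard part'' --- ruling out an infinite tail of distinct periodic orbits --- is only announced, not proved. You say the plan is to use the nested cones, Proposition \ref{one to one for connect} ``adapted to periodic endpoints,'' and Poincar\'e--Bendixson to show that two periodic orbits with equal $N$-level and equal unstable dimension cannot be joined; no argument is supplied, and the cited tools do not obviously deliver it. Proposition \ref{one to one for connect} is an injectivity statement used in Section 7 to build bump functions for the Sard--Smale theorem; it is not a non-existence result and does not preclude such a heteroclinic connection. Poincar\'e--Bendixson (via Smale--Birkhoff) does forbid a transverse homoclinic orbit of a periodic orbit, hence cycles, but an infinite descending tail of \emph{distinct} periodic orbits is not thereby excluded; one still needs an actual argument --- e.g.\ the $\lambda$-lemma, compactness of the global attractor, and uniform isolation of hyperbolic periodic orbits. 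This is exactly the content of \cite[Lemma 6.3]{RGR} that the paper cites, and it is missing from your proof; as written, (ii) is incomplete.
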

	
	\begin{proof}
		The proof follows arguments analogous to those in Lemmas 6.2 and 6.3 of \cite{RGR}.
	\end{proof}
	
	\begin{proof}[Proof of Theorem \ref{GenericityofMorse–Smaleproperty}]
		We claim that $\mathcal{L}^-_d$ is a Baire space in the sense of $\iota_d =\bar{\iota } \mid _{\mathcal{L}_d}$. To prove this statement, we introduce the following notation
		$$
		\mathcal{L}^-_d\left(K_m\right)=\left\{f\in \mathcal{C}^1_{BF}\mid\exists\,r>0,\text{ for any }x\in K_m\text{ and }\left|x\right|\ge r,\text{ such that }\left\langle f\left ( x \right ),x  \right \rangle <0\right\}.
		$$
		Note that $\mathcal{L}^-_d\left(K_m\right)$ is an open set in $\mathcal{C}^1_{BF}$. In fact, given $f\in 
		\mathcal{L}^-_d\left(K_m\right)$, there exists $r>0$ such that for any $\left | x \right | \ge r$ and $x\in K_m$, $\left \langle f\left ( x \right ),x  \right \rangle <0$. The map $$g\in \mathcal{C}^1_{BF}\mid _{K_m}\mapsto \phi^x\left(g\right)=\left \langle g\left ( x \right ),x  \right \rangle \in \mathbb{R} $$  is a continuous linear functional by Cauchy-Schwarz inequality. Since $f\left(K_m\right)$ is a compact set, there exists an open neighborhood $V_m$ of 
		$f\mid 
		_{K_m}$ in $\mathcal{C}^1_{BF}\mid _{K_m}$, such that for any $\tilde{g}\in V_m$, $\left | x \right | \ge r$ and $x\in K_m$, $\left \langle \tilde{g}\left 
		( x 
		\right ),x  \right \rangle <0$. Similar to Section 5.2, we define a constraint map $$\mathcal{R}:\mathcal{C}^1_{BF}\mapsto 
		\mathcal{C}^1_{BF}\mid _{K_m}.$$ Thus, $\mathcal{R}^{-1}V_m$ is an open neighborhood of $f$ in 
		$\mathcal{C}^1_{BF}$. 
		
		Define a closed subset 
		$$
		\mathcal{A}^1=\left\{f\left(x\right)\in \mathcal{C}^1_{BF}\mid Df\left(x\right) \text{ satisfies both }(\ref{special matrix})\text{ and } b_i, c_i\geq 0,i=1,\cdots,n-1, b_n,c_n\leq 0\right\}
		$$ of $\mathcal{C}^1_{BF}$.
		Note that $\mathcal{L}^-\left(K_m\right)\cap\mathcal{A}^1$ is an open subset of $\left(\mathcal{A}^1,\bar{\iota } \mid _{\mathcal{A}^1}\right)$, $\mathcal{L}^-_{d,i}=\bigcap_{m\ge i}\mathcal{R}^{-1}V_m\cap \mathcal{L}^-\left(K_m\right)\cap\mathcal{A}^1$ is a $\mathcal{G}_\delta$ set of $\left(\mathcal{A}^1,\bar{\iota } \mid _{\mathcal{A}^1}\right)$. Since $\left(\mathcal{A}^1,\bar{\iota } \mid _{\mathcal{A}^1}\right)$ is a complete metric space, 
		$\mathcal{L}^-_{d,i}$ is a Baire space. Thus, $\mathcal{L}^-_{d}=\cup_{i=1}\mathcal{L}^-_{d,i}$ is a Baire space. By replacing $\mathcal{L}^-$ with $\mathcal{L}^-_d$ in Section 6, we can obtain the generic subset $\tilde{\mathcal{O}}$ of $\mathcal{L}^-_d$ that satisfies Theorem \ref{Generic non-existence of homoindexed connecting orbits}. Moreover, given $f\in\tilde{\mathcal{O}}$, by virtue of \cite[Theorem 5.1]{JKH}(which states that there exists a compact global attractor for such $f$), we can conclude that $\tilde{\mathcal{O}}$ has the desired properties.  In view of Theorem \ref{Generic non-existence of homoindexed connecting orbits} and Proposition \ref{finite critical elements}, we have completed the proof that $\mathcal{O}_M$ is a generic subset of $\mathcal{L}^-_d$.
	\end{proof}

	\setcounter{defn}{0} 
	\renewcommand{\thedefn}{A.\arabic{defn}}
	\setcounter{thm}{0} 
	\renewcommand{\thethm}{A.\arabic{thm}}
	\setcounter{rk}{0} 
	\renewcommand{\therk}{A.\arabic{rk}}
	
	\section*{Appendix A. Sard-Smale theorem}
	
	We first recall the definition of a Fredholm operator.  Assume that $\mathcal{X}$, $\mathcal{Y}$ are two Banach spaces, and let $B\left ( 
	\mathcal{X},\mathcal{Y}\right )$ be the set of all continuous linear operators from $\mathcal{X}$ to $\mathcal{Y}$. A linear operator $L\in B\left ( \mathcal{X},\mathcal{Y}\right ) $ satisfying 
	that 
	its range $R\left(L\right)$ is closed and both (or, at least one) of $\operatorname{dim} ker\left(L\right)$, $\operatorname{codim}R\left(L\right)$ are 
	finite, is called a {\it Fredholm (or semi-Fredholm) operator}. The integer $\operatorname{ind} \left(L\right)=\operatorname{dim} 
	ker\left(L\right)-\operatorname{codim}R\left(L\right)$ is called the index of the operator $L$.
	
	Let $M$, $M^{'}$ be two differentiable Banach manifolds, and let $f\in C^1\left(M,M^{'}\right)$. We call $y$ a {\it regular value} of $f$ if, for any $x\in 
	f^{-1}\left \{ y \right \} $, the tangent map $Df(x)$ is a surjection and its kernel splits, which means that $D{f} \left(x\right)$ has a closed complement (see, e.g.,
	\cite[Definition 5.7]{H}) in $T_xM$, and then $x$ is called a {\it regular point}. A subset in a topological space is {\it generic} or {\it residual} if it contains
	a countable intersection of open and dense sets.
	
	The version of the Sard-Smale theorem presented here has been proved in \cite{H} (for weaker versions, we also refer to \cite{Q,ST}). This theorem generalizes the classical Sard's theorem to infinite-dimensional Banach spaces and is widely used in genericity proofs in various contexts. It has been particularly influential in the works \cite{BP,BG}.
	
	\begin{thm}\label{SS-1}
		Let $k$, $m$ be two positive integers; $X$, $Y$, $Z$ be $C^k$ Banach manifolds; $U \subset X$ and $V \subset Y$ be two open sets, $f:U\times V\to Z$ be a map 
		of class $C^k$ and a point $\zeta \in Z$. Assume that the following hypotheses hold:
		\begin{enumerate}
			\item[{\rm(i)}] $\forall \left(x,y\right)\in f^{-1}\left(\zeta\right)$, $D_xf\left(x,y\right):T_xX\to T_\zeta Z$ is Fredholm with index strictly 
			less than $k$;
			
			\item[{\rm(ii)}] $\forall \left(x,y\right)\in f^{-1}\left(\zeta\right)$, 
			$Df\left(x,y\right)=\left(D_xf\left(x,y\right),D_yf\left(x,y\right)\right):T_xX \times T_yY \to T_\zeta Z$ is surjective;
			
			\item[{\rm(iii)}] one of the following properties is satisfied:
			\begin{enumerate}
				\item[{\rm(a)}] $X$ and $Y$ are separable metric spaces;
				
				\item[{\rm(b)}] the map $\left ( x,y \right ) \in f^{-1}\left \{ \zeta  \right \} \mapsto y\in Y$ is $\sigma $-proper, that is 
				$f^{-1}\left ( \zeta  \right ) =\bigcup_{i=1}^{\infty }M_i$ is a countable union of sets $M_i$ such that $\left ( x,y \right )\in M_i 
				\mapsto y\in Y$ is 
				proper (i.e. any sequence $\left (x_k,y_k \right )\in M_i$ such that $y_k$ is convergent in $Y$ has a convergent subsequence in $M_i$).
			\end{enumerate}
		\end{enumerate}
		Then, the set $\Theta=\{y \in V \mid \zeta$ is a regular value of $f(., y)\}$ contains a countable intersection of open and dense sets (and hence
		is dense) in $V$.
	\end{thm}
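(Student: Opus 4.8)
\emph{Proof proposal.} The plan is to follow Smale's classical strategy: realize the solution set $\mathcal{M}:=f^{-1}(\zeta)$ as a $C^k$ Banach submanifold of $U\times V$, show that the projection $\pi:\mathcal{M}\to V$, $\pi(x,y)=y$, is a $C^k$ Fredholm map of index strictly less than $k$, identify its set of regular values with $\Theta$, and then deduce that $\Theta$ is residual from the Sard--Smale theorem for Fredholm maps, whose globalization is supplied by hypothesis (iii). For the technical details of the local reduction I would cite \cite{H}.

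First I would verify that $\mathcal{M}$ is a $C^k$ manifold. Fix $(x,y)\in f^{-1}(\zeta)$. By (i), $D_xf(x,y)$ is Fredholm, so $\ker D_xf(x,y)$ is finite dimensional (hence splits) and $R(D_xf(x,y))$ is closed of finite codimension; by (ii), $Df(x,y)$ is onto. A short argument then shows $\ker Df(x,y)$ splits in $T_xX\times T_yY$ (build a closed complement from a complement of $\ker D_xf(x,y)$ in $T_xX$ together with a finite-dimensional complement of $R(D_xf(x,y))$ in $T_\zeta Z$ lifted through $D_yf$), so the implicit function theorem gives that $\mathcal{M}$ is a $C^k$ Banach manifold with $T_{(x,y)}\mathcal{M}=\ker Df(x,y)$. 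Next, $D\pi(x,y)$ is just the restriction to $\ker Df(x,y)$ of the projection $T_xX\times T_yY\to T_yY$, and one checks directly that $\ker D\pi(x,y)\cong\ker D_xf(x,y)$ and $R(D\pi(x,y))=\{v\in T_yY:D_yf(x,y)v\in R(D_xf(x,y))\}$, which is closed (preimage of a closed set) and satisfies $T_yY/R(D\pi(x,y))\cong T_\zeta Z/R(D_xf(x,y))$ via $v\mapsto D_yf(x,y)v$, surjectivity of this map being exactly hypothesis (ii). Hence $D\pi(x,y)$ is Fredholm with $\operatorname{ind}D\pi(x,y)=\operatorname{ind}D_xf(x,y)<k$, and $y$ is a regular value of $\pi$ precisely when $\operatorname{coker}D\pi(x,y)=0$ for every $(x,y)\in\pi^{-1}(y)$, i.e.\ precisely when $\zeta$ is a regular value of $f(\cdot,y)$. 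Thus $\Theta$ equals the set of regular values of $\pi$.

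It then remains to show that the regular values of a $C^k$ Fredholm map $\pi:\mathcal{M}\to V$ of index $<k$ form a residual subset of $V$. The local step is the finite-dimensional Sard theorem: around any point of $\mathcal{M}$, splitting off $\ker D\pi$ and a complement of $R(D\pi)$ and performing a Lyapunov--Schmidt reduction, $\pi$ becomes in suitable $C^k$ charts a map of the form $(u,v)\in\mathbb{R}^a\times E\mapsto(u,g(u,v))\in\mathbb{R}^b\times E$ with $a-b=\operatorname{ind}D\pi<k$, so that the critical values of $\pi$ are governed by the $C^k$ maps $v\mapsto g(u,v)$ from an open subset of $\mathbb{R}^a$ to $\mathbb{R}^b$; classical Sard, applicable since the smoothness $k$ exceeds $a-b$, shows these critical values are locally of measure zero, in fact closed and nowhere dense after shrinking the chart. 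For globalization, under (iii)(b) the splitting $f^{-1}(\zeta)=\bigcup_iM_i$ with $\pi|_{M_i}$ proper makes each critical-value set of $\pi|_{M_i}$ closed, hence closed and nowhere dense by the local step, so $\Theta$ is the complement of a countable union of such sets and is therefore residual; under (iii)(a), $\mathcal{M}$ is separable metrizable, hence Lindel\"of, so the cover by the above local charts admits a countable subcover and the same conclusion follows.

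I expect the only genuinely delicate point to be this last globalization: a $C^k$ Fredholm map need not be proper and its critical-value set need not be closed, so the argument must route through exactly one of the two devices in hypothesis (iii)---$\sigma$-properness or separability of $\mathcal{M}$---to reduce the global statement to a countable family of well-behaved local ones. Everything else, namely the manifold structure of $\mathcal{M}$, the Fredholm and index bookkeeping for $\pi$, and the identification of regular values, is routine linear algebra together with the implicit function theorem, and the local Sard reduction is standard.
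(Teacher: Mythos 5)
The paper does not give a proof of this theorem: it is stated in Appendix~A as a quotation from Henry's monograph \cite{H} (with weaker variants attributed to \cite{Q,ST}), so there is no in-paper argument to compare against. Your sketch follows exactly the strategy that \cite{H} (and Smale's original infinite-dimensional Sard theorem) uses: realize $\mathcal{M}=f^{-1}(\zeta)$ as a $C^k$ Banach manifold using (i)+(ii), show the projection $\pi:\mathcal{M}\to V$ is a $C^k$ Fredholm map of the same index as $D_xf$, identify regular values of $\pi$ with $\Theta$, and conclude by the parametric (Fredholm) Sard theorem, globalized via (iii). That is the right route, and the Fredholm/index bookkeeping for $\pi$ and the splitting argument for $\ker Df$ are both correct.

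Two points deserve tightening. First, the local normal form is scrambled: as written, $(u,v)\in\mathbb{R}^a\times E\mapsto(u,g(u,v))\in\mathbb{R}^b\times E$ makes the first coordinate of the image land in $\mathbb{R}^b$ while outputting $u\in\mathbb{R}^a$, which only makes sense when $a=b$, and the fibered Sard argument is then attributed to $v\mapsto g(u,v)$, which is a map out of the infinite-dimensional factor $E$. The correct local representation is $(u,v)\mapsto\bigl(g(u,v),\,v\bigr)$, where $u\in\mathbb{R}^a\cong\ker D\pi$, $v\in E$ a closed complement, $g:\mathbb{R}^a\times E\to\mathbb{R}^b$ with $\mathbb{R}^b$ a complement of $R(D\pi)$, the critical values being governed, for each fixed $v$, by the finite-dimensional $C^k$ map $u\mapsto g(u,v):\mathbb{R}^a\to\mathbb{R}^b$ to which classical Sard applies because $k>a-b$.

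Second, you quietly invoke, in the phrase ``closed and nowhere dense after shrinking the chart,'' what is actually the technical crux of the globalization and should be made explicit: a $C^k$ Fredholm map is locally proper. In the normal form above, $\pi$ restricted to a product of closed bounded balls $\overline{B}_r\times\overline{B}_s\subset\mathbb{R}^a\times E$ has the identity on the $E$-factor and a compact finite-dimensional factor, so preimages of compact sets are compact; consequently, the critical values of $\pi$ restricted to such a shrunken chart form a closed set, and by the fibered Sard argument this closed set is nowhere dense. This local properness is what actually licenses both branches of (iii): under (iii)(a) separability of $X$ and $Y$ makes $\mathcal{M}$ Lindel\"of, so it is covered by countably many such good charts and the complement of $\Theta$ is a countable union of closed nowhere dense sets; under (iii)(b), for each $M_i$ properness of $\pi|_{M_i}$ gives both the closedness of $\pi(C\cap M_i)$ (via the sequential characterization of properness in the statement) and, via compactness of the fibers $\pi^{-1}(y)\cap M_i$, a finite covering of each fiber by good charts and thus local nowhere-density; together, closed plus locally nowhere dense gives nowhere dense. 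With these two repairs the sketch is the argument of \cite{H}.
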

	
	\begin{rk}
		\rm{Note that (iii)(b) holds if $f^{-1}\left(\zeta\right)$ is Lindel$\ddot{\rm{o}}$f, or more specifically, if 
			$f^{-1}\left(\zeta\right)$ is a separable metric space or if $X$ and $Y$ are separable metric spaces. Moreover, if $x$ is a regular point of $f$ and the 
			kernel 
			of $Df \left(x\right)$ is finite dimensional, then its kernel always has a closed complementary space (e.g., see \cite[Lemma 5.8.]{H}), which means the kernel 
			splits. Specifically, when the domain of $D{f} \left(x\right)$ is finite dimensional, its kernel splits naturally.}
	\end{rk}

	The following theorem is a special case  (see \cite[p.48]{AR}), where $D_xf\left(x,y\right)$ is no longer required to be a Fredholm operator.
	
	\begin{thm} \label{SS-2}
		Let integer $k\ge 1$ and $X$, $Y$, $Z$ be Banach manifolds of class $C^k$. Let $U \subset X$ and $V \subset Y$ be two open sets, $f:U\times V\to Z$ be a 
		map of class $C^k$ and a point $\zeta \in Z$. Assume that the following hypotheses hold:
		\begin{enumerate}
			\item[{\rm(i)}] $\dim X=n<\infty$, $\dim Z=q<\infty$;
			
			\item[{\rm(ii)}] $k>\operatorname{max}\left\{n-q,0\right\}$;
			
			\item[{\rm(iii)}] $\forall \left(x,y\right)\in f^{-1}\left(\zeta\right)$, $Df\left(x,y\right):T_xX \times T_yY \to T_\zeta Z$ is surjective;
			
			\item[{\rm(iv)}] $X$ and $Y$ are separable metric spaces.
		\end{enumerate}
		Then, the set $\Theta=\{y \in V \mid \zeta$ is a regular value of $f(., y)\}$ is generic (and hence
		is dense) in $V$.
	\end{thm}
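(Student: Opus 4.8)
The plan is to obtain Theorem \ref{SS-2} as a direct corollary of Theorem \ref{SS-1}, by checking that the finite-dimensionality hypotheses (i)--(ii) of Theorem \ref{SS-2}, together with (iii) and (iv), force all the hypotheses of Theorem \ref{SS-1}. First I would dispose of the degenerate case: if $f^{-1}(\zeta)=\emptyset$, then vacuously $\zeta$ is a regular value of $f(\cdot,y)$ for every $y\in V$, so $\Theta=V$ and there is nothing to prove. Hence assume $f^{-1}(\zeta)\neq\emptyset$ and fix an arbitrary $(x,y)\in f^{-1}(\zeta)$.

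The heart of the matter is that hypothesis (i) of Theorem \ref{SS-1} is automatic under (i)--(ii) of Theorem \ref{SS-2}. Since $\dim X=n<\infty$ and $\dim Z=q<\infty$, the partial differential $D_xf(x,y)\colon T_xX\to T_\zeta Z$ is a linear map between finite-dimensional spaces of dimensions $n$ and $q$; its range is closed and its kernel and cokernel are finite-dimensional, so it is a Fredholm operator, and the rank--nullity theorem gives $\operatorname{ind} D_xf(x,y)=n-q$. By hypothesis (ii), $k>\max\{n-q,0\}\ge n-q$, so this index is strictly less than $k$, which is precisely what hypothesis (i) of Theorem \ref{SS-1} demands. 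Hypothesis (ii) of Theorem \ref{SS-1} (surjectivity of the full differential $Df(x,y)$) is hypothesis (iii) of Theorem \ref{SS-2} verbatim, and hypothesis (iii)(a) of Theorem \ref{SS-1} (the parameter manifolds are separable metric spaces) is hypothesis (iv) of Theorem \ref{SS-2}. One may also record, as in the remark following Theorem \ref{SS-1}, that since $Df(x,y)$ is onto $T_\zeta Z\cong\mathbb{R}^q$ its kernel has codimension at most $q$ in $T_xX\times T_yY$ and therefore splits; thus $\zeta$ is a regular value of $f$ in the sense recalled in the appendix and $f^{-1}(\zeta)$ is a $C^k$ Banach submanifold, which is what makes the Sard--Smale mechanism behind Theorem \ref{SS-1} applicable.

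With all hypotheses of Theorem \ref{SS-1} in force, that theorem yields that $\Theta=\{y\in V\mid \zeta\text{ is a regular value of }f(\cdot,y)\}$ contains a countable intersection of open and dense subsets of $V$ and is therefore dense in $V$; by the definition of ``generic'' recalled in the appendix, this is exactly the conclusion of Theorem \ref{SS-2}.

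I do not anticipate a genuine obstacle here: all of the analytic content is packaged inside Theorem \ref{SS-1}, and the reduction uses only the elementary fact that a linear map between finite-dimensional spaces is Fredholm with index equal to the difference of the dimensions. The only points requiring mild care are the index bookkeeping against the bound $k>\max\{n-q,0\}$ --- the $\max$ with $0$ being what covers the case $n<q$, where the index is negative --- and the trivial observation that the empty-fibre case must be dealt with separately.
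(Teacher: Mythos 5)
Your reduction of Theorem \ref{SS-2} to Theorem \ref{SS-1} is correct, and the index bookkeeping is right: in finite dimensions $D_xf(x,y)$ is automatically Fredholm of index $n-q$, so hypothesis (ii) of Theorem \ref{SS-2} delivers exactly the bound $\operatorname{ind} D_xf(x,y)<k$ demanded by hypothesis (i) of Theorem \ref{SS-1}, the other hypotheses match verbatim, and finite codimension of $\ker Df(x,y)$ guarantees the splitting. The paper itself does not write out a proof --- it labels Theorem \ref{SS-2} as ``a special case'' and defers to the reference \cite[p.~48]{AR}, where the finite-dimensional parametric transversality theorem is proved directly from the classical Sard theorem rather than derived from the infinite-dimensional Sard--Smale statement --- so your argument supplies an explicit reduction that the paper only gestures at; it is a perfectly valid, and arguably more self-contained, route to the same conclusion.
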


	\setcounter{defn}{0} 
	\renewcommand{\thedefn}{B.\arabic{defn}}
	\setcounter{lem}{0} 
	\renewcommand{\thelem}{B.\arabic{lem}}
	\setcounter{thm}{0} 
	\renewcommand{\thethm}{B.\arabic{thm}}
	\setcounter{prop}{0} 
	\renewcommand{\theprop}{B.\arabic{prop}}
	\setcounter{equation}{0} 
	\renewcommand{\theequation}{B.\arabic{equation}}
	\setcounter{cor}{0} 
	\renewcommand{\thecor}{B.\arabic{cor}}
	\setcounter{rk}{0} 
	\renewcommand{\therk}{B.\arabic{rk}}
	\section*{Appendix B. Exponential dichotomies and applications}
	\subsection*{B.1 General Results} 
	
	Let $X$ be a Banach space and $J\subset \mathbb{R}$ be an interval. A family of continuous linear maps $\{T(s,t)|s, t\in J\}$ on $X$ are called evolution operators, if for any $t\ge s \ge  r$ in $J$ one has
	$$
	T\left(r,r\right)=I,\quad T\left(t,r\right)=T\left(t,s\right)\circ T\left(s,r\right),
	$$
	where $I$ is the identity map on $X$.
	Meanwhile, the family of operators
	on $X^*$ are given by $T^*\left(r,t\right)=\left(T\left(t,r\right)\right)^*$.
	
	Frow now on, we only consider the relevant results of discrete time, i.e., $J \subset \mathbb{Z}$, while the continuous case can be found in \cite{KP}. In 
	this case, we define the family of evolution operators $\left \{ T_n\in L\left ( X,X \right ) \mid n \in  J  \right \} $, which satisfying
	$$
	T\left(m,m\right)=I,\quad T\left(n,m\right)=T_{n-1}\circ\cdots T_m,\quad \forall n>m \text{ in } J.
	$$
	\begin{defn}
		{\rm 
			The family of evolution operators $\left \{ T_{n}\in L\left ( X,X \right ) \mid n\in J  \right \} $ (or the family of evolution operators $\left \{ 
			T\left(n,m\right) \mid n\ge m \text{ in } J  \right \} $) has an exponential dichotomy (or discrete dichotomy) on the interval $J$ with exponent 
			$\beta >0$, 
			bound $M>0$ and projections $\left\{P\left(n\right),n\in J\right\}$ if there is a family of continuous projections $\left\{P\left(n\right),n\in 
			J\right\}$ such 
			that for any $n, m$ in $J$, one has
			\begin{enumerate}
				\item[{\rm(i)}] $T\left(n,m\right)P\left(m\right)=P\left(n\right)T\left(n.m\right)$ for $n\ge m$ in $J$;
				\item[{\rm(ii)}]  the restriction $T\left(n,m\right)\mid _{R\left(P\left(m\right)\right)} \mapsto R\left(P\left(n\right)\right)$ is an 
				isomorphism (i.e. bicontinuous bijection) for $n \ge m$ in $J$,  and $T\left(m,n\right)$ is
				defined as the inverse from $R\left(P\left(n\right)\right)$ onto $R\left(P\left(m\right)\right)$;
				\item[{\rm(iii)}] $\left \| T\left ( n,m \right ) \left ( I-P\left ( m \right )  \right )  \right \| \le Me^{-\beta \left ( n-m \right ) } $ 
				for 
				$n\ge m$ in $J$;
				\item[{\rm(iv)}] $\left \| T\left ( n,m \right ) P\left ( m \right )   \right \| \le Me^{-\beta \left ( m-n \right ) } $ for $n\le m$ in $J$, 
				where $T\left ( n,m \right ) P\left ( m \right )$ is defined in (ii).
			\end{enumerate} 
			If $\operatorname{dim}R\left(P\left(n\right)\right)=k$ is finite for some $n \in J$, the equality holds for all $n \in J$ by (ii), and we say that 
			the 
			dichotomy
			has finite rank $k$. We sometimes call $R\left(P\left(n\right)\right)=U\left(n\right)$ the unstable space and 
			$\ker\left(P\left(n\right)\right)=S\left(n\right)$ the stable space.
		}
	\end{defn}
	
	\textit{Example}. Let $A$ be a sectorial operator on a Banach space $Y$. For any $n\ge m$, we define $T_n=e^A$ and 
	$T\left(n,m\right)=e^{A\left(n-m\right)}$ on the Banach space $X=Y^\alpha$, $\alpha \in \left[0,1\right]$. Then $\left \{ T\left(n,m\right) \mid n\ge m 
	\text{ 
		in } J  \right \}$ is a family of evolution operators on $X$. If the spectrum $\sigma\left(A\right)$ satisfies $\sigma\left(A\right)\cap \left\{\mu\mid 
	\operatorname{Re} \mu =0\right\}=\emptyset$, then, for any $t_0>0$, we can define the projection $P$ by
	$P=I-\frac{1}{2 i \pi} \int_{|z|=1}\left(z I-e^{A t_0}\right)^{-1} d z$. And $T\left(n,m\right)$ has an exponential dichotomy with projection $P$. If the 
	spectrum $\sigma\left(A\right)$ satisfies $\sigma\left(A\right)\cap \left\{\mu\mid -\beta \le \operatorname{Re} \mu \le \beta \right\}=\emptyset$ for some 
	$\beta >0$, then there exists a positive constant $M$ such that $T\left(n,m\right)$ has an exponential dichotomy with projection $P\left(n\right)=P$, 
	exponent $\beta$ and constant $M$. If the essential spectrum of $e^{At}$ is strictly inside the unit circle, the dichotomy has finite rank.
	
	\begin{defn}
		{\rm 
			The family of evolution operators $\left \{ T^*_{n} \mid n\in J  \right \} $ (or the family of evolution operators $$\left \{ T^*\left(m,n\right) 
			\mid n\ge m \text{ in } J  \right \} $$ has a reverse exponential dichotomy on the interval $J$ with exponent $\beta >0$, bound $M>0$ and 
			projections 
			$\left\{P^*\left(n\right),n\in J\right\}$ if there is a family of continuous projections $\left\{P\left(n\right),n\in J\right\}$ such that for any 
			$n$ in $J$:
			\begin{enumerate}
				\item[{\rm(i)}] $T^*\left(m,n\right)P^*\left(n\right)=P^*\left(m\right)T^*\left(m.n\right)$ for $n\ge m$ in $J$;
				\item[{\rm(ii)}]  the restriction $T^*\left(m,n\right)\mid _{R\left(P^*\left(n\right)\right)} \mapsto R\left(P^*\left(m\right)\right)$ is an 
				isomorphism (i.e. bicontinuous bijection) for $n \ge m$ in $J$,  and $T^*\left(n,m\right)$ is
				defined as the inverse from $R\left(P^*\left(m\right)\right)$ onto $R\left(P^*\left(n\right)\right)$;
				\item[{\rm(iii)}] $\left \| T^*\left ( m,n \right ) \left ( I-P^*\left ( n \right )  \right )  \right \| \le Me^{-\beta \left ( n-m \right ) 
				} $ for $n\ge m$ in $J$;
				\item[{\rm(iv)}] $\left \| T^*\left ( m,n \right ) P^*\left ( n \right )   \right \| \le Me^{-\beta \left ( m-n \right ) } $ for $n\le m$ in 
				$J$, where $T^*\left ( m,n \right ) P^*\left ( n \right )$ is defined in (ii).
			\end{enumerate} 
		}
	\end{defn}
	
	The following natural property is proved in \cite[Lemma 4.a.3]{BP}.
	
	\begin{lem}\label{adjoint exponential dichotomy}
		If $\left \{ T\left(n,m\right) \mid n\ge m \text{ in } J  \right \}$ on the Banach space X has exponential dichotomy with projections 
		$P\left(n\right)$, exponent $\beta$ and bound $M$, then $T^*\left(m,n\right)$ has reverse exponential dichotomy on $J$ with the same exponent and bound  
		and 
		with the projections $P^*\left(n\right)=\left(P\left(n\right)\right)^*$.
	\end{lem}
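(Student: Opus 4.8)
The plan is to obtain the reverse exponential dichotomy for $T^{*}$ by dualizing, one at a time, the four defining properties of the exponential dichotomy of $T$. Throughout I would use three elementary facts: the adjoint of a bounded projection is again a bounded projection, with $\|P^{*}\| = \|P\|$; the adjoint reverses the order of composition and satisfies $\|A^{*}\| = \|A\|$; and for a complemented splitting $X = R(P)\oplus\ker P$ one has $R(P^{*}) = (\ker P)^{\perp}$ and $\ker P^{*} = (R(P))^{\perp}$, so that $X^{*} = R(P^{*})\oplus\ker P^{*}$. Accordingly I set $P^{*}(n) := (P(n))^{*}$, which is a bounded projection on $X^{*}$, and $T^{*}(m,n) := (T(n,m))^{*}$ for $n\ge m$ in $J$.

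Properties (i), (iii) and (iv) are then routine. Taking adjoints of $P(n)T(n,m) = T(n,m)P(m)$ and using that the adjoint reverses order yields $T^{*}(m,n)P^{*}(n) = P^{*}(m)T^{*}(m,n)$ for $n\ge m$, which is (i). For (iii), since $I - P^{*}(n) = (I - P(n))^{*}$ and, by (i) of the original, $(I-P(n))T(n,m) = T(n,m)(I-P(m))$, one gets $T^{*}(m,n)(I - P^{*}(n)) = \big(T(n,m)(I - P(m))\big)^{*}$, whose norm equals $\|T(n,m)(I - P(m))\|\le Me^{-\beta(n-m)}$. For (iv), for $n\le m$ the operator $T(n,m)P(m)$ has range in $R(P(n))$ and kernel $\ker P(m)$, so $P(n)T(n,m)P(m) = T(n,m)P(m) = T(n,m)P(m)P(m)$; dualizing gives $T^{*}(m,n)P^{*}(n) = \big(T(n,m)P(m)\big)^{*}$, and the norm bound $Me^{-\beta(m-n)}$ carries over from $\|T(n,m)P(m)\|$.

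The step that requires genuine care is (ii): $T^{*}(m,n)$ should restrict to an isomorphism of $R(P^{*}(n)) = (\ker P(n))^{\perp}$ onto $R(P^{*}(m)) = (\ker P(m))^{\perp}$, with inverse $T^{*}(n,m)$. First, since the commutation relation $(I-P(n))T(n,m)=T(n,m)(I-P(m))$ shows $T(n,m)$ maps $\ker P(m)$ into $\ker P(n)$, dualizing this inclusion shows $(T(n,m))^{*}$ maps $(\ker P(n))^{\perp}$ into $(\ker P(m))^{\perp}$. Injectivity: if $\phi\in(\ker P(n))^{\perp}$ and $(T(n,m))^{*}\phi = 0$, then $\phi$ annihilates $R(T(n,m))\supseteq R(P(n))$ as well as $\ker P(n)$, hence $\phi = 0$. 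Surjectivity: given $\psi\in(\ker P(m))^{\perp}$, define $\phi\in X^{*}$ by $\phi := \psi\circ T(m,n)$ on $R(P(n))$, using the inverse isomorphism $T(m,n)\colon R(P(n))\to R(P(m))$ supplied by (ii) for $T$, and $\phi := 0$ on $\ker P(n)$; boundedness of $\phi$ follows from the bound on $T(m,n)$, and checking on the two summands $R(P(m))$ and $\ker P(m)$ gives $(T(n,m))^{*}\phi = \psi$. That $T^{*}(n,m)$ is the inverse is obtained by dualizing $T(m,n)T(n,m) = \mathrm{Id}$ on $R(P(m))$ and $T(n,m)T(m,n) = \mathrm{Id}$ on $R(P(n))$.

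Putting (i)--(iv) together establishes that $T^{*}$ has a reverse exponential dichotomy on $J$ with the same exponent $\beta$, the same bound $M$, and projections $P^{*}(n)$; the finite-rank assertion is immediate since $\dim R(P^{*}(n)) = \dim R(P(n))$ whenever the latter is finite. The one real obstacle is the annihilator bookkeeping in (ii) — correctly identifying the dual unstable space as $(\ker P(n))^{\perp}$ and verifying surjectivity of the restricted adjoint; everything else is a mechanical dualization. This is precisely the argument carried out in \cite[Lemma 4.a.3]{BP}.
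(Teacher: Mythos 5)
Your proof is correct. The paper does not actually prove this lemma---it merely defers to \cite[Lemma~4.a.3]{BP}---and your dualization argument (adjoints of the projections, annihilator identifications $R(P^{*})=(\ker P)^{\perp}$, $\ker P^{*}=(R(P))^{\perp}$, and direct verification of the four defining properties) is the standard proof that the cited reference carries out. The one place you compress things is property (iv): the identity $T^{*}(m,n)P^{*}(n)=\bigl(T(n,m)P(m)\bigr)^{*}$ for $n\le m$ does not follow purely from dualizing $P(n)T(n,m)P(m)=T(n,m)P(m)$, since $T^{*}(m,n)$ is \emph{defined} as an inverse rather than as an adjoint; one should also check, as your analysis of (ii) implicitly shows, that $T^{*}(n,m)\bigl(T(n,m)P(m)\bigr)^{*}=P^{*}(n)$, which pins down $\bigl(T(n,m)P(m)\bigr)^{*}$ as $T^{*}(m,n)P^{*}(n)$ by the already-established injectivity---but this is a routine fix, not a gap.
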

	
	The next property given in \cite[Corollary 1.9]{H2} is about the roughness of exponential dichotomies and is a consequence of \cite[Theorem 1.5]{H2}.
	
	\begin{thm}\label{Perturbation of exponential  dichotomies}
		(Roughness of exponential  dichotomies). Let $n_0 > 0$ (resp. $n_0 < 0$) be a given integer and assume the discrete family of evolution operators  
		$\left\{T_n\right\}_{n\ge n_0}\subset L\left(X\right)$ (resp.  $\left\{T_n\right\}_{n\le n_0}\subset L\left(X\right)$) has a discrete dichotomy on 
		$\left [ 
		n_0,+\infty  \right )\cap \mathbb{Z}^+ $ (resp. $\left ( -\infty,n_0  \right ]\cap \mathbb{Z}^- $) with constant $\beta$, $M$ and projections 
		$P^{T}\left(n\right)$. Let $M_1>M$, $0<\beta_1<\beta$ and $$0<\varepsilon \leqslant\left(\frac{1}{M}-\frac{1}{M_1}\right) 
		\frac{e^{-\beta_1-e^{-\beta}}}{1+e^{-\left(\beta+\beta_1\right)}}.$$
		If a discrete family of evolution operators  $\left\{S_n\right\}_{n\ge n_0}\subset L\left(X\right)$ (resp.  $\left\{S_n\right\}_{n\le n_0}\subset 
		L\left(X\right)$) with $\left \| S_n-T_n \right \| \le \varepsilon$ for all $n \ge n_0$ (resp. $n \le n_0$), then $\left\{S_n\right\}_{n\ge n_0}\subset 
		L\left(X\right)$ (resp.  $\left\{S_n\right\}_{n\le n_0}\subset L\left(X\right)$) has a discrete dichotomy with constant $\beta_1$, $M$ and the 
		corresponding 
		projections $P^{S}\left(n\right)$ satisfy 
		$$
		\sup _n\left\|P^S\left(n\right)-P^T\left(n\right)\right\|=O\left(\sup _n\left\|S_n-T_n\right\|\right)
		\text{ as } \sup _n\left\|S_n-T_n\right\| \to 0.
		$$
		Furthermore, there exists $\varepsilon_0>0$ such that, for $0<\varepsilon\le\varepsilon_0$, if $T_n$ has a dichotomy of
		finite rank $k$, then the dichotomy of $S_n$ is also of finite rank $k$.
	\end{thm}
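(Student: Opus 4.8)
The plan is to derive Theorem~B.1 (roughness of exponential dichotomies) from the classical admissibility / Green's-operator method. I treat the forward half-line $J=[n_0,+\infty)\cap\mathbb{Z}$; the case $n_0<0$ is identical after reversing the time direction, and passing to adjoints via Lemma~\ref{adjoint exponential dichotomy} is not needed.

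\textbf{Green's operator of $\{T_n\}$.} Using the dichotomy of $\{T_n\}$ with projections $P^T(n)$, exponent $\beta$ and bound $M$, I would introduce the discrete Green's function $G^T(n,m)=T(n,m)(I-P^T(m))$ for $n\ge m$ and $G^T(n,m)=-T(n,m)P^T(m)$ for $n<m$, together with the operator $(\mathcal{G}^Th)_n=\sum_m G^T(n,m)h_m$ acting on the Banach space of $X$-valued sequences with finite $e^{\beta_1\cdot}$-weighted supremum norm. Summing the two geometric series coming from properties (iii) and (iv) of the dichotomy produces a bound $\|\mathcal{G}^T\|\le M\,\kappa(\beta,\beta_1)$, where $\kappa(\beta,\beta_1)$ is precisely the reciprocal of the factor multiplying $\bigl(\tfrac1M-\tfrac1{M_1}\bigr)$ in the hypothesis on $\varepsilon$; this is the source of the explicit admissible range for $\varepsilon$.

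\textbf{Fixed-point reformulation and the dichotomy of $\{S_n\}$.} Write $S_n=T_n+B_n$ with $\sup_n\|B_n\|\le\varepsilon$. A sequence $x$ with the appropriate one-sided exponential decay inside $J$ solves $x_{n+1}=S_nx_n$ iff it is a fixed point of the map $x\mapsto x^{\mathrm h}+\mathcal{G}^T(Bx)$, where $x^{\mathrm h}$ ranges over the piece of the $T$-dichotomy being continued. The hypothesis on $\varepsilon$ yields $\|B\|\,\|\mathcal{G}^T\|\le 1-M/M_1<1$, so the Banach fixed-point theorem gives, for each admissible $x^{\mathrm h}$, a unique solution whose norm is bounded by $(1-\|B\|\|\mathcal{G}^T\|)^{-1}\|x^{\mathrm h}\|$. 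Re-absorbing this factor into the weaker exponent $\beta_1$ shows that $\{S_n\}$ admits one-sided exponentially decaying solutions with rate $\beta_1$ and bound $M$; carrying this out forward and backward inside $J$ and checking that the two families of initial values are closed and complementary yields the exponential dichotomy of $\{S_n\}$ with constant $\beta_1$, bound $M$ and projections $P^S(n)$.

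\textbf{Projection estimate, finite rank, and the main obstacle.} The projection $P^S(n)$ is the projection onto the ``unstable'' subspace $U^S(n)$ (values at $n$ of $S$-solutions decaying backward in $J$) along the ``stable'' one; subtracting the fixed-point representations for $S$ and for $T$ and using $\|\mathcal{G}^T\|$ together with $\|B_n\|\le\varepsilon$ gives $\sup_n\|P^S(n)-P^T(n)\|=O(\sup_n\|S_n-T_n\|)$, and properties (i)--(iv) for $P^S(n)$, $\beta_1$, $M$ are then read off from the contraction estimates. If $\dim R(P^T(n))=k<\infty$, then for $\varepsilon$ below some $\varepsilon_0$ one has $\sup_n\|P^S(n)-P^T(n)\|<1$, and since two projections at operator-norm distance $<1$ are conjugate they have equal rank, so $\dim R(P^S(n))=k$. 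The delicate part is the first two steps: one must work in the correctly $e^{\beta_1\cdot}$-weighted sequence space and estimate the two geometric series sharply enough that the perturbed dichotomy comes out with exponent exactly $\beta_1$ and bound exactly $M$ rather than $M+O(\varepsilon)$, which is what forces the precise form of the admissible $\varepsilon$ and the role of the auxiliary constant $M_1>M$; the rest is routine bookkeeping, and alternatively the statement follows directly from \cite[Theorem~1.5 and Corollary~1.9]{H2}.
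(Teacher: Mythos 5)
The paper offers no proof of this theorem: it states it verbatim as a quotation of Henry's roughness result (\cite[Corollary 1.9]{H2}, derived there from \cite[Theorem 1.5]{H2}), which in turn is established by precisely the Green's-operator/fixed-point argument you sketch, so your approach coincides with that of the cited source. Your write-up is a plan rather than a complete verification — the delicate bookkeeping you correctly flag (working in the right $\beta_1$-weighted admissibility space so the geometric sums give $\|\mathcal{G}^T\|$ equal to the reciprocal of the factor in the $\varepsilon$-bound, and tracking the constant so it does not inflate beyond $M_1$) is exactly where Henry's proof spends its effort, but the ingredients you list (contraction via $\|B\|\,\|\mathcal{G}^T\|<1$, projection estimate linear in $\sup_n\|S_n-T_n\|$, rank preservation since projections at norm distance $<1$ are conjugate) are the right ones.
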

	
	The following result, which is proved in \cite[Theorem 1.14]{H2}, allows us to extend dichotomies to a larger ``time
	interval''. The continuous version of it is proved in \cite{XL}.
	\begin{thm}\label{Extension of exponential dichotomies}
		(Extension of exponential dichotomies). Given $\left(T_n\right)_{n \le n_1}$ (resp. $\left(T_n\right)_{n \ge n_1}$) is a discrete family of
		evolution operators on a Banach space $X$ and $n_0<n_1$ (resp. $n_0>n_1$). Suppose $\left\{T_n\right\}_{n\le n_0}$ (resp. $\left(T_n\right)_{n \ge 
			n_0}$) has a discrete dichotomy with finite rank $k$, exponent $\beta$, constant M and projections $\left\{P\left(n\right)\right\}_{n\le n_0}$ 
		(resp. 
		$\left\{P\left(n\right)\right\}_{n\ge n_0}$) and assume $\left.T\left(n_1, n_0\right)\right|_{R\left(P\left(n_0\right)\right)}$ (resp. 
		$\left.T^*\left(n_1, 
		n_0\right)\right|_{R\left(P^*\left(n_0\right)\right)}$) is injective. Then $\left\{T_n\right\}_{n\le n_1}$ (resp. $\left\{T_n\right\}_{n\ge n_1}$) has 
		a 
		discrete dichotomy with the same finite rank $k$, same exponent and projections $\left\{\tilde{P}\left(n\right)\right\}_{n\le n_1}$ \bigg(resp. 
		$\left\{\tilde{P}\left(n\right)\right\}_{n\ge n_1}$\bigg) such that $\left \| P\left ( n \right )-\tilde{P} \left ( n \right )   \right \| \to 0$ 
		exponentially 
		as $n \to -\infty$ (resp. $n \to +\infty$).
		In both cases, the constant $M$ has to be replaced by a larger one, the convergence of $\left \| P\left ( n \right )-\tilde{P} \left ( n \right )   
		\right \|$ is of order $O\left(e^{-2 \beta|n|}\right)$.
	\end{thm}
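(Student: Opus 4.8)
\emph{The plan} is to build the extended dichotomy by hand: push the old unstable fibres forward over the finite block $[n_0,n_1]$, pull a fixed complement chosen at $n_1$ backward to get the new stable fibres, and then pay for the exponential bounds by enlarging the constant. First I would set $\tilde U(n):=U(n)=R(P(n))$ for $n\le n_0$ and $\tilde U(n):=T(n,n_0)U(n_0)$ for $n_0\le n\le n_1$. Since $T(n_1,n_0)=T(n_1,n)\circ T(n,n_0)$ and $T(n_1,n_0)\big|_{U(n_0)}$ is injective by hypothesis, each $T(n,n_0)\big|_{U(n_0)}$ with $n_0\le n\le n_1$ is injective, so every $\tilde U(n)$ is a closed $k$-dimensional subspace; combined with the fact that $T(n,m)\colon U(m)\to U(n)$ is an isomorphism for $m\le n\le n_0$, this gives that $T(n,m)$ restricts to an isomorphism $\tilde U(m)\to\tilde U(n)$ for all $m\le n$ in $(-\infty,n_1]$, and in particular $T(n_1,n)\big|_{\tilde U(n)}$ is injective for each $n\le n_1$. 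This is exactly the structure needed for the stable fibres.

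Next I would fix an arbitrary closed complement $\tilde S(n_1)$ of $\tilde U(n_1)$ in $X$ and put $\tilde S(n):=\bigl(T(n_1,n)\bigr)^{-1}\bigl(\tilde S(n_1)\bigr)$ for $n<n_1$. Each $\tilde S(n)$ is closed, and injectivity of $T(n_1,n)$ on $\tilde U(n)$ together with $\tilde U(n_1)\oplus\tilde S(n_1)=X$ forces $\tilde U(n)\oplus\tilde S(n)=X$; forward invariance $T_n\tilde S(n)\subset\tilde S(n+1)$ is immediate from $T(n_1,n)=T(n_1,n+1)\circ T_n$. Letting $\tilde P(n)$ be the projection onto $\tilde U(n)$ along $\tilde S(n)$, the algebraic conditions (i)--(ii) of the definition of exponential dichotomy hold on $(-\infty,n_1]$ for free, and since $R(\tilde P(n))=R(P(n))$ for $n\le n_0$ one has $\tilde P(n)-P(n)=\tilde P(n)(I-P(n))=-L_n(I-P(n))$, where $L_n\colon S(n)\to U(n)$ is the bounded linear map whose graph is $\tilde S(n)$; so the whole question of closeness of the new projections is the question of smallness of $L_n$.

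The hard part will be the quantitative estimate $\sup_n\|\tilde P(n)-P(n)\|=O(e^{-2\beta|n|})$ as $n\to-\infty$. On $(-\infty,n_0]$ the splitting $U\oplus S$ is $T$-invariant, so writing $T(n_0,n)$ in block form and using $T(n_0,n)\tilde S(n)\subset\tilde S(n_0)$ I would derive the conjugation identity
\[
L_n=\bigl(T(n,n_0)\big|_{U(n_0)}\bigr)\;L_{n_0}\;\bigl(T(n_0,n)\big|_{S(n)}\bigr),
\]
where $L_{n_0}$ is the bounded, fixed graph map of $\tilde S(n_0)$ over $S(n_0)$ determined by the choice of $\tilde S(n_1)$ and the finitely many operators in $[n_0,n_1]$. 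Bounding the two outer factors by conditions (iv) and (iii) of the hypothesis, namely $\|T(n,n_0)|_{U(n_0)}\|=\|T(n,n_0)P(n_0)\|\le Me^{-\beta(n_0-n)}$ and $\|T(n_0,n)|_{S(n)}\|=\|T(n_0,n)(I-P(n))\|\le Me^{-\beta(n_0-n)}$, yields $\|L_n\|\le\|L_{n_0}\|\,M^2e^{-2\beta(n_0-n)}=O(e^{-2\beta|n|})$, hence $\|\tilde P(n)-P(n)\|\le(1+M)\|L_n\|\to0$ at the same rate. The ``double'' exponent is thus the product of the backward contraction of $T$ on the unstable block by that on the stable block.

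Finally I would promote the splitting to a genuine dichotomy with a larger constant $M'$. For indices in the bounded window $[n_0,n_1]$ the estimates (iii)--(iv) for $\tilde P$ hold trivially after enlarging $M$. For $m\to-\infty$ and $n\le n_1$, decompose $x\in\tilde S(m)$ as $x=P(m)x+(I-P(m))x$: the $S(m)$-component decays at rate $\beta$ by the original dichotomy (losing only the bounded factor $\sup_{n_0\le j\le n_1}\|T(j,n_0)\|$ when $n>n_0$), while $\|P(m)x\|=\|(P(m)-\tilde P(m))x\|$ is exponentially small by the previous step; the identity $\|T(n,m)P(m)x\|=\|P(n)T(n,m)x\|\le\|P(n)-\tilde P(n)\|\,\|T(n,m)x\|$ then lets one absorb the unstable component and obtain $\|T(n,m)(I-\tilde P(m))\|\le M'e^{-\beta(n-m)}$, and the bound (iv) for $\tilde P(m)$ follows the same way using $\|\tilde P(m)\|\le M+1$; finite rank $k$ was built in from the start. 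The ``resp.'' statement (extending a forward half-line dichotomy of $\{T_n\}_{n\ge n_0}$ to $\{T_n\}_{n\ge n_1}$ with $n_1<n_0$, under injectivity of $T^*(n_1,n_0)\big|_{R(P^*(n_0))}$) then follows from what has been proved by passing to adjoints through Lemma \ref{adjoint exponential dichotomy} and reversing time. The only genuinely technical point is the graph estimate of the third paragraph; the rest is bookkeeping with the dichotomy constants.
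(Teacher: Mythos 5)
The paper does not prove this statement: it cites Henry's manuscript \cite[Theorem 1.14]{H2}, so there is no in-paper argument to compare against. That said, your construction is the standard one, and the first three paragraphs are correct and essentially complete: keeping the old unstable fibres on $(-\infty,n_0]$, pushing them forward over $[n_0,n_1]$, pulling back a fixed complement of $\tilde U(n_1)$, and deriving the conjugation identity $L_n=\bigl(T(n,n_0)|_{U(n_0)}\bigr)L_{n_0}\bigl(T(n_0,n)|_{S(n)}\bigr)$, whose two factors are controlled by conditions (iii) and (iv) of the original dichotomy, is exactly what produces the rate $2\beta$ in $\|\tilde P(n)-P(n)\|$.

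Two steps in the last paragraph do not hold up as written. First, the absorption inequality requires $\|P(n)-\tilde P(n)\|<1$ uniformly for $n\le n_0$, but your own bound gives $\|P(n)-\tilde P(n)\|\lesssim(1+M)M^2\|L_{n_0}\|e^{-2\beta(n_0-n)}$, which near $n=n_0$ is of order $(1+M)M^2\|L_{n_0}\|$ and can exceed $1$, because $\|L_{n_0}\|$ is governed by the arbitrary choice of $\tilde S(n_1)$. The repair is to avoid absorption altogether: for $y:=T(n,m)(I-\tilde P(m))x\in\tilde S(n)$ one has $(I-P(n))y=T(n,m)(I-P(m))(I-\tilde P(m))x$, which obeys the original $\beta$-decay, and the graph inequality $\|y\|\le(1+\|L_n\|)\|(I-P(n))y\|$ on $\tilde S(n)$ then gives the bound with $\sup_{n\le n_0}\|L_n\|<\infty$ absorbed into $M'$. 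Second, the ``resp.'' case does not literally follow by passing to adjoints: Lemma~\ref{adjoint exponential dichotomy} only yields that a dichotomy of $T$ produces a reverse dichotomy of $T^*$, so after extending the $X^*$-dichotomy of $T^*$ you would still have to descend to a dichotomy of $T$ on $X$, which requires the new stable spaces in $X^*$ to be weak-$*$ closed and is not automatic in a non-reflexive Banach space. It is cleaner to mirror the argument directly on $X$: keep $\tilde S(n)=S(n)$ for $n\ge n_0$, set $\tilde S(n)=T(n_0,n)^{-1}S(n_0)$ for $n_1\le n<n_0$ (the injectivity of $T^*(n_1,n_0)$ on $R(P^*(n_0))=S(n_0)^\perp$ is precisely the statement that $R(T(n_0,n_1))+S(n_0)$ is dense, hence equal to $X$ by finite codimension, so $\operatorname{codim}\tilde S(n_1)=k$), choose any complement $\tilde U(n_1)$, push it forward, and run the same graph-map estimate with the roles of $U$ and $S$ exchanged.
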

	
	Now, we introduce a space $\ell ^{\infty } \left ( \mathbb{Z} ,X \right ) $ (resp. $\ell ^{\infty } \left ( \mathbb{Z}^\pm  ,X \right )$). Given a 
	family evolution operators $T_n$, $n\in \mathbb{Z}$ (resp. $n\in \mathbb{Z}^\pm$), we define a map $\mathfrak{L}: X^{\mathbb{Z}} \mapsto X^{\mathbb{Z}}$ 
	(resp. $\mathfrak{L}^\pm: X^{\mathbb{Z}^\pm} \mapsto X^{\mathbb{Z}^\pm})$ by, for given $n\in \mathbb{Z}$ (resp. $n\in \mathbb{Z}^\pm$), 
	\begin{equation}\label{L}
		\left(\mathfrak{L}Y\right)\left(n\right)=Y\left(n+1\right)-T_nY\left(n\right)
	\end{equation}
	(resp. $\left(\mathfrak{L}^\pm Y\right)\left(n\right)=Y\left(n+1\right)-T_nY\left(n\right)$).
	
	The set 
	$$
	D\left(\mathfrak{L}\right)=\left \{ Y\in X^\mathbb{Z} \mid \sup_{n\in \mathbb{Z} } \left \| Y\left ( n+1 \right )-T_nY\left ( n \right )  \right 
	\|<\infty   \right \} 
	$$ 
	is the domain of $\mathfrak{L}$ (likewise, we can define $D\left(\mathfrak{L}\right)^\pm$). This allows us to define the operator 
	$\mathfrak{L}: D\left(\mathfrak{L}\right)\subset X^{\mathbb{Z}} \mapsto X^{\mathbb{Z}}$ by (\ref{L}) (likewise, we may define the
	operator $\mathfrak{L}^\pm: D\left(\mathfrak{L}^\pm\right)\subset X^{\mathbb{Z}^\pm} \mapsto X^{\mathbb{Z}^\pm}$). 
	
	\begin{thm}
		Let $\left\{T_n\right\}^\infty_{-\infty}$ be a family evolution operators. The following are equivalent:
		\begin{enumerate}
			\item [\rm (i)] $\left\{T_n\right\}^\infty_{-\infty}$ has a discrete dichotomy.
			
			\item [\rm (ii)] Given a bounded sequence $\left\{f\left(n\right)\right\}^\infty_{-\infty} \in \ell ^{\infty } \left ( \mathbb{Z} ,X \right ) $, 
			there is a unique bounded sequence $Y\in \ell ^{\infty } \left ( \mathbb{Z} ,X \right )$ such that 
			$\left(\mathfrak{L}Y\right)\left(n\right)=Y\left(n+1\right)-T_nY\left(n\right)=f\left(n\right)$ for any $n\in \mathbb{Z}$. Moreover, the unique bounded solution is given 
			by 
			$$
			Y(n)=\sum_{k=-\infty}^{+\infty} \mathcal{G}(n, k+1) f(k),
			$$
			where $\mathcal{G}(n, m)=T(n, m)(I-P(m))$ for $n\ge m$, $\mathcal{G}(n, m)=-T(n, m)P(m)$ for $n < m$, is called the Green function.
			\item [\rm (iii)] The restrictions to both $\mathbb{Z}^+$ and $\mathbb{Z}^-$ have dichotomies  and also $X=S_0\oplus U_0$ where 
			$$
			\begin{aligned}
				& U_0=\left\{x_0 \mid \exists\left\{x_n\right\}_{n \leqslant 0} \in \ell ^{\infty } \left ( \mathbb{Z}^- ,X \right ) \text {with } 
				x_{n+1}=T_n x_n, n<0\right\}, \\
				& S_0=\left\{x_0 \mid \exists\left\{x_n\right\}_{n \geqslant 0} \in \ell ^{\infty } \left ( \mathbb{Z}^+  ,X \right ) \text {with } 
				x_{n+1}=T_n x_n, n \geqslant 0\right\} .
			\end{aligned}
			$$
			If dichotomies in $\mathbb{Z}^+$ and $\mathbb{Z}^-$ have finite rank, then the equality $X=S_0\oplus U_0$ means that they have the same rank 
			and also the only bounded solution of $x_{n+1}=T_n x_n$ (all $n$) is the zero sequence.
		\end{enumerate}
	\end{thm}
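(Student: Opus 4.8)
The plan is to prove the cycle $\mathrm{(i)}\Rightarrow\mathrm{(ii)}\Rightarrow\mathrm{(iii)}\Rightarrow\mathrm{(i)}$. For $\mathrm{(i)}\Rightarrow\mathrm{(ii)}$, assume the dichotomy with projections $P(n)$, exponent $\beta$ and bound $M$. A first routine step would be the uniform estimate $\sum_{k\in\mathbb{Z}}\|\mathcal{G}(n,k+1)\|\le\tfrac{2M}{1-e^{-\beta}}$ for every $n$, obtained by splitting the sum at $k+1=n$ and inserting the dichotomy inequalities (iii) and (iv); this makes $Y(n):=\sum_{k}\mathcal{G}(n,k+1)f(k)$ a well-defined sequence in $\ell^\infty(\mathbb{Z},X)$ with $\|Y\|_\infty\le\tfrac{2M}{1-e^{-\beta}}\|f\|_\infty$. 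That $\mathfrak{L}Y=f$ reduces to the telescoping identity that $\mathcal{G}(n+1,m)-T_n\mathcal{G}(n,m)$ equals the identity when $m=n+1$ and vanishes otherwise, which in turn follows from $T(n+1,m)=T_n\circ T(n,m)$ on the relevant invariant subspace together with conditions (i) and (ii) of the dichotomy. For uniqueness, if $Z\in D(\mathfrak{L})$ is bounded with $\mathfrak{L}Z=0$ then $Z(n+1)=T_nZ(n)$, and letting $n\to+\infty$ in (iv) with $m=0$ forces $P(0)Z(0)=0$, while letting $n\to-\infty$ in (iii) with $m=0$ forces $(I-P(0))Z(0)=0$; hence $Z\equiv0$. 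In particular $\mathfrak{L}$ is injective and the homogeneous equation has no nonzero bounded solution on $\mathbb{Z}$, a fact reused below.

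For $\mathrm{(iii)}\Rightarrow\mathrm{(i)}$: from the half-line dichotomies with projections $P^+(n)$ $(n\ge0)$ and $P^-(n)$ $(n\le0)$, the same limiting argument identifies the spaces of bounded forward (resp. backward) solutions as $S_0=\ker P^+(0)$ and $U_0=R(P^-(0))$, so the hypothesis reads $X=\ker P^+(0)\oplus R(P^-(0))$; in particular $S_0\cap U_0=\{0\}$, so the homogeneous equation again has no nonzero bounded solution on $\mathbb{Z}$, which forces $T(n,0)$ to be injective on $U_0$ for every $n$. One would then build the global projection family $\{P(n)\}_{n\in\mathbb{Z}}$ by splicing: for $n\le0$ use the unstable bundle $U(n):=R(P^-(n))$ and for $n\ge0$ its forward image $U(n):=T(n,0)U_0$; for $n\ge0$ use the stable bundle $S(n):=\ker P^+(n)$ and for $n\le0$ the preimage $S(n):=\{x\mid T(0,n)x\in\ker P^+(0)\}$. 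The no-nonzero-bounded-solution property shows $U(n)\cap S(n)=\{0\}$ for every $n$, and the hypothesis $S_0\oplus U_0=X$ propagates to give $X=U(n)\oplus S(n)$ for all $n$, so $P(n):=$ the projection onto $U(n)$ along $S(n)$ is well defined and bounded; the four dichotomy inequalities for $\{P(n)\}$ then follow from those of $P^\pm$, the intertwining relations, and the boundedness of $T_0$ on the overlap, at the cost of enlarging $M$. Theorem~\ref{Extension of exponential dichotomies} applied to $\{T_n\}_{n\le1}$ (and, via Lemma~\ref{adjoint exponential dichotomy}, to the adjoint family) gives an alternative route to the same splicing.

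The main obstacle is $\mathrm{(ii)}\Rightarrow\mathrm{(iii)}$, which is a Perron-type theorem. Surjectivity of $\mathfrak{L}$ on $\ell^\infty(\mathbb{Z},X)$ yields surjectivity of the half-line operator $\mathfrak{L}^+$ on $\ell^\infty(\mathbb{Z}^+,X)$: given $g\in\ell^\infty(\mathbb{Z}^+,X)$, solve $\mathfrak{L}Y=\tilde g$ for $\tilde g$ the extension of $g$ by $0$, and restrict $Y$ to $\mathbb{Z}^+$. The crux is then the half-line statement that surjectivity of $\mathfrak{L}^+$ on $\ell^\infty(\mathbb{Z}^+,X)$ together with the absence of nonzero bounded solutions forces a discrete dichotomy on $\mathbb{Z}^+$, and symmetrically on $\mathbb{Z}^-$; its proof would define $P^\pm(n)$ as bounded projections onto the spaces of bounded half-line solutions and verify the exponential estimates through a closed-graph / uniform-boundedness argument for the bounded right inverse of $\mathfrak{L}^\pm$ on the relevant quotient — this is the step that carries the analytic weight. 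Finally $X=S_0\oplus U_0$: $S_0\cap U_0=\{0\}$ by injectivity of $\mathfrak{L}$, while $S_0+U_0=X$ follows by solving $\mathfrak{L}Y=f$ with $f$ concentrated at a single index and reading off the jump relation $Y(1)-T_0Y(0)=f(0)$, which writes any prescribed vector as a difference of an element of a translate of $S_0$ and an element of a translate of $U_0$; shifting the index back to $0$ and using closedness and the splitting topology from the half-line dichotomies completes the decomposition, and in the finite-rank case the direct sum forces the two ranks to coincide and, once more, the trivial bounded solution. Since the discrete-time statements run parallel to the continuous ones in \cite{KP} and \cite{H2}, once the Perron step is in place the remaining bookkeeping is standard.
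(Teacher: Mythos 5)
The paper does not supply a proof of this theorem; the remark immediately following it simply cites Henry \cite[Theorem 7.6.5]{H1} and \cite[Theorem 1.13]{H2}. So there is no in-paper argument to compare against, and your proposal has to be judged on its own terms.

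Your route $\mathrm{(i)}\Rightarrow\mathrm{(ii)}\Rightarrow\mathrm{(iii)}\Rightarrow\mathrm{(i)}$ is the standard Palmer/Henry scheme, and the two "easy" legs are essentially right. For $\mathrm{(i)}\Rightarrow\mathrm{(ii)}$ the Green-function bound, the telescoping identity, and the uniqueness argument all work; a small slip is in the uniqueness step, where the inequalities should be read as $\|P(0)Z(0)\|=\|T(0,m)P(m)Z(m)\|\le Me^{-\beta m}\|Z\|_\infty$ with $m\to+\infty$ and $\|(I-P(0))Z(0)\|=\|T(0,n)(I-P(n))Z(n)\|\le Me^{\beta n}\|Z\|_\infty$ with $n\to-\infty$, rather than "letting $n\to\pm\infty$ with $m=0$" as you wrote; the substance, however, is correct. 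For $\mathrm{(iii)}\Rightarrow\mathrm{(i)}$ the splicing idea is right, and the propagation of $X=S(n)\oplus U(n)$ to all $n$ does hold once one uses that $T(n,0)$ restricted to $R(P^+(0))$ is an isomorphism onto $R(P^+(n))$ together with $S_0\oplus U_0=X$; the appeal to Theorem~\ref{Extension of exponential dichotomies} as an alternative is also appropriate.

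The genuine gap is in $\mathrm{(ii)}\Rightarrow\mathrm{(iii)}$, both in formulation and in execution. You state the half-line Perron principle as "surjectivity of $\mathfrak{L}^+$ together with the absence of nonzero bounded solutions forces a discrete dichotomy on $\mathbb{Z}^+$," but on the half-line the kernel of $\mathfrak{L}^+$ is precisely the set of bounded forward solutions, i.e.\ it is naturally identified with $S_0$ and is in general nontrivial, so "absence of nonzero bounded solutions" is the wrong hypothesis. The correct half-line hypothesis is surjectivity plus the requirement that the stable (resp.\ unstable) subspace be complemented; in your setting that complement has to come from the separately proved splitting $X=S_0\oplus U_0$. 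Thus the logical order needs to be: first derive $S_0\cap U_0=\{0\}$ from uniqueness and $S_0+U_0=X$ from the jump argument (which, as you correctly note, is cleanest with $f$ supported at $n=-1$ so the relation $Y(0)-T_{-1}Y(-1)=z$ lands directly in $S_0+U_0$), and only then use this complemented structure to produce the half-line projections. Beyond that, the analytic heart of the Perron step — the uniform boundedness of the spliced projections $P^\pm(n)$ and the exponential decay rates, obtained from the boundedness of the solution operator of $\mathfrak{L}$ via uniform boundedness/open mapping — is acknowledged but not carried out; this is not a matter of bookkeeping but is the bulk of Henry's Theorem 1.13, so as written the proposal is an outline with one load-bearing lemma missing rather than a complete proof.
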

	
	\begin{rk}
		\rm{The proof regarding the characterization of the existence of a discrete dichotomy for a family evolution operators 
			$\left\{T_n\right\}^\infty_{-\infty}$ can be found in \cite[Theorem 7.6.5]{H1} and \cite[Theorem 1.13]{H2}. For related results in the context of ordinary differential, functional differential, and parabolic equations, see \cite{JX,KP,RS} and \cite[Theorem 
			4.a.4]{BP}. The proof of the following result can be found 
			in 
			\cite[Theorem 1.15]{H2}.}
	\end{rk}
	
	\begin{thm}\label{Fredholm operator}
		Let $\left\{T_n\right\}^\infty_{-\infty}$ 
		be a discrete family of evolution operators on a Banach space $X$ which has discrete dichotomies of finite rank on both $\mathbb{Z}^+$ and $\mathbb{Z}^-$, 
		with 
		projections $P^+\left(n\right)$ and $P^-\left(n\right)$. Assume that the operator $\mathfrak{L}: D\left(\mathfrak{L}\right)\subset X^{\mathbb{Z}} \mapsto X^{\mathbb{Z}}$ is given by \eqref{L}. Then, $\operatorname{dim}\ker\left(\mathfrak{L}\right)=\operatorname{dim}\left(S_0\cap 
		U_0\right)<\infty$, $\operatorname{codim}R\left(\mathfrak{L}\right)=\operatorname{codim}\left(S_0+ U_0\right)<\infty$ and the
		operator $\mathfrak{L}$ is a Fredholm operator with index $\operatorname{ind}\left(\mathfrak{L}\right)$ given by
		$$
		\operatorname{ind}\mathfrak{L}=\operatorname{dim} U_0-\operatorname{codim} 
		S_0=\operatorname{dim}\left(R\left(P^-\left(0\right)\right)\right)-\operatorname{dim}\left(R\left(P^+\left(0\right)\right)\right).
		$$
		Finally, if we use the symbol $\left \langle \cdot ,\cdot  \right \rangle _ { X,X^*} $ to represent the duality pairing between $X$ and $X^*$, then the 
		sequence $f\in \ell ^{\infty } \left ( \mathbb{Z} ,X \right ) $ belongs to $R\left(\mathfrak{L}\right)$ if and only if 
		$$
		\sum_{n=-\infty}^{+\infty}\langle \xi (n+1),f(n)\rangle_{X, X^*}=0
		$$
		for every sequence $\xi(n)=T^*(n, 0) \xi_0, \xi_0 \in X^*$, which is bounded.
	\end{thm}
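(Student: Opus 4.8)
The plan is to follow the classical Palmer-type reduction (this is essentially \cite[Theorem 1.15]{H2}): everything is reduced to the two half-line dichotomies and a finite-dimensional matching at $n=0$. First I would describe $\ker\mathfrak{L}$: a bounded sequence $Y$ lies in $\ker\mathfrak{L}$ exactly when $Y(n+1)=T_nY(n)$ for all $n\in\mathbb{Z}$, so $Y$ is determined by $Y(0)$, and boundedness of the forward orbit forces the unstable component $P^+(0)Y(0)$ to vanish (otherwise it expands like $e^{\beta n}$ by the dichotomy estimates), i.e.\ $Y(0)\in\ker P^+(0)=S_0$, while boundedness of the backward orbit forces $Y(0)\in R(P^-(0))=U_0$, since a bounded backward orbit can only live on the finite-rank unstable space, where it decays as $n\to-\infty$. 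Conversely each $x_0\in S_0\cap U_0$ extends to a unique bounded full orbit, so $Y\mapsto Y(0)$ identifies $\ker\mathfrak{L}$ with $S_0\cap U_0$, which is finite dimensional; this is the first assertion.

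Next I would analyze the range. Given $f\in\ell^\infty(\mathbb{Z},X)$, I solve $\mathfrak{L}Y=f$ on $\mathbb{Z}^+$ and on $\mathbb{Z}^-$ separately via the Green functions $\mathcal{G}^\pm$ of the two half-line dichotomies, getting bounded particular solutions $Y^\pm$; the bounded solutions on $\mathbb{Z}^+$ carrying $f|_{\mathbb{Z}^+}$ have value at $0$ ranging exactly over $Y^+(0)+S_0$, and on $\mathbb{Z}^-$ over $Y^-(0)+U_0$ (the ambiguity being the bounded homogeneous solutions found above). Hence $\mathfrak{L}Y=f$ has a bounded solution iff $Y^+(0)-Y^-(0)\in S_0+U_0$; since $S_0=\ker P^+(0)$ is closed and $U_0$ is finite dimensional, $S_0+U_0$ is closed of finite codimension, so $f\mapsto[\,Y^+(0)-Y^-(0)\,]\in X/(S_0+U_0)$ is a well-defined linear map whose kernel is $R(\mathfrak{L})$; this shows $R(\mathfrak{L})$ is closed with $\operatorname{codim}R(\mathfrak{L})\le\operatorname{codim}(S_0+U_0)$, and testing against $f$ supported at a single index gives equality. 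The index formula then follows from $\dim(S_0\cap U_0)-\operatorname{codim}(S_0+U_0)=\dim U_0-\operatorname{codim}S_0$ (elementary linear algebra for a closed finite-codimension $S_0$ and finite-dimensional $U_0$), together with $\dim U_0=\dim R(P^-(0))$ and $\operatorname{codim}S_0=\dim R(P^+(0))$.

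For the orthogonality characterization, since $R(\mathfrak{L})$ is closed of finite codimension, $f\in R(\mathfrak{L})$ iff $f$ annihilates the finite-dimensional space $R(\mathfrak{L})^\perp$. The easy inclusion is that every bounded adjoint solution $\xi(n)=T^*(n,0)\xi_0$ annihilates $R(\mathfrak{L})$, via the telescoping identity $\sum_n\langle\xi(n+1),f(n)\rangle=\sum_n\big(\langle\xi(n+1),Y(n+1)\rangle-\langle\xi(n),Y(n)\rangle\big)$ for a solution $Y$ of $\mathfrak{L}Y=f$, which vanishes when both $\xi$ and $Y$ are bounded. For the converse I would invoke Lemma \ref{adjoint exponential dichotomy} to get reverse dichotomies of finite rank for $T^*$ on both half-lines, rerun the kernel computation for $T^*$ to see that the bounded adjoint solutions form a space of dimension $\operatorname{codim}R(\mathfrak{L})$, and conclude by a dimension count that it is all of $R(\mathfrak{L})^\perp$.

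The hard part will be the half-line step: checking that the Green-function series actually converge to bounded solutions with uniform bounds, that the ``value at $0$'' set of bounded half-line solutions is \emph{precisely} (not merely contained in) the stated affine subspace, and that the residue map onto $X/(S_0+U_0)$ is genuinely surjective. This is exactly where the finite-rank hypothesis and the roughness and extension results (Theorems \ref{Perturbation of exponential dichotomies} and \ref{Extension of exponential dichotomies}) are used; the index identity, by contrast, is routine.
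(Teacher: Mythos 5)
The paper offers no proof of this statement; the remark immediately preceding it cites Henry's monograph for the result (``The proof of the following result can be found in \cite[Theorem 1.15]{H2}''). What you have sketched is, in essence, a reconstruction of that Palmer--Henry argument: identify $\ker\mathfrak{L}$ with $S_0\cap U_0$, use the half-line Green functions to reduce the solvability of $\mathfrak{L}Y=f$ to the finite-dimensional matching condition $Y^+(0)-Y^-(0)\in S_0+U_0$, observe that $S_0+U_0$ is closed of finite codimension (since $S_0=\ker P^+(0)$ is closed and $U_0=R(P^-(0))$ is finite-dimensional), establish surjectivity of the induced map to $X/(S_0+U_0)$ by testing against sequences supported at a single index, and then read off the index from the modular identity $\dim(S_0\cap U_0)-\operatorname{codim}(S_0+U_0)=\dim U_0-\operatorname{codim}S_0$. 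The functional characterization of $R(\mathfrak{L})$ is then matched to $(S_0+U_0)^{\perp}$, which by the paper's Lemma~\ref{bounded} is exactly the set of initial data producing bounded adjoint sequences. This is the right structure and the right decomposition, so your route and the cited one coincide.

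One point deserves tightening. You assert that the telescoping sum $\sum_n\langle\xi(n+1),f(n)\rangle$ ``vanishes when both $\xi$ and $Y$ are bounded.'' Boundedness alone is not enough: the partial sums equal $\langle\xi(M+1),Y(M+1)\rangle-\langle\xi(-N),Y(-N)\rangle$, and if $\xi$ were merely bounded these boundary terms need not tend to zero, nor would the series even converge absolutely. What saves the argument is that a \emph{bounded} adjoint sequence necessarily decays exponentially at both ends: by Lemma~\ref{bounded}, $\xi(m)\in R\left((P^+(m))^*\right)$ for $m>0$ and $\xi(m)\in R\left(I-(P^-(m))^*\right)$ for $m\le 0$, and the reverse-dichotomy estimates force exponential decay there. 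So the conclusion is correct, but you should invoke the decay (which the dichotomy already provides), not just boundedness. Likewise, in the converse direction, rather than ``rerunning the kernel computation for $T^*$,'' the cleaner bookkeeping is to read the dimension directly from Lemma~\ref{bounded}: the bounded adjoint sequences are parametrized by $\xi_0\in[R(P^-(0))]^{\perp}\cap[R(I-P^+(0))]^{\perp}=(S_0+U_0)^{\perp}$, a space of dimension $\operatorname{codim}(S_0+U_0)=\operatorname{codim}R(\mathfrak{L})$, and the evaluation against delta sequences $f=\delta_0\otimes x$ yields linear independence of the resulting functionals.
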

	
	\subsection*{B.2 Application to the ordinary differential equation}
	
	In this section, we apply the general theoretical results described earlier to the equation $\dot{x} =f\left ( x \right ) $ and provide some equivalent forms 
	of transitivity, where $f\left(x\right)\in C^1\left(\Omega,\mathbb{R}^n\right)$. In addition, in this section, we always denote $S\left(t\right):=S_f\left(t\right)$ the solution operator, and assume $\tilde{u}\left(t\right)$ is a bounded  trajectory with initial value $\tilde{u}\left(0\right)=\tilde{u}_0$  satisfying $\lim_{t \to \pm \infty} \tilde{u}\left ( t \right ) =e^{\pm } $, where $e^{\pm }$ are hyperbolic 
	equilibria of 
	Morse index $i\left(e^{\pm }\right)$. Now, we consider the linearized equation along $\tilde{u}\left(t\right)$, that is, the linear equation for $t\ge s$,
	\begin{equation}\label{app linearized equation}
		\dot{v} \left ( t \right ) =D f\left ( \tilde{u}\left(t\right)  \right ) v\left ( t \right )\equiv A\left(t\right)v\left ( t \right ),\quad 
		v\left(s\right)=v_0.
	\end{equation}
	We set the solution operator of (\ref{app linearized equation}) as $S\left(t,s\right)v_0=S_{\tilde{u}}\left(t,s\right)v_0=v\left(t\right)$. The following 
	equation is the adjoint linearized equation to (\ref{app linearized equation}), that is, the linear equation for $s\le t$,
	\begin{equation}\label{adjoint equation}
		\dot{\phi} \left ( s \right ) =-(A\left(s\right))^* \phi\left ( s \right ),\quad \phi\left(t\right)=\phi_0,
	\end{equation}
	where $(A\left(s\right))^*$ is the transpose of $A\left(s\right)$. We denote this solution $\phi\left(s\right):=\phi\left(s,t;\phi_0\right)$. It is not 
	difficult to find that $S\left(t,s\right)$ is an evolution operator on $\mathbb{R}^n$ and its adjoint evolution operator on $\mathbb{R}^n$, given by 
	$S^*\left(s,t\right):=\left(S\left(t,s\right)\right)^*$, $t\ge s$. According to \cite[Theorem 7.3.1]{H1}, for any $\phi_0\in \mathbb{R}^n$, one has 
	$S^*\left(s,t\right)\phi_0=\phi\left(s,t;\phi_0\right)$, $\forall s\le t$. Obviously, the adjoint operator $\left(S\left(t,s\right)\right)^*$ is bijective 
	from 
	$\mathbb{R}^n$ to $\mathbb{R}^n$. 
	
	To apply the general results presented earlier, we discretize the evolution operators. Fix a time step $\tau 
	>0$ and consider the discretizations $S\left(n\tau\right)$ and $S\left(n\tau,m\tau\right)$, with $n$, $m\in \mathbb{Z}$. Then, $\tilde{u}\left(n\tau\right)$ is a 
	discretization of the trajectory $\tilde{u}\left(t\right)$ connecting the hyperbolic equilibria $e^\pm$. For the linearized equation along $e^\pm $, that 
	is 
	$\dot{v} \left ( t \right ) =D f\left ( e^\pm   \right ) v\left ( t \right )$, we can select $\beta^\pm >0$ such that $\sigma\left(D f\left ( e^\pm   
	\right ) 
	\right)\cap \left\{\mu\mid -\beta^\pm  \le \operatorname{Re} \mu \le \beta^\pm  \right\}=\emptyset$. As explained in the example of Section B.1, the 
	evolution 
	operators $e^{D f\left ( e^\pm   \right )\tau}$ has an exponential dichotomy with projection $P^\pm $, exponent $\beta^\pm$ and constant $M$.
	
	\begin{thm}\label{S exponential dichotomies}
		Given $\beta_1\in \left(0,\beta^\pm\right)$,  the discrete family of evolution operators $S\left(n\tau,m\tau\right)$ has exponential dichotomies on 
		$\mathbb{Z}^\pm$ on $\mathbb{R}^n$ of finite rank equal to the index $i\left(e^\pm\right)$ of hyperbolic equilibria $e^\pm$, with exponent 
		$\beta^\pm_1$,
		constant $M^\pm$ and projections $P^\pm_{\tilde{u}}\left(n\right)$, satisfying
		$$
		\lim _{n \rightarrow \pm \infty}\left\|P_{\tilde{u}}^{ \pm}(n)-P^{ \pm}\right\|=0.
		$$
		Obviously, $\left(S\left(\left(n+1\right)\tau,n\tau\right)\right)^*$ has a reverse
		exponential dichotomy on $\mathbb{Z}^\pm $ with rank $i\left(e^\pm\right)$, exponent $\beta^\pm_1$ and projections 
		$\left(P^\pm_{\tilde{u}}\left(n\right)\right)^*$.
	\end{thm}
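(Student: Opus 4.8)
The plan is to derive the dichotomies of the variational flow along $\tilde u$ from the autonomous dichotomies at the endpoints $e^{\pm}$ (already recalled just before the statement), by combining the roughness and extension results of Appendix~B.1 with the fact that on $\mathbb{R}^n$ every solution operator of \eqref{app linearized equation} is an isomorphism.

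First I would relate the one-step operators $T_n:=S((n+1)\tau,n\tau)$ to the constant ones. Since $\tilde u(t)\to e^{+}$ as $t\to+\infty$ and $f\in C^1$, the coefficient matrix $A(t)=Df(\tilde u(t))$ converges to $Df(e^{+})$; hence on each window $[n\tau,(n+1)\tau]$ with $n$ large, $A(\cdot)$ is uniformly close to the constant matrix $Df(e^{+})$, and a continuous-dependence (Gronwall) estimate gives $\left\|T_n-e^{Df(e^{+})\tau}\right\|\to 0$ as $n\to+\infty$; symmetrically $\left\|T_n-e^{Df(e^{-})\tau}\right\|\to 0$ as $n\to-\infty$. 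Recall that $e^{Df(e^{\pm})\tau}$ has an exponential dichotomy on $\mathbb{Z}$ with constant projection $P^{\pm}$, exponent $\beta^{\pm}$ and some bound $M$, whose rank equals the number of eigenvalues of $Df(e^{\pm})$ with positive real part, i.e. the Morse index $i(e^{\pm})$.

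Next I would fix $\beta_1^{\pm}\in(0,\beta^{\pm})$ and apply the roughness theorem (Theorem~\ref{Perturbation of exponential  dichotomies}) on the half-lines $[N,+\infty)$ and $(-\infty,-N]$, taking $e^{Df(e^{\pm})\tau}$ as the unperturbed family and $\{T_n\}$ as the perturbation; by the previous step one may take $N$ large enough that the perturbation is admissibly small. This yields exponential dichotomies of $\{T_n\}_{n\ge N}$ on $[N,+\infty)$ and of $\{T_n\}_{n\le -N}$ on $(-\infty,-N]$, with exponent $\beta_1^{\pm}$, a bound $M^{\pm}$, the same finite rank $i(e^{\pm})$, and projections $P_{\tilde u}^{\pm}(n)$ satisfying $\sup_n\left\|P_{\tilde u}^{\pm}(n)-P^{\pm}\right\|=O\left(\sup_n\left\|T_n-e^{Df(e^{\pm})\tau}\right\|\right)$, hence $P_{\tilde u}^{\pm}(n)\to P^{\pm}$ as $n\to\pm\infty$. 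To upgrade these half-line dichotomies to dichotomies on all of $\mathbb{Z}^{+}$ and $\mathbb{Z}^{-}$ I would use the extension theorem (Theorem~\ref{Extension of exponential dichotomies}); its only nontrivial hypothesis is injectivity of the relevant backward evolution operator (resp. its adjoint) restricted to the finite-dimensional unstable subspace, which holds automatically since each $T_k$ is invertible on $\mathbb{R}^n$. The extension preserves the rank $i(e^{\pm})$ and the exponent $\beta_1^{\pm}$ (enlarging only the bound) and perturbs the projections by $O(e^{-2\beta_1^{\pm}|n|})$ near $\pm\infty$, so the convergence $P_{\tilde u}^{\pm}(n)\to P^{\pm}$ survives. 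This establishes the first assertion.

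The last assertion is then immediate from Lemma~\ref{adjoint exponential dichotomy}: a forward exponential dichotomy of $\{S((n+1)\tau,n\tau)\}$ on $\mathbb{Z}^{\pm}$ with projections $P_{\tilde u}^{\pm}(n)$, exponent $\beta_1^{\pm}$ and bound $M^{\pm}$ transfers to a reverse exponential dichotomy of the adjoint family $\{S^{*}(n\tau,(n+1)\tau)\}$ with the same exponent and bound and projections $(P_{\tilde u}^{\pm}(n))^{*}$, of rank $i(e^{\pm})$ since a matrix and its transpose have equal rank. The main obstacle is really the first step alone — turning the pointwise convergence $\tilde u(t)\to e^{\pm}$ into operator-norm convergence of the discretized flow $T_n$ so that the roughness hypothesis applies; after that, Theorems~\ref{Perturbation of exponential  dichotomies} and~\ref{Extension of exponential dichotomies} do the work, and the finite-dimensionality of the phase space makes the extension hypothesis vacuous.
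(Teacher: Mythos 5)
Your proposal is essentially the paper's own proof: a Gronwall estimate on each window $[n\tau,(n+1)\tau]$ turns $\tilde u(t)\to e^{\pm}$ into operator-norm convergence $\|T_n-e^{Df(e^{\pm})\tau}\|\to 0$, then Theorem~\ref{Perturbation of exponential  dichotomies} gives dichotomies on half-lines, Theorem~\ref{Extension of exponential dichotomies} extends them to $\mathbb{Z}^{\pm}$, and Lemma~\ref{adjoint exponential dichotomy} supplies the reverse dichotomy of the adjoint family. You are in fact a bit more explicit than the paper on two points the paper leaves tacit: that the injectivity hypothesis of the extension theorem is vacuous in $\mathbb{R}^n$ because each $T_k$ is invertible, and that the adjoint assertion follows from the cited lemma rather than being declared ``obvious''.

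One small logical remark: the step ``$\sup_n\|P_{\tilde u}^{\pm}(n)-P^{\pm}\|=O(\sup_n\|T_n-e^{Df(e^{\pm})\tau}\|)$, hence $P_{\tilde u}^{\pm}(n)\to P^{\pm}$'' is a non sequitur as written, since the left-hand side is a single constant, not a sequence in $n$. To obtain the stated limit one should run the roughness estimate on a sequence of shifted half-lines $[N_k,+\infty)$ (resp. $(-\infty,-N_k]$) with $N_k\uparrow\infty$, so that the bound $C\sup_{n\ge N_k}\|T_n-e^{Df(e^{+})\tau}\|$ shrinks with $k$, and then use the uniqueness of the stable and unstable subspaces of a dichotomy to conclude that the projections obtained on a smaller tail agree with $P^{+}_{\tilde u}(n)$ for $n\ge N_k$; this gives $\|P^{+}_{\tilde u}(n)-P^{+}\|\to 0$ (and similarly at $-\infty$). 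The paper's own proof also elides this, so the gap is minor and shared, and it is straightforwardly repairable.
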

	
	\begin{proof}
		Let $\left \| x^\pm_0 \right \|=1$ and consider equations 
		$$
		\dot{v} \left ( t \right ) =D f\left ( \tilde{u}\left(t\right) )   \right ) v\left ( t \right ),\, v\left(n\tau\right)=x^\pm_0
		$$
		and 
		$$
		\dot{y} \left ( t \right ) =D f\left ( e^\pm   \right ) y\left ( t \right ),\, y\left(0\right)=x^\pm_0.
		$$
		Then
		$$
		S\left(\left(n+1\right)\tau,n\tau\right)x^\pm_0=v\left(\left(n+1\right)\tau\right)=x^\pm_0+\int_{n\tau }^{\left ( n+1 \right )\tau  } Df\left ( 
		\tilde u\left ( s \right )  \right ) v\left ( s \right )ds, 
		$$
		$$
		e^{Df\left(e^\pm \right)\tau}x^\pm_0=x^\pm_0+\int_0^{  \tau  } Df\left ( e^\pm   \right ) y\left ( s \right )ds = x^\pm_0+\int_{n\tau}^{  
			\left(n+1\right)\tau  } Df\left ( e^\pm   \right ) y\left ( s \right )ds=y^\pm\left(\left(n+1\right)\tau\right).
		$$
		Therefore
		$$
		\left \| v\left ( \left ( n+1 \right )\tau   \right ) - y^\pm\left ( \left ( n+1 \right )\tau   \right ) \right \|\le  \varepsilon \left ( n \right 
		)K_{1} +\int_{n\tau }^{\left ( n+1 \right )\tau  }K\left | v\left ( s \right )-y^\pm \left ( s \right )   \right |ds,
		$$
		where $\varepsilon \left ( n \right )\to 0$, as $n\to \pm\infty$, $K_1=\max \left\{\sup_{s\in \left[0,\tau\right]}\left\{y^+\left(s\right)\right\},\,\sup_{s\in \left[0,\tau\right]}\left\{y^-\left(s\right)\right\}\right\}$,  $K=\sup_{s\in \mathbb{R} } \left \| Df\left ( \tilde u\left ( s \right )  \right )  \right \|$. By the Gronwall inequality, we have 
		$$
		\left \| v\left ( \left ( n+1 \right )\tau   \right ) - y^\pm\left ( \left ( n+1 \right )\tau   \right ) \right \|\le \varepsilon \left ( n \right 
		)K_{1} exp\left\{\int_{n\tau }^{\left ( n+1 \right )\tau  }K ds\right\}=\varepsilon \left ( n \right )K_{1}e^{K\tau}.
		$$
		That is $\left \| S\left(\left(n+1\right)\tau,n\tau\right)-e^{Df\left(e^\pm \right)\tau} \right \|\le \varepsilon \left ( n \right 
		)K_{1}e^{K\tau}.$
		It  follows from Theorem \ref{Perturbation of exponential  dichotomies} that there exists $n_0\in \mathbb{Z}^+$ such that 
		$S\left(\left(n+1\right)\tau,n\tau\right)$ has an exponential dichotomy on $\left [ n_0,+\infty  \right ) \cap \mathbb{Z}^+$ (resp. $\left ( -\infty, 
		n_0  
		\right ] \cap \mathbb{Z}^-$)  of
		finite rank $i(e^+)$ (resp. $i(e^-)$) and projections $P^+_S\left(n\right)$ (resp. $P^-_S\left(n\right)$). Applying Theorem \ref{Extension of exponential dichotomies}, we can extend these dichotomies to 
		$\mathbb{Z}^\pm$ and the proof is completed.
	\end{proof}
	
	For more details in the case of ordinary differential equations (resp. functional differential equations,
	parabolic equations in the case damped wave equations), we refer the reader to \cite{KP,KP2} (resp. \cite{XL,BP} and \cite{BG}). The 
	following important characterization of the range of $P^\pm_{\tilde{u}}\left(n\right)$ is  provided  by Lemma 4.2 (on p. 376) and Appendix C of \cite{MX}.
	
	\begin{prop}\label{the range of P}
		We have the following equalities in $\mathbb{R}^n$:
		$$
		\begin{aligned}
			& R\left(P_{\tilde{u}}^{-}(n)\right)=T_{\tilde{u}(n)} W^u\left(e^{-}\right), \quad \forall n \in \mathbb{Z}^{-} \\
			& R\left(I-P_{\tilde{u}}^{+}(n)\right)=T_{\tilde{u}(n)} W^S\left(e^{+}\right), \quad \forall n \in \mathbb{Z}^{+}.
		\end{aligned}
		$$
	\end{prop}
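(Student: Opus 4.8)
The plan is to identify each of the two ranges with the corresponding invariant subspace of the discrete dichotomy furnished by Theorem~\ref{S exponential dichotomies}, and then to match that subspace with the tangent space of the invariant manifold by a dimension count.

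First I would record, directly from properties (iii)--(iv) in the definition of a discrete dichotomy, that $R\bigl(P^-_{\tilde{u}}(n)\bigr)$ is exactly the set of $v_0\in\mathbb{R}^n$ for which the solution of $\dot v=Df(\tilde u(t))v$ with $v(n\tau)=v_0$ stays bounded on $\{\,m\tau:m\le n\,\}$ (equivalently, decays exponentially as $m\to-\infty$), while $R\bigl(I-P^+_{\tilde{u}}(n)\bigr)$ is exactly the set of $v_0$ whose forward linearized orbit stays bounded on $\{\,m\tau:m\ge n\,\}$ (hence decays as $m\to+\infty$). Next, since $\tilde u(t)\to e^-$ as $t\to-\infty$ and $\tilde u(t)\to e^+$ as $t\to+\infty$, the whole orbit lies in $W^u(e^-)\cap W^s(e^+)$; consequently $T_{\tilde u(n\tau)}W^u(e^-)$ and $T_{\tilde u(n\tau)}W^s(e^+)$ are well-defined subspaces of $\mathbb{R}^n$, of dimension $i(e^-)$ and $n-i(e^+)$ respectively, which already agree with $\operatorname{rank}P^-_{\tilde u}(n)$ and $\operatorname{rank}\bigl(I-P^+_{\tilde u}(n)\bigr)$. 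Moreover $W^u(e^-)$ and $W^s(e^+)$ are flow-invariant, so differentiating $\varphi(t-s,\cdot)$ restricted to the respective manifold shows that $S_{\tilde u}(t,s)$ carries $T_{\tilde u(s)}W^u(e^-)$ isomorphically onto $T_{\tilde u(t)}W^u(e^-)$, and similarly for $W^s(e^+)$.

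It then suffices to prove the inclusions $T_{\tilde u(n\tau)}W^u(e^-)\subseteq R\bigl(P^-_{\tilde u}(n)\bigr)$ and $T_{\tilde u(n\tau)}W^s(e^+)\subseteq R\bigl(I-P^+_{\tilde u}(n)\bigr)$, the reverse inclusions being automatic from the matching of dimensions above. For the unstable inclusion I would fix $v_0\in T_{\tilde u(n\tau)}W^u(e^-)$ and a $C^1$ curve $c:(-\varepsilon,\varepsilon)\to W^u(e^-)$ with $c(0)=\tilde u(n\tau)$, $\dot c(0)=v_0$; for $m\le n$ the curve $\varepsilon\mapsto\varphi\bigl((m-n)\tau,c(\varepsilon)\bigr)$ still lies in $W^u(e^-)$, passes through $\tilde u(m\tau)$, and has derivative $S_{\tilde u}(m\tau,n\tau)v_0$ at $\varepsilon=0$. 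For $m$ sufficiently negative this curve lies in $W^u_{loc}(e^-)$, which is a $C^1$ graph over the unstable eigenspace of $Df(e^-)$ whose derivative vanishes at $e^-$; writing the flow in this graph and invoking the standard exponential estimates for the flow and its variational equation on the local unstable manifold of a hyperbolic equilibrium (see \cite[Theorem 4.1]{CC} and \cite[Appendix C]{MX}), one gets $\tilde u(m\tau)\to e^-$ and $S_{\tilde u}(m\tau,n\tau)v_0\to 0$ at an exponential rate as $m\to-\infty$, so $v_0\in R\bigl(P^-_{\tilde u}(n)\bigr)$. The stable inclusion is proved by the symmetric argument, run forward along $W^s_{loc}(e^+)$.

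The hard part will be precisely this last exponential estimate for the linearized variations $S_{\tilde u}(m\tau,n\tau)v_0$ along the local (un)stable manifold: one must choose the curve $c$ so that its backward (resp.\ forward) images remain inside $W^u_{loc}(e^-)$ (resp.\ $W^s_{loc}(e^+)$) and then control these variations uniformly in $m$, which is exactly what the graph-transform / Lyapunov-norm construction of invariant manifolds at a hyperbolic equilibrium delivers. Once that is in hand, the two inclusions together with $\dim T_{\tilde u(n\tau)}W^u(e^-)=\operatorname{rank}P^-_{\tilde u}(n)=i(e^-)$ and $\dim T_{\tilde u(n\tau)}W^s(e^+)=\operatorname{rank}\bigl(I-P^+_{\tilde u}(n)\bigr)=n-i(e^+)$ close the argument.
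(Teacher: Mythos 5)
The paper does not actually prove Proposition~\ref{the range of P}; it simply cites Lemma~4.2 and Appendix~C of \cite{MX}. Your proposal supplies an argument where the paper relies on a reference, and the argument is the standard one used in that literature, so there is no genuine alternative route to compare against.

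Your structure is sound: for a finite-rank dichotomy of an invertible linear cocycle on a half-line, $R\bigl(P^-_{\tilde u}(n)\bigr)$ is precisely the set of initial data with backward-bounded (hence backward-decaying) linearized orbit and $R\bigl(I-P^+_{\tilde u}(n)\bigr)$ the forward analogue, the ranks match the manifold dimensions by Theorem~\ref{S exponential dichotomies}, and one inclusion plus a dimension count finishes the argument. The inclusion itself is handled correctly by pushing a tangent curve on $W^u(e^-)$ backward along the flow and using that $S_{\tilde u}(t,s)$ carries $T_{\tilde u(s)}W^u(e^-)$ onto $T_{\tilde u(t)}W^u(e^-)$. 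The only place where you should be more explicit is the step you yourself flag as hard: showing $S_{\tilde u}(m\tau,n\tau)v_0\to 0$ exponentially. Rather than invoking graph-transform estimates on $W^u_{loc}$, there is a cleaner route entirely inside the dichotomy framework already established: the one-step operators $S_{\tilde u}\bigl((m+1)\tau,m\tau\bigr)$ converge to $e^{Df(e^-)\tau}$ as $m\to-\infty$, and the invariant family of subspaces $T_{\tilde u(m\tau)}W^u(e^-)$ converges to $E^u=R(P^-)$ because $W^u(e^-)$ is a $C^1$ manifold tangent to $E^u$ at $e^-$. If $v_0$ had a nonzero component in $\ker P^-_{\tilde u}(n)$, then $S_{\tilde u}(m\tau,n\tau)v_0$ would become asymptotically aligned with the stable complement $\ker P^-_{\tilde u}(m)\to E^s$ while still lying in $T_{\tilde u(m\tau)}W^u(e^-)\to E^u$, contradicting $E^u\cap E^s=\{0\}$. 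This avoids appealing to quantitative estimates for the variational flow on the local manifold and uses only Theorems~\ref{Perturbation of exponential dichotomies}--\ref{S exponential dichotomies} plus the $C^1$ tangency, which is perhaps closer in spirit to the framework the paper has set up in Appendix~B.
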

	
	\begin{defn}
		\rm{Let $\tilde{u}\left(n\right)\in W^u\left(e^-\right)\cap W^s\left(e^+\right)$, where $e^\pm$ are hyperbolic equilibrium points. The 
			bounded orbit $\tilde{u}\left(t\right)$ is transverse at $\tilde{u}\left(0\right)$ ($W^u\left(e^{-}\right) \pitchfork_{\tilde{u}(0)} W^s\left(e^{+}\right)$) if $T_{\tilde{u}\left(0\right)} W^u\left(e^{-}\right)$ contains a  complement of $T_{\tilde{u}\left(0\right)} W^s\left(e^{+}\right)$ in $\mathbb{R}^n$.}
	\end{defn}
	
	\begin{rk}
		\rm{Since
			the linearized operator $S\left(t,s\right)$ is bicontinuous bijection, the above condition implies that, for any $t\in \mathbb{R}$, $W^u\left(e^{-}\right) 
			\pitchfork_{\tilde{u}(t)} W^s\left(e^{+}\right)$, which allows to simply say that the orbit $\tilde{u}(t)$ is a transverse connecting orbit.}
	\end{rk}
	
	Given $H\in L\left(X\right)$, then  $\operatorname{Ker} \left ( H^* \right )=\left ( R\left ( H \right )  \right )  ^{\perp } $.  Assume  
	$X_0$ be a subspace of  $X$ and define 
	$$X_0^\perp=\left \{ \phi \in X^*\mid \left \langle \phi ,x \right \rangle  =0,\forall x\in X_0\right \}. $$ If $H$ is a projection, 
	then $R\left(I-H^*\right)=\operatorname{Ker} \left ( H^* \right )=\left ( R\left ( H \right )  \right )  ^{\perp } $. By this property, Theorem \ref{S 
		exponential dichotomies} and Proposition \ref{the range of P}, one has the following equivalences.
	
	\begin{prop}\label{trajectory transverse}
		The trajectory $\tilde{u}\left(t\right)$ is transverse in $\mathbb{R}^n$ if and only if  one of the following equivalent conditions holds:
		\begin{enumerate}
			\item [\rm (i)] $R\left(P_{\tilde{u}}^{-}(0)\right)+R\left(I-P_{\tilde{u}}^{+}(0)\right)=\mathbb{R}^n$;
			\item [\rm (ii)] 
			$\left[R\left(P_{\tilde{u}}^{-}(0)\right)\right]^\perp\cap\left[R\left(I-P_{\tilde{u}}^{+}(0)\right)\right]^\perp=\left\{0\right\}$;
			\item [\rm (iii)]
			$R\left(I-\left(P_{\tilde{u}}^{-}(0)\right)^*\right)\cap R\left(\left(P_{\tilde{u}}^{+}(0)\right)^*\right)=\left\{0\right\}$.
		\end{enumerate}
	\end{prop}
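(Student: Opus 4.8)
The plan is to reduce everything to finite-dimensional linear algebra on $\mathbb{R}^n$, using the two ingredients already established: Theorem~\ref{S exponential dichotomies}, which produces the dichotomy projections $P^{\pm}_{\tilde{u}}(n)$ on $\mathbb{Z}^{\pm}$, and Proposition~\ref{the range of P}, which identifies their ranges with the relevant tangent spaces at $\tilde{u}(0)$. First I would note that, by the remark following the definition of a transverse orbit, transversality of $\tilde{u}(t)$ is independent of the base point: bicontinuity of the linearized solution operator $S(t,s)$ carries a complement of $T_{\tilde{u}(s)}W^s(e^+)$ inside $T_{\tilde{u}(s)}W^u(e^-)$ to a complement of $T_{\tilde{u}(t)}W^s(e^+)$ inside $T_{\tilde{u}(t)}W^u(e^-)$. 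Hence it suffices to work at $t=0$, and the assertion to prove is that $T_{\tilde{u}(0)}W^u(e^-)$ contains a complement of $T_{\tilde{u}(0)}W^s(e^+)$ in $\mathbb{R}^n$ if and only if (i), (ii), (iii) hold.

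Second, I would substitute the identifications $R(P^{-}_{\tilde{u}}(0)) = T_{\tilde{u}(0)}W^u(e^-)$ and $R(I-P^{+}_{\tilde{u}}(0)) = T_{\tilde{u}(0)}W^s(e^+)$ coming from Proposition~\ref{the range of P}. Writing $A := R(P^{-}_{\tilde{u}}(0))$ and $B := R(I-P^{+}_{\tilde{u}}(0))$, transversality becomes the statement that $A$ contains a complement of $B$. Here I would insert a one-line linear-algebra lemma: in finite dimension, $A$ contains a complement of $B$ if and only if $A+B=\mathbb{R}^n$; for the nontrivial direction, take a complement $C$ of $A\cap B$ inside $A$ and check that $C+B=A+B=\mathbb{R}^n$ while $C\cap B\subset (A\cap B)\cap C=\{0\}$. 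This gives the equivalence with (i). Then, since $(A+B)^{\perp}=A^{\perp}\cap B^{\perp}$ and a subspace of $\mathbb{R}^n$ equals the whole space exactly when its orthogonal complement is $\{0\}$, condition (i) is equivalent to $A^{\perp}\cap B^{\perp}=\{0\}$, which is precisely (ii).

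Finally, I would pass from (ii) to (iii) using the projection identity recalled just before the statement: for any projection $H$ on $\mathbb{R}^n$ one has $R(I-H^*)=\operatorname{Ker}(H^*)=(R(H))^{\perp}$. Applying this with $H=P^{-}_{\tilde{u}}(0)$ gives $R(I-(P^{-}_{\tilde{u}}(0))^*)=A^{\perp}$; applying it with the complementary projection $H=I-P^{+}_{\tilde{u}}(0)$, and using $I-(I-P^{+}_{\tilde{u}}(0))^* = (P^{+}_{\tilde{u}}(0))^*$, gives $R((P^{+}_{\tilde{u}}(0))^*)=B^{\perp}$. Consequently the intersection in (iii) is literally $A^{\perp}\cap B^{\perp}$, so (ii) and (iii) are the same condition, and the chain transversality $\Leftrightarrow$ (i) $\Leftrightarrow$ (ii) $\Leftrightarrow$ (iii) is complete.

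I do not expect a genuine obstacle here once Theorem~\ref{S exponential dichotomies} and Proposition~\ref{the range of P} are in hand; the proof is entirely formal. The only points needing care are the reduction to the single base point $\tilde{u}(0)$, which must be justified by the bicontinuity of $S(t,s)$, and the bookkeeping that it is $I-P^{+}_{\tilde{u}}(0)$ (not $P^{+}_{\tilde{u}}(0)$) whose range is the stable tangent space, so that when the adjoint/orthogonal-complement identities are applied the roles of $P^{-}$ and $I-P^{+}$ are handled consistently and condition (iii) lands on the correct factors $R(I-(P^{-}_{\tilde{u}}(0))^*)$ and $R((P^{+}_{\tilde{u}}(0))^*)$.
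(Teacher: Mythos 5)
Your proposal is correct and takes essentially the same route the paper intends: the paper justifies the proposition only by the one-line pointer ``By this property, Theorem~\ref{S exponential dichotomies} and Proposition~\ref{the range of P},'' and your write-up simply fills in the elided linear algebra (the complement/sum equivalence in finite dimension, $(A+B)^\perp=A^\perp\cap B^\perp$, and the projection identity $R(I-H^*)=(R(H))^\perp$ applied to $H=P^-_{\tilde u}(0)$ and $H=I-P^+_{\tilde u}(0)$). The base-point reduction you mention is actually already built into the paper's definition of transversality (which is stated at $\tilde u(0)$), so it is not strictly needed, but it does no harm.
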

	
	The following lemma will be used to derive the functional characterization of transversality. The proof of this lemma is contained in the proof of Theorem 
	1.15 in \cite{H2}.
	
	\begin{lem}\label{bounded}
		Let $T_n$ be an evolution operator admitting discrete dichotomies of finite rank on both $\mathbb{Z}^+$ and $\mathbb{Z}^-$. Then the element $\xi_0\in 
		X^*$ belongs to $\left[R\left(P^{-}(0)\right)\right]^\perp\cap\left[R\left(I-P^{+}(0)\right)\right]^\perp$ if and only if the sequence 
		$\xi\left(m\right)=T^*\left(m,0\right)\xi_0$, $m\in \mathbb{Z}$ is bounded. In this case, $\xi\left(m\right)\in 
		R\left(I-\left(P^-\left(m\right)\right)^*\right)$ for $m\le 0$ and $\xi(m)\in R\left(\left(P^+\left(m\right)\right)^*\right)$ for $m> 0$.
	\end{lem}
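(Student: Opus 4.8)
The plan is to read the statement off from the reverse exponential dichotomy of the adjoint family (Lemma \ref{adjoint exponential dichotomy}) together with the four decay/growth estimates built into the definition of a reverse exponential dichotomy, treating the two half-lines $m\le 0$ and $m\ge 0$ separately and then splicing them at $m=0$. This is exactly the mechanism behind the proof of \cite[Theorem 1.15]{H2} (our Theorem \ref{Fredholm operator}), of which the present lemma is a by-product; what follows is how I would organise it.

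First I would apply Lemma \ref{adjoint exponential dichotomy} to conclude that $\{T^{*}(m,n)\}$ has a reverse exponential dichotomy of finite rank on $\mathbb{Z}^{-}$ with projections $(P^{-}(n))^{*}$ and on $\mathbb{Z}^{+}$ with projections $(P^{+}(n))^{*}$. I would then record the purely algebraic facts, valid for any bounded projection $P$ on a Banach space, that $R(I-P^{*})=\ker(P^{*})=[R(P)]^{\perp}$ and $R(P^{*})=\ker(I-P^{*})=[R(I-P)]^{\perp}$; these identify the two annihilator subspaces appearing in the statement with the adjoint fibres at $0$, namely $[R(P^{-}(0))]^{\perp}=R(I-(P^{-}(0))^{*})$ and $[R(I-P^{+}(0))]^{\perp}=R((P^{+}(0))^{*})$.

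For the half $m\le 0$ I would split $\xi_{0}=(I-(P^{-}(0))^{*})\xi_{0}+(P^{-}(0))^{*}\xi_{0}$ and propagate backward by $\xi(m)=T^{*}(m,0)\xi_{0}$. Invariance of the dichotomy keeps the first summand in $R(I-(P^{-}(m))^{*})$ and the second in $R((P^{-}(m))^{*})$; the estimate for $T^{*}(\cdot,\cdot)(I-P^{*})$ keeps the first summand bounded (indeed makes it decay) as $m\to-\infty$, while the estimate for $T^{*}(\cdot,\cdot)P^{*}$, read on the fibre where $T^{*}(\cdot,0)$ acts as the inverse isomorphism, forces the second summand to grow exponentially as $m\to-\infty$ unless it is zero. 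Hence $\{\xi(m)\}_{m\le 0}$ is bounded iff $(P^{-}(0))^{*}\xi_{0}=0$, i.e. iff $\xi_{0}\in[R(P^{-}(0))]^{\perp}$, and in that case $\xi(m)\in R(I-(P^{-}(m))^{*})$ for all $m\le 0$. The half $m\ge 0$ is the mirror image: the adjoint orbit issuing from $\xi_{0}$ extends forward as a bounded sequence only along the adjoint unstable fibre $R((P^{+}(m))^{*})$ (which is precisely where $T^{*}(m,m-1)$ is defined as an inverse isomorphism), and there the $T^{*}(\cdot,\cdot)P^{*}$-estimate makes it exponentially contracting; so a bounded forward extension exists iff $\xi_{0}\in R((P^{+}(0))^{*})=[R(I-P^{+}(0))]^{\perp}$, and then $\xi(m)\in R((P^{+}(m))^{*})$ for $m>0$. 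Splicing the two halves at $m=0$ gives the claimed equivalence and the two membership assertions.

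The place where I expect the bookkeeping to bite is the half $m\ge 0$: for $m>0$ the object ``$T^{*}(m,0)\xi_{0}$'' is only meaningful through the inverse of $T^{*}(0,m)$ restricted to $R((P^{+}(m))^{*})$ supplied by part (ii) of the reverse-dichotomy definition, and one must verify that the appropriate one of the four estimates, read with the time indices in the correct order and on the correct side of $0$, yields a \emph{uniform} bound on this inverse (hence boundedness, not expansion). Matching each estimate with the right projection and index range is the only genuinely nontrivial point; the remainder is the routine linear algebra of complementary projections combined with Lemma \ref{adjoint exponential dichotomy}.
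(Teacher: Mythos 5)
The paper does not prove this lemma directly; it cites the proof of Theorem 1.15 in Henry's notes, and your sketch is precisely the argument underlying that citation (pass to the adjoint family via the adjoint-dichotomy lemma, identify the two annihilators with the ranges of $I-(P^-(0))^*$ and $(P^+(0))^*$, and use the four dichotomy estimates on each half-line), so you are on the same track. One small phrasing slip worth correcting: on the backward half $m\le 0$, the restriction of $T^*(m,0)$ to $R\bigl((P^-(0))^*\bigr)$ is the \emph{direct} isomorphism from part (ii) of the reverse-dichotomy definition, not the inverse; the exponential growth of the second summand $T^*(m,0)(P^-(0))^*\xi_0$ follows because its inverse $T^*(0,m)\big|_{R((P^-(m))^*)}$ is bounded by $Me^{\beta m}\to 0$ via estimate (iv), whereas on the forward half $m>0$ it really is $T^*(m,0)$ that is defined as the inverse, as you state. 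With that wording fixed, the rest (the splitting at $0$, the membership $\xi(m)\in R(I-(P^-(m))^*)$ for $m\le 0$ and $\xi(m)\in R((P^+(m))^*)$ for $m>0$) is correct.
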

	
	By Lemma \ref{bounded} and Proposition \ref{trajectory transverse}, we obtain the following characterization of transversality.
	
	\begin{cor}\label{u bounded}
		The trajectory $\tilde{u}\left(t\right)$ is transverse if and only if there does not exist element $\xi_0\in\mathbb{R}^n$, $\xi_0\ne 0$, such that 
		the sequence $\left(S^*\left(n,0\right)\xi_0\right)_{n\in\mathbb{Z}}$ is bounded in $\mathbb{R}^n$.
	\end{cor}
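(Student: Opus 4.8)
The plan is to deduce Corollary \ref{u bounded} as an immediate consequence of Proposition \ref{trajectory transverse} and Lemma \ref{bounded}, once the discretized linearized flow along the standing trajectory $\tilde{u}(t)$ (bounded, with $\tilde{u}(t)\to e^{\pm}$ hyperbolic as $t\to\pm\infty$) is placed in the framework of those statements. First I would recall that, by Theorem \ref{S exponential dichotomies}, the discrete family of evolution operators $T_n:=S_{\tilde{u}}\bigl((n+1)\tau,n\tau\bigr)$ — equivalently $T(n,m)=S_{\tilde{u}}(n\tau,m\tau)$ — admits discrete dichotomies of finite rank $i(e^{-})$ on $\mathbb{Z}^-$ and $i(e^{+})$ on $\mathbb{Z}^+$, with projections $P^-_{\tilde{u}}(n)$ and $P^+_{\tilde{u}}(n)$ respectively. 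Hence the hypotheses of Lemma \ref{bounded} are satisfied with $P^-(0)=P^-_{\tilde{u}}(0)$ and $P^+(0)=P^+_{\tilde{u}}(0)$, and the adjoint evolution operators are $T^*(m,0)=(T(0,m))^*=(S_{\tilde{u}}(0,m\tau))^*=S^*_{\tilde{u}}(m\tau,0)$, which we abbreviate $S^*(n,0)$.

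Next I would combine the two cited results. By Proposition \ref{trajectory transverse}(ii), the trajectory $\tilde{u}(t)$ is transverse in $\mathbb{R}^n$ if and only if
\[
\left[R\left(P^-_{\tilde{u}}(0)\right)\right]^{\perp}\cap\left[R\left(I-P^+_{\tilde{u}}(0)\right)\right]^{\perp}=\{0\}.
\]
By Lemma \ref{bounded}, an element $\xi_0\in\mathbb{R}^n$ lies in this intersection if and only if the full two-sided sequence $\xi(n)=S^*(n,0)\xi_0$, $n\in\mathbb{Z}$, is bounded in $\mathbb{R}^n$ (with $\xi(n)\in R(I-(P^-_{\tilde{u}}(n))^*)$ for $n\le 0$ and $\xi(n)\in R((P^+_{\tilde{u}}(n))^*)$ for $n>0$). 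Stringing these equivalences together, $\tilde{u}(t)$ is transverse exactly when the only $\xi_0$ producing a bounded sequence $\bigl(S^*(n,0)\xi_0\bigr)_{n\in\mathbb{Z}}$ is $\xi_0=0$, i.e.\ exactly when no nonzero $\xi_0$ with this property exists. This is the statement of the corollary.

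Since the argument merely chains two previously established facts, there is no substantive obstacle; the only point demanding care is bookkeeping of the dichotomy and adjoint conventions — namely that $T^*(r,t)=(T(t,r))^*$ and, via \cite[Theorem 7.3.1]{H1}, that the discretized adjoint evolution coincides with the solution operator of the discretized adjoint equation \eqref{adjoint equation}, so that the sequence ``$S^*(n,0)\xi_0$'' in the statement genuinely matches the sequence $\xi(m)=T^*(m,0)\xi_0$ of Lemma \ref{bounded}. If desired, one could also note — using the same Gronwall estimate as in the proof of Theorem \ref{S exponential dichotomies}, controlling the flow over one step of length $\tau$ by $e^{K\tau}$ with $K=\sup_{s}\lVert Df(\tilde{u}(s))\rVert$ — that boundedness of this discrete sequence is equivalent to boundedness on all of $\mathbb{R}$ of the continuous solution $\phi(\cdot,0;\xi_0)$ of \eqref{adjoint equation}; but this refinement is not needed for the statement as phrased.
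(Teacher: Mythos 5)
Your proof is correct and follows the same route the paper takes: the paper derives Corollary \ref{u bounded} in a single sentence by combining Proposition \ref{trajectory transverse}(ii) with Lemma \ref{bounded}, with Theorem \ref{S exponential dichotomies} guaranteeing the dichotomy hypotheses. Your write-up simply makes the bookkeeping of adjoints and discretization explicit; the optional remark about equivalence with boundedness of the continuous adjoint solution is accurate but, as you note, not needed.
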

	
	Let $\mathfrak{L}_{\tilde{u}}\equiv \mathfrak{L}$ be as defined in (\ref{L}) 
	with $T (n,m) = S\left(n\tau,m\tau\right)$, then Theorem \ref{Fredholm operator} and Corollary \ref{u bounded} imply the following results.
	
	\begin{cor}\label{Functional characterization of transversality}
		(Functional characterization of transversality). The operator $\mathfrak{L}_{\tilde{u}}:D\left(\mathfrak{L}_{\tilde{u}}\right)\mapsto 
		\ell^\infty\left(\mathbb{Z},\mathbb{R}^n\right)$ is a Fredholm operator of index $i\left(e^-\right)-i\left(e^+\right)$. In particular, 
		$$\operatorname{codim}R\left(\mathfrak{L}_{\tilde{u}}\right)=\operatorname{codim}\left[R\left(P^-_{\tilde{u}}\left(0\right)\right)+R\left(I-P^+_{\tilde{u}}\left(0\right)\right)\right].$$ 
		Moreover, a sequence $f\in \ell^\infty\left(\mathbb{Z},\mathbb{R}^n\right) $ belongs to $R\left(\mathfrak{L}_{\tilde{u}}\right)$ if and only if
		$$
		\sum_{n=-\infty}^{+\infty}\langle \xi (n+1),f(n)\rangle_{\mathbb{R}^n}=0
		$$
		for every sequence $\xi(n)=S^*(n\tau, 0) \xi_0, \xi_0 \in \mathbb{R}^n$, which is bounded. Finally, the connecting orbit $\tilde{u}\left(t\right)$ is 
		transverse if and only if $\mathfrak{L}_{\tilde{u}}$ is surjective.
	\end{cor}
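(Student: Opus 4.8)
The plan is to deduce this corollary from the abstract Fredholm theory of discrete evolution families (Theorem~\ref{Fredholm operator}), combined with the concrete dichotomy data for the linearization supplied by Theorem~\ref{S exponential dichotomies}. First I would note that Theorem~\ref{S exponential dichotomies} furnishes precisely the hypothesis of Theorem~\ref{Fredholm operator}: the discretized solution family $T(n,m)=S(n\tau,m\tau)$ is a discrete family of evolution operators on $\mathbb{R}^n$ admitting exponential dichotomies of finite rank $i(e^+)$ on $\mathbb{Z}^+$ and of finite rank $i(e^-)$ on $\mathbb{Z}^-$, with projections $P^\pm_{\tilde u}(n)$. Applying Theorem~\ref{Fredholm operator} then gives at once that $\mathfrak{L}_{\tilde u}$ is a Fredholm operator with
$$
\operatorname{ind}\mathfrak{L}_{\tilde u}=\dim R\bigl(P^-_{\tilde u}(0)\bigr)-\dim R\bigl(P^+_{\tilde u}(0)\bigr)=i(e^-)-i(e^+),
$$
together with $\operatorname{codim}R(\mathfrak{L}_{\tilde u})=\operatorname{codim}(S_0+U_0)$ and the stated orthogonality description of $R(\mathfrak{L}_{\tilde u})$, read with $X=X^*=\mathbb{R}^n$ and $T^*(n,0)=\bigl(S(n\tau,0)\bigr)^*=S^*(n\tau,0)$.

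Next I would translate the abstract spaces $S_0,U_0$ into tangent spaces of invariant manifolds. By Proposition~\ref{the range of P}, $U_0=R(P^-_{\tilde u}(0))=T_{\tilde u(0)}W^u(e^-)$ and $S_0=\ker P^+_{\tilde u}(0)=R(I-P^+_{\tilde u}(0))=T_{\tilde u(0)}W^s(e^+)$; inserting these into the codimension identity yields $\operatorname{codim}R(\mathfrak{L}_{\tilde u})=\operatorname{codim}\bigl[R(P^-_{\tilde u}(0))+R(I-P^+_{\tilde u}(0))\bigr]$, which is the ``in particular'' assertion. For the range characterization one uses that $S^*(s,t)$ is the solution operator of the adjoint equation \eqref{adjoint equation} (via \cite[Theorem~7.3.1]{H1}), so that the bounded test sequences $\xi(n)=S^*(n\tau,0)\xi_0$ appearing in Theorem~\ref{Fredholm operator} are exactly the bounded discretized solutions of \eqref{adjoint equation}.

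Finally, for the transversality equivalence I would combine the codimension formula with Proposition~\ref{trajectory transverse}: $\mathfrak{L}_{\tilde u}$ is surjective iff $\operatorname{codim}R(\mathfrak{L}_{\tilde u})=0$, iff $R(P^-_{\tilde u}(0))+R(I-P^+_{\tilde u}(0))=\mathbb{R}^n$, which is condition~(i) there, i.e.\ the transversality of $\tilde u$; equivalently one may quote Corollary~\ref{u bounded}, since by the orthogonality description $\mathfrak{L}_{\tilde u}$ fails to be surjective exactly when some nonzero $\xi_0\in\mathbb{R}^n$ makes $(S^*(n\tau,0)\xi_0)_{n\in\mathbb{Z}}$ bounded. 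I do not expect a serious obstacle here: essentially all the analytic content (roughness and extension of dichotomies, the Green's function, the Fredholm alternative) is already packaged in Appendix~B. The one point deserving care is matching conventions across the passage from the continuous flow to its $\tau$-discretization -- namely that the discrete family $S(n\tau,m\tau)$ shares the stable and unstable subspaces at each base point with the continuous linearization (so that no transversality information is lost in discretizing), and that the adjoint discretization $S^*(n\tau,0)$ lines up with the solution operator of \eqref{adjoint equation}.
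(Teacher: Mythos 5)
Your proposal is correct and follows essentially the same route the paper intends: the paper's one-line proof simply invokes Theorem~\ref{Fredholm operator} and Corollary~\ref{u bounded} applied to $T(n,m)=S(n\tau,m\tau)$, and you have spelled out precisely the hypotheses check via Theorem~\ref{S exponential dichotomies}, the identification of $S_0,U_0$ with $R(I-P^+_{\tilde u}(0))$ and $R(P^-_{\tilde u}(0))$, and the deduction of the transversality equivalence from the codimension formula and Proposition~\ref{trajectory transverse} (or equivalently Corollary~\ref{u bounded}). No gap; the extra caution you flag about matching the continuous and discretized conventions is exactly what Theorem~\ref{S exponential dichotomies} and Proposition~\ref{the range of P} are there to handle.
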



\end{document}